\newtheorem{theorem}{Theorem}[section]
\newtheorem{lemma}[theorem]{Lemma}
\newtheorem{definition}[theorem]{Definition}
\newtheorem{corollary}[theorem]{Corollary}
\newtheorem{conjecture}[theorem]{Conjecture}
\newtheorem{proposition}[theorem]{Proposition}
\newtheorem{remark}[theorem]{Remark}
\title[Fractional Calderón problems on unbounded domains]{Fractional Calderón problems and Poincaré inequalities on unbounded domains}
\keywords{Fractional Laplacian, fractional gradient, Calderón problem, Poincaré inequality}
\subjclass[2020]{Primary 35R30; secondary 26A33, 42B37, 46F12}
\author{Jesse Railo}
\thanks{Department of Pure Mathematics and Mathematical Statistics, University of
Cambridge (\href{mailto:jr891@cam.ac.uk}{jr891@cam.ac.uk})}
\address{Department of Pure Mathematics and Mathematical Statistics, University of
Cambridge, Cambridge CB3 0WB, UK}
\email{jr891@cam.ac.uk}
\author{Philipp Zimmermann}
\thanks{Department of Mathematics, ETH Zurich (\href{mailto:philipp.zimmermann@math.ethz.ch}{philipp.zimmermann@math.ethz.ch})}
\address{Department of Mathematics, ETH Zurich, Z\"urich, Switzerland}
\email{philipp.zimmermann@math.ethz.ch}
\date{\today}
\newcommand{\C}{{\mathbb C}}
\newcommand{\R}{{\mathbb R}}
\newcommand{\Z}{{\mathbb Z}}
\newcommand{\N}{{\mathbb N}}
\newcommand{\schwartz}{\mathscr{S}}
\newcommand{\tempered}{\mathscr{S}^{\prime}}
\newcommand{\slowly}{\mathscr{O}_M}
\newcommand{\fraclaplace}{(-\Delta)^s}
\newcommand{\fourier}{\mathcal{F}}
\newcommand{\ifourier}{\mathcal{F}^{-1}}
\newcommand{\vev}[1]{\left\langle#1\right\rangle}
\newcommand{\cdistr}{\mathscr{E}'}
\newcommand{\distr}{\mathscr{D}^{\prime}}
\newcommand{\dimens}{n}
\newcommand{\norm}[1]{\lVert #1 \rVert}
\newcommand{\abs}[1]{\left\lvert #1 \right\rvert}
\newcommand{\aabs}[1]{\left\lVert #1 \right\rVert}
\newcommand{\ip}[2]{\left\langle #1,#2 \right\rangle}
\DeclareMathOperator{\Div}{div} 
\DeclareMathOperator{\supp}{supp} 
\begin{document}

\maketitle
\begin{abstract} We generalize many recent uniqueness results on the fractional Calderón problem to cover the cases of all domains with nonempty exterior. The highlight of our work is the characterization of uniqueness and nonuniqueness of partial data inverse problems for the fractional conductivity equation on domains that are bounded in one direction for conductivities supported in the whole Euclidean space and decaying to a constant background conductivity at infinity. We generalize the uniqueness proof for the fractional Calderón problem by Ghosh, Salo and Uhlmann to a general abstract setting in order to use the full strength of their argument. This allows us to observe that there are also uniqueness results for many inverse problems for higher order local perturbations of a lower order fractional Laplacian. We give concrete example models to illustrate these curious situations and prove Poincaré inequalities for the fractional Laplacians of any order on domains that are bounded in one direction. We establish Runge approximation results in these general settings, improve regularity assumptions also in the cases of bounded sets and prove general exterior determination results. Counterexamples to uniqueness in the inverse fractional conductivity problem with partial data are constructed in another companion work.
\end{abstract}

\setcounter{tocdepth}{1}
\tableofcontents

\section{Introduction}

The primary objective of this article is to study the fractional Calderón problem for perturbations of the fractional Laplacian $(-\Delta)^{s}u \vcentcolon = \ifourier(\abs{\xi}^{2s} \hat{u})$ where $s \in \R_+ \setminus \Z$. This is the nonlocal analog of the classical Calderón problem for the Schrödinger equation which appears in many imaging applications \cite{UHL-electrical-impedance-tomography, UH-inverse-problems-seeing-the-unseen}. The study of this type of fractional inverse problems was initiated by Ghosh, Salo and Uhlmann in \cite{GSU20}. In particular, we consider generalizations of the model problem \begin{align}\label{eq:fractionalschrodingerequation} \left\{\begin{array}{rl}
        (\fraclaplace+q)u&=0  \;\;\text{in}\;  \Omega,\\
        u &=f   \;\;\text{in}\;\Omega_e ,
        \end{array}\right.\end{align}
where $\Omega_e=\R^n \setminus\overline{\Omega}$ is the exterior of an open set $\Omega$ and is assumed to be nonempty. The associated Dirichlet to Neumann (DN) map to the exterior value problem \eqref{eq:fractionalschrodingerequation}, under certain conditions on $q$ and $\Omega$, is a bounded linear operator $\Lambda_q\colon H^s(\Omega_e)\rightarrow (H^s(\Omega_e))^*$ which acts under some stronger regularity assumptions than imposed in this article as $\Lambda_q f=\fraclaplace u|_{\Omega_e}$ (see \cite[Lemma A.1]{GSU20}). We need to impose some positivity or smallness assumptions on the perturbations $q$ to guarantee that the exterior DN map is well-defined due to the lack of compact Sobolev embeddings on unbounded domains. The studied fractional Calderón problem can be simply formulated as follows: \emph{Given $f \mapsto \Lambda_q f$, determine $q$.}

We extend the recent uniqueness results in \cite{CMR20,RS-fractional-calderon-low-regularity-stability} for singular potentials and for lower order local linear perturbations \cite{CLR18-frac-cald-drift,CMRU20-higher-order-fracCald} to domains that are bounded in one direction. In fact, we prove a general result which applies also in the case of local higher order perturbations (i.e. the order of the perturbation is higher than the order of the fractional Laplacian) and for all domains with nonempty exterior. We also consider the setting of the fractional conductivity equation and the related Liouville transformation studied earlier in \cite{covi2019inverse-frac-cond}. The main contributions of our work are Theorem \ref{thm: characterization of uniqueness}, which rather completely characterizes the uniqueness of the fractional Calderón problem for the conductivity equation in a general setting, and an extension of the usual methods on bounded domains to the case of nonlocal (or fractional) inverse problems of any order on unbounded domains (see Theorems \ref{thm:PoincUnboundedDoms} and \ref{theorem:GeneralFractionalCalderon}).

The methods needed on unbounded domains allow us to slightly extend the uniqueness results of the fractional Calderón problem in the usual setting of bounded domains to larger classes of singular perturbations than in the scientific literature (see e.g.~\cite{CMR20,CMRU20-higher-order-fracCald,RS-fractional-calderon-low-regularity-stability}). We refer to the survey~\cite{Sal17} for a more detailed but a bit outdated treatment of the topic. See also \cite{CLL19,CLR18-frac-cald-drift,CO-magnetic-fractional-schrodinger,covi2021uniquenessQuasiLocal, GLX-calderon-nonlocal-elliptic-operators,GRSU-fractional-calderon-single-measurement,LL19, LL-fractional-semilinear-problems, LLR19,LI-fractional-magnetic, LILI-semilinear-magnetic,  LILI-fractional-magnetic-calderon,ruland2019quantitative} and the references therein for other recent studies of the fractional Calderón problems. These include for example perturbations of nonlocal elliptic anisotropic operators, nonlinear perturbations and nonlocal perturbations among many other model problems as well as studies of the stability and reconstruction. Most recently, inverse problems for the fractional spectral Laplacian have been studied also on Riemannian manifolds \cite{feizmohammadi2021fractional,feizmohammadiEtAl2021fractional}. We discuss some of these works with more details in the later sections.

The classical Calderón problem and related inverse problems have been studied on unbounded domains earlier. A historical reference is the work of Langer, which studies the inverse conductivity problem and geophysical applications for conductivities in a half-space under the assumption that the conductivity is depending only upon the depth \cite{Langer33-calderon-halfspace}. The classical Calderón problem is studied in a half-space, e.g., for convex inclusions in \cite{FI89-calderonProblem-one-measurement} and in two dimensions for anisotropic conductivities in \cite{APL05-Calderon-anistropic-plane}. These inverse problems are also studied in infinite slabs or similar unbounded structures in \cite{CKS17-Calderon-problem-periodic-cylindrical-domain,Ikehata01-calderon-infinite-slab,KLU12-magnetic-calderon-infinite-slab,SW06-complex-spherical-waves-unbounded-domains} when $n \geq 3$. Inverse problems for general second order elliptic equations are studied recently in \cite{BR20-inverseproblems-elliptic-unbounded-domains}. The above list is incomplete and more references can be found from the aforementioned works. In the classical Calderón problems on unbounded domains, one often demands some radiation condition for the solutions at infinity which are not present in the nonlocal formulations studied in this article.

The second goal of this article is to study Poincaré inequalities for the fractional Laplacian of any positive order $s \geq 0$ in the local Bessel potential spaces. In the simplest form, we prove that for any bounded open set $\Omega \subset \R^n$, $n \geq 1$, $1 < p < \infty$ and $s \geq 0$ there exists a constant $C(n,p,s,\Omega) > 0$ such that
\begin{equation}\label{eq:basicPoincareIntro}
    \norm{u}_{L^p(\Omega)} \leq C\norm{(-\Delta)^{s/2}u}_{L^p(\R^n)}
\end{equation}
for all $u \in C_c^\infty(\Omega)$. We obtain generalizations of this inequality to domains that are bounded in one direction. We do not impose any boundary regularity assumptions on $\Omega$. We refer to the following surveys and expository articles on the fractional Laplacian, fractional Sobolev spaces and their applications \cite{DINEPV-hitchhiker-sobolev,KWA-ten-definitions-fractional-laplacian,KM19-FracLapOp-its-properties,LPGetAl-WhatIsFracLap,LSW-Fractional-Gaussian-fields-survey}. See also the books \cite{Interpolation-spaces,Functional-spaces-elliptic-equations,Leoni17-FirstCourseInSobolevSpaces,Singular-Integrals-Stein,Triebel-Theory-of-function-spaces,TRI-interpolation-function-spaces} covering nearly the whole spectra of classical methods related to the fractional order Sobolev spaces.

The Poincaré inequalities for the fractional Laplacian are important as they allow to show existence, uniqueness and boundedness theorems for weak solutions of PDEs (see e.g. \cite{CMRU20-higher-order-fracCald,GSU20,Boundary-Regularity-fract-laplacian,rosoton2013extremal,RS-fractional-calderon-low-regularity-stability}). The standard example is the fractional Laplace equation $(-\Delta)^{s/2}u =F$ with the interior data $F$ in some domain $\Omega \subset \R^n$ (see e.g.~ \cite{RosOton16-NonlocEllipticSurvey}). The cases when $p \neq 2$ appear naturally in the studies of nonlinear PDEs and the standard examples are different kind of $p$-Laplacians (see e.g.~\cite{BLP13-Cheeger,BS19-Note-homog-Sob-space-frac-ord,NDS19-WeightedIneqFracLap,PBL18-Opt-Prob-First-Eigenvalues-pFracLap,DSV15-Dens-Frac-Sobspaces,RR08-nonLingroundHardyInq,FP14-Frac-p-eigenvalues,LL14--Fractional-eigenvalues,MS21-Best-frac-p-Poincare-unboundedDoms,MPSY16-Brez-Niren-prob16} and references therein). The Poincaré and other closely related inequalities for the fractional Laplacians are studied recently, for example, in \cite{AFM19-Asymptotic-FracLap-unbounded,BC18-fractionalHardy,IGPF21-fracPoincUnbounded,PBL18-Opt-Prob-First-Eigenvalues-pFracLap,HYZ12-FracGaglNirenHardy,LL14--Fractional-eigenvalues,MS21-Best-frac-p-Poincare-unboundedDoms,yeressian2014asymptotic}. On recent studies and further references of the higher order fractional Laplacians, we point to \cite{CMR20,CMRU20-higher-order-fracCald,RS15-higher-order-frac}. The higher order fractional Laplacian also appears in the studies of the Radon transform and imaging methods since the normal operator of the $d$-plane Radon transform for an integer $1 \leq d < n$ can be identified with $(-\Delta)^{-d/2}$ and the inverse operator can be defined using $(-\Delta)^{d/2}$ (see e.g.~\cite{CMR20,HE:integral-geometry-radon-transforms,IM-unique-continuation-riesz-potential}).

We state our main results on the fractional Poincaré inequalities in Section \ref{sec:FracPoincResults}. We discuss our main results on the fractional Calderón problems in Section \ref{sec:GenFracCaldResults}. We recall preliminaries in Section \ref{sec:preliminaries}. We prove the results related to the generalized fractional Calderón problem in Section \ref{sec:GeneralizedFractionalCalderon}. We prove the results related to the fractional Poincaré inequalities in Section \ref{sec:poincare}. We define some classes and decompositions of Sobolev multipliers and PDOs in Section \ref{sec:SobMultDecompositions}, and these are used to obtain uniqueness results for concrete example models in Section \ref{sec:fractionalCalderonMainSec}. We study the fractional conductivity equation and the related inverse problem in Section \ref{sec:conductivityCalderon}. We prove some basic properties for products of functions in Bessel potential spaces in Appendix \ref{subsubsec: Multiplication map in Bessel potential spaces} as they are essential for our main results on the fractional conductivity equation and we could not locate all of them in the required form in the earlier literature.

\subsection*{Acknowledgements} The authors thank Prof. Mikko Salo for commenting an earlier version of the manuscript and suggesting changes. The authors thank Dr. Giovanni Covi for many fruitful discussions related to the topics of this article. J.R. was supported by the Vilho, Yrjö and Kalle Väisälä Foundation of the Finnish Academy of Science and Letters.

\section{Main results of the article}

\subsection{Poincaré inequalities for the fractional Laplacians}\label{sec:FracPoincResults}

Our first main result is the higher order fractional Poincaré inequality for domains that are bounded in one direction. We begin with the following definition.
\begin{definition} \label{def:bounded1dir}
\begin{enumerate}[(i)]
    \item\label{item 1 bounded one dir} We say that an open set $\Omega_\infty \subset\R^n$ of the form $\Omega_\infty=\R^{n-k}\times \omega$, where $n\geq k>0$ and $\omega \subset \R^k$ is a bounded open set, is a \emph{cylindrical domain}.
    \item\label{item 2 bounded one dir} We say that an open set $\Omega \subset \R^n$ is \emph{bounded in one direction} if there exists a cylindrical domain $\Omega_\infty \subset \R^n$ and a rigid Euclidean motion $A(x) = Lx + x_0$, where $L$ is a linear isometry and $x_0 \in \R^n$, such that
$\Omega \subset A\Omega_\infty$.
\end{enumerate}

\end{definition}
One could equivalently require that there exists a rigid Euclidean motion $A$ and a cylindrical domain $\Omega_{\infty}\subset\R^n$ such that $A\Omega \subset \Omega_\infty$. In Section \ref{sec:poincare} we prove the following theorem.

\begin{theorem}[Poincaré inequality]\label{thm:PoincUnboundedDoms} Let $\Omega\subset\R^n$ be an open set that is bounded in one direction. Suppose that $2 \leq p < \infty$ and $0\leq s\leq t < \infty$, or $1 < p < 2$, $1 \leq t < \infty$ and $0 \leq s \leq t$. Then there exists $C(n,p,s,t,\Omega)>0$ such that
    \begin{equation}
    \label{eq: poincare on L1}
        \|(-\Delta)^{s/2}u\|_{L^p(\R^n)}\leq C\|(-\Delta)^{t/2}u\|_{L^p(\R^n)}
    \end{equation}
    for all $u\in \Tilde{H}^{t,p}(\Omega)$.
\end{theorem}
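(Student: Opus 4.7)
The plan is to reduce the theorem to the basic Poincaré inequality (the special case $s=0$) and then pass to arbitrary $s\le t$ via Mikhlin multiplier estimates and the standard equivalent-norm description of Bessel potential spaces. First, using invariance of both the $L^p$-norm and $(-\Delta)^{s/2}$ under rigid Euclidean motions, I reduce to $\Omega=\R^{n-k}\times\omega$ with $\omega\subset\R^k$ bounded, and by density of $C_c^\infty(\Omega)$ in $\tilde H^{t,p}(\Omega)$ I take $u\in C_c^\infty(\Omega)$.

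The main task is to prove the basic Poincaré inequality $\|u\|_{L^p(\R^n)}\le C\|(-\Delta)^{t/2}u\|_{L^p(\R^n)}$, which I attack differently in different $(p,t)$ regimes. For $p=2$ and any $t\ge 0$, I apply the partial Fourier transform in the unbounded variables $x'\in\R^{n-k}$; for each fixed $\xi'$ the slice $\tilde u(\xi',\cdot)$ is supported in the bounded set $\omega\subset\R^k$, so the classical fractional Poincaré on a bounded subset of $\R^k$ for $p=2$ (via fractional Sobolev embedding on a cube containing $\omega$, for instance) gives a uniform-in-$\xi'$ bound, and integrating in $\xi'$ with Plancherel and $|\xi''|\le|\xi|$ concludes this case. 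For $1<p<\infty$ and $t\ge 1$, I exploit classical first-order Poincaré $\|u\|_{L^p}\le\mathrm{diam}_j(\omega)\|\partial_j u\|_{L^p}$ in the bounded direction $j$, iterate $m:=\lfloor t\rfloor$ times (each $\partial_j^k u$ remains supported in $\Omega$), and apply Mikhlin's theorem to the smooth degree-zero symbol $(i\xi_j/|\xi|)^m$ to obtain $\|u\|_{L^p}\lesssim\|(-\Delta)^{m/2}u\|_{L^p}$. The standard Bessel-potential norm equivalence $\|u\|_{H^{\tau,p}(\R^n)}\sim\|u\|_{L^p}+\|(-\Delta)^{\tau/2}u\|_{L^p}$ together with the embedding $H^{t,p}\hookrightarrow H^{m,p}$ then produces an estimate of the shape $\|u\|_{L^p}\le A(\|u\|_{L^p}+\|(-\Delta)^{t/2}u\|_{L^p})$ with $A$ proportional to $\mathrm{diam}_j(\omega)^m$; a rescaling $u\mapsto u(R\cdot)$ shrinks the analogous constant below $\tfrac12$ for $R$ large, allowing absorption of the $\|u\|_{L^p}$-term, and unrescaling adds only a factor of $R^{t-m}$ to the final constant. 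Together these two arguments cover the full range of $(p,t)$ permitted by the theorem (the residual $p>2$, $t<1$ sub-case is subsumed by complex interpolation between the previous two).

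Once the basic Poincaré is in hand, the full statement follows by chaining two Mikhlin estimates. The symbol $|\xi|^s\langle\xi\rangle^{-s}$ is a Mikhlin multiplier, giving $\|(-\Delta)^{s/2}u\|_{L^p}\lesssim\|u\|_{H^{s,p}}$, and $\langle\xi\rangle^{s-t}$ for $s\le t$ yields the monotonicity $\|u\|_{H^{s,p}}\le\|u\|_{H^{t,p}}$. Combined with the Bessel-potential norm equivalence and the basic Poincaré,
\[
    \|(-\Delta)^{s/2}u\|_{L^p}\;\lesssim\;\|u\|_{H^{s,p}}\;\le\;\|u\|_{H^{t,p}}\;\lesssim\;\|u\|_{L^p}+\|(-\Delta)^{t/2}u\|_{L^p}\;\lesssim\;\|(-\Delta)^{t/2}u\|_{L^p}.
\]

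The hardest step is the basic Poincaré for $1<p<2$ with non-integer $t$: iterated classical Poincaré reaches only the integer order $m\le t$, so the fractional gap to the actual exponent $t$ must be bridged via the Bessel-potential norm equivalence, which inevitably introduces a lower-order $\|u\|_{L^p}$-term to be absorbed by the rescaling. This bootstrap from classical first-order Poincaré also explains the hypothesis $t\ge 1$ when $p<2$: in the absence of Plancherel, no direct fractional Poincaré is available to serve as a base case.
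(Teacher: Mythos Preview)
Your strategy differs substantially from the paper's. The paper's main tool is an interpolation-of-Poincar\'e-constants result: a single base-case Poincar\'e inequality at order $r$, for \emph{fixed} $p$, yields the inequality at all orders $t\ge r$ (and all $0\le s\le t$). For $p\ge 2$ the base case at arbitrarily small $r\in(0,1)$ is imported from an external Gagliardo-seminorm Poincar\'e on cylindrical domains together with the embedding $[u]_{r,p}\le C\|(-\Delta)^{r/2}u\|_{L^p}$, valid precisely for $p\ge 2$; for $1<p<2$ the base case is $r=1$, obtained from the classical Poincar\'e and the boundedness of the Riesz transform. Your $p=2$ argument essentially coincides with the paper's alternative proof, and your iterated classical Poincar\'e plus Mikhlin plus rescaling-absorption argument for $t\ge 1$ is a correct (if more elaborate) self-contained substitute for the paper's method in that range. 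The final chaining step from the basic Poincar\'e to general $s\le t$ is also correct.

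There is, however, a genuine gap: the subcase $p>2$, $0<t<1$ is not covered. Your claim that it follows by ``complex interpolation between the previous two'' is unsubstantiated and, as far as I can see, does not go through. The Poincar\'e inequality is not the boundedness of a fixed linear operator between a fixed compatible couple; to pass from your $L^2$ result at some order $t_0$ and your $L^{p_1}$ result at order $1$ to an $L^p$ result at order $t\in(0,1)$ one is led, after applying log-convexity of $L^p$ norms and the two known Poincar\'e bounds, to control $\|(-\Delta)^{t_0/2}u\|_{L^2}^{1-\theta}\|(-\Delta)^{1/2}u\|_{L^{p_1}}^\theta$ from above by $\|(-\Delta)^{t/2}u\|_{L^p}$, but the interpolation inequality for homogeneous Sobolev norms runs in the opposite direction. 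Interpolating the subspaces $\tilde H^{t,p}(\Omega)$ equipped with the homogeneous seminorm would require nontrivial subspace-interpolation identities that are not standard. The paper never interpolates across different values of $p$; it instead invokes the external Gagliardo-seminorm Poincar\'e to obtain a base case at small $r<1$ for each fixed $p\ge 2$, and this input is exactly what your argument lacks in that range.
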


We obtain this result and also information on the Poincaré constants using a generalization of the interpolation result of fractional Poincaré constants in \cite{CMR20}. The lower order cases $0 < t < 1$ when $p > 2$ rely on the Poincaré inequalities for the Gagliardo seminorms (see e.g. \cite[Theorem 1.2]{MS21-Best-frac-p-Poincare-unboundedDoms} and references therein). Unfortunately, we do not have a proof which would also settle the cases $0 < t < 1$ and $1 < p < 2$. We have stated and proved the fractional Poincaré inequality on bounded sets in Lemma \ref{lemma:PoincareBoundedsets} without these limitations on the integrability and Sobolev scales. See also the discussion of different methods of proof preceding the statement of Lemma \ref{lemma:PoincareBoundedsets}.

We have the following result on the Poincaré constants. The proof is given in Section \ref{sec:poincare}.

\begin{theorem}[Interpolation of Poincaré constants]
\label{theorem: Higher Order Fractional Poincare Inequality} Let $\Omega \subset \R^n$ be an open set and $1 < p < \infty$. Suppose that $r>z$, and $t\geq s\geq r >z\geq 0$ or $t\geq r\geq s\geq z\geq 0$ hold. If there exists a fractional Poincar\'e constant $C_{r,z}>0$ with
    \begin{equation}
    \label{eq: assumption higher order fractional poincare constant I1}
        \|(-\Delta)^{z/2}v\|_{L^p(\R^n)}\leq C_{r,z}\|(-\Delta)^{r/2}v\|_{L^p(\R^n)}
    \end{equation}
    for all $v\in \widetilde{H}^{r,p}(\Omega)$, then we have
    \begin{equation}
    \label{eq: conclusion higher order fractional poincare constant I2}
        \|(-\Delta)^{s/2}u\|_{L^p(\R^n)}\leq C_{r,z}^{\frac{t-s}{r-z}}\|(-\Delta)^{t/2}u\|_{L^p(\R^n)}
    \end{equation}
    for all $u\in \widetilde{H}^{t,p}(\Omega)$.
\end{theorem}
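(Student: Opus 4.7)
The plan is to combine two ingredients. First, since $t\ge r$, the standard embedding $\widetilde H^{t,p}(\Omega)\hookrightarrow \widetilde H^{r,p}(\Omega)$ places any $u\in\widetilde H^{t,p}(\Omega)$ inside the hypothesis domain, so \eqref{eq: assumption higher order fractional poincare constant I1} applies to $u$ itself and gives $\|(-\Delta)^{z/2}u\|_{L^p}\le C_{r,z}\|(-\Delta)^{r/2}u\|_{L^p}$. Second, I would invoke the logarithmic-convexity (three-lines) estimate
\begin{equation}
\label{eq:logconvPlan}
\|(-\Delta)^{\beta/2}u\|_{L^p(\R^n)} \leq \|(-\Delta)^{\alpha/2}u\|_{L^p(\R^n)}^{(\gamma-\beta)/(\gamma-\alpha)}\|(-\Delta)^{\gamma/2}u\|_{L^p(\R^n)}^{(\beta-\alpha)/(\gamma-\alpha)}
\end{equation}
valid for $0\le\alpha\le\beta\le\gamma$ with sharp constant $1$. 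For $p=2$ this is H\"older on the Fourier side via Plancherel; for $1<p<\infty$ it follows from complex interpolation of the homogeneous Bessel potential spaces (equivalently, Stein's theorem applied to the analytic family $\zeta\mapsto (-\Delta)^{((1-\zeta)\alpha+\zeta\gamma)/2}$, normalized by $F(\zeta)=\langle T_\zeta u,g\rangle/(\|(-\Delta)^{\alpha/2}u\|_{L^p}^{1-\zeta}\|(-\Delta)^{\gamma/2}u\|_{L^p}^{\zeta})$ on the strip $0\le\Re\zeta\le 1$).

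Applying \eqref{eq:logconvPlan} with the ordered triple $(z,r,t)$ (legal since $z<r\le t$) bounds $\|(-\Delta)^{r/2}u\|_{L^p}$ in the Poincar\'e inequality above by a geometric mean of $\|(-\Delta)^{z/2}u\|_{L^p}$ and $\|(-\Delta)^{t/2}u\|_{L^p}$. Using the identity $1-(t-r)/(t-z)=(r-z)/(t-z)>0$, I would then isolate $\|(-\Delta)^{z/2}u\|_{L^p}$ and raise to the power $(t-z)/(r-z)$ to obtain the intermediate bound
\begin{equation}
\label{eq:interPlan}
\|(-\Delta)^{z/2}u\|_{L^p(\R^n)}\le C_{r,z}^{(t-z)/(r-z)}\|(-\Delta)^{t/2}u\|_{L^p(\R^n)}.
\end{equation}
Both hypothesis orderings $t\ge s\ge r>z$ and $t\ge r\ge s\ge z$ force $z\le s\le t$, so a final application of \eqref{eq:logconvPlan} with the triple $(z,s,t)$ combined with \eqref{eq:interPlan} produces
$$\|(-\Delta)^{s/2}u\|_{L^p}\le C_{r,z}^{\frac{t-z}{r-z}\cdot\frac{t-s}{t-z}}\|(-\Delta)^{t/2}u\|_{L^p}^{\frac{t-s}{t-z}+\frac{s-z}{t-z}}=C_{r,z}^{(t-s)/(r-z)}\|(-\Delta)^{t/2}u\|_{L^p},$$
which matches \eqref{eq: conclusion higher order fractional poincare constant I2} exactly after the telescoping of exponents.

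The hard part will be establishing \eqref{eq:logconvPlan} with the sharp constant $1$ on $L^p$ for $p\ne 2$: a bare Riesz--Thorin interpolation would introduce a $p$-dependent factor because $\|(-\Delta)^{iy}\|_{L^p\to L^p}$ grows polynomially in $|y|$, and one must use the $F$-normalization above so that both admissible boundary bounds on the strip coincide with $1$ and the three-lines lemma concludes without any extra constant. Once this is secured, the remainder is a short, purely algebraic two-step combination that treats both hypothesis orderings uniformly.
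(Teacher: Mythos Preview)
Your proof is correct and uses the same core tool as the paper, namely the sharp log-convexity inequality \eqref{eq:logconvPlan} (stated in the paper as Lemma~\ref{lem:GeneralInterpBerghLöf}, quoted from \cite[Exercise~6.8.31]{Interpolation-spaces}); the paper simply cites this with constant $1$ rather than rederiving it via Stein interpolation, so your worry about the ``hard part'' is unnecessary.

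Where you differ is in the \emph{path}. The paper splits into the two orderings and, in each, chooses different interpolation triples: in the first case $(z,r,t)$ followed by $(r,s,t)$, in the second $(s,r,t)$ followed by $(z,s,r)$, with a separate remark to dispose of the boundary situation $r=s$. Your route is more uniform: you first bootstrap the assumed $(z,r)$ Poincar\'e inequality to a $(z,t)$ Poincar\'e inequality (your \eqref{eq:interPlan}), and then a single application of \eqref{eq:logconvPlan} with the triple $(z,s,t)$ finishes both orderings at once, since both force $z\le s\le t$. This avoids the case distinction and the $r=s$ edge case entirely. The paper's approach has the minor advantage of making the intermediate constants in each ordering explicit, but your argument is shorter and arguably cleaner.
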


We believe that the constant in Theorem \ref{theorem: Higher Order Fractional Poincare Inequality} remains optimal if $C_{r,z}$ is the optimal Poincaré constant. The resulting constant in \eqref{eq: conclusion higher order fractional poincare constant I2} has the same scaling properties as the optimal constants and the constant cannot be improved by making the interpolation argument twice. Moreover, the Poincaré constants for $\tilde{H}^{t+2,p}(\Omega)$ give upper bounds for lower order Poincaré constants for all $\overline{(-\Delta)\tilde{H}^{t+2,p}(\Omega)} \subset \tilde{H}^{t,p}(\Omega)$, and if ''$\overline{(-\Delta)\tilde{H}^{t+2,p}(\Omega)} = \tilde{H}^{t,p}(\Omega)$'' \emph{were} true for all $t >0$, then the optimality and the lower order cases when $1 < p <2$ would also follow by a short proof using the interpolation result. For these heuristical reasons, we have formulated the optimality of the constants as Conjecture \ref{conj:equivalenceOfPoincareConstants}. See also Remark \ref{remark:generalizationsInterpolation} on a possibility to generalize the assumptions in Theorem \ref{theorem: Higher Order Fractional Poincare Inequality}.

\subsection{Generalized fractional Calderón problems}\label{sec:GenFracCaldResults}
We next describe our results for the generalized fractional Calderón problems. We first state an abstract formulation of the argument given in \cite{GSU20} which was later modified for lower order PDO perturbations in \cite{CLR18-frac-cald-drift,CMRU20-higher-order-fracCald}. A related question is discussed in \cite[Remark, p.~7]{RS-fractional-calderon-low-regularity-stability} by Rüland and Salo but not studied there further. The approach outlined in \cite{RS-fractional-calderon-low-regularity-stability} is more related to the work accomplished in \cite{covi2021uniquenessQuasiLocal} by Covi for quasilocal perturbations of the fractional Laplacian. We then discuss concrete model problems with our focus in the inverse problem for the fractional conductivity equation studied first in \cite{covi2019inverse-frac-cond}.

We first introduce the related definitions for the abstract formulation.

\begin{definition}[Properties of bilinear forms] Let $s \in \R$ and $B\colon H^s(\R^n) \times H^s(\R^n) \to \R$ be a bounded bilinear form.
\begin{enumerate}[(i)]
\item\label{item left UCP} We say that $B$ has the \emph{left UCP} on an open nonempty set $W \subset \R^n$ when the following holds: If $u \in H^s(\R^n)$, $u|_W = 0$ and $B(u,\phi) = 0$ for all $\phi \in C_c^\infty(W)$, then $u \equiv 0$.
\item\label{item right UCP} We say that $B$ has the \emph{right UCP} on an open nonempty set $W \subset \R^n$ when the following holds: If $u \in H^s(\R^n)$, $u|_W = 0$ and $B(\phi,u) = 0$ for all $\phi \in C_c^\infty(W)$, then $u \equiv 0$.
\item\label{item strong UCP} We say that $B$ has the \emph{symmetric UCP} on an open nonempty set $W \subset \R^n$ when it has the left UCP and the right UCP on $W$.
\item\label{item local} We say that $B$ is \emph{local} when the following holds: If $u,v \in H^s(\R^n)$ and $\supp(u) \cap \supp(v) = \emptyset$, then $B(u,v) = 0$.
\item\label{item adjoint} We denote by $B^*(u,v) \vcentcolon = B(v,u)$ the \emph{adjoint (or transpose) bilinear form} of $B$.
\end{enumerate}
\end{definition}

\begin{remark} If $B$ is local, then $B^*$ is local. If $B$ has the left UCP, then $B^*$ has the right UCP and vice versa. If $B$ is symmetric, that is $B = B^*$, then $B$ has the symmetric UCP if and only if it has the left or the right UCP.
\end{remark}

Our axiomatic formulation for the uniqueness of the generalized fractional Calderón problem comes with two important improvements in comparison to the existing theorems in this area. Firstly, we replace the assumption that $\Omega$ is a bounded set by the assumption that $\Omega$ has nonempty exterior. Secondly, we allow general bilinear forms in the statement. This let us to study problems where the local perturbation $q$ has higher order than the nonlocal bilinear form $L$ with suitable unique continuation properties. The third benefit of this general statement is that it explicitly states what kind of unique continuation properties are really needed for the nonlocal term $L$. Especially, we do not have to assume that bilinear forms are symmetric. A closely related approach was taken earlier in the work \cite{covi2021uniquenessQuasiLocal} in the case of quasilocal perturbations but these and our results are not included into each other. We also prove an exterior determination result which does not appear in the earlier literature even in the simplest case of the fractional Schrödinger equation, to the best of our knowledge. The proofs together with the necessary definitions are given in Section \ref{sec:GeneralizedFractionalCalderon}.

\begin{theorem}[Interior determination]\label{theorem:GeneralFractionalCalderon} Let $s \in \R$, and $\Omega \subset \R^n$ be open such that $\Omega_e \neq \emptyset$. Let $L\colon H^s(\R^n) \times H^s(\R^n) \to \R$ be a bounded bilinear form with the following properties:
\begin{enumerate}[(i)]
    \item\label{item 1 interior det} There exists a nonempty open set $W_1 \subset \Omega_e$ such that $L$ has the right UCP on $W_1$.
    \item\label{item 2 interior det} There exists a nonempty open set $W_2 \subset \Omega_e$ such that $L$ has the left UCP on $W_2$.
    \item\label{item 3 interior det} $W_1 \cap W_2 = \emptyset$.
\end{enumerate}Let $q_j\colon H^s(\R^n) \times H^s(\R^n) \to \R$, $j=1,2$, be local and bounded bilinear forms. Suppose that $B_{L,q_j} = L + q_j$ are (strongly) coercive in $\tilde{H}^s(\Omega)$. 

If the exterior data $\Lambda_{L,q_1}[f][g] = \Lambda_{L,q_2}[f][g]$ agree for all $f \in C_c^\infty(W_1)$ and $g \in C_c^\infty(W_2)$, then $q_1 = q_2$ in $\tilde{H}^s(\Omega) \times \tilde{H}^s(\Omega)$. 
\end{theorem}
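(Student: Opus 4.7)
The plan is to follow the Ghosh--Salo--Uhlmann strategy, but carefully track where the side-specific UCP hypotheses on $L$ and the locality of $q_j$ enter. I would divide the proof into three steps: (i) an Alessandrini-type integral identity expressing the DN-map difference as $(q_1-q_2)$ evaluated at primal and adjoint solutions, (ii) two Runge approximation results, one using the right UCP of $L$ on $W_1$ for the forward problem and one using the left UCP on $W_2$ for the adjoint problem, and (iii) a density-plus-locality argument to pass from solutions to all of $\tilde H^s(\Omega)\times\tilde H^s(\Omega)$.

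For step (i), I introduce forward solutions $u_j \in H^s(\R^n)$ with $u_j|_{\Omega_e}=f\in C_c^\infty(W_1)$ satisfying $B_{L,q_j}(u_j,\phi)=0$ for $\phi\in\tilde H^s(\Omega)$, and adjoint solutions $v_j$ with $v_j|_{\Omega_e}=g\in C_c^\infty(W_2)$ satisfying $B_{L,q_j}(\phi,v_j)=0$. Writing $u_j=\tilde u_j+\bar f$ and $v_j=\tilde v_j+\bar g$ with $\tilde u_j,\tilde v_j\in\tilde H^s(\Omega)$ and $\bar f,\bar g$ the zero extensions to $\R^n$, a short telescoping computation yields
\begin{equation}
\Lambda_{L,q_1}[f][g]-\Lambda_{L,q_2}[f][g]=(q_1-q_2)(u_1,v_2).
\end{equation}
By hypothesis, the left-hand side vanishes for every admissible pair $(f,g)$.

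For the two Runge approximations in step (ii), I argue by Hahn--Banach duality. Suppose $F\in(\tilde H^s(\Omega))^*$ annihilates every $\tilde u_1^f$ with $f\in C_c^\infty(W_1)$. Coercivity of $B_{L,q_1}$ (and hence of $B_{L,q_1}^*$ since $B(u,u)=B^*(u,u)$) and the Lax--Milgram theorem produce $v\in\tilde H^s(\Omega)$ with $B_{L,q_1}(\phi,v)=F(\phi)$ for all $\phi\in\tilde H^s(\Omega)$. Testing the equation for $u_1$ against $v$ gives $B_{L,q_1}(\tilde u_1^f,v)=-B_{L,q_1}(\bar f,v)$, and combining with $F(\tilde u_1^f)=0$ yields $B_{L,q_1}(\bar f,v)=0$. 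Since $\supp\bar f\subset W_1\subset\Omega_e$ and $\supp v\subset\overline\Omega$ are disjoint, locality of $q_1$ removes the zero-order piece, leaving $L(\bar f,v)=0$ for every $\bar f\in C_c^\infty(W_1)$; together with $v|_{W_1}=0$ (inherited from $v\in\tilde H^s(\Omega)$), the right UCP of $L$ on $W_1$ forces $v=0$, hence $F=0$. A symmetric argument, using coercivity of $B_{L,q_2}$ and the left UCP of $L$ on $W_2$, establishes density of $\{\tilde v_2^g:g\in C_c^\infty(W_2)\}$ in $\tilde H^s(\Omega)$.

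For step (iii), expand $(q_1-q_2)(u_1,v_2)=(q_1-q_2)(\tilde u_1+\bar f,\tilde v_2+\bar g)$. The three cross terms involving $\bar f$ or $\bar g$ vanish by locality together with the three disjointness facts $W_1\cap\overline\Omega=\emptyset$, $W_2\cap\overline\Omega=\emptyset$, and $W_1\cap W_2=\emptyset$ (the last being hypothesis (iii)). Only $(q_1-q_2)(\tilde u_1,\tilde v_2)=0$ survives, and the density statements from step (ii) together with boundedness of $q_1-q_2$ extend this to all of $\tilde H^s(\Omega)\times\tilde H^s(\Omega)$.

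The main obstacle I anticipate is making the Runge duality argument work without any symmetry of $L$ and with only one-sided UCP per exterior set: the trick is that the locality of $q_j$ lets us strip the potential term whenever it is tested against an extension supported in $W_1$ or $W_2$, reducing the obstruction to a pure equation for $L$ (respectively $L^*$) and letting each side-specific UCP act exactly where it is available. This is also the structural reason the theorem remains valid when the local perturbation $q_j$ has higher order than the nonlocal form $L$.
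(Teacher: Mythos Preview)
Your proposal is correct and follows essentially the same approach as the paper: Alessandrini identity reducing to $(q_1-q_2)(u_f,u_g^*)=0$, two Hahn--Banach/Runge density arguments (right UCP on $W_1$ for the forward problem with $q_1$, left UCP on $W_2$ for the adjoint problem with $q_2$, in each case stripping the potential term by locality), and then expanding $(q_1-q_2)(\tilde u_1+\bar f,\tilde v_2+\bar g)$ and killing the cross terms via the disjointness conditions. The paper packages the three steps as separate lemmas (well-posedness, Alessandrini, Runge) but the logical structure and the role of hypothesis~(iii) are identical to yours.
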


We identify functions $q \in L^1_{loc}(\R^n)$ with the bilinear form $q(u,v) = \int_{\R^n} q uv dx$ for all $u,v \in C_c^\infty(\R^n)$ and, in particular, one of the assumptions in Corollary \ref{cor:fracCaldExteriorDet} is that the related bilinear form is bounded in $H^s(\R^n)$. For example, $q\in L^{\frac{n}{2s}}(\R^n)$ when $0<s<n/2$ (see Lemma~\ref{lemma: continuity of multiplication map in Bessel potential spaces I V}), or $q\in L^{\frac{2p}{2-p}}(\R^n)$ when $s=n/2$ and any $1<p<2$ (see Proposition~\ref{proposition: multiplication map in fractional Sobolev spaces}, \cite[p. 261]{Ozawa}), or $q\in L^1(\R^n)$ when $s>n/2$. The locality of the bilinear forms of this type follows automatically. The idea behind Corollary \ref{cor:fracCaldExteriorDet} turns out to give strong partial data uniqueness results for the inverse fractional conductivity problem studied in details in Section \ref{sec:conductivityCalderon}.

\begin{corollary}[Exterior determination]\label{cor:fracCaldExteriorDet} Suppose that the assumptions of Theorem \ref{theorem:GeneralFractionalCalderon} hold. If additionally $\Lambda_{L,q_1}[f][g] = \Lambda_{L,q_2}[f][g]$ for all $f,g \in C_c^\infty(W)$ where $W \subset \Omega_e$ is open, then $q_1=q_2$ in $\tilde{H}^s(W) \times \tilde{H}^s(W)$.

In particular, we have the following full data uniqueness result: If $W = \Omega_e$ and $q_1,q_2 \in C(\R^n)$ (or $q_1,q_2 \in L^1_{loc}(\R^n)$ and $\partial \Omega$ has measure zero), then $q_1=q_2$ (a.e.) in $\R^n$.
\end{corollary}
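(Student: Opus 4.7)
The plan is to combine the interior determination obtained from Theorem~\ref{theorem:GeneralFractionalCalderon} with the Alessandrini-type identity and the locality of $q_j$, exploiting the fact that $W\cap\overline{\Omega}=\emptyset$ since $W\subset\Omega_e=\R^n\setminus\overline{\Omega}$.

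First I would invoke Theorem~\ref{theorem:GeneralFractionalCalderon} itself, whose assumptions (including the DN agreement on $C_c^\infty(W_1)\times C_c^\infty(W_2)$) are supposed to hold, to obtain
\[
q_1 = q_2 \quad \text{on} \quad \tilde H^s(\Omega)\times \tilde H^s(\Omega).
\]
For arbitrary $f,g\in C_c^\infty(W)$ I let $u_1\in H^s(\R^n)$ be the solution of $B_{L,q_1}(u_1,\cdot)=0$ on $\tilde H^s(\Omega)$ with exterior data $f$, and $v_2\in H^s(\R^n)$ the solution of the adjoint problem $B_{L,q_2}(\cdot,v_2)=0$ with exterior data $g$; both are well defined by coercivity of $B_{L,q_j}$ and $B_{L,q_j}^*$ via Lax--Milgram. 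Using $u_1$ and $v_2$ as admissible extensions of $f$ and $g$ in the definition of the DN maps, the standard Alessandrini identity gives
\[
0 = \Lambda_{L,q_1}[f][g]-\Lambda_{L,q_2}[f][g] = (q_1-q_2)(u_1,v_2).
\]

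Next, writing $u_1=f+\phi_1$ and $v_2=g+\psi_2$ with $\phi_1,\psi_2\in\tilde H^s(\Omega)$, I would expand
\[
(q_1-q_2)(u_1,v_2) = (q_1-q_2)(f,g)+(q_1-q_2)(f,\psi_2)+(q_1-q_2)(\phi_1,g)+(q_1-q_2)(\phi_1,\psi_2).
\]
The two cross terms vanish by locality of $q_1$ and $q_2$: one has $\supp f,\supp g\subset W$ and $\supp\phi_1,\supp\psi_2\subset\overline{\Omega}$, while $W\cap\overline{\Omega}=\emptyset$. The last term vanishes by the interior determination above, since $\phi_1,\psi_2\in\tilde H^s(\Omega)$. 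Hence $(q_1-q_2)(f,g)=0$ for every $f,g\in C_c^\infty(W)$, and boundedness of $q_j$ on $H^s(\R^n)\times H^s(\R^n)$ together with density of $C_c^\infty(W)$ in $\tilde H^s(W)$ extends the identity to all of $\tilde H^s(W)\times\tilde H^s(W)$.

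For the ``in particular'' statement I take $W=\Omega_e$ and identify the potentials with multiplication, so that $q_j(u,v)=\int_{\R^n}q_j uv\,dx$. Interior determination yields $\int(q_1-q_2)uv\,dx=0$ for all $u,v\in C_c^\infty(\Omega)$, and the conclusion just proved gives the analogous vanishing on $\Omega_e$. Fixing a smooth bump function strictly positive on a small ball $B\subset\Omega$ (resp.\ $B\subset\Omega_e$) in one slot and varying the other in $C_c^\infty(B)$, one obtains $q_1=q_2$ a.e.\ on $B$, and a countable covering gives $q_1=q_2$ a.e.\ on $\Omega\cup\Omega_e$. In the continuous case this promotes to pointwise equality on the dense open set $\R^n\setminus\partial\Omega$, and hence on all of $\R^n$ by continuity; in the $L^1_{\mathrm{loc}}$ case with $|\partial\Omega|=0$ the set $\Omega\cup\Omega_e$ already has full measure, so the a.e.\ equality holds on $\R^n$. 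The main subtlety I anticipate is the careful justification of the Alessandrini identity in the non-symmetric bilinear setting --- in particular the well-posedness of the adjoint forward problem for $B_{L,q_j}^*$ and the independence of the DN pairing from the choice of extension --- but this is inherited from the Lax--Milgram framework already in place for Theorem~\ref{theorem:GeneralFractionalCalderon}.
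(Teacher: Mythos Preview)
Your proof is correct and follows essentially the same approach as the paper: invoke interior determination from Theorem~\ref{theorem:GeneralFractionalCalderon}, apply the Alessandrini identity to get $(q_1-q_2)(u_f,u_g^*)=0$, expand via $u_f=f+\phi_1$, $u_g^*=g+\psi_2$, and kill the cross terms by locality and the diagonal $\tilde H^s(\Omega)$ term by interior determination. The only cosmetic difference is in the ``in particular'' step, where the paper simply notes that $\int(q_1-q_2)\phi\,dx=0$ for all $\phi\in C_c^\infty(\Omega\cup\Omega_e)$ (taking $f$ a cutoff identically one on $\supp\phi$ and $g=\phi$), while you use a positive bump and a covering argument; both are routine.
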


Theorem \ref{theorem:GeneralFractionalCalderon} can be seen as a general uniqueness lemma for nonlocal exterior value inverse problems. It might be possible to obtain more abstract versions of Theorem \ref{theorem:GeneralFractionalCalderon} in general Hilbert or Banach spaces (especially on geometric settings on manifolds) but that is out of our scope here. One can remove the assumption that $B$ is strongly coercive by assuming that the related interior and exterior value problems have unique weak solutions for $B$ and its adjoint $B^*$ (cf. Lemma \ref{lemma:generalWellposedness} and Remark \ref{remark:generalizationCalderon}). See also Theorem \ref{lemma:GeneralRungeApproximation} for a general Runge approximation result which may be of interest also in other applications. The main point of the article is to explain what Theorem \ref{theorem:GeneralFractionalCalderon} together with the Poincaré inequalities (Theorem \ref{thm:PoincUnboundedDoms}) and some additional analysis imply for many new model problems to be discussed next. We also give minor improvements for the regularity assumptions of many existing theorems.
\subsubsection{Fractional conductivity equation}
We highlight our result for the fractional conductivity equation which rather completely characterizes the uniqueness and nonuniqueness of the related partial data inverse problem on domains that are bounded in one direction. Theorem \ref{thm: characterization of uniqueness} would also generalize to all domains with finite fractional Poincaré constant. This problem was studied earlier in \cite{covi2019inverse-frac-cond} by Covi for bounded domains under the assumption that conductivities are trivial in the exterior. Here we relax the assumption that the conductivities take constant values (and are thus equal) in the exterior by making suitable decay assumptions at infinity. We also define here the Liouville reduction slightly differently than in the article \cite{covi2019inverse-frac-cond} in order to have access to exterior determination and obtain more general results. As a minor improvement, we remove the superfluous Lipschitz assumption of the domain.

The fractional gradient of order $s$ is the bounded linear operator $\nabla^s\colon H^s(\R^n)\to L^2(\R^{2n};\R^n)$ given by
    \[
        \nabla^su(x,y)=\sqrt{\frac{C_{n,s}}{2}}\frac{u(x)-u(y)}{|x-y|^{n/2+s+1}}(x-y),
\]
where $C_{n,s}>0$ is a constant. This operator appears in the studies of nonlocal diffusion (see e.g. \cite{NonlocDiffusion} and references therein) and is naturally associated with the $L^2$ Gagliardo seminorm. We denote the adjoint of $\nabla^s$ by $\Div_s$. Let $\gamma \in L^\infty(\R^n)$ be a positive conductivity. We denote the conductivity matrix by $\Theta_{\gamma}(x,y)\vcentcolon =\gamma^{1/2}(x)\gamma^{1/2}(y)\mathbf{1}_{n\times n}$ for $x,y\in\R^n$ and by $m_{\gamma}\vcentcolon =\gamma^{1/2}-1$ the background deviation. We use the notation $\Lambda_\gamma$ for the exterior DN map associated to the equation
\begin{equation}
        \begin{split}
            \Div_s(\Theta\cdot\nabla^s u)&= 0\quad\text{in}\quad\Omega,\\
            u&= f\quad\text{in}\quad\Omega_e.
        \end{split}
\end{equation}
See the beginning of Section \ref{sec:conductivityCalderon} for further details of the basic definitions. 

We note that Lemmas \ref{thm: uniqueness conductivity equation} and \ref{thm: sharpness of condition (ii) in uniqueness thm} state some interesting special cases of Theorem \ref{thm: characterization of uniqueness} (see also Remark \ref{rmk: fractional conductivity UCP remark}). Our main theorem is the following. The proof is given in the end of Section \ref{sec:conductivityCalderon} and it utilizes nearly all of our other results that are proved in the earlier sections.

\begin{theorem}
\label{thm: characterization of uniqueness}
    Let $\Omega\subset \R^n$ be an open set which is bounded in one direction and $0<s<\min(1,n/2)$. Assume that $\gamma_1,\gamma_2\in L^{\infty}(\R^n)$ with background deviations $m_1,m_2$ satisfy $\gamma_1(x),\gamma_2(x)\geq \gamma_0>0$ and $m_1,m_2\in H^{2s,\frac{n}{2s}}(\R^n)$. Moreover, assume that $m_0\vcentcolon =m_1-m_2\in H^s(\R^n)$ and $W_1,W_2\subset\Omega_e$ are nonempty open sets with
    \begin{equation}\label{eq:conductivitysupportassumptions}
         (\supp(m_1)\cup\supp(m_2))\cap (W_1\cup W_2)=\emptyset.
    \end{equation}
Then the following statements hold:
\begin{enumerate}[(i)]
    \item\label{item 1 characterization of uniqueness} If $W_1\cap W_2\neq \emptyset$, then  $\left.\Lambda_{\gamma_1}f\right|_{W_2}=\left.\Lambda_{\gamma_2}f\right|_{W_2}$ for all $f\in C_c^{\infty}(W_1)$ if and only if $\gamma_1=\gamma_2$ in $\R^n$.
    \item\label{item 2 characterization of uniqueness} If $W_1\cap W_2=\emptyset$, then $\left.\Lambda_{\gamma_1}f\right|_{W_2}=\left.\Lambda_{\gamma_2}f\right|_{W_2}$ for all $f\in C_c^{\infty}(W_1)$ if and only if $m_1-m_2$ is the unique solution of
    \begin{equation}
    \label{eq: PDE uniqueness cond eq}
        \begin{split}
            (-\Delta)^sm-\frac{(-\Delta)^sm_1}{\gamma_1^{1/2}}m&=0\quad\text{in}\quad \Omega,\\
            m&=m_0\quad\text{in}\quad \Omega_e.
        \end{split}
    \end{equation}
\end{enumerate}
\end{theorem}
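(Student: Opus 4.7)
The plan is to reduce the fractional conductivity equation to a fractional Schrödinger-type exterior value problem via the Liouville substitution $v_i := \gamma_i^{1/2} u_i$, which (as in \cite{covi2019inverse-frac-cond}, now carried out using the multiplier results in Appendix \ref{subsubsec: Multiplication map in Bessel potential spaces}) converts the conductivity equation for $\gamma_i$ into $(-\Delta)^s v_i + q_{\gamma_i} v_i = 0$ in $\Omega$ with the local potential $q_{\gamma_i} := -\frac{(-\Delta)^s m_i}{\gamma_i^{1/2}}$. The regularity $m_i \in H^{2s,\frac{n}{2s}}(\R^n)\cap L^\infty(\R^n)$ together with Appendix \ref{subsubsec: Multiplication map in Bessel potential spaces} ensures that $q_{\gamma_i}$ is a well-defined bounded local bilinear form on $H^s(\R^n)$, and the support condition \eqref{eq:conductivitysupportassumptions} yields $\gamma_i = 1$ on $W_1\cup W_2$, so the exterior data is unchanged by the substitution. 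Hence the equality of conductivity DN maps tested with $f\in C_c^\infty(W_1)$ against $W_2$ is equivalent to the analogous equality for the Schrödinger DN maps.

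Next I verify the hypotheses of Theorem \ref{theorem:GeneralFractionalCalderon} for the principal form $L(u,v) := \langle (-\Delta)^{s/2} u, (-\Delta)^{s/2} v\rangle_{L^2}$: the symmetric UCP of $L$ on any nonempty open set follows from Rüland's UCP for the fractional Laplacian (in case (i) one shrinks $W_1, W_2$ to disjoint open subsets to satisfy condition (iii)), while coercivity of $L + q_{\gamma_i}$ on $\widetilde H^s(\Omega)$ is inherited from coercivity of the conductivity form, which uses $\gamma_i \geq \gamma_0 > 0$ combined with the fractional Poincaré inequality on domains bounded in one direction (Theorem \ref{thm:PoincUnboundedDoms}). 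Theorem \ref{theorem:GeneralFractionalCalderon} then yields $q_{\gamma_1} = q_{\gamma_2}$ as bilinear forms on $\widetilde H^s(\Omega) \times \widetilde H^s(\Omega)$, equivalently $\frac{(-\Delta)^s m_1}{\gamma_1^{1/2}} = \frac{(-\Delta)^s m_2}{\gamma_2^{1/2}}$ a.e.\ in $\Omega$. Calling this common value $Q$ and using $\gamma_i^{1/2} = 1 + m_i$, I obtain $(-\Delta)^s m_0 = (\gamma_1^{1/2} - \gamma_2^{1/2})Q = m_0 Q = \frac{(-\Delta)^s m_1}{\gamma_1^{1/2}} m_0$ in $\Omega$, which is precisely the PDE in \eqref{eq: PDE uniqueness cond eq}; uniqueness of its solution follows from the coercivity of $L + q_{\gamma_1}$, so $m_1-m_2$ is its unique solution. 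The backward direction of (ii) reverses this algebraic chain: if $m_0$ satisfies \eqref{eq: PDE uniqueness cond eq}, then $q_{\gamma_1}=q_{\gamma_2}$ in $\Omega$, the Schrödinger DN maps agree, hence so do the conductivity DN maps.

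For case (i), the extra hypothesis $W_1 \cap W_2 \neq \emptyset$ lets me additionally apply Corollary \ref{cor:fracCaldExteriorDet} with $W := W_1 \cap W_2$ to get the coefficient identity a.e.\ on $W$. The support hypothesis makes $\gamma_i^{1/2} = 1$ on $W$, so the identity reduces to $(-\Delta)^s m_0 = 0$ on $W$, while $m_0 = 0$ on $W$ as well; Rüland's UCP for the fractional Laplacian applied to $m_0 \in H^s(\R^n)$ then forces $m_0 \equiv 0$, i.e.\ $\gamma_1 = \gamma_2$ in $\R^n$. The converse of (i) is trivial.

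The main technical obstacle I anticipate is not the invocation of the general theorems themselves, but the rigorous execution of the Liouville reduction at the prescribed low regularity: one must verify that $u \mapsto \gamma_i^{1/2} u$ is a continuous bijection of $\widetilde H^s(\Omega)$ onto itself preserving exterior data, that the conductivity and Schrödinger weak formulations are equivalent under this substitution, and that coercivity transfers. These regularity bookkeeping steps rest on the multiplier estimates of Appendix \ref{subsubsec: Multiplication map in Bessel potential spaces} together with the restriction $0 < s < \min(1, n/2)$ and the hypothesis $m_i \in H^{2s,\frac{n}{2s}}(\R^n)$, and account for the bulk of the technical work in Section \ref{sec:conductivityCalderon}.
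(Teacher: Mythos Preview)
Your overall strategy matches the paper's: Liouville reduction to the Schr\"odinger problem, interior determination via Theorem \ref{theorem:GeneralFractionalCalderon}, the algebraic identity giving the PDE \eqref{eq: PDE uniqueness cond eq} for $m_0$, and for part (i) the combination of exterior determination with the UCP. The forward directions of both (i) and (ii), and all of (i), are handled correctly.

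There is, however, a genuine gap in the backward direction of (ii). You write that if $m_0$ solves \eqref{eq: PDE uniqueness cond eq} then $q_{\gamma_1}=q_{\gamma_2}$ in $\Omega$ and ``the Schr\"odinger DN maps agree''. But equality of the potentials \emph{only in $\Omega$} does not by itself imply equality of the DN maps: the Alessandrini identity gives
\[
(\Lambda_{q_1}-\Lambda_{q_2})(f,g)=\int_{\R^n}(q_1-q_2)\,u_f^{(1)}u_g^{(2)}\,dx,
\]
and the integrand is supported on all of $\R^n$, not just $\Omega$. This is not a reversal of the forward chain (Theorem \ref{theorem:GeneralFractionalCalderon} is a one-way implication), so a separate argument is needed. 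What the paper does (Lemma \ref{thm: sharpness of condition (ii) in uniqueness thm}) is expand $u_f^{(1)}=(u_f^{(1)}-f)+f$, $u_g^{(2)}=(u_g^{(2)}-g)+g$ and show, using $q_1=q_2$ in $\Omega$ together with approximation of $u_f^{(1)}-f,\,u_g^{(2)}-g\in\widetilde H^s(\Omega)$ by $C_c^\infty(\Omega)$ functions and the disjointness of supports, that the right-hand side reduces to $\int_{\Omega_e}(q_1-q_2)fg\,dx$. Only then does the hypothesis $W_1\cap W_2=\emptyset$ kill this term via $fg\equiv 0$. You need to insert this computation; without it the backward implication in (ii) is unproven.
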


\begin{remark}
There were three points which seem to be overlooked in \cite{covi2019inverse-frac-cond}. However, all arguments in \cite{covi2019inverse-frac-cond} are easily justified for $C_c^\infty(\Omega)$ perturbations of the uniform background conductivity. To cover a few missing steps in the low regularity setting, we establish regularity results related to the fact that the (fractional) Liouville transformation is bijective (cf.~ Theorem \ref{theorem: Liouville transformation}). In the proof, we need to approximate equations with equations having regular coefficients so that the used pointwise computation with a singular integral definition of the fractional Laplacian can be applied to the \emph{coefficients} coming from the conductivity equation (this particular approximation uses results in \cite{SilvestreFracObstaclePhd}). The second point is that we show that the Liouville transformation can be reversed from the solutions of the associated fractional Schrödinger equation to the solutions of a conductivity equation, so that the first mentioned problem has unique solutions, by the same property of conductivity equations, and has \emph{always} a well-defined exterior DN map (in particular $0$ is never a Dirichlet eigenvalue for the related exterior value problem). This uses an argument showing sufficient regularity of the division by $\gamma^{1/2}$ (employing \cite{AdamsComposition}). It is rather clear that this property is desired so that the result applies for all conductivities from the given class without any open questions related to the well-posedness of the reduction to the fractional Schrödinger equation. Finally, we consider multiplication results for Bessel potential spaces in Appendix \ref{subsubsec: Multiplication map in Bessel potential spaces} by adapting the general methods in \cite{BrezisComposition}. This is done as the reference \cite{BrezisComposition} used in \cite{covi2019inverse-frac-cond} establishes the multiplication results with respect to the fractional Gagliardo norms but not directly for the Bessel potential norms as actually desired in \cite{covi2019inverse-frac-cond} and our work. Nevertheless, the main results in \cite{covi2019inverse-frac-cond} are correct whenever $0 < s < 1$ and $n \geq 2$, or $0 < s < 1/2$ when $n=1$, as the aforesaid gaps in the proof are made rigour in our article, but the cases $1/2 \leq s < 1$ when $n=1$ are actually still open in low regularity to the best of our understanding. Otherwise, the strategy of our interior determination proof follows that of \cite{covi2019inverse-frac-cond} whereas the exterior determination requires yet another argument, given in Section \ref{sec:uniquenessFracCaldSubsection}.
\end{remark}

This result is interesting for several reasons to be discussed next. Since the problem studied here is considerably more general than the one in \cite{covi2019inverse-frac-cond}, nonuniqueness may occur in some cases when $W_1 \cap W_2 = \emptyset$. A striking difference to the earlier results for the fractional Calderón problems is that when $W_1 \cap W_2 \neq \emptyset$, then this actually helps to solve the inverse problem. In this case, one obtains a global uniqueness result also in $\Omega_e$ without assuming that conductivities agree there in the first place. The Liouville reduction is analogous to the one for the classical Calderón problem \cite{SU87-CalderonProblem-annals} but we do not have to use any kind of special solutions in the end. In the proof of Theorem \ref{thm: characterization of uniqueness}, we do not reduce the inverse problem only back to the Calderón problem for the fractional Schrödinger equation like in \cite{covi2019inverse-frac-cond} but also invoke a new characterization of equal exterior data and the UCP of fractional Laplacians. This part resembles the earlier studies which avoid using the Runge approximation and rather directly rely on the UCP to show uniqueness in the fractional Calderón problem (see e.g. \cite{GRSU-fractional-calderon-single-measurement} for single measurement results).

The exterior determination result is not possible with only finitely many measurement whereas interior determination is possible with just single measurement whenever the conductivity is known in the exterior. Exterior determination can be thought as an anolog of boundary determination in the classical Calderón problem \cite{KV84-CalderonBdryDetermination}, which is an important step in the proof of uniqueness \cite{SU87-CalderonProblem-annals}. However, we do not have a complete exterior determination result and we still need to rely to the fact that conductivities are assumed to be constant in the sets where we make our exterior measurements (this is the content of the formula \eqref{eq:conductivitysupportassumptions}). A more complete answer to the exterior determination problem remains open. We remark that when the exterior conditions (fractional voltage) are set disjointly with respect to the set where the measurements (fractional current) are made, we may have the loss of uniqueness (see the article~\cite{RZ2022counterexamples} for constructions of counterexamples). In the classical Calderón problem there are also uniqueness results for some partial data problems which are analogous with Theorem \ref{thm: characterization of uniqueness} \ref{item 1 characterization of uniqueness} (see \cite[Corollary 1.4]{KSU07-CalderonPartialData}).

Below in Figures~\ref{fig: Geometric setting 2} and \ref{fig: Geometric setting 1} we graphically illustrate the two possible geometric settings in Theorem~\ref{thm: characterization of uniqueness} corresponding to the cases \ref{item 1 characterization of uniqueness} and \ref{item 2 characterization of uniqueness}, respectively.

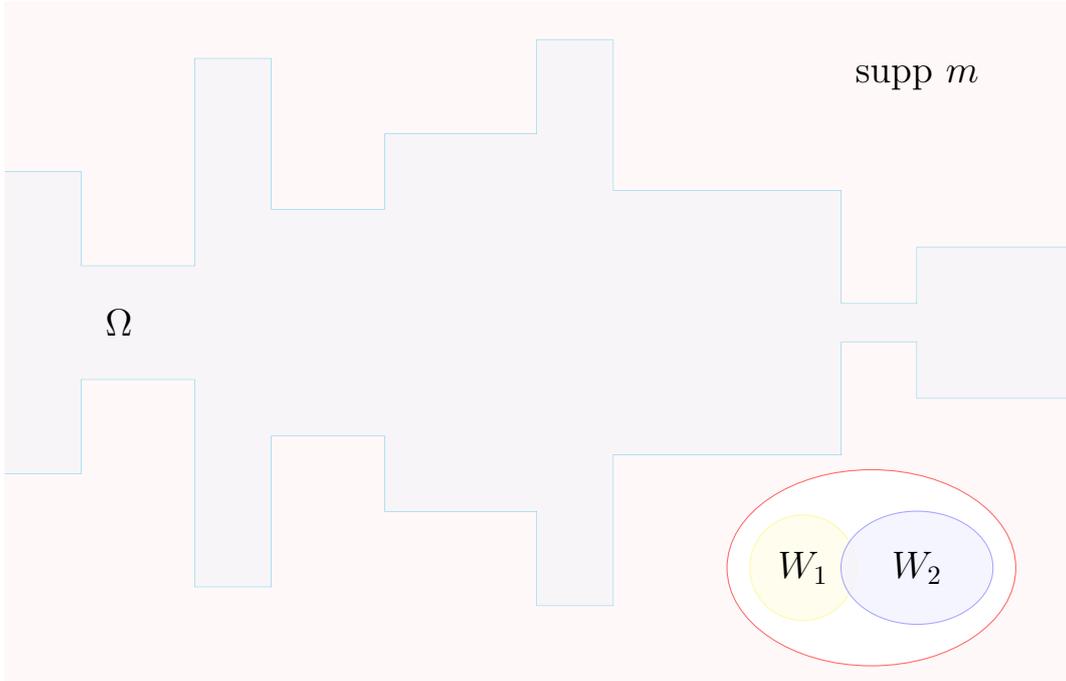
\begin{figure}[!ht]
    \centering
    \begin{tikzpicture}
    \draw [cyan, xshift=4cm]  (-1,0.25)-- (0,0.25) --(0,-1)--(1.5,-1)--(1.5,1.75)--(2.5,1.75)--(2.5,-0.25)--(4,-0.25)-- (4,0.75)--(6,0.75)-- (6,2) --(7,2)-- (7,0) --(10,0)--(10,-1.5)-- (11,-1.5)--(11,-0.75)-- (13,-0.75);
    \draw [cyan, xshift=4cm]  (-1,-3.75)-- (0,-3.75) --(0,-2.5)--(1.5,-2.5)--(1.5,-5.25)--(2.5,-5.25)--(2.5,-3.25)--(4,-3.25)-- (4,-4.25)--(6,-4.25)-- (6,-5.5) --(7,-5.5)-- (7,-3.5) --(10,-3.5)--(10,-2)-- (11,-2)--(11,-2.75)-- (13,-2.75)--(13,-2.75);
    \fill[cyan!5, xshift=4cm] (-1,0.25)-- (0,0.25) --(0,-1)--(1.5,-1)--(1.5,1.75)--(2.5,1.75)--(2.5,-0.25)--(4,-0.25)-- (4,0.75)--(6,0.75)-- (6,2) --(7,2)-- (7,0) --(10,0)--(10,-1.5)-- (11,-1.5)--(11,-0.75)-- (13,-0.75) --(13,-2.75)-- (13,-2.75)--(11,-2.75)-- (11,-2)--(10,-2)--(10,-3.5)-- (7,-3.5)--(7,-5.5)-- (6,-5.5)--(6,-4.25)-- (4,-4.25)--(4,-3.25)--(2.5,-3.25)--(2.5,-5.25)--(1.5,-5.25)--(1.5,-2.5)--(0,-2.5)-- (0,-3.75)--(-1,-3.75)--(-1,0.25);
    \filldraw[red!5, xshift=4cm, opacity=0.5](-1,0.25)-- (-1,-6.5)--(13,-6.5)--(13,2.5)--(-1,2.5)--(-1,0.25);
    \filldraw[color=white,xshift=12.5cm, yshift=-2.5cm](1.9,-2.5) ellipse (1.9 and 1.3);
    \draw[color=red!60,xshift=12.5cm, yshift=-2.5cm](1.9,-2.5) ellipse (1.9 and 1.3);
    \filldraw[color=yellow!50, fill=yellow!10, xshift=2cm, yshift=-0.5cm, opacity=0.8](11.5,-4.5) circle (0.7);
    \filldraw[color=blue!50, fill=blue!5, xshift=12cm, yshift=-2.5cm, opacity=0.7] (3,-2.5) ellipse (1 and 0.75);
    \node[xshift=12cm, yshift=-2.5cm] at (3,4) {$\raisebox{-.35\baselineskip}{\Large\ensuremath{\supp\,m}}$};
    \node[xshift=12cm, yshift=-2.5cm] at (3,-2.5) {$\raisebox{-.35\baselineskip}{\Large\ensuremath{W_2}}$};
    \node[xshift=12cm, yshift=-2.5cm] at (1.5,-2.5) {$\raisebox{-.35\baselineskip}{\Large\ensuremath{W_1}}$};
    \node[xshift=-1.5cm] at (6,-1.75) {$\raisebox{-.35\baselineskip}{\Large\ensuremath{\Omega}}$};
\end{tikzpicture}
    \caption{Graphical illustration of the geometric setting of part \ref{item 1 characterization of uniqueness} of Theorem~\ref{thm: characterization of uniqueness}. Here the set $\Omega\subset \R^n$ is a domain which is bounded in one direction and the measurements are performed in the nonempty open subsets $W_1,W_2\subset \Omega_e$ that have a nonempty intersection. The support of $m$ is represented in red. The DN map $\Lambda_{\gamma}$ determines $m$, and hence $\gamma$, uniquely. }
    \label{fig: Geometric setting 2}
\end{figure}

\begin{figure}[!ht]
    \centering
    \begin{tikzpicture}
    \draw [cyan, xshift=4cm]  (-1,0.25)-- (0,0.25) --(0,-1)--(1.5,-1)--(1.5,1.75)--(2.5,1.75)--(2.5,-0.25)--(4,-0.25)-- (4,0.75)--(6,0.75)-- (6,2) --(7,2)-- (7,0) --(10,0)--(10,-1.5)-- (11,-1.5)--(11,-0.75)-- (13,-0.75);
    \draw [cyan, xshift=4cm]  (-1,-3.75)-- (0,-3.75) --(0,-2.5)--(1.5,-2.5)--(1.5,-5.25)--(2.5,-5.25)--(2.5,-3.25)--(4,-3.25)-- (4,-4.25)--(6,-4.25)-- (6,-5.5) --(7,-5.5)-- (7,-3.5) --(10,-3.5)--(10,-2)-- (11,-2)--(11,-2.75)-- (13,-2.75)--(13,-2.75);
    \fill[cyan!5, xshift=4cm] (-1,0.25)-- (0,0.25) --(0,-1)--(1.5,-1)--(1.5,1.75)--(2.5,1.75)--(2.5,-0.25)--(4,-0.25)-- (4,0.75)--(6,0.75)-- (6,2) --(7,2)-- (7,0) --(10,0)--(10,-1.5)-- (11,-1.5)--(11,-0.75)-- (13,-0.75) --(13,-2.75)-- (13,-2.75)--(11,-2.75)-- (11,-2)--(10,-2)--(10,-3.5)-- (7,-3.5)--(7,-5.5)-- (6,-5.5)--(6,-4.25)-- (4,-4.25)--(4,-3.25)--(2.5,-3.25)--(2.5,-5.25)--(1.5,-5.25)--(1.5,-2.5)--(0,-2.5)-- (0,-3.75)--(-1,-3.75)--(-1,0.25);
    \filldraw[color=yellow!50, fill=yellow!10, xshift=2cm](2,1.25) circle (0.7);
    \filldraw[color=blue!50, fill=blue!5, xshift=12cm, yshift=-2cm] (3,-2.5) ellipse (1 and 0.75);
    \node[xshift=12cm, yshift=-2cm] at (3,-2.5) {$\raisebox{-.35\baselineskip}{\Large\ensuremath{W_2}}$};
    \node[xshift=2cm] at (2,1.25) {$\raisebox{-.35\baselineskip}{\Large\ensuremath{W_1}}$};
    \node[xshift=-2cm] at (6,-1.75) {$\raisebox{-.35\baselineskip}{\Large\ensuremath{\Omega}}$};
    \filldraw [color=orange!80, fill = orange!5, xshift=8cm, yshift=-2cm,opacity=0.8] plot [smooth cycle] coordinates {(-2,0.75)(-2,3) (-1,1) (3,1.5) (6,-3.5) (3,-1) (-3,-0.25)};
     \filldraw [color=red!60, fill = red!5, xshift=10cm, yshift=-2cm, opacity=0.8] plot [smooth cycle] coordinates {(0,3) (0,4.5) (1,4) (2.2,4) (2.2,1.8) (5.6,1.65) (4, 0.5) (3.65, -1.3)(3.1,-0.7) (1,0)};
    \node[xshift=2cm] at (7,-1.75) {$\raisebox{-.35\baselineskip}{\Large\ensuremath{\supp\,m_1}}$};
    \node[xshift=2cm] at (10.7,-1.05) {$\raisebox{-.35\baselineskip}{\Large\ensuremath{\supp\,m_2}}$};
\end{tikzpicture}
    \caption{Graphical illustration of the geometric setting of part \ref{item 2 characterization of uniqueness} of Theorem~\ref{thm: characterization of uniqueness}. Here the set $\Omega\subset \R^n$ is a domain which is bounded in one direction and the measurements are performed in the disjoint nonempty open subsets $W_1,W_2\subset \Omega_e$. Moreover, the supports of the background deviations $m_1,m_2$, which are represented in orange and red, respectively, do not necessarily coincide in the exterior $\Omega_e$ and can even be disjoint as illustrated above. In this case uniqueness may be lost.}
    \label{fig: Geometric setting 1}
\end{figure}
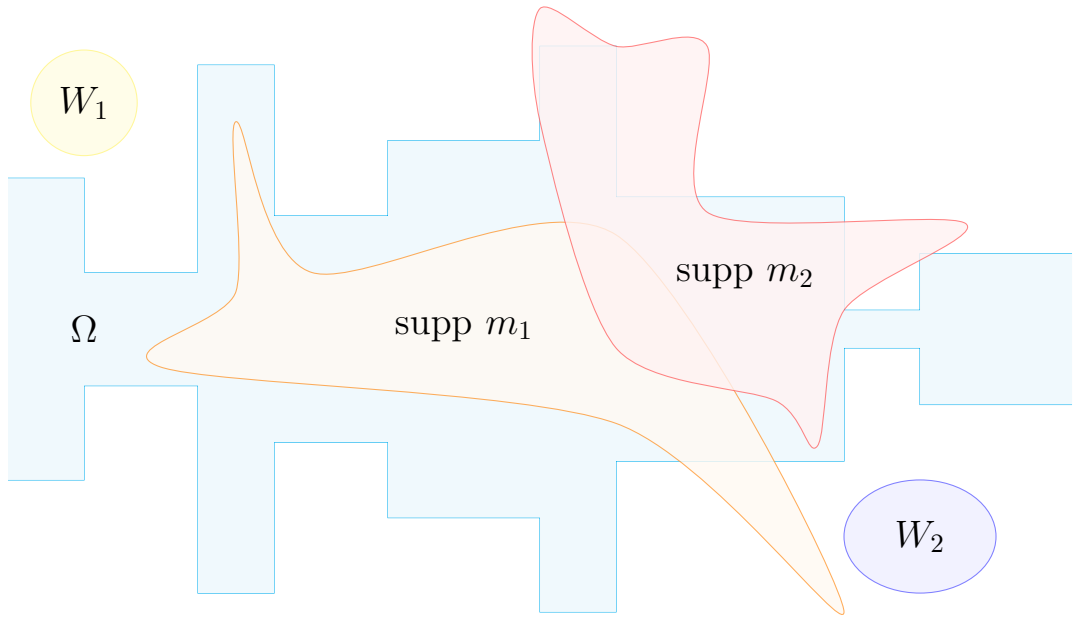

\subsubsection{Fractional Schrödinger equations} Later in this article we present many applications of Theorem \ref{theorem:GeneralFractionalCalderon} in which we always fix $L = (-\Delta)^t$ (in the weak form) for some $t \in \R_+ \setminus \Z$ with $t \leq s$. One could also take many other bilinear forms in the place of $L$, for example:
\begin{itemize}
    \item One can take $L$ to be the bilinear form related to the Bessel potential $\vev{D}^t$ whenever $t \in \R \setminus \Z$ \cite[Theorem 1.6 and Remark 1.7]{kenigetal2020-uniqueDispersive}.
    \item One can take $L$ to be the bilinear form related to the nonlocal operator $\mathcal{L}^t$ where $t \in (0,1)$ and $\mathcal{L}$ is an elliptic second order operator $\mathcal{L} = \nabla \cdot (A \nabla u)$ \cite[Theorem 1.2]{GLX-calderon-nonlocal-elliptic-operators} and \cite[Proposition 3.2]{ghosh2021calderon}.
    \item See \cite[Theorem 1.2]{covi2021uniquenessQuasiLocal} for the fractional Schrödinger operators with rapidly decaying quasilocal perturbations having the UCP.
    \item See \cite[Theorem 4.1]{covi2021uniquenessQuasiLocal} for the fractional Schrödinger operators with perturbations of finite propagation having the UCP.
    \item Any local perturbation to an operator with the symmetric UCP (this is by now a well-known fact, see e.g. the discussion in \cite[Section 3.1]{CMR20}).
\end{itemize}

\begin{remark}We note that this does not imply that the methods used in this article would solve the inverse problems studied in \cite{covi2021uniquenessQuasiLocal} since the role of $q$ in our work is more restricted. This only implies that the inverse problems for local perturbations of the fractional Schrödinger operators used in \cite{covi2021uniquenessQuasiLocal} could be studied directly with Theorems \ref{thm:PoincUnboundedDoms} and \ref{theorem:GeneralFractionalCalderon}. One would have to refine Theorem \ref{theorem:GeneralFractionalCalderon} from local perturbations into the direction of nonlocal perturbations to have access to the fractional Calderón problem for quasilocal perturbations on unbounded domains. The generalization to the case of perturbations with finite propagation should be now a rather easy problem to solve but the case of quasilocal perturbations would require one to be more careful. We do not pursue this matter here as this would put us too far away from the proof of Theorem \ref{thm: characterization of uniqueness}, and our work and the work \cite{covi2021uniquenessQuasiLocal} were put through independently around the same time.
\end{remark}

We apply Theorems \ref{thm:PoincUnboundedDoms} and \ref{theorem:GeneralFractionalCalderon} to prove uniqueness results for the fractional Calderón problems on domains that are bounded in one direction. The methods needed to bypass the lack of compact Sobolev embeddings on unbounded domains turn out to be useful also in the case of bounded domains and allow us to slightly extend the recent results in \cite{RS-fractional-calderon-low-regularity-stability,CMR20,CMRU20-higher-order-fracCald} in the usual setting of bounded domains.

Rather than introducing right now all new concrete model examples where our methods apply, we only point to the following results that are stated and proved later in this article:
\begin{itemize}
    \item Remark \ref{rmk: exteriorPDOcase} explains how Corollary \ref{cor:fracCaldExteriorDet} on exterior determination extends to the case of PDO perturbations with variable coefficients.
    \item Proposition \ref{prop:unboundedExamples} gives example model classes with higher order local perturbations (i.e. the perturbations have higher order than the fractional Laplacian) for all domains that have nonempty exterior and the fractional Calder\'on problem is uniquely solvable. Especially, these domains are not assumed to be bounded in one direction.
    \item Theorem \ref{main-theorem-singular} gives example models with linear local lower order perturbations on domains that are bounded in one direction.
    \item Theorem \ref{thm:schrodingeruniquenessUnbounded} gives further example models on domains that are bounded in one direction in the case where the perturbations are functions rather than operators.
    \item Theorems \ref{thm:schrodingeruniqueness} and \ref{main-theorem-singular-bounded} give slight generalizations of the existing theorems on bounded domains.
\end{itemize}

Needless to say, there are many other situations where the combination of existing results and the ideas presented in this article allow various generalizations for the fractional Calderón problems, the Runge approximation results and related problems. More interesting research questions, mathematically speaking, would be to understand if the single measurement, stability, quantitative Runge approximation, and nonlinear and nonlocal perturbation results \cite{BGU-lower-order-nonlocal-perturbations,covi2021uniquenessQuasiLocal,covietal2021calderon-directionally-antilocal,GRSU-fractional-calderon-single-measurement,LL-fractional-semilinear-problems,RS-fractional-calderon-low-regularity-stability} have some extensions to the cases of unbounded domains or higher order perturbations, where higher order perturbations are understood in the sense that the local part has higher order than the nonlocal part of an equation.

\section{Preliminaries}\label{sec:preliminaries}

We introduce in this subsection key definitions and basic notation. More sophisticated definitions and notation are given in later subsections when needed.

\subsection{Spaces of distributions}\label{subsec: spaces of dist}
Throughout the article we use the convention that $n \geq 1$ and $\Omega \subset \R^n$ is an open set. We denote by $\schwartz(\R^n)$ and $\tempered(\R^n)$ Schwartz functions and tempered distributions respectively. We say that $f\in C^{\infty}(\R^n)$ belongs to $\slowly(\R^n)$ if for any $\alpha\in\N^n_0$ there exists an $N\in\N_0$ such that $|\partial^{\alpha}f(x)| \leq C\langle x\rangle^N$, where $\langle x\rangle=\sqrt{1+|x|^2}$ is the Japanese bracket.
 
We let $\distr(\Omega)$ stand for the space of distributions on $\Omega$ and by $\cdistr(\Omega)$ the space of compactly supported distributions on $\Omega$.

We define the Fourier transform $\fourier\colon \schwartz(\R^n)\to \schwartz(\R^n)$ by
\begin{equation}
    \fourier f(\xi) \vcentcolon = \int_{\R^n} f(x)e^{-ix \cdot \xi} \,dx,
\end{equation}
which is occasionaly also denoted by $\hat{f}$. By duality it can be extended to the space of tempered distributions and will again be denoted by $\fourier u = \hat{u}$, where $u \in \tempered(\R^n)$, and we denote the inverse Fourier transform by $\ifourier$.

\subsection{Fractional Sobolev spaces, Bessel potential spaces and fractional Laplacians}\label{sec:Bessel-basics}

We denote by $W^{s,p}(\Omega)$ the fractional Sobolev spaces, when $s\in (0,\infty)\setminus\N$, $1 \leq p < \infty$. These spaces are also called Slobodeckij spaces or Gagliardo spaces. In particular, for $0<s<1$, $1\leq p<\infty$ the space $W^{s,p}(\Omega)$ consists of functions $u\in L^p(\Omega)$ such that $\frac{u(x)-u(y)}{|x-y|^{n/p+s}}\in L^p(\Omega\times \Omega)$ and it is endowed with the norm
\[
    \|u\|_{W^{s,p}(\Omega)}\vcentcolon =\left(\|u\|_{L^p(\Omega)}^p+[u]_{W^{s,p}(\Omega)}^p\right)^{1/p},
\]
where
\[
    [u]_{W^{s,p}(\Omega)}\vcentcolon =\left(\int_{\Omega}\int_{\Omega}\frac{|u(x)-u(y)|^p}{|x-y|^{n+sp}}\,dxdy\right)^{1/p}
\]
is the so called Gagliardo (semi-) norm. For $s>1$ such that $s=m+\sigma$ with $m\in\N,\sigma\in (0,1)$ we set 
\[
W^{s,p}(\Omega)\vcentcolon =\{\,u\in W^{m,p}(\Omega)\,;\, \partial^{\alpha} u\in W^{\sigma,p}(\Omega)\quad \forall |\alpha|=m \},
\]
where $W^{m,p}(\Omega)$ with $m\in \N, 1\leq p\leq \infty$ is the classical Sobolev space, and endow it with the norm
\[
\|u\|_{W^{s,p}(\Omega)}\vcentcolon =\left(\|u\|_{W^{m,p}(\Omega)}^p+\sum_{|\alpha|=m}[\partial^{\alpha}u]_{W^{\sigma,p}(\Omega)}^p\right)^{1/p}.
\]
We mainly use them in Section \ref{sec:poincare} in which we recall recent results on the Poincar\'e inequalities on certain subspaces of $W^{s,p}(\R^n)$ and Section \ref{sec:conductivityCalderon} dealing with the fractional conductivity equation but for the sake of presentation this part of the proof is done in Appendix \ref{subsubsec: Multiplication map in Bessel potential spaces}.\\ 
Next we introduce the Bessel potential spaces $H^{s,p}(\R^n)$ and various notions of local Bessel poential spaces which we introduce next. We define the Bessel potential of order $s \in \R$ as a Fourier multiplier $\vev{D}^s\colon \tempered(\R^n) \to \tempered(\R^n)$
\begin{equation}\label{eq: Bessel pot}\vev{D}^s u \vcentcolon = \ifourier(\vev{\xi}^s\hat{u}).
\end{equation} We define the Bessel potential spaces for any $1 \leq p \leq \infty$ and $s \in \R$ by \begin{equation}\label{eq: Bessel pot spaces}
    H^{s,p}(\R^n) \vcentcolon = \{\, u \in \tempered(\R^n)\,;\, \vev{D}^su \in L^p(\R^n)\,\}
\end{equation}
and it is equipped with the norm $\norm{u}_{H^{s,p}(\R^n)} \vcentcolon = \norm{\vev{D}^su}_{L^p(\R^n)}$.

\begin{definition} 
Let $\Omega, F \subset \R^n$ be open and closed sets respectively, $1 \leq p \leq \infty$ and $s \in \R$. We define the following \emph{local Bessel potential spaces}:
\begin{equation}\begin{split}\label{eq: local bessel pot spaces}
    H^{s,p}(\Omega) &\vcentcolon = \{\,u|_{\Omega} \,;\, u \in H^{s,p}(\R^n)\,\},\\
    \widetilde{H}^{s,p}(\Omega) &\vcentcolon = \overline{C_c^\infty(\Omega)}^{H^{s,p}(\R^n)},\\
    H_0^{s,p}(\Omega) &\vcentcolon =\overline{C_c^\infty(\Omega)}^{H^{s,p}(\Omega)},\\
    H_F^{s,p}(\R^n) &\vcentcolon =\{\,u \in H^{s,p}(\R^n)\,;\, \supp(u) \subset F\,\}.
    \end{split}
\end{equation}
The space $H^{s,p}(\Omega)$ is equipped with the quotient norm
\begin{equation}
\label{eq: norm local space}
    \norm{u}_{H^{s,p}(\Omega)} \vcentcolon = \inf\{\,\norm{w}_{H^{s,p}(\R^n)}\,;\, w \in H^{s,p}(\R^n), w|_\Omega = v\,\}.
\end{equation}
The spaces $\widetilde{H}^{s,p}(\Omega)$ and $H_F^{s,p}(\R^n)$ are equipped with the $H^{s,p}(\R^n)$ norm. The space $H_0^{s,p}(\Omega)$ is equipped with the $H^{s,p}(\Omega)$ norm. As usual we set $H^s \vcentcolon = H^{s,2}$.
\end{definition}

We have that $\tilde{H}^{s,p}(\Omega) \hookrightarrow \tilde{H}_{\overline{\Omega}}^{s,p}(\R^n)$, $(\tilde{H}^{s,p}(\Omega))^* = H^{-s,p'}(\Omega)$ and 
$\tilde{H}^{s,p}(\Omega) = (H^{-s,p'}(\Omega))^*$ for every $1 < p < \infty$ and $s \in \R$.

Finally, we define the fractional Laplacian of order $s \in \R_+$ as a Fourier multiplier
\begin{equation}\label{eq:fracLapFourDef}
    (-\Delta)^{s/2} u = \ifourier(\abs{\xi}^s\hat{u}) 
\end{equation}
for $u \in \tempered(\R^n)$ whenever the formula is well-defined. In particular, the fractional Laplacian is a well-defined bounded mapping $(-\Delta)^{s/2}\colon H^{t,p}(\R^n) \to H^{t-s,p}(\R^n)$ for every $1 \leq p < \infty$, $s \geq0$ and $t \in \R$.

\section{Generalized fractional Calderón problem}\label{sec:GeneralizedFractionalCalderon}

In this section, we formulate and prove a uniqueness result for a generalized fractional Calderón problem. The structure of our argument follows similar ideas as introduced by Ghosh, Salo and Uhlmann \cite{GSU20} and later modified in \cite{CLR18-frac-cald-drift,CMRU20-higher-order-fracCald} to obtain uniqueness results for some special cases of nonsymmetric local perturbations of the nonlocal Schr\"odinger equation. The axiomatic approach to be taken here has several benefits. We observe that the argument can be applied on all domains with nonempty exterior, and in general, the argument works whenever the direct problem is well-posed. One can even consider and prove uniqueness results for some higher order local perturbations of the fractional Laplacian. Either of these two examples, unbounded domains or higher order perturbations, do not appear in the earlier literature to the best of our knowledge. These general considerations in part motivate the rest of article. In later sections of this article, we will study fractional Poincaré inequalities on unbounded domains and the related decompositions of Sobolev multipliers and PDOs to be able to provide new nontrivial examples where the theory presented in this section applies.

\begin{lemma}[Well-posedness]\label{lemma:generalWellposedness} Let $s \in \R$, and $\Omega \subset \R^n$ be open set such that $\Omega_e \neq \emptyset$. Let $B\colon H^s(\R^n) \times H^s(\R^n) \to \R$ be a bounded bilinear form that is (strongly) coercive in $\tilde{H}^s(\Omega)$, that is, there exists some $c >0$ such that $B(u,u) \geq c\norm{u}_{H^s(\R^n)}^2$ for all $u \in \tilde{H}^s(\Omega)$. Then the following hold:
\begin{enumerate}[(i)]
\item For any $f \in H^s(\R^n)$ and $F \in (\tilde{H}^s(\Omega))^*$ there exists a unique $u \in H^s(\R^n)$ such that $u-f \in \tilde{H}^s(\Omega)$ and $B(u,\phi) = F(\phi)$ for all $\phi \in \tilde{H}^s(\Omega)$. When $F\equiv 0$, we denote this unique solution by $u_f$.\label{item:uniqunessOfSolutions}

\item If $f_1-f_2 \in \tilde{H}^s(\Omega)$ and $u_i$ are solutions to the problems $u_i-f_i \in \tilde{H}^s(\Omega)$ and $B(u_i,\cdot)=F$ in $\tilde{H}^s(\Omega)$ for $i=1,2$, then $u_1 = u_2$.\label{item:independenceOfExteriorData}
\item Let $X \vcentcolon = H^s(\R^n)/\tilde{H}^s(\Omega)$ be the abstract trace space. Then the exterior DN map $\Lambda_B\colon X \to X^*$ defined by $\Lambda_B[f][g] \vcentcolon = B(u_f,g)$ for $[f],[g] \in X$ is a well-defined bounded linear map.\label{item:DNmaps}
\item The above statements hold true for the adjoint bilinear form $B^*$ and the corresponding unique solutions, when $F\equiv 0$, are denoted by $u_f^*$. Moreover,
$\Lambda_B[f][g] = \Lambda_{B^*}[g][f]$ for any $[f],[g] \in X$, that is, $\Lambda_B^*=\Lambda_{B^*}$. \label{item:adjointBilinear}
\end{enumerate}
\end{lemma}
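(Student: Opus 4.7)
The plan is to reduce (i) to the Lax--Milgram theorem on $\tilde H^s(\Omega)$ and to derive (ii)--(iv) as algebraic consequences of uniqueness together with the vanishing of $B(u_f,\cdot)$ on the test space. For (i) I would rewrite $u = v + f$ with $v \in \tilde H^s(\Omega)$, so that the equation becomes $B(v,\phi) = F(\phi) - B(f,\phi)$; the right-hand side is a bounded functional on $\tilde H^s(\Omega)$ because $B$ is bounded on $H^s(\R^n) \times H^s(\R^n)$ and $F \in (\tilde H^s(\Omega))^*$, while the restriction of $B$ to $\tilde H^s(\Omega) \times \tilde H^s(\Omega)$ is bounded and strongly coercive by assumption, so Lax--Milgram produces a unique $v$. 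Part (ii) follows immediately: the hypotheses force $w \vcentcolon= u_1 - u_2 \in \tilde H^s(\Omega)$ with $B(w,\phi) = 0$ on the test space, and coercivity with $\phi = w$ gives $w = 0$.

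For (iii), well-definedness of $\Lambda_B\colon X \to X^*$ is a two-step check: independence from the representative of $[f]$ is (ii) applied with $F \equiv 0$, and independence from the representative of $[g]$ is the assertion that $B(u_f,\cdot)$ annihilates $\tilde H^s(\Omega)$. The quantitative estimate is the main technical step: for any representative $\tilde f$ of $[f]$, the function $v \vcentcolon= u_f - \tilde f$ lies in $\tilde H^s(\Omega)$, and since $F \equiv 0$ the equation yields $B(v,v) = -B(\tilde f, v)$, hence by coercivity and boundedness $\|v\|_{H^s} \lesssim \|\tilde f\|_{H^s}$ and thus $\|u_f\|_{H^s(\R^n)} \lesssim \|\tilde f\|_{H^s(\R^n)}$. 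Taking the infimum over representatives converts this into a bound in the quotient norm on $X$, which combined with boundedness of $B$ gives boundedness of $\Lambda_B$.

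For (iv), the adjoint form $B^*$ inherits coercivity from $B$ since $B^*(u,u) = B(u,u)$, so parts (i)--(iii) transfer verbatim to $B^*$ with solutions denoted $u_g^*$. To identify $\Lambda_B^*$ with $\Lambda_{B^*}$ I would introduce the auxiliary scalar $B(u_f, u_g^*)$ and show that it coincides with both $B(u_f, g)$ and $B(f, u_g^*)$: the first equality comes from testing $B(u_f,\cdot) = 0$ against $u_g^* - g \in \tilde H^s(\Omega)$, and the second from testing $B^*(u_g^*,\cdot) = 0$ against $u_f - f \in \tilde H^s(\Omega)$. Chaining them yields $\Lambda_B[f][g] = B(f,u_g^*) = B^*(u_g^*,f) = \Lambda_{B^*}[g][f]$. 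The only mildly delicate point in the whole argument is the a priori estimate in (iii), where one must pass cleanly from a particular $H^s(\R^n)$ representative of the exterior data to the quotient norm on $X$; the remainder is Lax--Milgram bookkeeping together with the variational characterization of $u_f$ and $u_g^*$.
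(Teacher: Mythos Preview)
Your proposal is correct and follows essentially the same route as the paper: Lax--Milgram on $\tilde H^s(\Omega)$ for (i), uniqueness from coercivity for (ii), well-definedness plus the a priori estimate for (iii), and the identity $\Lambda_B[f][g]=B(u_f,u_g^*)=\Lambda_{B^*}[g][f]$ for (iv). The only cosmetic difference is that the paper packages the norm estimate in (iii) as part of (i) rather than re-deriving it from $B(v,v)=-B(\tilde f,v)$, but the content is identical.
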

\begin{remark}\label{remark:generalizationCalderon} We remark that the assumption on the coercivity of $B$ can be replaced by the assumption that the exterior and interior value problems have unique solutions depending continuously on the data. 
If we similarly assume that this is true for the adjoint problem, then the presented proofs work without any changes. This applies in the whole section.
\end{remark}
\begin{proof} \ref{item:uniqunessOfSolutions} Let $f \in H^s(\R^n)$, $F \in (\tilde{H}^s(\Omega))^*$ and set $\tilde{u} \vcentcolon = u-f$. By linearity the problem is reduced to finding the unique $\tilde{u} \in \tilde{H}^s(\Omega)$ such that $B(\tilde{u},\phi) = \tilde{F}(\phi)$ for all $\phi \in \tilde{H}^s(\Omega)$ where $\tilde{F} \vcentcolon = F - B(f,\cdot)$. Observe that the modified functional $\tilde{F}$ belongs to $(\tilde H^s(\Omega))^*$, since there holds
$$|\tilde{F}(\phi)| \leq |F(\phi)| + |B(f,\phi)| \leq (\aabs{F}_{(\widetilde{H}^s(\Omega))^*}+C\|f\|_{H^s(\mathbb R^n)})\|\phi\|_{H^s(\mathbb R^n)}$$
for all $\phi \in \tilde{H}^s(\Omega)$. The Lax--Milgram theorem implies that there exists a bounded linear operator $G\colon (\widetilde{H}^{s}(\Omega))^* \rightarrow \widetilde H^s(\Omega)$ associating to each functional in $(\widetilde{H}^{s}(\Omega))^*$ its unique representative in the bilinear form $B(\cdot,\cdot)$ on $\widetilde H^s(\Omega)$. Thus $\tilde u \vcentcolon = G \tilde F$ solves $$ B(\tilde u,v) = \tilde F(v)\quad \mbox{for all}\quad v\in \widetilde H^s(\Omega) $$
and it is the required unique solution $\tilde u \in \widetilde H^s(\Omega)$. Moreover, there holds \begin{equation}\label{eq: norm estimate abstract well-posedness}\norm{u}_{H^s(\R^n)} \leq c^{-1}\|\tilde{F}\|_{(\tilde{H}^s(\Omega))^*}+\norm{f}_{H^s(\R^n)} \leq C(\aabs{F}_{(\widetilde{H}^s(\Omega))^*}+\|f\|_{H^s(\mathbb R^n)})
\end{equation}
for some $C>0$.

\ref{item:independenceOfExteriorData} Since $f_1-f_2\in \tilde{H}^s(\Omega)$ there exists $\phi \in \tilde{H}^s(\Omega)$ such that $f_2=f_1+\phi$ and the function $u\vcentcolon =u_2-u_1 \in \tilde{H}^s(\Omega)$ solves the homogenous problem with exterior value $\phi$.
Since $w=0$ is also a solution to this problem we obtain by uniqueness of solutions that $u_1\equiv u_2$ in $\R^n$.

\ref{item:DNmaps} By \ref{item:uniqunessOfSolutions} and \ref{item:independenceOfExteriorData}, we have $$ B(u_{f+\phi},g+\psi) = B(u_f,g) + B(u_f,\psi) = B(u_f,g)$$
for all $f,g \in H^s(\mathbb R^n)$ and $\phi, \psi \in \tilde{H}^s(\Omega)$. Moreover, if $f,g \in H^s(\mathbb R^n)$ and $\phi,\psi \in \widetilde H^s(\Omega)$, then
\begin{align*}
|\langle\Lambda_B[f],[g]\rangle| & = |B(u_{f-\phi},g-\psi)| \leq C \|u_{f-\phi}\|_{H^s(\R^n)} \|g-\psi\|_{H^s(\R^n)}\\ &\leq C \|f-\phi\|_{H^s(\R^n)} \|g-\psi\|_{H^s(\R^n)}
\end{align*}
for some $C>0$. 
This implies that for some $C>0$ it holds that
$$ |\langle\Lambda_B[f],[g]\rangle| \leq C\inf_{\phi\in\widetilde H^s(\Omega)}\|f-\phi\|_{H^s(\R^n)}\inf_{\psi\in\widetilde H^s(\Omega)}\|g-\psi\|_{H^s(\R^n)} = C \|[f]\|_X \|[g]\|_X$$ and hence $\Lambda_B[f]\in X^*$ for all $[f]\in X$. If $f_1,f_2 \in H^s(\R^n)$, then $u_{f_1+f_2}=u_{f_1}+u_{f_2}$ since the solutions are unique and $u_{f_1}+u_{f_2}$ solves the homogeneous equation with the exterior condition $u_{f_1}+u_{f_2}-(f_1+f_2) \in \tilde{H}^s(\Omega)$. Therefore we have shown that $\Lambda_B\colon X\to X^*$ is a bounded linear operator.

\ref{item:adjointBilinear} Notice that $B^*$ is also a bounded bilinear form with the same coercivity estimate. Therefore \ref{item:uniqunessOfSolutions}, \ref{item:independenceOfExteriorData} and \ref{item:DNmaps} hold when $B$ is replaced by $B^*$. Suppose that $u_f$ solves $B(u_f,\cdot)=0$ in $\tilde{H}^s(\Omega)$ with $u_f-f \in \tilde{H}^s(\Omega)$, and $u_g^*$ solves $B^*(u_g^*,\cdot)=0$ in $\tilde{H}^s(\Omega)$ with $u_g^* - g \in \tilde{H}^s(\Omega)$. Now 
\begin{equation}
\label{eq: DN maps coincide abstract}
    \Lambda_B[f][g] = B(u_f,u_g^*) = B^*(u_g^*,u_f) = \Lambda_{B^*}[g][f].\qedhere
\end{equation}
\end{proof}

\begin{theorem}[Runge approximation property]\label{lemma:GeneralRungeApproximation} Let $s \in \R$ and $\Omega \subset \R^n$ be an open set such that $\Omega_e \neq \emptyset$. Let $L,q\colon H^s(\R^n) \times H^s(\R^n) \to \R$ be bounded bilinear forms and assume that $q$ is local and that $B_{L,q} \vcentcolon = L + q$ is (strongly) coercive in $\tilde{H}^s(\Omega)$.
\begin{enumerate}[(i)]
\item If $L$ has the right UCP on a nonempty open set $W \subset \Omega_e$, then $\mathcal{R}(W) \vcentcolon = \{\,u_f -f \,;\, f \in C_c^\infty(W)\,\} \subset \tilde{H}^s(\Omega)$ is dense.\label{item:generalRunge}
\item If $L$ has the left UCP on a nonempty open set $W \subset \Omega_e$, then $\mathcal{R}^*(W) \vcentcolon = \{\,u_g^* -g \,;\, g \in C_c^\infty(W)\,\} \subset \tilde{H}^s(\Omega)$ is dense.\label{item:generaladjointRunge}
\end{enumerate}
\end{theorem}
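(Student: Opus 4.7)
The plan is to prove both parts by a Hahn--Banach duality argument in which the locality of $q$ decouples the perturbation from the unique continuation analysis, leaving only the hypothesis on $L$ to carry the final step. I describe part \ref{item:generalRunge} in detail; part \ref{item:generaladjointRunge} then follows by exchanging the roles of $B_{L,q}$ and $B_{L,q}^*$ throughout. Fix an arbitrary $F \in (\tilde{H}^s(\Omega))^*$ that vanishes on $\mathcal{R}(W)$, so that by Hahn--Banach it suffices to prove $F \equiv 0$. Since the quadratic forms of $B_{L,q}$ and its adjoint agree, $B_{L,q}^*$ is also strongly coercive on $\tilde{H}^s(\Omega)$, so Lemma \ref{lemma:generalWellposedness}\ref{item:uniqunessOfSolutions} applied to $B_{L,q}^*$ with vanishing exterior data and source $F$ produces a unique $v \in \tilde{H}^s(\Omega)$ with $B_{L,q}(\phi, v) = B_{L,q}^*(v, \phi) = F(\phi)$ for every $\phi \in \tilde{H}^s(\Omega)$.

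Next I test $F$ on the element $u_f - f \in \tilde{H}^s(\Omega)$ for arbitrary $f \in C_c^\infty(W)$. Using $v \in \tilde{H}^s(\Omega)$ as an admissible test function for the equation $B_{L,q}(u_f, \cdot) = 0$, bilinearity gives
\[
0 = F(u_f - f) = B_{L,q}(u_f - f, v) = B_{L,q}(u_f, v) - B_{L,q}(f, v) = -L(f, v) - q(f, v).
\]
The decisive step is the following support argument: $v \in \tilde{H}^s(\Omega)$ forces $\supp(v) \subset \overline{\Omega}$, while $\supp(f) \subset W \subset \Omega_e$ is disjoint from $\overline{\Omega}$, so locality of $q$ gives $q(f, v) = 0$. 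Consequently $L(f, v) = 0$ for every $f \in C_c^\infty(W)$, and the same support disjointness yields $v|_W = 0$.

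At this stage the right UCP hypothesis on $L$ at $W$ applies directly and forces $v \equiv 0$, whence $F \equiv 0$ and $\mathcal{R}(W)$ is dense in $\tilde{H}^s(\Omega)$. Part \ref{item:generaladjointRunge} follows by symmetry: I represent an arbitrary $F \in (\tilde{H}^s(\Omega))^*$ via $B_{L,q}$ itself as $B_{L,q}(v, \phi) = F(\phi)$ with $v \in \tilde{H}^s(\Omega)$, test $F$ against $u_g^* - g$ where $u_g^* - g \in \tilde{H}^s(\Omega)$ and $B_{L,q}(\phi, u_g^*) = 0$, and reduce to $L(v, g) = 0$ for all $g \in C_c^\infty(W)$ together with $v|_W = 0$; the left UCP of $L$ at $W$ then closes the argument. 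I expect no substantive obstacle beyond the support-based cancellation that erases $q$ on the testing set, which is precisely the feature that lets the entire UCP burden be placed on the nonlocal form $L$ alone and marks the gain of this axiomatic framework over hypotheses placed on the full perturbed form $B_{L,q}$.
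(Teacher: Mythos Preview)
Your proof is correct and follows essentially the same approach as the paper: a Hahn--Banach argument, representation of the annihilating functional via the adjoint bilinear form (Lax--Milgram/well-posedness), cancellation of the $q$-term by locality and disjoint supports, and conclusion via the UCP hypothesis on $L$. The only differences are cosmetic (the paper writes the key computation as $0=-F(u_f-f)=-B^*(\phi,u_f-f)=B^*(\phi,f)=L^*(\phi,f)+q^*(\phi,f)=L(f,\phi)$, but this is exactly your chain rearranged).
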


\begin{proof} We only prove here \ref{item:generalRunge}, since the proof of \ref{item:generaladjointRunge} is completely analogous.

Since $\mathcal{R}(W)\subset \tilde{H}^s(\Omega)$ is a subspace it suffice by the Hahn-Banach theorem to show that if $F\in (\tilde{H}^s(\Omega))^*$ vanishes on $\mathcal{R}(W)$, then $F$ is identically zero. Hence, let $F \in (\tilde{H}^s(\Omega))^*$ and suppose that $F(u_f-f) = 0$ for all $f \in C_c^\infty(W)$. By the Lax--Milgram theorem there exists unique $\phi \in \tilde{H}^s(\Omega)$ such that $B^*(\phi,\cdot)=F$. Notice first that $B^*(\phi,u_f) = B(u_f,\phi)=0$ since $u_f$ is a solution. We have also that $q^*(\phi,f)=q(f,\phi) = 0$ since $\supp(f) \cap \supp(\phi) \subset \Omega_e \cap \overline{\Omega} = \emptyset$ and $q$ is local. We may now calculate that
\begin{equation}
    0 = -F(u_f-f) = -B^*(\phi,u_f-f) = B^*(\phi,f) = L^*(\phi,f) + q^*(\phi,f) = L(f,\phi)
\end{equation}
for all $f \in C_c^\infty(W)$. Since $\phi|_W = 0$, we have by the right UCP that $\phi\equiv 0$ and therefore $F\equiv 0$.
\end{proof}

In Figure \ref{fig: Runge Approximation}, we give a concrete example model where Theorem \ref{lemma:GeneralRungeApproximation} applies and the domain is not bounded in one direction.

\begin{figure}[!ht]
    \centering
    \begin{tikzpicture}
    \fill[cyan!5, xshift=4cm,yshift=2.5cm] (-1,0.25)-- (13,0.25) --(13,-2.75)--(-1,-2.75)--(-1,0.25);
    \filldraw[color=yellow!50, fill=yellow!10, xshift=7.5cm](2,1.25) circle (1);
    \node[xshift=7.5cm, yshift=-0.6cm] at (2,1.25)
    {$\raisebox{-.35\baselineskip}{\Large\ensuremath{B_{\epsilon}}}$};
    \node[xshift=3.5cm] at (2,1.25)
    {$\raisebox{-.35\baselineskip}{\Large\ensuremath{((-\Delta)^s+\delta)u=0}}$};
    \node[xshift=7.5cm, yshift=0.4cm] at (2,1.25)
    {$\raisebox{-.35\baselineskip}{\Large\ensuremath{u=f}}$};
\end{tikzpicture}
    \caption{An example illustrating Theorem \ref{lemma:GeneralRungeApproximation}. Let us denote $B_\epsilon = B(0;\epsilon) \subset \R^n$ for any $\epsilon >0$ and $n \geq 1$. For any $\epsilon, \delta >0$ and $\Omega := \R^n \setminus \overline{B_\epsilon}$, we have that the restriction to $\R^n \setminus \overline{B_\epsilon}$ of the unique solutions $u_f$ to the equation $((-\Delta)^s+\delta)u=0$ in $\R^n \setminus \overline{B_\epsilon}$ are dense in $\tilde{H}^s (\R^n \setminus \overline{B_\epsilon})$ with exterior conditions $f \in C_c^\infty(B_\epsilon)$. This means \emph{heuristically} speaking that \emph{all} functions in $H^s(\R^n)$ are almost $s$-harmonic up to an error depending on $\delta, \epsilon > 0$. This observation is analogous to the celebrated result on bounded domains \cite{DSV-all-functions-are-s-harmonic} where one may take $\delta=0$, but this possible choice of $\delta$ is also true on domains that are bounded in one direction. We do not perform a quantitative analysis of this phenomenon in this article.}
    \label{fig: Runge Approximation}
\end{figure}

\begin{lemma}[Alessandrini identity]\label{lemma:generalAlessandrini} Let $s \in \R$, and $\Omega \subset \R^n$ be an open set such that $\Omega_e \neq \emptyset$. Let $B_i\colon H^s(\R^n) \times H^s(\R^n) \to \R$, $i=1,2$, be bounded bilinear forms that are (strongly) coercive in $\tilde{H}^s(\Omega)$. Then
\begin{equation}
    (\Lambda_{B_1}-\Lambda_{B_2})[f][g] = (B_1-B_2)(u_f,u_g^*)
\end{equation}
for any $[f],[g] \in X$ where 
\begin{enumerate}[(i)]
\item\label{item 1 abstract alessandrini} $u_f$ solves $B_1(u_f,\cdot)=0$ in $\tilde{H}^s(\Omega)$ with $u_f-f \in \tilde{H}^s(\Omega)$;
\item\label{item 2 abstract alessandrini} $u_g^*$ solves $B_2^*(u_g^*,\cdot)=0$ in $\tilde{H}^s(\Omega)$ with $u_g^* - g \in \tilde{H}^s(\Omega)$.
\end{enumerate}
\end{lemma}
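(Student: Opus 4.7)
The plan is to exploit the fact, established in Lemma \ref{lemma:generalWellposedness}, that the DN map is well-defined on the quotient space $X = H^s(\R^n)/\tilde{H}^s(\Omega)$, so any representative of an equivalence class can be used when evaluating $\Lambda_{B_i}[f][g]$. In particular, one is free to plug in \emph{solutions} of the relevant problems, after which the bilinear-form structure allows direct subtraction. This is the standard argument behind the classical Alessandrini identity, transported to our abstract setting.

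First I would evaluate $\Lambda_{B_1}[f][g]$. By definition $\Lambda_{B_1}[f][g] = B_1(u_f,g)$, where $u_f$ is the unique solution from \ref{item 1 abstract alessandrini}. Since $u_g^* - g \in \tilde{H}^s(\Omega)$ by \ref{item 2 abstract alessandrini}, and since $B_1(u_f,\psi)=0$ for every $\psi \in \tilde{H}^s(\Omega)$, replacing $g$ by $u_g^*$ changes nothing, giving
\begin{equation}
    \Lambda_{B_1}[f][g] = B_1(u_f, u_g^*).
\end{equation}

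Next I would evaluate $\Lambda_{B_2}[f][g]$ by first passing to the adjoint via Lemma \ref{lemma:generalWellposedness}\ref{item:adjointBilinear}, which yields $\Lambda_{B_2}[f][g] = \Lambda_{B_2^*}[g][f] = B_2^*(u_g^*, f)$. Because $u_f - f \in \tilde{H}^s(\Omega)$ and $u_g^*$ solves $B_2^*(u_g^*,\cdot)=0$ on $\tilde{H}^s(\Omega)$, replacing $f$ by $u_f$ is legitimate, and unwinding the definition of the adjoint bilinear form gives
\begin{equation}
    \Lambda_{B_2}[f][g] = B_2^*(u_g^*, u_f) = B_2(u_f, u_g^*).
\end{equation}
Subtracting the two identities yields the claim $(\Lambda_{B_1}-\Lambda_{B_2})[f][g] = (B_1-B_2)(u_f,u_g^*)$.

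There is no substantive obstacle: every ingredient has already been established in Lemma \ref{lemma:generalWellposedness}. The only point requiring a touch of care is making sure the ``solution-swap'' in both computations is justified by the correct bilinear form being tested against $\tilde{H}^s(\Omega)$ -- for the $\Lambda_{B_1}$ step one uses that $u_f$ is a $B_1$-solution (so one is allowed to change the \emph{second} argument by elements of $\tilde{H}^s(\Omega)$), while for the $\Lambda_{B_2}$ step, after passing to $B_2^*$, one uses that $u_g^*$ is a $B_2^*$-solution (again allowing modification of the second argument by $\tilde{H}^s(\Omega)$). This asymmetry between $B_1$ and $B_2^*$ is precisely what forces the two different solution definitions in \ref{item 1 abstract alessandrini} and \ref{item 2 abstract alessandrini}, and it is the only subtle point in the argument.
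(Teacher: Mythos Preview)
Your argument is correct and matches the paper's own proof essentially verbatim: the paper also uses Lemma~\ref{lemma:generalWellposedness}\ref{item:adjointBilinear} to write $\Lambda_{B_2}[f][g]=\Lambda_{B_2^*}[g][f]$ and then swaps representatives to get $B_1(u_f,u_g^*)-B_2^*(u_g^*,u_f)=(B_1-B_2)(u_f,u_g^*)$. Your write-up simply makes the representative-swap justification more explicit than the paper's one-line computation.
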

\begin{proof} Notice that the solutions $u_f$ and $u_g^*$ exist by Lemma \ref{lemma:generalWellposedness}. We can directly calculate using \ref{item:adjointBilinear} of Lemma \ref{lemma:generalWellposedness} that
\begin{equation}
    (\Lambda_{B_1}-\Lambda_{B_2})[f][g] = \Lambda_{B_1}[f][g]-\Lambda_{B_2^*}[g][f] = B_1(u_f,u_g^*)-B_2^*(u_g^*,u_f) = (B_1-B_2)(u_f,u_g^*).\qedhere
\end{equation}
\end{proof}

We can now prove the uniqueness theorem for the generalized fractional Calderón problem (Theorem~\ref{theorem:GeneralFractionalCalderon}).

\begin{proof}[Proof of Theorem \ref{theorem:GeneralFractionalCalderon}] Using the Alessandrini identity (Lemma \ref{lemma:generalAlessandrini}) we have \begin{equation}
    0=(\Lambda_{L,q_1}-\Lambda_{L,q_2})[f][g] = (B_{L,q_1}-B_{L,q_2})(u_f,u_g^*) = (q_1-q_2)(u_f,u_g^*)
\end{equation}
for all $f \in C_c^\infty(W_1)$ and $g \in C_c^\infty(W_2)$, where 
\begin{enumerate}[(i)]
\item\label{item 1 proof abstract DN maps} $u_f$ solves $B_{L,q_1}(u_f,\cdot)=0$ in $\tilde{H}^s(\Omega)$ with $u_f-f \in \tilde{H}^s(\Omega)$;
\item\label{item 2 proof abstract DN maps} $u_g^*$ solves $B_{L,q_2}^*(u_g^*,\cdot)=0$ in $\tilde{H}^s(\Omega)$ with $u_g^* - g \in \tilde{H}^s(\Omega)$.
\end{enumerate}

By the Runge approximation property (Lemma \ref{lemma:GeneralRungeApproximation}), we have that there exists sequences $u_{f_k}-f_k \to v$ and $u_{g_k}^* -g_k \to v^*$ as $k \to \infty$ where $f_k \in C_c^\infty(W_1), g_k \in C_c^\infty(W_2)$ and $v,v^* \in \tilde{H}^s(\Omega)$. Since $q_1,q_2$ are bounded bilinear forms and local, we find that
\begin{equation}
    (q_1-q_2)(v,v^*) = \lim_{k\to\infty}(q_1-q_2)(u_{f_k}-f_k,u_{g_k}^* -g_k) = \lim_{k\to\infty}(q_1-q_2)(u_{f_k},u_{g_k}^*) = 0
\end{equation}
where the first identity follows from the boundedness of $q_1-q_2$, the second identity follows from the support conditions and the locality of $q_1-q_2$, and the last identity follows from the assumption that the exterior DN maps are equal. This implies that $q_1 = q_2$ in $\tilde{H}^s(\Omega) \times \tilde{H}^s(\Omega)$.
\end{proof}

\begin{proof}[Proof of Corollary \ref{cor:fracCaldExteriorDet}]
First of all, we have that $q_1 = q_2$ in $\tilde{H}^s(\Omega) \times \tilde{H}^s(\Omega)$ by Theorem \ref{theorem:GeneralFractionalCalderon}. Let us now take $f,g \in C_c^\infty(W)$. We have by the Alessandrini identity (Lemma \ref{lemma:generalAlessandrini}) and the assumption that $\Lambda_{L,q_1}[f][g]=\Lambda_{L,q_2}[f][g]$ that
\begin{equation}
    0 = (q_1-q_2)(u_f,u_g^*).
\end{equation}
The rest is a simple calculation and an approximation argument. Let us write that $B:=q_1-q_2$ and calculate
\begin{equation}\begin{split}
    B(u_f,u_g^*) &= B((u_f-f)+f,(u_g^*-g)+g)\\
    &=B(u_f-f,u_g^*-g)+B(u_f-f,g)+B(f,u_g^*-g)+B(f,g)\\
    &=B(f,g)
\end{split}
\end{equation}
where we used that $u_f-f,u_g^*-g \in \tilde{H}^s(\Omega)$ so that $B(u_f-f,u_g^*-g)$ vanishes by interior determination, and $B(u_f-f,g)$ and $B(f,u_g^*-g)$ vanish by the locality of $B$. This shows that $q_1(f,g)=q_2(f,g)$ for all $f,g \in C_c^\infty(W)$. Approximation gives that $q_1=q_2$ in $\tilde{H}^s(W) \times \tilde{H}^s(W)$. This completess the proof of the first part of the statement.

We next prove the conclusions about the full data case. We fix now that $W = \Omega_e$. Suppose first that $q_1,q_2 \in L^1_\text{loc}(\R^n)$. Now we have that $\int_{\R^n} q_1\phi dx=\int_{\R^n} q_2\phi dx$ for all $\phi \in C_c^\infty(\Omega\cup\Omega_e)$ by interior and exterior determination. This implies that $q_1 = q_2$ a.e. in $\Omega\cup \Omega_e$. If $\partial \Omega$ has measure zero, then $q_1 = q_2$ a.e. in $\R^n$. On the other hand, if $q_1,q_2 \in C(\R^n)$, then we find first that $q_1 = q_2$ in $\Omega \cup \Omega_e$, but this must then also hold in $\R^n = \overline{\Omega \cup \Omega_e}$ by the continuity without assuming anything about the regularity or measure of $\partial \Omega$. 
\end{proof}

\begin{remark}\label{rmk: exteriorPDOcase} Corollary \ref{cor:fracCaldExteriorDet} also applies in the case of local linear perturbations studied in Section \ref{sec:fractionalCalderonMainSec}. One could obtain similar exterior and full data results as stated in the case of zeroth order perturbations in Corollary \ref{cor:fracCaldExteriorDet}. The argument first uses this general result for bilinear forms, and then similar test functions as used in the proof of Theorem \ref{main-theorem-singular} (but now for the exterior conditions). Finally, the full data case can be concluded either using continuity or basic measure theory. This would not involve any new ideas beyond these and the details are thus omitted. 
\end{remark}

We give the following simple examples as corollaries right now. Both results given in the examples are new but we do not aim to write the most general examples into this direction here. We also remark that the generalized Runge approximation result in Lemma \ref{lemma:GeneralRungeApproximation} may find applications in other problems on unbounded domains. We study more sophisticated examples in Section \ref{sec:fractionalCalderonMainSec}.

\begin{proposition}[Example models]\label{prop:unboundedExamples} Let $s \geq 0$, and $\Omega \subset \R^n$ be an open set such that $\Omega_e \neq \emptyset$. The generalized fractional Calderón problem is well-defined and has the uniqueness property (in the weak sense) in the following cases:
\begin{enumerate}[(i)]
    \item (Domains without Poincaré inequalities) For $(-\Delta)^s + q$ in $\Omega$ where $s \in \R_+ \setminus \Z$ and the potential $q$ is uniformly positive and bounded, i.e. $q \in L^\infty(\R^n)$ is such that $q \geq c > 0$ a.e. in $\Omega$ for some $c(q) > 0$.\label{item:example1}
    \item (Higher order perturbations) For $(-\Delta)^t + (-\Delta)^{s/2}(\gamma (-\Delta)^{s/2}) + q$ in $\Omega$ where $t \in \R_+ \setminus \Z$, $s \in 2\Z$ and $t < s$, and $\gamma$, $q$ are uniformly positive and bounded in $\Omega$.\label{item:example2}
\end{enumerate}
\end{proposition}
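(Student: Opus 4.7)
The plan is to derive both statements as immediate applications of Theorem~\ref{theorem:GeneralFractionalCalderon}. In each case I would split the operator into a nonlocal bilinear form $L$ carrying the symmetric UCP and a local bounded bilinear form playing the role of $q$, verify strong coercivity on $\tilde{H}^s(\Omega)$ so that Lemma~\ref{lemma:generalWellposedness} produces the exterior DN map, and then choose any two disjoint nonempty open sets $W_1, W_2 \subset \Omega_e$ (which exist because $\Omega_e$ is nonempty and open in $\R^n$).

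For \ref{item:example1} I would set the ambient space to $H^s(\R^n)$ and define
\[
    L(u,v) \vcentcolon = \int_{\R^n} (-\Delta)^{s/2}u\,(-\Delta)^{s/2}v \,dx, \qquad q(u,v) \vcentcolon = \int_{\R^n} q\, uv \,dx.
\]
Then $L$ is bounded and symmetric on $H^s \times H^s$, while $q$ is bounded by $\|q\|_{L^\infty}$ (via $H^s \hookrightarrow L^2$) and local. The left UCP of $L$ on any open $W$ reduces to the Ghosh--Salo--Uhlmann UCP for $(-\Delta)^s$ with $s \in \R_+ \setminus \Z$: if $u|_W = 0$ and $L(u,\phi) = 0$ for all $\phi \in C_c^\infty(W)$, then $(-\Delta)^s u$ vanishes on $W$ as a distribution, so $u \equiv 0$; symmetry of $L$ gives the right UCP as well. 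Strong coercivity on $\tilde{H}^s(\Omega)$ is immediate, since the pointwise bound $q \geq c > 0$ a.e.\ in $\Omega$ combined with $L(u,u) = \|(-\Delta)^{s/2}u\|_{L^2(\R^n)}^2$ dominates the full $H^s$ norm without any Poincaré-type input.

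For \ref{item:example2} I would work in the ambient space $H^s(\R^n)$ with $s \in 2\Z_+$, take the nonlocal form $L(u,v) \vcentcolon = \int_{\R^n} (-\Delta)^{t/2}u \, (-\Delta)^{t/2}v \,dx$ (which is bounded on $H^s \times H^s$ because $H^s \hookrightarrow H^t$, is symmetric, and has the symmetric UCP by Ghosh--Salo--Uhlmann since $t \in \R_+ \setminus \Z$), and introduce the local perturbation
\[
    q_0(u,v) \vcentcolon = \int_{\R^n} \gamma (-\Delta)^{s/2}u\,(-\Delta)^{s/2}v \,dx + \int_{\R^n} q\, uv \,dx.
\]
Boundedness of $q_0$ on $H^s \times H^s$ follows from $\gamma, q \in L^\infty$ together with the mapping property $(-\Delta)^{s/2}\colon H^s \to L^2$, and strong coercivity from the uniform lower bounds $\gamma \geq c_\gamma > 0$ and $q \geq c_q > 0$ in $\Omega$, giving control of $\|(-\Delta)^{s/2}u\|_{L^2}^2 + \|u\|_{L^2}^2$ which is equivalent to $\|u\|_{H^s}^2$ for integer $s$. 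Theorem~\ref{theorem:GeneralFractionalCalderon} then yields weak uniqueness of $q_0$.

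The only delicate point I foresee is the locality of $q_0$ in \ref{item:example2}: it is crucial here that $s \in 2\Z$, since only then is $(-\Delta)^{s/2}$ a genuine local differential operator, whence $\supp((-\Delta)^{s/2}u) \subset \supp(u)$ and $q_0(u,v)=0$ whenever $\supp(u) \cap \supp(v) = \emptyset$. Everything else is routine bookkeeping needed to verify the hypotheses of Theorem~\ref{theorem:GeneralFractionalCalderon}. These two examples illustrate complementary features of the axiomatic setup: \ref{item:example1} dispenses with the need for a Poincaré inequality on $\Omega$ by exploiting the strict positivity of the zeroth-order potential, while \ref{item:example2} accommodates a local perturbation of strictly higher order than the nonlocal principal part, a situation inaccessible to arguments that rely on $(-\Delta)^s$ being the leading term.
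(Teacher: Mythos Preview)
Your proposal is correct and follows essentially the same argument as the paper: split into the nonlocal form $L$ with the symmetric UCP (via the fractional Laplacian UCP of Ghosh--Salo--Uhlmann and its higher-order extension) and a local bounded perturbation, verify strong coercivity from the uniform positivity assumptions, and apply Theorem~\ref{theorem:GeneralFractionalCalderon}. Your observation that locality of $q_0$ in \ref{item:example2} hinges on $s/2 \in \Z$ is exactly the point the paper singles out as well.
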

\begin{remark} In the domains, with finite fractional Poincaré constants, one could formulate results for more general local higher order perturbations. If additionally the domain is bounded, then the spectral theory would allow to consider a large class of local higher order perturbations as long as a weaker coercivity estimate can be established. We note that the problem must then be set in the space whose Sobolev scale depends on the local perturbation rather than the order of the fractional Laplacian. The key observation is that still the fractional Laplacian permits the uniqueness results for the related inverse problems whenever the forward problems are well-posed.
\end{remark}
\begin{proof}\ref{item:example1} The associated bilinear forms are $L(u,v) = \langle (-\Delta)^{s/2}u, (-\Delta)^{s/2}v \rangle$ and $q(u,v) = \langle qu, v \rangle$. It follows that the related bilinear form $L+q$ is equivalent with $H^s(\R^n)$ norm in $\tilde{H}^s(\Omega)$. It follows from \cite[Theorem 1.2]{CMR20} and \cite[Theorem 1.2]{GSU20} that $L$ has the symmetric UCP on any open set, and by the definition $q$ is local. Hence, Theorem \ref{theorem:GeneralFractionalCalderon} applies.

\ref{item:example2} We have now that $L = \langle (-\Delta)^{t/2}u, (-\Delta)^{t/2}v \rangle$ has again the symmetric UCP on any open set for the functions in $H^s(\R^n)$. It is also clear that $L\colon H^s(\R^n) \times H^s(\R^n) \to \R$ is bounded since $0 < t < s$. The bilinear form related to $(-\Delta)^{s/2}(\gamma (-\Delta)^{s/2}) + q$ is given by
\begin{equation}Q(u,v) = \langle \gamma(-\Delta)^{s/2}u, (-\Delta)^{s/2}v \rangle + \langle qu, v \rangle.\end{equation} It is clear that this bilinear form is also bounded in $H^s(\R^n) \times H^s(\R^n)$. The coercivity estimate is also easy to argue since $Q$ is (strongly) coercive in $\tilde{H}^s(\Omega)$ and $L(v,v)\geq 0$ for any $v \in H^s(\R^n)$. Since $s/2 \in \Z$, we have that the bilinear form $Q$ is local, and $L$ has the symmetric UCP on any open set as earlier. Again, the rest follows from Theorem \ref{theorem:GeneralFractionalCalderon}.
\end{proof}

\section{Interpolation of fractional Poincaré constants on unbounded domains}\label{sec:poincare}

The purpose of this section is to extend the fractional Poincar\'e inequality to open sets which are bounded in one direction as introduced in Definition~\ref{def:bounded1dir}. The main result of this section is the proof of Theorem \ref{thm:PoincUnboundedDoms}. The Poincaré inequalities on these sets allow us to come up with concrete nontrivial example models that satisfy the assumptions of Theorem \ref{theorem:GeneralFractionalCalderon}.

We first recall a recent result on the fractional Poincaré inequalities when $1 \leq p < \infty$ and $0 < s < 1$ in \cite{IGPF21-fracPoincUnbounded,MS21-Best-frac-p-Poincare-unboundedDoms}. They study the Poincaré constant via the variational problem 
\begin{equation}
    P_{n,s,p}(\Omega) \vcentcolon = \inf_{u \in W_\Omega^{s,p}(\R^n),u\neq0} \frac{[u]_{s,p,\R^n}^p}{\norm{u}_{L^p(\Omega)}^p},
\end{equation}
where
\begin{equation}
    [u]_{s,p,\R^n} \vcentcolon = \left(\frac{C(n,s,p)}{2} \int_{\R^n} \int_{\R^n} \frac{\abs{u(x)-u(y)}^p}{\abs{x-y}^{n+sp}}dxdy\right)^{1/p}
\end{equation}
is the (normalized) Gagliardo seminorm, $C(n,s,p) > 0$ is some positive constant and $W^{s,p}_{\Omega}(\R^n)$ is the closure of $C_c^{\infty}(\Omega)$ with respect to the norm $(\|u\|_{L^p(\Omega)}^p+[u]_{s,p,\R^n}^p)^{1/p}$ (see e.g.~\cite{MS21-Best-frac-p-Poincare-unboundedDoms} for details). The associated Poincaré constant is then defined as $C^*_{n,s,p}(\Omega) \vcentcolon = P_{n,s,p}(\Omega)^{-1/p}$. If $C^*_{n,s,p}(\Omega)  > 0$, then it holds that
\begin{equation}
    \norm{u}_{L^p(\R^n)} \leq C^*_{n,s,p}(\Omega) [u]_{s,p,\R^n}.
\end{equation}

We remark that $[u]_{s,2,\R^n} = \norm{(-\Delta)^{s/2}u}_{L^2(\R^n)}$ and $W_\Omega^s(\R^n) = \tilde{H}^s(\Omega)$ in the notation used in this article. The fractional Poincaré inequalities with respect to the Gagliardo seminorms are not equivalent with the Poincaré inequalities for the fractional Laplacian when $p \neq 2$. However, there exist the following embeddings when $0 < s < 1$ (see e.g. \cite[p. 47 Proposition 2 (iii)/(9), pp. 51--52, p. 242 Theorem 2 and Remark 3, and Section 5.2.5]{Triebel-Theory-of-function-spaces} and \cite[Theorems 6.2.4, 6.3.2 and 6.4.4]{Interpolation-spaces} where one identifies $W^{s,p}(\R^n) = B_{pp}^s(\R^n)$ as a Besov space and $H^{s,p}(\R^n) = F_{p2}^s(\R^n)$ as a Triebel--Lizorkin space):
\begin{align}\label{eq:ListOfembeddings}
    W^{s,p}(\R^n) &\hookrightarrow H^{s,p}(\R^n),\quad \norm{u}_{H^{s,p}(\R^n)} \leq C\norm{u}_{W^{s,p}(\R^n)} &(1 < p \leq 2) \\
    H^{s,p}(\R^n) &\hookrightarrow W^{s,p}(\R^n),\quad  \norm{u}_{W^{s,p}(\R^n)} \leq C\norm{u}_{H^{s,p}(\R^n)} &(2 \leq p < \infty)\\
    \dot{W}^{s,p}(\R^n) &\hookrightarrow \dot{H}^{s,p}(\R^n),\quad \norm{(-\Delta)^{s/2}u}_{L^p(\R^n)} \leq C [u]_{s,p,\R^n} &(1 < p \leq 2)\\
    \dot{H}^{s,p}(\R^n) &\hookrightarrow \dot{W}^{s,p}(\R^n),\quad [u]_{s,p,\R^n} \leq C \norm{(-\Delta)^{s/2}u}_{L^p(\R^n)} &(2 \leq p < \infty)\\
     H^{s,p}(\R^n) &\hookrightarrow \dot{H}^{s,p}(\R^n), \quad \norm{(-\Delta)^{s/2}u}_{L^p(\R^n)} \leq C\norm{u}_{H^{s,p}(\R^n)} &(1 < p < \infty, s \geq 0) \\
     \quad W^{s,p}(\R^n) &\hookrightarrow \dot{W}^{s,p}(\R^n), \quad [u]_{s,p,\R^n} \leq C\norm{u}_{W^{s,p}(\R^n)} &(1 < p < \infty, 0 < s < 1)
\end{align}
with some constants $C > 0$ independent of the choice of a function from the left hand side spaces. The last two embeddings are the same as continuity of the associated fractional Laplacian operators, in $H^{s,p}(\R^n)$ the operator does not depend on a particular choice of $p$ and always coincides with the Fourier multiplier definition of $(-\Delta)^{s/2}$.

\begin{theorem}[{\cite[Theorem 1.2]{IGPF21-fracPoincUnbounded}, \cite[Theorem 1.2]{MS21-Best-frac-p-Poincare-unboundedDoms}}]\label{thm:UnboundedLowOrdPoinc} Let $\Omega = \R^{n-k} \times \omega \subset \R^n$ where $n\geq k>0$ and $\omega \subset \R^{k}$ is a bounded open set. Then for any $1 < p < \infty$ and $0 < s  < 1$ it holds that
\begin{equation}
    P_{n,s,p}(\Omega) = P_{k,s,p}(\omega).
\end{equation}
\end{theorem}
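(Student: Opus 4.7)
The plan is to prove the matching inequalities $P_{n,s,p}(\Omega) \le P_{k,s,p}(\omega)$ and $P_{n,s,p}(\Omega) \ge P_{k,s,p}(\omega)$ separately, exploiting the product structure $\Omega = \R^{n-k} \times \omega$. The key algebraic tool, obtained by the scaling $T = |Y|T'$, is the subordination identity
\begin{equation*}
\frac{1}{|Y|^{k+sp}}=\frac{1}{\alpha_{n-k}}\int_{\R^{n-k}}\frac{dT}{(|Y|^2+|T|^2)^{(n+sp)/2}},
\qquad \alpha_{n-k}\vcentcolon=\int_{\R^{n-k}}\frac{dT}{(1+|T|^2)^{(n+sp)/2}}.
\end{equation*}
This dovetails with the dimensional compatibility $C(n,s,p)\,\alpha_{n-k}=C(k,s,p)$ of the Gagliardo normalization constants, which can be verified directly from their Gamma-function expressions (and which I would check for $p=2$ as a sanity step); this calibration is precisely what upgrades the expected two-sided inequality into the exact equality claimed.

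For $P_{n,s,p}(\Omega) \ge P_{k,s,p}(\omega)$ I would proceed by slicing. Given $u \in W^{s,p}_\Omega(\R^n)$, the slice $u(X,\cdot)$ belongs to $W^{s,p}_\omega(\R^k)$ for a.e.\ $X\in\R^{n-k}$, so the $k$-dimensional Poincar\'e inequality applied slicewise and Fubini's theorem yield
\begin{equation*}
\|u\|_{L^p(\Omega)}^p \;\le\; P_{k,s,p}(\omega)^{-1}\int_{\R^{n-k}}[u(X,\cdot)]_{s,p,\R^k}^p\,dX.
\end{equation*}
Inserting the subordination identity for $|Y_1-Y_2|^{-(k+sp)}$ and performing the substitution $X_2 \vcentcolon= X_1-T$ rewrites the right-hand side as a four-fold integral over $(\R^{n-k})^2 \times (\R^k)^2$ whose integrand is $|u(X_1,Y_1)-u(X_1,Y_2)|^p$ divided by $(|X_1-X_2|^2+|Y_1-Y_2|^2)^{(n+sp)/2}$. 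It then suffices to dominate this ``partial-slice'' integral by the full Gagliardo integral $[u]_{s,p,\R^n}^p$ with constant $1$; for $p=2$ this is an immediate consequence of Plancherel and the pointwise inequality $(|\xi|^2+|\eta|^2)^s \ge |\eta|^{2s}$, and for general $p$ I would deduce it from symmetrization in $(X_1,X_2)$ combined with translation invariance and convexity of $t\mapsto|t|^p$.

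For $P_{n,s,p}(\Omega) \le P_{k,s,p}(\omega)$ I would build an almost-minimizing sequence by tensorization. Pick an almost-minimizing sequence $v_j \in C_c^\infty(\omega)$ for the Rayleigh quotient on $\omega$ and a cutoff $\phi \in C_c^\infty(\R^{n-k})$, then set $u_{j,R}(X,Y) \vcentcolon= R^{-(n-k)/p}\phi(X/R)v_j(Y) \in C_c^\infty(\Omega)$. The $L^p$ norm factorizes, $\|u_{j,R}\|_{L^p(\Omega)}^p = \|\phi\|_{L^p}^p\|v_j\|_{L^p(\omega)}^p$, independently of $R$, and the bilinear decomposition
\begin{equation*}
\phi_R(X_1)v_j(Y_1)-\phi_R(X_2)v_j(Y_2)=\phi_R(X_1)[v_j(Y_1)-v_j(Y_2)]+[\phi_R(X_1)-\phi_R(X_2)]\,v_j(Y_2)
\end{equation*}
combined with Minkowski's inequality splits $[u_{j,R}]_{s,p,\R^n}^p$ into a ``$Y$-contribution'' (evaluated via the subordination formula to give exactly $\|\phi\|_{L^p}^p[v_j]_{s,p,\R^k}^p$ by the calibration above) and an ``$X$-contribution'' which by scaling is $O(R^{-sp})\to 0$. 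Letting $R\to\infty$ and then optimizing in $j$ produces the upper bound.

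The hardest step will be ensuring sharpness of the constants for $p \ne 2$. Whereas Fourier analysis at $p=2$ renders the lower-bound comparison pointwise and tight, for general $p$ the matching rests delicately on the identity $C(n,s,p)\,\alpha_{n-k}=C(k,s,p)$ together with a symmetrization that does not leak constants when passing between the partial and full Gagliardo integrals; making these two ingredients rigorous is what turns the naive equivalence of Poincar\'e constants into their exact equality.
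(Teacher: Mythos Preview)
This theorem is quoted from the references \cite{IGPF21-fracPoincUnbounded,MS21-Best-frac-p-Poincare-unboundedDoms} and is \emph{not} proved in the present paper, so there is no in-paper proof to compare against. Your two-sided strategy (tensorized test functions for the upper bound, slicing plus a subordination identity for the lower bound) is indeed the standard route, and the upper bound $P_{n,s,p}(\Omega)\le P_{k,s,p}(\omega)$ via $u_{j,R}=R^{-(n-k)/p}\phi(\cdot/R)\otimes v_j$ is correctly sketched: the subordination identity together with the calibration $C(n,s,p)\,\alpha_{n-k}=C(k,s,p)$ evaluates the $Y$-contribution exactly, and your scaling computation gives the $X$-contribution as $O(R^{-sp})\to0$.

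There is, however, a genuine gap in the lower bound for $p\neq2$. After subordination you need, with sharp constant $1$,
\[
\int\frac{|u(X_1,Y_1)-u(X_1,Y_2)|^p}{(|X_1-X_2|^2+|Y_1-Y_2|^2)^{(n+sp)/2}}\,dX_1dX_2dY_1dY_2\;\le\;\int\frac{|u(X_1,Y_1)-u(X_2,Y_2)|^p}{(|X_1-X_2|^2+|Y_1-Y_2|^2)^{(n+sp)/2}}\,dX_1dX_2dY_1dY_2.
\]
For $p=2$ your Plancherel argument via $|\eta|^{2s}\le(|\xi|^2+|\eta|^2)^s$ is valid. For general $p$, the mechanism you propose---``symmetrization in $(X_1,X_2)$ combined with translation invariance and convexity''---does not work as stated. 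Using the kernel symmetries $X_1\leftrightarrow X_2$ and $Y_1\leftrightarrow Y_2$ to symmetrize both sides reduces the desired comparison to the pointwise inequality
\[
|a-b|^p+|c-d|^p\;\le\;|a-d|^p+|b-c|^p,
\]
with $a=u(X_1,Y_1)$, $b=u(X_1,Y_2)$, $c=u(X_2,Y_1)$, $d=u(X_2,Y_2)$; but this fails already for $a=d=1$, $b=c=0$ (left side $2$, right side $0$). No amount of translation invariance repairs this, since the obstruction is purely algebraic. You correctly flag this step as the delicate one, but the proposal does not supply a working argument; the sharp lower bound for general $p$ in the cited references requires a more careful device than the one you outline.
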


We remark that it is well-known that in open bounded sets one has finite Poincaré constants for the Gagliardo seminorms. See also \cite[Theorem 1.1]{AFM19-Asymptotic-FracLap-unbounded} where the result of Theorem \ref{thm:UnboundedLowOrdPoinc} was obtained when $p=2$ by considering asymptotics of the fractional Poincaré constants for domains that are bounded in one direction. The fractional Hardy inequalities on (possibly unbounded) convex sets \cite[Theorem 1.2]{BC18-fractionalHardy} also imply that $P_{n,s,p}(\Omega) > 0$ whenever $\Omega$ is bounded in one direction. See also the related work \cite{yeressian2014asymptotic} proving that $P_{n,s,2}(\Omega) > 0$.

Theorem \ref{theorem: Higher Order Fractional Poincare Inequality} is a generalization of a recent interpolation result for the fractional Poincaré constants of the fractional Laplacians in \cite[Theorem 3.17]{CMR20}. Their result relates the classical Poincaré constant for the gradient in $L^2$ to the Poincaré constants for pairs different order fractional Laplacians. In comparison to the result in \cite{CMR20}, we observe here that a similar result holds also on unbounded domains, without reducing the problem to the classical Poincaré constants, and for any $1 < p < \infty$. The proof of Theorem \ref{theorem: Higher Order Fractional Poincare Inequality} is given in the end of this section.

We can now prove Theorem \ref{thm:PoincUnboundedDoms} using Theorems \ref{thm:UnboundedLowOrdPoinc} and \ref{theorem: Higher Order Fractional Poincare Inequality}. The case when $s \geq 1$ uses the boundedness of Riesz transformation. We do not know a proof which would settle the cases $1 < p < 2$ and $0 < s < 1$. We also give one alternative proof when $p = 2$ as its structure may be relevant for the future attempts to prove the missing special cases.

We first recall a simple lemma due to the absence of a reference.

\begin{lemma}\label{lemma:rigidInvariance} Let $A\colon \R^n\to \R^n$ be a rigid Euclidean motion $A(x) = Lx + x_0$ where $L$ is a linear isometry and $x_0 \in \R^n$. If $u \in C_c^\infty(\R^n)$ and $s \geq 0$, then 
\begin{equation}\label{eq:RigidMotionInvariance}
    (-\Delta)^{s/2}(u \circ A)(x) = ((-\Delta)^{s/2}u\circ A)(x)
\end{equation}
for all $x \in \R^n$. In particular,
$\norm{(-\Delta)^{s/2}u}_{L^p(\R^n)} = \norm{(-\Delta)^{s/2}(u\circ A)}_{L^p(\R^n)}$ for any $p \in [1,\infty]$.
\end{lemma}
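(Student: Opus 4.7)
My plan is to verify the identity on the Fourier side, exploiting the orthogonal invariance of the symbol $\abs{\xi}^s$.

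Since $u \in C_c^\infty(\R^n) \subset \schwartz(\R^n)$, its Fourier transform $\hat u$ also lies in $\schwartz(\R^n)$, and so $\abs{\xi}^s \hat u(\xi)$ is a continuous function with rapid decay (even for non-integer $s$ the singularity of $\abs{\xi}^s$ at the origin is mild enough, since $s \geq 0$). Its inverse Fourier transform $(-\Delta)^{s/2}u$ is therefore a bounded, smooth function; the same is true for $u \circ A$, which again lies in $\schwartz(\R^n)$ because $A$ is an affine isometry and thus preserves Schwartz decay. Consequently, both sides of \eqref{eq:RigidMotionInvariance} are bona fide pointwise-defined functions, and it suffices to check the identity after taking Fourier transforms.

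The key computation is the effect of $A$ on the Fourier transform. Writing $y = Lx + x_0$ and using that the linear isometry $L$ is orthogonal (so $L^{-1} = L^T$ and $\abs{\det L} = 1$), the change of variables in the definition $\hat f(\xi) = \int_{\R^n} f(x) e^{-ix\cdot\xi}\,dx$ yields
\[
    \widehat{u \circ A}(\xi) = e^{i x_0 \cdot L\xi}\,\hat u(L\xi),
\]
where I have used $L^T v \cdot \xi = v \cdot L\xi$. Multiplying by $\abs{\xi}^s$ and invoking the isometry identity $\abs{L\xi} = \abs{\xi}$ gives
\[
    \widehat{(-\Delta)^{s/2}(u \circ A)}(\xi) = \abs{L\xi}^s\, e^{i x_0 \cdot L\xi}\,\hat u(L\xi).
\]
On the other hand, applying the same composition formula to $v \vcentcolon = (-\Delta)^{s/2}u$, whose Fourier transform is $\abs{\eta}^s \hat u(\eta)$, produces the identical right-hand side. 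Fourier inversion therefore yields \eqref{eq:RigidMotionInvariance} pointwise.

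For the $L^p$ norm statement I would simply change variables $y = Ax$ (whose Jacobian has absolute value one) inside $\int_{\R^n} \abs{(-\Delta)^{s/2}(u \circ A)(x)}^p\,dx$ and invoke \eqref{eq:RigidMotionInvariance}. No substantial obstacle is anticipated; the only bookkeeping point is to make sure the Fourier multiplier acts pointwise rather than only as a tempered distribution, but this is automatic on Schwartz inputs with $s \geq 0$ as explained above.
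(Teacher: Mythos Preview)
Your proof is correct and follows essentially the same approach as the paper: both verify the identity on the Fourier side by computing $\widehat{u\circ A}$ via a change of variables and then invoking the orthogonal invariance $\abs{L\xi}=\abs{\xi}$ of the symbol. The only cosmetic difference is that the paper writes the translation phase as $e^{iL^{-1}(x_0)\cdot\xi}$ rather than your equivalent $e^{ix_0\cdot L\xi}$, and you add a brief justification that both sides are genuine pointwise functions on Schwartz inputs, which the paper leaves implicit.
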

\begin{proof} By a simple change of variables it is easy to see that \eqref{eq:RigidMotionInvariance} implies $\norm{(-\Delta)^{s/2}u}_{L^p(\R^n)} = \norm{(-\Delta)^{s/2}(u\circ A)}_{L^p(\R^n)}$ for any $p \in [1,\infty]$. 

On the other hand, the formula \eqref{eq:RigidMotionInvariance} follows by a direct calculation using the symbols and a change of variables. In fact,
\begin{equation}
    \fourier((-\Delta)^{s/2}(u\circ A))(\xi) = \abs{\xi}^s \mathcal{F}(u \circ A)(\xi) = \abs{\xi}^s e^{iL^{-1}(x_0)\cdot \xi}\fourier(u \circ L)(\xi)
\end{equation}
and 
\begin{equation}
    \fourier((-\Delta)^{s/2}u\circ A)(\xi) = e^{iL^{-1}(x_0)\cdot \xi} \int_{\R^n}e^{-ix \cdot \xi}((-\Delta)^{s/2}u)(Lx)dx = e^{iL^{-1}(x_0)\cdot \xi}\abs{L\xi}^s\hat{u}(L\xi).
\end{equation}
Now we only need to notice that $\abs{L\xi} = \abs{\xi}$ and $\fourier(u \circ L)(\xi) = \hat{u}(L\xi)$ hold. This completes the proof.
\end{proof}

\begin{remark} \label{remark: poincare constants and euclidean motion} Lemma \ref{lemma:rigidInvariance} implies that $\Omega$ and $A\Omega$ have the same fractional Poincaré constants. It is clear that if $\Omega$ has a finite fractional Poincaré constant then all open subsets of $\Omega$ also have finite fractional Poincaré constants which are at most the fractional Poincaré constant of $\Omega$. Therefore, without loss of generality, we will often work directly with the cylindrical domains in the proofs.
\end{remark}

\begin{proof}[{Proof of Theorem \ref{thm:PoincUnboundedDoms} when $p \geq 2$}] By Remark~\ref{remark: poincare constants and euclidean motion} it suffices to prove the assertion for cylindrical domains $\Omega =\R^{n-k}\times \omega$, where $\omega\subset\R^k$ is an open bounded set and $n\geq k>0$. Moreover, by Theorem \ref{thm:UnboundedLowOrdPoinc} and the fact that $P_{k,r,p}(\omega) >0$ we know that the assumptions of Theorem \ref{theorem: Higher Order Fractional Poincare Inequality} hold for the parameters $z=0$ and $0<r<1$. This follows rather directly from the embeddings \eqref{eq:ListOfembeddings} between the (in)homogenenous Besov and Triebel spaces when $p \geq 2$.

We are free to choose any $t \geq s \geq r$ to find that
\begin{equation}
    \label{eq: conclusion higher order fractional poincare constant I3}
        \|(-\Delta)^{s/2}u\|_{L^p(\R^n)}\leq C_{r,0}^{\frac{t-s}{r}}\|(-\Delta)^{t/2}u\|_{L^p(\R^n)}
\end{equation}
where $C_{r,0}\vcentcolon = CP_{k,r,p}(\omega)^{-1/p} > 0$, where $C>0$ is the operator norm of the embedding $\dot{H}^{r,p}(\R^n)\hookrightarrow \dot{W}^{r,p}(\R^n)$. Since $r > 0$ can be taken as small as possible, this proves Theorem \ref{thm:PoincUnboundedDoms} for any $t,s > 0$ with $t \geq s$.

On the other hand, when $s=0$, we have that $r > s=z$ and we are free to choose any $t \geq r$ in Theorem \ref{theorem: Higher Order Fractional Poincare Inequality}. This proves Theorem \ref{thm:PoincUnboundedDoms} in the case $t > 0$ and $s=0$.
\end{proof}

\begin{proof}[{Proof of Theorem \ref{thm:PoincUnboundedDoms} when $1<p<2$, $s \geq 1$}] Notice that the boundedness of the Riesz transform gives $\norm{\nabla u}_{L^p(\R^n,\R^n)} \sim \norm{(-\Delta)^{1/2}u}_{L^p(\R^n)}$ for any $1 < p < \infty$ (see \cite[Chapter III, Section 1.2]{Singular-Integrals-Stein}). By the classical Poincaré inequality, we know that there exists some $C > 0$ such that
\begin{equation}
    \norm{u}_{L^p(\Omega)} \leq C\norm{\nabla u}_{L^p(\Omega,\R^n)} \sim \norm{(-\Delta)^{1/2}u}_{L^p(\R^n)}.
\end{equation}
Now the proof continues as the earlier proof of Theorem \ref{thm:PoincUnboundedDoms}. 
\end{proof}

Next we recall and prove the fractional Poincaré inequality on bounded sets. We make a few remarks about it now. One could replace the argument using the Hardy--Littlewood--Sobolev inequality by the embedding theorem of homogeneous Triebel--Lizorkin spaces \cite[Theorem A]{Han95-EmbeddingTheoremForHomogBesovTriebel}. One could replace the interpolation argument by an iteration with integer orders and monotonicity arguments, using certain special inequalities for the critical cases ($tp=n$) \cite[p. 261]{Ozawa} and Hölder estimates for homogeneous convolution kernels in the supcritical cases ($tp>n$) \cite[Theorem 4.5.10]{HO:analysis-of-pdos} (see \cite[Theorem 3.14]{CMR20} for this proof in the simpler special case $p=2$). One can also prove the fractional Poincaré inequality on bounded sets using the compact Sobolev embeddings (see e.g. \cite[Lemma 3.3]{ARS21-OnFractVersionMurat} and the related discussion for the outline) but we are not sure if this directly works for all possible parameters (i.e. in the critical and supcritical cases) without interpolation or iteration. One can also obtain some stronger statements, at least in many special cases, using the maximum principle (see \cite[Proposition 1.4]{rosoton2013extremal} and \cite[Claim 2.8]{Boundary-Regularity-fract-laplacian}).

\begin{lemma}[Fractional Poincaré inequality on bounded sets]\label{lemma:PoincareBoundedsets} Let $\Omega \subset \R^n$ be a bounded open set, $1 < p < \infty$ and $0 \leq s \leq t < \infty$. Then there exists $C(n,p,t,s,\Omega)>0$ such that
\begin{equation}
    \norm{(-\Delta)^{s/2}u}_{L^p(\R^n)} \leq C\norm{(-\Delta)^{t/2}u}_{L^p(\R^n)}
\end{equation}
for all $u \in \tilde{H}^t(\Omega)$.
\end{lemma}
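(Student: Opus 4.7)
The plan is to combine the Hardy–Littlewood–Sobolev (HLS) inequality, which provides a base Poincaré-type estimate at a small fractional order, with the interpolation result of Theorem \ref{theorem: Higher Order Fractional Poincare Inequality}, which promotes such an estimate to the full range of Sobolev scales. The inequality is trivial when $s=t$ (with $C=1$), so throughout the plan I assume $0\leq s<t$; in particular $t>0$.

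The first step is to establish the base case: for every $r>0$ with $rp<n$, there is a constant $C=C(n,p,r,\Omega)>0$ such that
\begin{equation}
    \|u\|_{L^p(\R^n)}\leq C\,\|(-\Delta)^{r/2}u\|_{L^p(\R^n)} \qquad \text{for all } u\in\tilde H^{r,p}(\Omega).
\end{equation}
The argument uses the identity $u=c_{n,r}\,I_r\!\left((-\Delta)^{r/2}u\right)$ in $\mathscr{S}'(\R^n)$, where $I_r$ denotes the Riesz potential of order $r$, together with the HLS inequality, to obtain
\begin{equation}
    \|u\|_{L^{p^{*}}(\R^n)}\leq C\,\|(-\Delta)^{r/2}u\|_{L^p(\R^n)}, \qquad p^{*}=\tfrac{np}{n-rp}.
\end{equation}
Since $\supp(u)\subset\overline{\Omega}$ and $|\Omega|<\infty$, Hölder's inequality then produces
\begin{equation}
    \|u\|_{L^p(\R^n)}=\|u\|_{L^p(\Omega)}\leq |\Omega|^{r/n}\,\|u\|_{L^{p^{*}}(\R^n)},
\end{equation}
which combines with the previous display to close the base case. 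This is precisely the hypothesis \eqref{eq: assumption higher order fractional poincare constant I1} of Theorem \ref{theorem: Higher Order Fractional Poincare Inequality} with $z=0$.

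The second step is to feed this base inequality into Theorem \ref{theorem: Higher Order Fractional Poincare Inequality}. If $s>0$, I choose $r\in(0,\min(s,n/p))$ so that $t\geq s\geq r>0=z$ matches the first branch of hypotheses; if $s=0<t$, I choose $r\in(0,\min(t,n/p))$ so that $t\geq r\geq s=z=0$ with $r>z$ matches the second branch. In either case the interpolation theorem yields the inequality \eqref{eq: conclusion higher order fractional poincare constant I2} with explicit constant $C(n,p,s,t,\Omega)=C_{r,0}^{(t-s)/r}$, which is the asserted fractional Poincaré inequality on $\tilde H^{t,p}(\Omega)$.

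The only point requiring attention is ensuring the HLS step is applicable, and this is handled by taking $r$ small enough that $rp<n$; such a choice is always available. The bounded-set hypothesis is used only through $|\Omega|<\infty$ in the Hölder step of the base case, and no compactness or spectral input is needed. The remaining ingredients (the HLS inequality and the interpolation theorem) hold for arbitrary $1<p<\infty$, which is what makes this argument uniform in $p$ and free from the dichotomy encountered on unbounded domains in Theorem~\ref{thm:PoincUnboundedDoms}.
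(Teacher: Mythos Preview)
The proposal is correct and follows essentially the same route as the paper: establish a base Poincar\'e inequality at a small order $r$ with $rp<n$ via the HLS inequality combined with H\"older's inequality on the bounded set, and then invoke Theorem~\ref{theorem: Higher Order Fractional Poincare Inequality} to promote this to arbitrary $0\le s\le t$. Your write-up is slightly more explicit about the Riesz potential identity and the case split $s>0$ versus $s=0$, but the structure and key ingredients are identical to the paper's argument.
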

\begin{proof}
Let us first assume that $0 < s < n$ and $1 < p < n/s$. It follows from the Hardy--Littlewood--Sobolev inequality and Hölder's inequality that for any $u \in C_c^\infty(\Omega)$ we have
\begin{equation}
    \norm{u}_{L^p(\Omega)} \leq C'\norm{u}_{L^q(\Omega)} \leq C'C\norm{(-\Delta)^{s/2} u}_{L^p(\R^n)}
\end{equation}
for some $C'(p,q,\Omega), C(n,s,p) >0$ where $q = \frac{np}{n-sp}$. This implies that for any $1 < p < \infty$, there exists some $\epsilon > 0$ such that
\begin{equation}
    \norm{u}_{L^p(\Omega)} \leq C\norm{(-\Delta)^{s/2} u}_{L^p(\R^n)}
\end{equation}
for all $s \in (0,\epsilon)$. It follows now from the interpolation of fractional Poincaré constants (Theorem \ref{theorem: Higher Order Fractional Poincare Inequality}) that there exists $C(n,s,p,\Omega)>0$ such that
\begin{equation}
    \norm{u}_{L^p(\R^n)} \leq C\norm{(-\Delta)^{s/2} u}_{L^p(\R^n)}
\end{equation}
holds for any $s \geq 0$ and $u \in \tilde{H}^s(\Omega)$. This shows the fractional Poincaré inequality on bounded sets.

The rest follows from Theorem \ref{theorem: Higher Order Fractional Poincare Inequality}, or alternatively from the monotonicity of Bessel potential spaces and boundedness of the fractional Laplacian.
\end{proof}

We can now give an alternative proof of Theorem \ref{thm:PoincUnboundedDoms} when $p=2$. We find this to be a quite natural approach for the problem but there is an issue which we do not know how to overcome when $p\neq 2$.

\begin{proof}[Alternative proof of Theorem \ref{thm:PoincUnboundedDoms} when $p=2$]

Suppose for simplicity that $\Omega = \R^{n-1} \times (a,b)$ for some $a < b$ and let us denote points $x$ in $\Omega$ by $x = (x',x_n)$. By Fubini's theorem and the fractional Poincaré inequality for bounded domains we deduce that there exists $C(s,p,a,b)>0$ such that
\begin{equation}
    \norm{u}_{L^p(\Omega)}^p = \int_{\R^{n-1}} \norm{u(x',x_n)}_{L^p(a,b)}^pdx' \leq C\int_{\R^{n-1}} \norm{(-\Delta_{x_n})^{s/2}u(x',x_n)}_{L^p(\R)}^pdx'
\end{equation}
for any $s \geq 0$ and $u \in C_c^\infty(\Omega)$.
We have that 
\begin{equation}
    \int_{\R^{n-1}} \norm{(-\Delta_{x_n})^{s/2}u(x',x_n)}_{L^p(\R)}^pdx' = \norm{(-\Delta_{x_n})^{s/2} u}_{L^p(\R^n)}^p.
\end{equation}
We need an estimate
\begin{equation}
\norm{(-\Delta_{x_n})^{s/2} u}_{L^p(\R^n)}^p \leq C\norm{(-\Delta)^{s/2} u}_{L^p(\R^n)}^p
\end{equation}
for some $C(n,s,p)>0$. This holds with $C=1$ when $p=2$, since the related multiplier 
\begin{equation}\label{eq:PoincareMultiplier}
a(\xi) = \frac{\abs{\xi_n}^s}{\abs{\xi}^s} = \left(\frac{\xi_n^2}{\xi_1^2+\cdots+\xi_n^2}\right)^{s/2},\quad \xi \neq 0,
\end{equation}
takes values in $[0,1]$.

A slight modification of this argument also applies for cylindrical domains $\Omega = \R^{n-k} \times \omega$, which in turn gives the desired result for open sets which are bounded in one direction. In fact, the above proof establishes an estimate for the optimal Poincaré constants, which reads
$C_n(s,\Omega) \leq C_{k}(s,\omega)$. (We know actually that equality holds because of Theorem \ref{thm:UnboundedLowOrdPoinc} but this gives an alternative proof for the upper bound when $p=2$.) 
\end{proof}

\begin{remark} The multiplier \eqref{eq:PoincareMultiplier} is also homogeneous of degree zero but fails to be smooth when restricted to $S^{n-1}$, which is the main obstruction to generalize this natural argument to cover also the cases $p\neq 2$.
\end{remark}

We now move on towards proving Theorem \ref{theorem: Higher Order Fractional Poincare Inequality} but first we recall an interpolation result for the fractional Laplacians in \cite{Interpolation-spaces}. The result is stated also when $p=2$ in \cite[Proposition 1.32]{BCD-fourier-analysis-nonlinear-pde}  but we apply the following more general version in the proof of Theorem \ref{theorem: Higher Order Fractional Poincare Inequality}.

\begin{lemma}[{\cite[Exercise~6.8.31]{Interpolation-spaces}}]\label{lem:GeneralInterpBerghLöf} Let $n \geq 1$, $1 < p_0,p_1 < \infty$, $s_0,s_1 \in \R$ and $0 \leq \theta \leq 1$. If $s_\theta = (1-\theta)s_0 + \theta s_1$ and $\frac{1}{p_\theta}=\frac{1-\theta}{p_0}+\frac{\theta}{p_1}$, then
\begin{equation}
    \norm{(-\Delta)^{s_\theta/2}u}_{L^{p_\theta}(\R^n)} \leq \norm{(-\Delta)^{s_0/2}u}_{L^{p_0}(\R^n)}^{1-\theta}\norm{(-\Delta)^{s_1/2}u}_{L^{p_1}(\R^n)}^{\theta}
\end{equation}
for all $u \in \dot{H}^{s_0,p_0}(\R^n)\cap \dot{H}^{s_1,p_1}(\R^n)$.
\end{lemma}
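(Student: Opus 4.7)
The plan is to establish Lemma \ref{lem:GeneralInterpBerghLöf} by Hadamard's three-lines lemma applied to a carefully chosen scalar analytic function $\Phi$ on the strip $S=\{z\in\C:0\le \Re z\le 1\}$; the statement is the log-convexity of $(s,1/p)\mapsto \|(-\Delta)^{s/2}u\|_{L^p}$ along the segment from $(s_0,1/p_0)$ to $(s_1,1/p_1)$, which is exactly the kind of bilinear-in-exponents estimate that Stein-type complex interpolation produces. Set $A\vcentcolon =\|(-\Delta)^{s_0/2}u\|_{L^{p_0}}$ and $B\vcentcolon =\|(-\Delta)^{s_1/2}u\|_{L^{p_1}}$; if either is $0$ the claim is trivial. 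By density I would first reduce to $u$ with $\hat u\in C_c^\infty(\R^n\setminus\{0\})$, so that with $s(z)\vcentcolon =(1-z)s_0+zs_1$ the symbol $|\xi|^{s(z)}\hat u(\xi)$ is Schwartz and holomorphic in $z$, making $f_z\vcentcolon =(-\Delta)^{s(z)/2}u$ a holomorphic Schwartz-valued family. By $L^{p_\theta}$-duality it then suffices to bound $|\int f_\theta g\,dx|$ for arbitrary $g\in C_c^\infty(\R^n)$ with $\|g\|_{L^{p_\theta'}}=1$.

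Putting $h\vcentcolon = |g|^{p_\theta'}$, introduce the standard Stein family
\[
 g_z(x)\vcentcolon = h(x)^{1/p(z)'}\operatorname{sgn} g(x),\qquad \tfrac{1}{p(z)'}=\tfrac{1-z}{p_0'}+\tfrac{z}{p_1'},
\]
so that $g_\theta=g$ and on the two boundary lines $\|g_{it}\|_{L^{p_0'}}=\|g_{1+it}\|_{L^{p_1'}}=1$. Define $\Phi(z)\vcentcolon = e^{\delta(z-\theta)^2}\int_{\R^n} f_z(x)\,g_z(x)\,dx$ for a small $\delta>0$. Then $\Phi$ is holomorphic in the interior of $S$, continuous on $\overline S$, and bounded on the two boundary lines thanks to the Gaussian factor absorbing polynomial Mihlin--Hörmander multiplier estimates. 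On $\Re z=0$, factorize $|\xi|^{s(it)}=|\xi|^{s_0}\cdot |\xi|^{it(s_1-s_0)}$; the imaginary-power multiplier is bounded on $L^{p_0}$ with norm $\le C_{p_0}(1+|t|)^N$, so Hölder's inequality yields $|\Phi(it)|\le e^{\delta(\theta^2-t^2)}C_{p_0}(1+|t|)^N A\le M_\delta\,A$, and symmetrically $|\Phi(1+it)|\le M_\delta\,B$. Hadamard's three-lines lemma then gives $|\int f_\theta g\,dx|=|\Phi(\theta)|\le M_\delta\,A^{1-\theta}B^\theta$, so taking the supremum over $g$ produces $\|(-\Delta)^{s_\theta/2}u\|_{L^{p_\theta}}\le M_\delta\,A^{1-\theta}B^\theta$. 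Pushing $M_\delta\to 1$ is achieved by the scaling $u(\cdot)\mapsto u(\lambda\cdot)$ (exploiting the homogeneity of $(-\Delta)^{s/2}$, under which $A$ and $B$ transform oppositely while the desired inequality is scale-invariant), optimizing in $\lambda$, and sending $\delta\downarrow 0$; a final density step extends the bound to all $u\in \dot H^{s_0,p_0}\cap \dot H^{s_1,p_1}$.

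The principal obstacle is extracting the sharp constant $1$ on the right-hand side. The imaginary fractional Laplacian powers $|\xi|^{i\tau}$ are $L^p$-bounded only with polynomial growth in $|\tau|$, so a naive application of the three-lines lemma inflates the bound by $|\tau|$-dependent multiplier constants. The Gaussian regulator $e^{\delta(z-\theta)^2}$ restores boundedness on the strip but leaves a residual $\delta$-dependent factor; passing to the sharp inequality requires either the Poisson-integral form of the three-lines conclusion (integrating the logarithms of the boundary bounds against the harmonic measure of $S$ at $\theta$, against which the polynomial factors are integrable) or the homogeneity/rescaling argument sketched above. A secondary subtlety, specific to the \emph{homogeneous} Bessel-potential spaces $\dot H^{s,p}$, is that the symbols $|\xi|^{s(z)}$ are singular at the origin when $\Re s(z)<0$; this is neutralized cleanly by the initial density reduction to $\hat u$ supported away from $\{0\}$, so that every quantity in the construction is a bona fide Schwartz function and every integration by parts or Fubini step is unambiguously justified.
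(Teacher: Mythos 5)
Your overall strategy—Stein's complex interpolation via the three-lines lemma applied to $f_z=(-\Delta)^{s(z)/2}u$ against a Stein family $g_z$—is indeed the standard concrete machinery underlying this statement, and it is closely related to what the paper does: the paper simply \emph{cites} the abstract consequence, namely the identification $[\dot H^{s_0,p_0},\dot H^{s_1,p_1}]_\theta = \dot H^{s_\theta,p_\theta}$ together with the ``exact of exponent $\theta$'' property of the complex interpolation functor, and applies this to $T(\lambda)=\lambda u$. So the two arguments are not independent; yours is an attempt to reprove the cited input from scratch.

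The gap is in your treatment of the constant, which you correctly flag as ``the principal obstacle'' but then do not actually clear. Your proposed fix—rescale $u(\cdot)\mapsto u(\lambda\cdot)$, optimize in $\lambda$, and send $\delta\downarrow0$—cannot produce any improvement, precisely because the inequality is scale-invariant. Under $u_\lambda(x)=u(\lambda x)$ one has $\|(-\Delta)^{s/2}u_\lambda\|_{L^p}=\lambda^{s-n/p}\|(-\Delta)^{s/2}u\|_{L^p}$, so $A\mapsto\lambda^{s_0-n/p_0}A$, $B\mapsto\lambda^{s_1-n/p_1}B$, and the left side picks up the factor $\lambda^{s_\theta-n/p_\theta}$. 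Since $(1-\theta)(s_0-n/p_0)+\theta(s_1-n/p_1)=s_\theta-n/p_\theta$, both sides scale by exactly the same power of $\lambda$; the inequality $\|(-\Delta)^{s_\theta/2}u\|_{L^{p_\theta}}\le M_\delta A^{1-\theta}B^\theta$ is $\lambda$-independent and there is nothing to optimize. Meanwhile, $M_\delta=\sup_t e^{\delta(\theta^2-t^2)}C(1+|t|)^N$ diverges as $\delta\downarrow0$, so you are stuck with a constant that does not go to $1$. The Poisson-kernel (harmonic-measure) variant of the three-lines lemma that you mention does avoid the $\delta$-regularization and yields a finite bound despite the polynomially growing boundary majorants, but it replaces the constant $1$ by $\exp\bigl(N\int(\omega_0+\omega_1)(\theta,t)\log(1+|t|)\,dt\bigr)>1$, again not the sharp constant in the lemma. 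As written, neither route reaches the claimed inequality with constant $1$; to close the gap one must either appeal to the abstract identification of the complex interpolation space (as the paper does, citing Bergh--L\"ofstr\"om) or replace the na\"{i}ve imaginary-power multiplier bound by a finer argument that removes the multiplier constant from the boundary estimates.
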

\begin{proof}[Sketch of the proof]
As this formula is stated without a proof in \cite{Interpolation-spaces}, we give a brief explanation here. See also \cite[Lemma 2.1]{AdamsComposition} for this statement and a brief outline for an alternative proof.

By the complex interpolation method in homogeneous Triebel--Lizorkin spaces $\dot{F}^{s}_{p,q}$ we know 
$[\dot{H}^{s_0,p_0}(\R^n),\dot{H}^{s_1,p_1}(\R^n)]_\theta = \dot{H}^{s_\theta,p_\theta}(\R^n)$ (see \cite[Sections 2.4.7, 5.2.3 and 5.2.5]{Triebel-Theory-of-function-spaces} or \cite[Exercise 6.8.29]{Interpolation-spaces}) and the complex interpolation functor is exact of exponent $\theta \in (0,1)$ (see \cite[Theorem 4.1.2]{Interpolation-spaces}), since the spaces in question are Banach spaces \cite[Section 5.1.5]{Triebel-Theory-of-function-spaces}. The end point cases of Lemma \ref{lem:GeneralInterpBerghLöf} hold trivially. Now the conclusion follows from the definition of exactness of exponent $\theta$ (see \cite[p. 27 (6)]{Interpolation-spaces}) by considering the operator $T\colon\C\to \dot{H}^{s_0,p_0}(\R^n)+\dot{H}^{s_1,p_1}(\R^n)$ with $T(\lambda)=\lambda u$, where $\lambda\in\C$ and $u\in \dot{H}^{s_0,p_0}(\R^n)\cap\dot{H}^{s_1,p_1}(\R^n)$ is fixed.
\end{proof}
We remark that Lemma~\ref{lem:GeneralInterpBerghLöf} in particular holds for all $u\in H^{s_0,p_0}(\R^n)\cap H^{s_1,p_1}(\R^n)$, since $H^{s,p}(\R^n)=L^p(\R^n)\cap\dot{H}^{s,p}(\R^n)$ for all $s>0$, $1< p<\infty$ by \cite[Theorem 6.3.2]{Interpolation-spaces}.

\begin{proof}[Proof of Theorem \ref{theorem: Higher Order Fractional Poincare Inequality}]
We set $p = p_1 = p_2$ in Lemma \ref{lem:GeneralInterpBerghLöf} so that the other condition is automatically satisfied. We assume in the proof that $u\in C_c^{\infty}(\Omega)$. The result follows from this by approximation.

\emph{The first case: $t\geq s\geq r >z\geq 0$.} The case $s=t$ is trivial and so we can assume $t>s$. First of all recall that  $\|u\|_{\dot{H}^{s,p}(\R^n)}=\|(-\Delta)^{s/2}u\|_{L^p(\R^n)}$ for all $u\in \dot{H}^{s,p}(\R^n)$, $s\geq 0$.
    \begin{enumerate}[(i)]
        \item\label{first item case 1 interpolation} If we define $s_0=z,s_1=t,\theta =\frac{r-z}{t-z}\in (0,1)$, then
        \begin{equation}
            s_{\theta}=(1-\theta)s_0+\theta s_1=\frac{t-r}{t-z}z+\frac{r-z}{t-z}t=r.
        \end{equation}
        By Lemma~\ref{lem:GeneralInterpBerghLöf} and \eqref{eq: assumption higher order fractional poincare constant I1} we obtain
        \begin{align}
            \|u\|_{\dot{H}^{r,p}(\R^n)}&\leq \|u\|_{\dot{H}^{z,p}(\R^n)}^{1-\theta}\|u\|_{\dot{H}^{t,p}(\R^n)}^{\theta} \leq C_{r,z}^{1-\theta}\|u\|_{\dot{H}^{r,p}(\R^n)}^{1-\theta}\|u\|_{\dot{H}^{t,p}(\R^n)}^{\theta}.
        \end{align}
        Hence, we obtain
        \begin{equation}
        \label{eq: first interpolation I}
            \|u\|_{\dot{H}^{r,p}(\R^n)}\leq C_{r,z}^{\frac{1-\theta}{\theta}}\|u\|_{\dot{H}^{t,p}(\R^n)}.
        \end{equation}
        \item\label{second item case 1 interpolation} Now, let $s_0=r, s_1=t, \Tilde{\theta}=\frac{s-r}{t-r}\in [0,1)$ and therefore 
        \begin{equation}
            s_{\Tilde{\theta}}=(1-\Tilde{\theta})s_0+\Tilde{\theta}s_1=\frac{t-s}{t-r}r+\frac{s-r}{t-r}t=s.
        \end{equation}
        As before we get the estimate
        \begin{equation}
            \|u\|_{\dot{H}^{s,p}(\R^n)}\leq \|u\|_{\dot{H}^{r,p}(\R^n)}^{1-\Tilde{\theta}}\|u\|_{\dot{H}^{t,p}(\R^n)}^{\Tilde{\theta}}.
        \end{equation}
    \end{enumerate}
    Combining these estimates we get
    \begin{equation}
    \|u\|_{\dot{H}^{s,p}(\R^n)}\leq \|u\|_{\dot{H}^{r,p}(\R^n)}^{1-\Tilde{\theta}}\|u\|_{\dot{H}^{t,p}(\R^n)}^{\Tilde{\theta}}\leq C_{r,z}^{\frac{1-\theta}{\theta}(1-\Tilde{\theta})}\|u\|_{\dot{H}^{t,p}(\R^n)}.
    \end{equation}
    Since, $$\frac{1-\theta}{\theta}(1-\Tilde{\theta})=\frac{\frac{t-r}{t-z}}{\frac{r-z}{t-z}}\frac{t-s}{t-r}=\frac{t-s}{r-z}$$ we have
    \begin{equation}
        \|u\|_{\dot{H}^{s,p}(\R^n)}\leq C_{r,z}^{\frac{t-s}{r-z}}\|u\|_{\dot{H}^{t,p}(\R^n)}
    \end{equation}
    and therefore we have shown \eqref{eq: conclusion higher order fractional poincare constant I2} for $u\in C_c^{\infty}(\Omega)$. It follows from approximation that the same estimate holds in $H^{t,p}(\R^n)$ by the continuity of the fractional Laplacian.

\emph{The second case: $t\geq r\geq s\geq z\geq 0$ and $r>z$.} Again without loss of generality we can assume $t>s$. We can additionally assume that $r > s$ since when $r=s$, we would have that $t > r=s > z$ and this case was already proved in the first part of the proof. This observation is important for the following proof to be valid.
    \begin{enumerate}[(i)]
        \item\label{first item case 2 interpolation} We apply the interpolation result to $s_0=s,s_1=t,\theta =\frac{r-s}{t-s}\in (0,1)$ and
        \begin{equation}
            s_{\theta}=(1-\theta)s_0+\theta s_1=\frac{t-r}{t-s}s+\frac{r-s}{t-s}t=r.
        \end{equation}
         By Lemma~\ref{lem:GeneralInterpBerghLöf} we obtain
        \begin{align}
            \|u\|_{\dot{H}^{r,p}(\R^n)}&\leq \|u\|_{\dot{H}^{s,p}(\R^n)}^{1-\theta}\|u\|_{\dot{H}^{t,p}(\R^n)}^{\theta}.
        \end{align}
        \item\label{second item case 2 interpolation} This time we take $s_0=z, s_1=r, \Tilde{\theta}=\frac{s-z}{r-z}\in [0,1)$ and therefore 
        \begin{equation}
            s_{\Tilde{\theta}}=(1-\Tilde{\theta})s_0+\Tilde{\theta}s_1=\frac{r-s}{r-z}z+\frac{s-z}{r-z}r=s.
        \end{equation}
        Hence, we obtain
        \begin{equation}
        \|u\|_{\dot{H}^{s,p}(\R^n)}\leq \|u\|_{\dot{H}^{z,p}(\R^n)}^{1-\Tilde{\theta}}\|u\|_{\dot{H}^{r,p}(\R^n)}^{\Tilde{\theta}}.
        \end{equation}
    \end{enumerate}
    Combining these estimates we get
    \begin{equation}
        \|u\|_{\dot{H}^{s,p}(\R^n)}\leq \|u\|_{\dot{H}^{z,p}(\R^n)}^{1-\Tilde{\theta}}\|u\|_{\dot{H}^{r,p}(\R^n)}^{\Tilde{\theta}}\leq C_{r,z}^{1-\Tilde{\theta}}\|u\|_{\dot{H}^{r,p}(\R^n)}\leq C_{r,z}^{1-\Tilde{\theta}}\|u\|_{\dot{H}^{s,p}(\R^n)}^{1-\theta}\|u\|_{\dot{H}^{t,p}(\R^n)}^{\theta}.
    \end{equation}
    
    Since, $(1-\Tilde{\theta})/\theta=\frac{r-s}{r-z}/\frac{r-s}{t-s}=\frac{t-s}{r-z}$ we have
    \begin{equation}
        \|u\|_{\dot{H}^{s,p}(\R^n)}\leq C_{r,z}^{\frac{t-s}{r-z}}\|u\|_{\dot{H}^{t,p}(\R^n)}
    \end{equation}
    and we can conclude for $u\in C_c^{\infty}(\Omega)$. The general case again follows by approximation.
\end{proof}

\begin{remark}\label{remark:generalizationsInterpolation} Theorem \ref{theorem: Higher Order Fractional Poincare Inequality} and Lemma \ref{lem:GeneralInterpBerghLöf} have their natural counterparts for the Gagliardo seminorms when $0 < s < 1$. Notice that $[\dot{W}^{s_0,p_0}(\R^n),\dot{W}^{s_1,p_1}(\R^n)]_\theta = \dot{W}^{s_\theta,p_\theta}(\R^n)$ (see again \cite[Sections 2.4.7 and 5.2.5]{Triebel-Theory-of-function-spaces}). The proof of Theorem \ref{theorem: Higher Order Fractional Poincare Inequality} works just as well in this case. In particular, this implies that it is enough to establish the Gagliardo seminorm Poincaré inequality only for all $t \in (0,\epsilon)$ with some $\epsilon >0$ as this already implies the Gagliardo seminorm inequalities for the rest of the cases $\epsilon \leq t < 1$. The argument works on arbitrary subsets of $W^{t,p}(\R^n)$ having some a priori given inequality with respect to the Gagliardo seminorms as the proof only applies generic properties of all functions in the homogeneous spaces, the monotonicity of the inhomogeneous spaces and the boundedness of the seminorms when $t \geq 0$. The same applies for the spaces $H^{t,p}(\R^n)$. Only for the sake of simplicity, we use the spaces $\tilde{H}^{t,p}(\Omega)$ in the statement of Theorem \ref{theorem: Higher Order Fractional Poincare Inequality}. Also the Poincaré--Sobolev type estimates in the mixed integrability cases $(p_1,p_2)$ might be of interest in other applications but not for us. These are quite possible modifications of the argument as the interpolation result is actually stronger than what we use in the proofs (since we set $p=p_1=p_2$).
\end{remark}

Theorem \ref{theorem: Higher Order Fractional Poincare Inequality} makes us to suggest the following conjecture on the equivalence of the fractional Poincaré constants.

\begin{conjecture}[Equivalence of the optimal fractional Poincaré constants]\label{conj:equivalenceOfPoincareConstants} Let $\Omega\subset \R^n$ be an open (bounded) domain and $1 < p < \infty$. If $C_{r,z}$ is the optimal fractional Poincaré constant for $r > z \geq 0$, then $C_{t,s}=C_{r,z}^{\frac{t-s}{r-z}}$ is the optimal Poincaré constant for $t > s \geq 0$.
\end{conjecture}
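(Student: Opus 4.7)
The plan is to combine the interpolation upper bound from Theorem~\ref{theorem: Higher Order Fractional Poincare Inequality}, applied in both orientations, with a shift-invariance property $C_{t+2m,s+2m}^{\mathrm{opt}} = C_{t,s}^{\mathrm{opt}}$ derived from the density hypothesis $\overline{(-\Delta)^{m}\tilde H^{t+2m,p}(\Omega)} = \tilde H^{t,p}(\Omega)$ mentioned in the heuristic preceding the conjecture.

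First I would establish the easy half of shift invariance, which requires no density assumption: for $u \in C_c^\infty(\Omega)$ and an integer $m \ge 0$, the function $v \vcentcolon= (-\Delta)^{m}u$ lies in $\tilde H^{t,p}(\Omega)$, and $\|(-\Delta)^{s/2}v\|_{L^p(\R^n)} = \|(-\Delta)^{(s+2m)/2}u\|_{L^p(\R^n)}$ together with the analogous identity at order $t$ show that the Poincar\'e ratio at level $(t+2m, s+2m)$ for $u$ equals that at level $(t,s)$ for $v$. Passing to the supremum yields $C_{t+2m,s+2m}^{\mathrm{opt}} \le C_{t,s}^{\mathrm{opt}}$. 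For the reverse, assuming the density hypothesis, I would approximate any $v \in \tilde H^{t,p}(\Omega)$ by $(-\Delta)^{m}u_k$ with $u_k \in C_c^\infty(\Omega)$, apply the Poincar\'e inequality at level $(t+2m, s+2m)$ to each $u_k$, and pass to the limit using the boundedness of $(-\Delta)^{\sigma/2}$ between Bessel potential spaces, yielding $C_{t,s}^{\mathrm{opt}} \le C_{t+2m,s+2m}^{\mathrm{opt}}$.

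To conclude the conjectured equality for arbitrary admissible pairs $(t,s)$ and $(r,z)$, I would use shift invariance to lift one of the pairs high enough that Theorem~\ref{theorem: Higher Order Fractional Poincare Inequality} applies in the desired direction. Concretely, choosing $m$ so that $r + 2m \ge z + 2m \ge t > s$ puts $(t,s)$ and $(r+2m, z+2m)$ into the first case of the theorem with $(t,s)$ as the source, giving $C_{r+2m,z+2m}^{\mathrm{opt}} \le (C_{t,s}^{\mathrm{opt}})^{(r-z)/(t-s)}$; shift invariance then collapses the left-hand side to $C_{r,z}^{\mathrm{opt}}$. Combining this with the direct interpolation inequality $C_{t,s}^{\mathrm{opt}} \le (C_{r,z}^{\mathrm{opt}})^{(t-s)/(r-z)}$ forces both to be equalities.

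The main obstacle is the density hypothesis itself. By Hahn--Banach and duality it reduces to the statement that every distribution $f \in H^{-t,p'}(\R^n)$ with $(-\Delta)^{m}f = 0$ in $\mathscr{D}'(\Omega)$ must represent zero in $H^{-t,p'}(\Omega)$. This literal statement fails: given any cutoff $\chi \in C_c^\infty(\R^n)$ equal to one on a neighbourhood of a bounded $\Omega$ and any nontrivial classical $m$-harmonic function $h$, the product $\chi h \in C_c^\infty(\R^n)$ is an $m$-harmonic distribution on $\Omega$ that does not vanish there. Consequently, a proof of the conjecture as stated would require either restricting to a subclass of domains on which the density holds, replacing $(-\Delta)^{m}$ by an operator better adapted to $\tilde H^{t,p}(\Omega)$ (such as a spectral fractional Laplacian on bounded domains), or circumventing the density step entirely by a direct spectral or variational argument exploiting the scaling of the optimal constants.
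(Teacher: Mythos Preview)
The statement you are addressing is a \emph{conjecture} in the paper, not a proven theorem; the paper gives no proof, only heuristic motivation in the paragraph immediately preceding it. Your proposal faithfully reconstructs that heuristic: the paper itself suggests that if the density statement $\overline{(-\Delta)\tilde H^{t+2,p}(\Omega)} = \tilde H^{t,p}(\Omega)$ were true for all $t>0$, then optimality would follow from the interpolation result by a short argument, and you have correctly fleshed out what that argument would be (shift invariance plus Theorem~\ref{theorem: Higher Order Fractional Poincare Inequality} in both directions).

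Your identification of the obstacle is also correct and, in fact, sharper than the paper's discussion. The paper hedges with ``if \ldots\ \emph{were} true'' without explicitly noting that the density hypothesis fails in general; your counterexample with a cutoff times an $m$-harmonic polynomial makes this failure concrete. Your concluding remarks about possible workarounds (restricting the class of domains, using a spectral Laplacian adapted to $\Omega$, or a direct variational argument) are reasonable directions, but none of them is pursued in the paper either. In short: there is nothing to compare your proposal against, because the paper offers no proof; your write-up is an accurate elaboration of the paper's own heuristics together with a valid explanation of why they do not yet yield a proof.
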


\section{Decompositions of Sobolev multipliers and PDOs}\label{sec:SobMultDecompositions}

In this section, we define classes of local perturbations to the fractional Schrödinger equations using the Poincaré inequalities. These classes of perturbations will naturally satisfy the assumptions of Theorem \ref{theorem:GeneralFractionalCalderon} in different settings.

\subsection{Basic notation and definitions}
\label{sec:Sobolev-Multiplier-basics}

We define the spaces of Sobolev multipliers between Bessel potential spaces $H^s$ in this section. For more details, we point to the article \cite[Section 2]{CMRU20-higher-order-fracCald}. For $r,t\in \R$, we define the space of multipliers $M(H^r\rightarrow H^t)$ between pairs of Bessel potential spaces according to \cite[Ch. 3]{MS-theory-of-sobolev-multipliers}. Eventually we will study a generalization of the problem \eqref{eq:fractionalschrodingerequation} where the potential $q$ is replaced by a partial differential operator $P(x,D)$ having its coefficients in the spaces of Sobolev multipliers.

If $f\in \distr(\R^n)$ is a distribution, we say that $f\in M(H^r\rightarrow H^t)$ whenever the norm 
$$\|f\|_{r,t} \vcentcolon = \sup \{\abs{\ip{f}{u v}} \,;\, u,v \in C_c^\infty(\mathbb R^n), \norm{u}_{H^r(\R^n)} = \norm{v}_{H^{-t}(\R^n)} =1 \}$$
is finite and we define $M_0(H^r \to H^t)$ to be the closure of $C_c^\infty(\R^n)$ in $M(H^r \to H^t)$. If $f\in M(H^r\rightarrow H^t)$ and $u,v \in C_c^\infty(\mathbb R^n)$ are both nonvanishing, we have the multiplier inequality
\begin{equation}\label{multiplier-inequality}
    \abs{\ip{f}{uv}} \leq \|f\|_{r,t}\norm{u}_{H^r(\R^n)} \norm{v}_{H^{-t}(\R^n)}.
\end{equation}

By density of $C_c^\infty(\R^n) \times C_c^\infty(\R^n)$ in $ H^r(\R^n) \times H^{-t}(\R^n)$, there is a unique continuous extension $(u,v) \mapsto \langle f, uv \rangle$ for $(u,v)\in H^r(\R^n)\times H^{-t}(\R^n)$. More precisely, each $f \in M(H^r\rightarrow H^t)$ gives rise to a linear multiplication map $m_f \colon H^r(\R^n) \rightarrow H^t(\R^n)$ defined by 
\begin{align}
    \langle m_f(u),v \rangle \vcentcolon = \lim_{i \to \infty}\langle f,u_iv_i \rangle \quad \mbox{for all} \quad (u,v)\in H^r(\R^n)\times H^{-t}(\R^n),
\end{align}
where $(u_i,v_i) \in C_c^\infty(\R^n) \times C_c^\infty(\R^n)$ is any sequence in $H^r(\R^n) \times H^{-t}(\R^n)$ converging to $(u,v)$. We can analogously define the unique adjoint multiplication map $m_f^*\colon H^{-t}(\R^n) \to H^{-r}(\R^n)$ such that 
\[\ip{m_f^*(v)}{u} \vcentcolon = \lim_{i \to \infty}\langle f,u_iv_i \rangle \quad \mbox{for all} \quad (u,v)\in H^r(\R^n)\times H^{-t}(\R^n).\] It is easy to verify that the adjoint of $m_f$ is $m_f^*$. We will just write $fu$ for both $m_f(u)$ and $m_f^*(u)$.

\subsection{Decompositions of Sobolev multipliers}

Now we introduce some subspaces of 
Sobolev multipliers $M(H^r \to H^t)$
to be used later in the context of fractional Calderón problems. We define for any open set $\Omega\subset\R^n$ which is bounded in one direction and $s>0$ the quantity 
$\delta(\Omega) \vcentcolon = \left(\frac{2^{-s/2}}{1+C(s,\Omega)}\right)^2$,
where $C(s,\Omega)$ is the optimal Poincaré constant on $\Omega$ (see~Theorem~\ref{thm:PoincUnboundedDoms}). We do not write the dependence of $\delta$ on $s > 0$ explicitly, but it must be noted. We have
\begin{equation}
\begin{split}
    \norm{(-\Delta)^{s/2}u}_{L^2(\R^n)} &\geq t\norm{(-\Delta)^{s/2}u}_{L^2(\R^n)}+\frac{1-t}{C(s,\Omega)}\norm{u}_{L^2(\R^n)}\\
    &=\frac{1}{C(s,\Omega)+1}\left(\norm{(-\Delta)^{s/2}u}_{L^2(\R^n)}+\norm{u}_{L^2(\R^n)}\right)\\
    &\geq \sqrt{\delta(\Omega)}\norm{u}_{H^s(\R^n)},
\end{split}
\end{equation}
if we choose $t = \frac{1}{C(s,\Omega)+1} \in (0,1)$. This calculation was presented here to motivate the definition of suitable classes of perturbations for the fractional Schrödinger equation such that the fractional Laplacian dominates the perturbations in the estimates, especially, from below. This allows to show the existence and uniqueness of weak solutions for the related perturbed fractional Schrödinger equations even when one lacks of compact Sobolev embeddings.

\begin{remark} Let $\Omega \subset\R^n$ be an open set which is bounded in one direction. Then by assumption there exists a rigid Euclidean motion $A$ such that $A\Omega\subset \Omega_{\infty}=\R^{n-k}\times \omega$, where $\omega\subset\R^k$ is a bounded open set and $n\geq k>0$. Using Remark~\ref{remark: poincare constants and euclidean motion} and Theorem~\ref{thm:UnboundedLowOrdPoinc}, we can prove the following estimate (the last inequality is obtained by optimizing the constant in \cite[Theorem 3.7]{CMR20}) \begin{equation}C(s,\Omega)=C(s,A\Omega)\leq C(s,\Omega_{\infty})\leq C(s,\omega) \leq  \frac{(\omega_k\abs{\omega})^{s/k}}{(2\pi)^s}\sqrt{\frac{\delta^{1+\frac{2s}{k}}}{\delta-1}} =\vcentcolon c(s,\Omega),\quad \delta=1+\frac{k}{2s},
\end{equation}
where $\omega_k$ is the volume of the unit ball in $\R^k$. This gives a quantitative lower bound $\delta(\Omega)^2 \geq 2^{-s}(1+c(s,\Omega))^{-1}>0$. This estimate could be used to define more concrete subclasses of Sobolev multipliers and PDOs within the classes which we let to depend directly on the Poincaré constants.
\end{remark}

\begin{definition}[Classes of Sobolev multipliers]
Let $C > 0$, $s>0$ and $\kappa\geq 0$. We define the space of \emph{small multipliers} as
\begin{equation}
    M_C(H^s \to H^{-s}) \vcentcolon = \{\,a \in M(H^s \to H^{-s}) \,;\,\norm{a}_{s,-s} < C\,\}
\end{equation}
and the spaces of \emph{nonnegative multipliers} as
\begin{equation}
    M_{\geq \kappa}(H^s \to H^{-s}) \vcentcolon = \{\,a \in M(H^s \to H^{-s}) \,;\,a(f^2) \geq \kappa\norm{f}_{L^2(\R^n)}^2 \quad \forall f \in C_c^\infty(\R^n)\,\}.
\end{equation}
\end{definition}

We have, for example, that $a \in L^\infty(\R^n)$ with $a \geq \kappa\geq 0$ a.e. in $\R^n$ belongs to $M_{\geq \kappa}(H^s \to H^{-s})$ for any $s >0$.
\begin{remark}
    One can generalize the example models in Proposition~\ref{prop:unboundedExamples} to the settings $q\in M_{\geq \kappa}(H^s\to H^{-s})$ or $\gamma\in M_{\geq \kappa}(L^2\to L^2)$ for any $\kappa>0$.
\end{remark}

\begin{definition}[Multipliers close to $M_0$]
Given a domain $\Omega \subset \R^n$ with a finite Poincaré constant of order $s$ and integrability scale $p=2$. We define the $\delta$-neighbourhood of $M_0$ as follows
\begin{equation}
    M_\Omega(H^s\to H^{-s}) \vcentcolon = \{\, a \in M(H^s\to H^{-s})\,;\, \inf_{a' \in M_0(H^s \to H^{-s})}\norm{a-a'}_{s,-s} < \delta(\Omega)\,\}.
\end{equation}
\end{definition}

It follows from the definitions that
\begin{equation}
\label{eq: decomposition}
    M_0(H^s\to H^{-s}) \subset M_\Omega(H^s\to H^{-s}) = M_0(H^s\to H^{-s}) + M_{\delta(\Omega)}(H^s \to H^{-s}).
\end{equation}
 In particular, if there were a decomposition ''$M = M_0 + M_{\geq0} + M_{\delta(\Omega)}$'' for a domain $\Omega$, then one would not have any restrictions in the class of multipliers on bounded sets in the theorems of Section \ref{sec:fractionalCalderonMainSec}. This is true if one can show that every Sobolev multiplier in $M(H^s \to H^{-s})$ can be approximated by $C_c^\infty(\R^n)$ after changing the multiplier a little bit and possibly removing an arbitrary nonnegative part. See the following remarks for further discussion.

\begin{remark}
The characterization of $M_0(H^r\rightarrow H^t)$ is an open problem and the authors are not even aware if $M_0(H^r\rightarrow H^t)\subsetneq M(H^r\rightarrow H^t)$ holds (see also \cite[Remark 2.5]{RS-fractional-calderon-low-regularity-stability} and the related results in the special cases $t=-r$.). 
\end{remark}

\begin{remark} Let $s > 0$. The following are equivalent:
\begin{enumerate}[(i)]
    \item\label{item 1 Sobolev mult} $M(H^s \to H^{-s}) = M_\Omega(H^s \to H^{-s})$ for all balls, i.e. $\Omega = B_r(0)$, $r > 0$;
    \item\label{item 2 Sobolev mult} $M(H^s \to H^{-s})=M_0(H^s \to H^{-s})$.
\end{enumerate}
This follows since the Poincaré constant on bounded domains grows like $d^s$ when $d = 2r \to \infty$, and thus $\delta(B_r(0)) \to 0$ as $r \to \infty$.

If for some $R \geq 1$ there exists $f \in M(H^s \to H^{-s})$ such that $d(f,M_0(H^s\to H^{-s})) \geq R$, then for all domains $\Omega \subset \R$ it holds that $f \notin M_\Omega(H^s \to H^{-s})$. This follows as in the best possible case $1 > \delta(\Omega) \approx 1$ when $C(s,\Omega) \approx 0$.
\end{remark}

\begin{remark}\label{rmk:multipliersRemark}
In mathematical terms, to have unique weak solutions (modulo the discrete spectrum) for the fractional Schrödinger equation with a potential $q \in M(H^s\to H^{-s})$ studied in Lemma \ref{lemma:schrodingerexistenceofsolutions} of Section \ref{sec:generalizationsCalderon}, it would be sufficient to show that for any $a \in M(H^s\to H^{-s})$ there exists $a' \in M_0(H^s\to H^{-s})$ such that $\norm{a-a'}_{s,-s} < \delta(\Omega)$, which is certainly a weaker condition than $M(H^s\to H^{-s}) = M_0(H^s\to H^{-s})$. In the works \cite{CMR20,RS-fractional-calderon-low-regularity-stability}, one restricted to take the potentials from $M_0(H^{s} \to H^{-s})$ rather than from the larger class $M_\Omega(H^{s} \to H^{-s})$. The benefit of this is of course that the class of perturbations in \cite{CMR20,RS-fractional-calderon-low-regularity-stability} do not depend on the domain $\Omega$.
\end{remark}

\subsection{PDOs with Sobolev multiplier coefficients}

Let $m \in \N_0$. We denote PDOs of order $m$ as \begin{equation}P = \sum_{|\alpha|\leq m} a_\alpha D^\alpha,\end{equation} where the coefficients are chosen appropriately. In order to prove existence and uniqueness of weak solutions on domains that are bounded in one direction, we make a suitable smallness assumption on $P$ due to the lack of compact Sobolev embeddings on unbounded domains $\Omega = \R^m \times \omega$.

More precisely, we first calculate that
\begin{align}\label{eq:PDOestimate}
\sum_{|\alpha|\leq m} |\langle a_\alpha , (D^\alpha v) w \rangle|
\leq \left(\sum_{|\alpha|\leq m}  \|a_\alpha\|_{s-|\alpha|,-s}\right) \|w\|_{H^s(\mathbb R^n)} \|v\|_{H^s(\mathbb R^n)}.
\end{align}
Using this calculation as our guide, we know how to choose the coefficients $a_\alpha$. This motivates the next definition, which gives the smallness condition that guarantees the existence of unique weak solutions on domains that have finite Poincaré constants.

\begin{definition}[Multiplier norms of PDOs] Let $\Omega \subset \R^n$ be open with a finite fractional Poincaré constant of any order $s > 0$.
We define the subspace of small PDOs of order $(m,s) \in \N_0 \times \R_+$ as follows
\begin{equation}\label{eq:smallPDOs}\mathcal{P}_{m,s}(\Omega) \vcentcolon = \{\, P =  \sum_{|\alpha|\leq m} a_\alpha D^\alpha\,;\, \norm{P}_{m,s} < \delta(\Omega),\quad a_{\alpha} \in M(H^{s-|\alpha|}\rightarrow H^{-s})\,\}\end{equation}
where the norm of PDOs is
\begin{equation}
\norm{P}_{m,s} \vcentcolon =  \sum_{|\alpha|\leq m}  \|a_\alpha\|_{s-|\alpha|,-s} = \sum_{|\alpha|\leq m}  \|a_\alpha\|_{s,|\alpha|-s}
\end{equation}
for any $P = \sum_{|\alpha|\leq m} a_\alpha D^\alpha$.
\end{definition}

\begin{remark}\label{rmk:multipliersRemark1}
Note that by Theorem~\ref{thm:PoincUnboundedDoms} the set $\mathcal{P}_{m,s}(\Omega)$ is well-defined if $s>0$ and $\Omega$ is an open set which is bounded in one direction. Moreover, we remark that in the articles \cite{CMRU20-higher-order-fracCald}, where the fractional Calderón problems were studied, one had to restrict to coefficients of PDOs or potentials from $M_0(H^r\rightarrow H^t)$. In our case of unbounded domains, the necessary smallness assumption avoids making this additional assumption and sufficiently small Bessel potential $H^{s,\infty}$ coefficients in \cite[Theorem 1.2]{CMRU20-higher-order-fracCald} belong the class we are using here (see the related estimate \cite[eq.~(22) in the proof of lemma 4.1] {CMRU20-higher-order-fracCald}).
\end{remark}

\section{Fractional Calderón problems on unbounded and bounded domains}\label{sec:fractionalCalderonMainSec}

In this section, we study fractional Calderón problems on domains that are bounded in one direction and begin to employ the tools introduced in the earlier sections. We remark that the arguments in this section follow closely those of \cite{CLR18-frac-cald-drift,CMR20,CMRU20-higher-order-fracCald,GSU20} and the emphasis is to explain the points were the differences occur. In Section \ref{sec:small local lin unbounded}, we briefly recall the basic structure of the argument for the sake of completeness and relate this to the abstract framework of Section \ref{sec:GeneralizedFractionalCalderon}. We note that this approach simplifies and generalizes the original uniqueness proofs in \cite{CLR18-frac-cald-drift,CMRU20-higher-order-fracCald}.

\subsection{Small local linear perturbations}\label{sec:small local lin unbounded}

Consider the problem
\begin{align}\label{problem}
(-\Delta)^s u + \sum_{|\alpha|\leq m} a_\alpha(D^\alpha u) & = F \quad \mbox{in} \;\;\Omega, \\
u & = f  \quad \mbox{in} \;\;\Omega_e 
\end{align}
and the corresponding adjoint problem
\begin{align}\label{adjoint-problem}
(-\Delta)^s u^* + \sum_{|\alpha|\leq m} (-1)^{|\alpha|} D^\alpha (a_\alpha u^*) & = F^* \quad \mbox{in} \;\;\Omega, \\
u^* & = f^*  \quad \mbox{in} \;\;\Omega_e .
\end{align}
Note that if $u, u^*\in H^s(\R^n)$ and $a_\alpha\in M(H^{s-\abs{\alpha}}\rightarrow H^{-s})=M(H^s\rightarrow H^{\abs{\alpha}-s})$, then $a_\alpha(D^\alpha u)\in H^{-s}(\R^n)$ and $D^\alpha(a_\alpha u^*)\in H^{-s}(\R^n)$ like $\fraclaplace u, \fraclaplace u^*\in H^{-s}(\R^n)$.

The problems \eqref{problem} and \eqref{adjoint-problem} are associated with the bilinear forms 
\begin{align}\label{bilinear}
B_P(v,w) \vcentcolon = \langle (-\Delta)^{s/2}v, (-\Delta)^{s/2}w \rangle + \sum_{|\alpha|\leq m} \langle a_\alpha ,(D^\alpha v) w \rangle
\end{align}
and
\begin{align}\label{adjoint-bilinear}
B^*_P(v,w) \vcentcolon = \langle (-\Delta)^{s/2}v, (-\Delta)^{s/2}w \rangle + \sum_{|\alpha|\leq m} \langle a_\alpha, v(D^\alpha w) \rangle,
\end{align}
defined on $v,w \in C^\infty_c(\R^n)$. Notice that definition of the bilinear forms do not explicitly depend on the particular choice of $\Omega$, but their quantitative properties like coercivity and boundedness will depend on $\Omega$. Later the domain $\Omega$ only shows up in the class of test functions $\tilde{H}^s(\Omega)$, the interior data $F,F^* \in (\tilde{H}^s(\Omega))^*$, and the exterior data $u-f,u^*-f^* \in \tilde{H}^s(\Omega)$ of the associated exterior value problem. Next we introduce the used notion of weak solutions of \eqref{problem} and \eqref{adjoint-problem}.

\begin{definition}[Weak solutions]
Let $f,f^* \in H^s(\mathbb R^n)$ and $F, F^* \in (\widetilde{H}^{s}(\Omega))^*$. We say that $u\in H^s(\mathbb R^n)$ is a weak solution to \eqref{problem} when $u-f \in \widetilde H^s(\Omega)$ and $B_P(u,v)=F(v)$ for all $v\in \widetilde H^s(\Omega)$. Similarly, we say that $u^*\in H^s(\mathbb R^n)$ is a weak solution to \eqref{adjoint-problem} when $u^*-f^* \in \widetilde H^s(\Omega)$ and $B^*_P(u^*,v)=F^*(v)$ for all $v\in \widetilde H^s(\Omega)$.
\end{definition}

We next begin the preparation to show the existence and uniqueness of weak solutions in order to finally define the exterior DN maps.

\begin{lemma}[Boundedness of the bilinear forms]\label{boundedness-bilinear}
Let $s \in \mathbb R^+ \setminus \mathbb Z$ and $m\in \mathbb N$ be such that $2s \geq m$, and let $a_{\alpha} \in M(H^{s-|\alpha|}\rightarrow H^{-s})$. Then $B_P$ and $B_P^*$ extend as bounded bilinear forms to $H^s(\mathbb R^n)\times H^s(\mathbb R^n)$.
\end{lemma}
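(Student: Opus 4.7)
The plan is to verify boundedness termwise and then sum a finite number of estimates. Indeed, the bilinear forms contain a principal part and a finite sum of PDO terms, and each contribution will be controlled independently. After establishing the bound on $C_c^\infty(\R^n) \times C_c^\infty(\R^n)$, the extension to $H^s(\R^n) \times H^s(\R^n)$ follows from density of test functions in this space.

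For the principal part $\langle (-\Delta)^{s/2} v, (-\Delta)^{s/2} w \rangle$, I would invoke Cauchy--Schwarz together with the continuity $(-\Delta)^{s/2}\colon H^s(\R^n) \to L^2(\R^n)$ (recalled after \eqref{eq:fracLapFourDef}), which directly gives the estimate by $\|v\|_{H^s(\R^n)}\|w\|_{H^s(\R^n)}$. This step is the easy one.

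For the lower order piece, for each multi-index $|\alpha|\leq m$, I would use the boundedness of $D^\alpha \colon H^s(\R^n) \to H^{s-|\alpha|}(\R^n)$ together with the Sobolev multiplier inequality \eqref{multiplier-inequality}, taking $r = s-|\alpha|$ and $t = -s$, so that the second factor lies naturally in $H^{-t}(\R^n) = H^s(\R^n)$. The hypothesis $2s \geq m \geq |\alpha|$ is what keeps the pair $(r,t) = (s-|\alpha|,-s)$ in the range $r \geq t$, so that the multiplier space $M(H^{s-|\alpha|}\to H^{-s})$ retains its intended meaning. This yields
\begin{equation}
|\langle a_\alpha, (D^\alpha v) w \rangle| \leq C\|a_\alpha\|_{s-|\alpha|,-s}\|v\|_{H^s(\R^n)}\|w\|_{H^s(\R^n)}
\end{equation}
for $v,w \in C_c^\infty(\R^n)$. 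Summing over $|\alpha|\leq m$, which is a finite index set, gives the estimate already recorded in \eqref{eq:PDOestimate}. For the adjoint form $B_P^*$, I would run the same argument with the factors $v$ and $D^\alpha w$ in the reverse roles; this requires no new idea because the multiplier pairing $\langle a_\alpha, \cdot \cdot \rangle$ is symmetric in its two function arguments, so one can freely designate which factor lies in $H^{s-|\alpha|}$ and which lies in $H^s$.

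Having established the continuity estimate on the dense subspace $C_c^\infty(\R^n) \times C_c^\infty(\R^n) \subset H^s(\R^n) \times H^s(\R^n)$, the standard extension by continuity of bounded bilinear forms yields a unique bounded bilinear extension to $H^s(\R^n) \times H^s(\R^n)$ whose norm is controlled by $1 + \sum_{|\alpha|\leq m}\|a_\alpha\|_{s-|\alpha|,-s}$ up to an absolute constant. There is no genuine obstacle in the proof, the only delicate point being the bookkeeping with the multiplier indices and the verification that the range condition $2s \geq m$ is exactly what is needed for each term $\langle a_\alpha, (D^\alpha v) w \rangle$ to make sense via the multiplier inequality; this is precisely the condition built into the definition \eqref{eq:smallPDOs} of $\mathcal{P}_{m,s}(\Omega)$.
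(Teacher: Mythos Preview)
Your proposal is correct and follows essentially the same approach as the paper's proof: bound the principal part by Cauchy--Schwarz and continuity of $(-\Delta)^{s/2}$, bound each PDO term via the multiplier inequality \eqref{multiplier-inequality} combined with the boundedness of $D^\alpha\colon H^s\to H^{s-|\alpha|}$, sum, and extend by density. The only cosmetic difference is that the paper writes out the estimate for $B_P^*$ (using the symmetry $\|a_\alpha\|_{s,|\alpha|-s}=\|a_\alpha\|_{s-|\alpha|,-s}$) and declares $B_P$ analogous, whereas you do the reverse.
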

\begin{proof}
We will verify this only for the adjoint bilinear form, since the proof for the original problem is analogous. Let $u, v \in C_c^\infty(\R^n)$. We can estimate
\begin{align}
|B_{P^*}(v,w)| & \leq |\langle (-\Delta)^{s/2}v, (-\Delta)^{s/2}w \rangle | + \sum_{|\alpha|\leq m} |\langle a_\alpha ,  v (D^\alpha w) \rangle| \\
& \leq \|w\|_{H^s(\mathbb R^n)} \|v\|_{H^s(\mathbb R^n)} + \sum_{|\alpha|\leq m}  \|a_\alpha\|_{s,|\alpha|-s} \|v\|_{H^s(\mathbb R^n)}\|D^\alpha w\|_{H^{s-|\alpha|}(\mathbb R^n)} \\ & 
\leq \left( 1+ \sum_{|\alpha|\leq m}  \|a_\alpha\|_{s-|\alpha|,-s}\right) \|v\|_{H^s(\mathbb R^n)} \|w\|_{H^s(\mathbb R^n)}.\\
&= (1+\norm{P}_{m,s})\|v\|_{H^s(\mathbb R^n)} \|w\|_{H^s(\mathbb R^n)}
\end{align}
where in the last step we used $\norm{a}_{r,s} = \norm{a}_{-s,-r}$.
Now the claim follows from the density of $C_c^\infty(\R^n)$ in $H^s(\R^n)$.
\end{proof}

\begin{lemma}[Well-posedness]\label{well-posedness}
Let $\Omega \subset \mathbb R^n$ be an open set which is bounded in one direction, assume that $s \in \mathbb R^+ \setminus \mathbb Z$, $m\in \mathbb N$ satisfy $2s > m$ and let $P \in \mathcal{P}_{m,s}(\Omega)$. Then for any $f\in H^s(\mathbb R^n)$ and $F\in (\widetilde{H}
^s(\Omega))^*$ there exists a unique $u\in H^s(\mathbb R^n)$ such that $u-f \in \widetilde H^s(\Omega)$ and
$$ B_P(u,v) = F(v) \quad \mbox{for all} \quad v \in \widetilde H^s(\Omega).$$
One has the estimate 
$$ \|u\|_{H^s(\mathbb R^n)} \leq C\left( \|f\|_{H^s(\mathbb R^n)} + \|F\|_{(\widetilde{H}^s(\Omega))^*} \right). $$
The function $u$ is the unique $u\in H^s(\mathbb R^n)$ satisfying $$(-\Delta)^su + \sum_{|\alpha|\leq m} a_\alpha D^\alpha u=F$$ in the sense of distributions in $\Omega$ and $u-f \in \widetilde H^s(\Omega)$. Moreover, if $f_1,f_2\in H^s(\R^n)$ satisfy $f_1-f_2\in \tilde{H}^s(\Omega)$ and $u_1,u_2\in H^{s}(\R^n)$ are the unique solutions to 
\begin{align*}
(-\Delta)^s u_j + \sum_{|\alpha|\leq m} a_\alpha(D^\alpha u_j) & = F \quad \mbox{in} \;\;\Omega, \\
u_j & = f_j  \quad \mbox{in} \;\;\Omega_e 
\end{align*}
for $j=1,2$, then $u_1\equiv u_2$ in $\R^n$.
\end{lemma}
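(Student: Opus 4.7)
\smallskip\noindent\textbf{Proof proposal.} My plan is to cast \eqref{problem} as an exterior value problem for the bilinear form $B_P$ on $H^s(\R^n)$ and then read off every assertion from the abstract well-posedness result Lemma \ref{lemma:generalWellposedness}. Boundedness of $B_P\colon H^s(\R^n)\times H^s(\R^n)\to\R$ is already Lemma \ref{boundedness-bilinear}, so the entire task reduces to verifying strong coercivity of $B_P$ on $\widetilde H^s(\Omega)$. Once this is in hand, parts \ref{item:uniqunessOfSolutions} and \ref{item:independenceOfExteriorData} of Lemma \ref{lemma:generalWellposedness} will immediately supply existence, uniqueness, the norm estimate in \eqref{eq: norm estimate abstract well-posedness}, and the assertion $u_1\equiv u_2$ when $f_1-f_2\in \widetilde H^s(\Omega)$.

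For the coercivity step I would fix $v\in \widetilde H^s(\Omega)$ and split
\begin{align}
    B_P(v,v)=\|(-\Delta)^{s/2}v\|_{L^2(\R^n)}^2+\sum_{|\alpha|\leq m}\langle a_\alpha,(D^\alpha v)\,v\rangle.
\end{align}
Running the multiplier estimate from the proof of Lemma \ref{boundedness-bilinear} with $w=v$ (the hypothesis $2s>m$ guarantees $s-|\alpha|\geq -s$ for every $|\alpha|\leq m$, so each norm $\|a_\alpha\|_{s-|\alpha|,-s}$ is well defined), the sum is bounded in absolute value by $\|P\|_{m,s}\,\|v\|_{H^s(\R^n)}^2$. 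Since $\Omega$ is bounded in one direction, Theorem \ref{thm:PoincUnboundedDoms} applies, and the weighted averaging calculation that opens Section \ref{sec:SobMultDecompositions} rephrases it as
\begin{align}
    \|(-\Delta)^{s/2}v\|_{L^2(\R^n)}^2\ \geq\ \delta(\Omega)\,\|v\|_{H^s(\R^n)}^2.
\end{align}
Combining the two displays I obtain $B_P(v,v)\geq\bigl(\delta(\Omega)-\|P\|_{m,s}\bigr)\|v\|_{H^s(\R^n)}^2$, and the hypothesis $P\in\mathcal P_{m,s}(\Omega)$ forces $\delta(\Omega)-\|P\|_{m,s}>0$. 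This is the desired strict coercivity constant.

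With boundedness and coercivity verified, Lemma \ref{lemma:generalWellposedness}\ref{item:uniqunessOfSolutions} yields the unique $u\in H^s(\R^n)$ with $u-f\in \widetilde H^s(\Omega)$ and $B_P(u,\cdot)=F$ on $\widetilde H^s(\Omega)$, together with the quantitative bound; part \ref{item:independenceOfExteriorData} of the same lemma supplies the independence statement for $f_1,f_2$. To pass to the distributional formulation, I would test $B_P(u,\phi)=F(\phi)$ against $\phi\in C_c^\infty(\Omega)\subset\widetilde H^s(\Omega)$ and unfold $B_P$ to read $(-\Delta)^s u+\sum_{|\alpha|\leq m}a_\alpha D^\alpha u=F$ in $\mathscr D'(\Omega)$; conversely, any $u\in H^s(\R^n)$ with $u-f\in\widetilde H^s(\Omega)$ satisfying this distributional identity extends, by density of $C_c^\infty(\Omega)$ in $\widetilde H^s(\Omega)$ and continuity of $B_P$, to a weak solution, so distributional uniqueness is inherited from the Lax--Milgram step.

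The one genuine obstacle in this route is the coercivity lower bound: without a Poincaré-type inequality on $\Omega$ one cannot dominate $\|v\|_{H^s(\R^n)}^2$ by $\|(-\Delta)^{s/2}v\|_{L^2(\R^n)}^2$, and the lower-order multiplier error cannot be absorbed. Being bounded in one direction is precisely the weakest geometric hypothesis under which Theorem \ref{thm:PoincUnboundedDoms} supplies this bound, and the strict smallness encoded in $\mathcal P_{m,s}(\Omega)$ is exactly what is needed to beat the resulting $\|P\|_{m,s}$ term cleanly, without having to invoke Fredholm theory (which would be delicate on unbounded $\Omega$ because of the failure of compact Sobolev embeddings).
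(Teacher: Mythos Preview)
Your proposal is correct and follows essentially the same route as the paper: establish strong coercivity of $B_P$ on $\widetilde H^s(\Omega)$ via the Poincar\'e inequality (encoded in $\delta(\Omega)$) and the smallness condition $\|P\|_{m,s}<\delta(\Omega)$, then invoke Lemma~\ref{lemma:generalWellposedness}. The paper's proof is terser---it simply displays the coercivity chain and says ``the statements follow''---whereas you spell out the passage to the distributional formulation, but the argument is the same.
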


\begin{proof}
We get from the definition of $B_P$ that
\begin{align}
B_P(v,v) & \geq \|(-\Delta)^{s/2}v\|^2_{L^2(\R^n)} - \sum_{|\alpha|\leq m} | \langle a_\alpha, (D^\alpha v) v \rangle | \\ 
& \geq  \|(-\Delta)^{s/2}v\|^2_{L^2(\R^n)} -  \norm{P}_{m,s}\norm{v}_{H^s(\R^n)}^2\\
& \geq c\norm{v}_{H^s(\R^n)}^2
\end{align}
where $c = \delta(\Omega)-\norm{P}_{m,s} > 0$. Therefore the assumptions of Lemma~\ref{lemma:generalWellposedness} are valid, and the statements follow.
\end{proof}

We define the exterior DN maps associated to the problems \eqref{problem} and \eqref{adjoint-problem} next. We recall that the abstract trace space is $X \vcentcolon = H^s(\mathbb R^n)/\widetilde H^s(\Omega)$ and for simplicity we denote in the rest of the article its elements by $f$ instead of $[f]$.

\begin{definition}\label{def:exteriorDNmaps}
Let $\Omega \subset \mathbb R^n$ be an open set which is bounded in one direction, assume that $s \in \mathbb R^+ \setminus \mathbb Z$, $m\in \mathbb N$ satisfy $2s > m$ and let $P \in \mathcal{P}_{m,s}(\Omega)$. The exterior DN maps $\Lambda_P$ and $\Lambda_P^*$ are  $$ \Lambda_P \colon X\rightarrow X^* \quad \mbox{defined by}\quad \langle \Lambda_Pf,g \rangle \vcentcolon = B_P(u_f, g)$$
and 
$$ \Lambda^*_P \colon X\rightarrow X^* \quad \mbox{defined by}\quad \langle \Lambda^*_Pf,g \rangle \vcentcolon = B^*_P(u^*_f, g)$$
where $u_f, u^*_f$ are the unique solutions to the equations
 \begin{align*}
(-\Delta)^s u + \sum_{|\alpha|\leq m} a_\alpha D^\alpha u & = 0 \quad \mbox{in} \;\;\Omega,\quad  u - f \in \widetilde H^s(\Omega)  
\end{align*}
and 
\begin{align*}
(-\Delta)^s u^* + \sum_{|\alpha|\leq m} (-1)^{|\alpha|} D^\alpha (a_\alpha  u^*) & = 0 \quad \mbox{in} \;\;\Omega, \quad u^* - f  \in \widetilde H^s(\Omega)
\end{align*}
with $f,g\in H^s(\mathbb R^n)$.
\end{definition}

We state in the next lemma that the exterior DN maps $\Lambda_P$ and $\Lambda_P^*$ are well-defined. The proof is an elementary consequence of Lemma \ref{well-posedness}, and the proof given in \cite[Lemma 3.6]{CMRU20-higher-order-fracCald} holds identically in our setting regardless of having the problem defined in unbounded domains. It also follows from Lemma \ref{lemma:generalWellposedness}.

\begin{lemma}[Exterior DN maps]\label{lemma-DN-maps}
The exterior DN maps $\Lambda_P$ and $\Lambda_P^*$ are well-defined, linear and continuous. One has the identity $\langle \Lambda_Pf,g \rangle = \langle f, \Lambda^*_Pg \rangle$.
\end{lemma}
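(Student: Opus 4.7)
The plan is to reduce everything to the abstract framework of Lemma \ref{lemma:generalWellposedness}, since the concrete setting of this section fits it precisely. First I would observe that, by Lemma \ref{boundedness-bilinear}, the bilinear form $B_P$ in \eqref{bilinear} extends to a bounded bilinear form $B_P\colon H^s(\R^n)\times H^s(\R^n)\to\R$. Moreover, the coercivity computation already carried out in the proof of Lemma \ref{well-posedness} gives
\begin{equation}
B_P(v,v)\;\geq\;\|(-\Delta)^{s/2}v\|_{L^2(\R^n)}^2-\|P\|_{m,s}\|v\|_{H^s(\R^n)}^2\;\geq\;\bigl(\delta(\Omega)-\|P\|_{m,s}\bigr)\|v\|_{H^s(\R^n)}^2
\end{equation}
for $v\in\widetilde H^s(\Omega)$, and the assumption $P\in\mathcal{P}_{m,s}(\Omega)$ guarantees $\delta(\Omega)-\|P\|_{m,s}>0$. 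Thus $B_P$ is strongly coercive on $\widetilde H^s(\Omega)$, and the hypotheses of Lemma \ref{lemma:generalWellposedness} hold. Parts \ref{item:uniqunessOfSolutions}--\ref{item:DNmaps} of that lemma then immediately yield the existence and uniqueness of $u_f$, the independence of $\Lambda_P f\in X^*$ on the representative of $[f]\in X$, the linearity of $\Lambda_P$, and the continuity estimate $\|\Lambda_P f\|_{X^*}\leq C\|f\|_X$.

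Next, for the adjoint map, I would verify that the bilinear form $B_P^*$ defined in \eqref{adjoint-bilinear} coincides with the abstract transpose of $B_P$, i.e.\ $B_P^*(v,w)=B_P(w,v)$ for all $v,w\in H^s(\R^n)$. By density this reduces to checking it on $C_c^\infty(\R^n)\times C_c^\infty(\R^n)$, which is a direct comparison of \eqref{bilinear} and \eqref{adjoint-bilinear}. Since $B_P^*(v,v)=B_P(v,v)$, the coercivity and boundedness of $B_P^*$ follow automatically, and Lemma \ref{lemma:generalWellposedness} applied to $B_P^*$ gives the analogous well-posedness for the adjoint problem. To identify the abstract solution with the $u_f^*$ of Definition \ref{def:exteriorDNmaps}, I would observe that testing the distributional equation $(-\Delta)^s u^*+\sum_{|\alpha|\leq m}(-1)^{|\alpha|}D^\alpha(a_\alpha u^*)=0$ against $v\in C_c^\infty(\Omega)$ and integrating by parts in the local part produces exactly the identity $B_P^*(u^*,v)=0$; this integration by parts is legitimate thanks to the multiplier interpretation recalled in Section \ref{sec:Sobolev-Multiplier-basics}, under which $a_\alpha u^*\in H^{-s}(\R^n)$ makes sense and $D^\alpha$ is transposed to $(-1)^{|\alpha|}D^\alpha$.

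Finally, the duality identity $\langle \Lambda_P f,g\rangle=\langle f,\Lambda_P^* g\rangle$ follows from Lemma \ref{lemma:generalWellposedness}\ref{item:adjointBilinear}, which says $\Lambda_{B_P}[f][g]=\Lambda_{B_P^*}[g][f]$; unwinding the notation this is exactly the claim. I do not expect any real obstacle here, as the whole lemma is a bookkeeping exercise translating the axiomatic statement of Lemma \ref{lemma:generalWellposedness} into the concrete language of \eqref{problem}--\eqref{adjoint-problem}. The only point that deserves some care is matching the weak formulation of the adjoint PDE with the transposed bilinear form $B_P^*$, which is a routine integration-by-parts computation justified by the Sobolev multiplier framework.
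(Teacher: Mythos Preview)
Your proposal is correct and follows exactly the approach the paper indicates: the paper's own proof simply remarks that the lemma is an elementary consequence of Lemma \ref{well-posedness} and also follows directly from Lemma \ref{lemma:generalWellposedness}. Your write-up is just a more detailed unpacking of this, including the (straightforward) verification that the concrete $B_P^*$ of \eqref{adjoint-bilinear} coincides with the abstract transpose $B_P(w,v)$ so that part \ref{item:adjointBilinear} of Lemma \ref{lemma:generalWellposedness} applies.
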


We note that Lemma \ref{lemma:generalAlessandrini} implies the following.

\begin{lemma}[Alessandrini identity]\label{alex}
Let $\Omega \subset \mathbb R^n$ be an open set which is bounded in one direction, assume that $s \in \mathbb R^+ \setminus \mathbb Z$, $m\in \mathbb N$ satisfy $2s > m$ and let $P_1,P_2 \in \mathcal{P}_{m,s}(\Omega)$. If we denote by $u_1,u_2^* \in H^s(\mathbb R^n)$ the unique weak solutions of
 \begin{align*}
(-\Delta)^s u_1 + \sum_{|\alpha|\leq m} a_{1,\alpha} D^\alpha u_1 & = 0 \quad \mbox{in} \;\;\Omega,\quad  u_1 - f_1 \in \widetilde H^s(\Omega)  
\end{align*}
and 
\begin{align*}
(-\Delta)^s u_2^* + \sum_{|\alpha|\leq m} (-1)^{|\alpha|} D^\alpha (a_{2,\alpha}  u_2^*) & = 0 \quad \mbox{in} \;\;\Omega, \quad u_2^* - f_2  \in \widetilde H^s(\Omega) ,
\end{align*}
where $f_1,f_2\in H^s(\R^n)$, then we have the identity
$$ \langle (\Lambda_{P_1} - \Lambda_{P_2})f_1,f_2 \rangle = \sum_{|\alpha|\leq m} \langle a_{1,\alpha}-a_{2,\alpha}, (D^\alpha u_1) u_2^*\rangle. $$
\end{lemma}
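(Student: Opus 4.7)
The plan is to deduce this from the general abstract Alessandrini identity (Lemma~\ref{lemma:generalAlessandrini}). First I would verify the hypotheses of that lemma: by Lemma~\ref{boundedness-bilinear} the bilinear forms $B_{P_1}, B_{P_2}\colon H^s(\R^n)\times H^s(\R^n)\to\R$ are bounded (using that $2s\geq m$ so every multiplier norm $\|a_{j,\alpha}\|_{s-|\alpha|,-s}$ is meaningful), and the coercivity estimate established inside the proof of Lemma~\ref{well-posedness}, namely $B_{P_j}(v,v)\geq (\delta(\Omega)-\|P_j\|_{m,s})\|v\|_{H^s(\R^n)}^2$ for $v\in\widetilde{H}^s(\Omega)$, gives strong coercivity since $P_j\in\mathcal{P}_{m,s}(\Omega)$ forces $\|P_j\|_{m,s}<\delta(\Omega)$.

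Next I would identify the solutions in the hypothesis of Lemma~\ref{lemma:generalAlessandrini}. The function $u_1$ is precisely the unique $u\in H^s(\R^n)$ with $u-f_1\in\widetilde{H}^s(\Omega)$ satisfying $B_{P_1}(u_1,\phi)=0$ for all $\phi\in\widetilde{H}^s(\Omega)$, so it plays the role of the first solution. The function $u_2^*$ is the unique element of $H^s(\R^n)$ with $u_2^*-f_2\in\widetilde{H}^s(\Omega)$ such that, by integration against test functions,
\[
B_{P_2}^*(u_2^*,\phi)=\langle(-\Delta)^{s/2}u_2^*,(-\Delta)^{s/2}\phi\rangle+\sum_{|\alpha|\leq m}\langle a_{2,\alpha},\phi\, D^\alpha u_2^*\rangle\wedge\text{(reinterpret via the adjoint pairing)}=0,
\]
which matches the adjoint problem $(-\Delta)^s u_2^*+\sum(-1)^{|\alpha|}D^\alpha(a_{2,\alpha}u_2^*)=0$ in $\Omega$. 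Thus $u_2^*$ is the adjoint solution required by Lemma~\ref{lemma:generalAlessandrini} applied to $B_2=B_{P_2}$.

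Lemma~\ref{lemma:generalAlessandrini} then yields
\begin{equation}
\langle(\Lambda_{P_1}-\Lambda_{P_2})f_1,f_2\rangle=(B_{P_1}-B_{P_2})(u_1,u_2^*).
\end{equation}
The final step is a direct computation: since the $(-\Delta)^{s/2}$ terms in $B_{P_1}$ and $B_{P_2}$ are identical and cancel in the difference, we are left with
\begin{equation}
(B_{P_1}-B_{P_2})(u_1,u_2^*)=\sum_{|\alpha|\leq m}\langle a_{1,\alpha}-a_{2,\alpha},(D^\alpha u_1)\,u_2^*\rangle,
\end{equation}
which is the claimed identity. The only subtlety I anticipate is the bookkeeping required to confirm that the bilinear form $B_{P_2}^*$ corresponds precisely to the strong-form adjoint equation stated in the lemma (i.e., that the $(-1)^{|\alpha|}D^\alpha(a_{2,\alpha}\,\cdot)$ terms arise from moving $D^\alpha$ off the test function via the multiplier duality). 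This is handled by the definition of the adjoint multiplication map $m_{a}^*$ recalled in Section~\ref{sec:Sobolev-Multiplier-basics}, together with density of $C_c^\infty(\R^n)$ in $H^s(\R^n)$, and does not require any new machinery.
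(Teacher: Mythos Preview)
Your proposal is correct and follows exactly the approach the paper takes: the paper simply notes that Lemma~\ref{lemma:generalAlessandrini} implies Lemma~\ref{alex}, and your write-up is a faithful elaboration of that deduction (verifying boundedness and coercivity, identifying $u_1$ and $u_2^*$ with the abstract solutions, and computing the difference $B_{P_1}-B_{P_2}$).
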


The Runge approximation property follows directly from Theorem \ref{lemma:GeneralRungeApproximation} and the unique continuation property of the higher order fractional Laplacian (see \cite{CMR20,GSU20}).

\begin{lemma}[Runge approximation property]\label{runge}
Let $\Omega \subset \mathbb R^n$ be an open set which is bounded in one direction, assume that $s \in \mathbb R^+ \setminus \mathbb Z$, $m\in \mathbb N$ satisfy $2s > m$ and let $P \in \mathcal{P}_{m,s}(\Omega)$. Let $W \subset \mathbb R^n$ be a nonempty open set such that $\overline W \cap \overline \Omega = \emptyset$. Let \[\mathcal{R} \vcentcolon = \{\, u_f - f\,;\,f\in C^\infty_c(W) \,\},\, \mathcal{R}^* \vcentcolon = \{\, u^*_f - f\,;\, f\in C^\infty_c(W) \,\} \subset \widetilde H^s(\Omega),\] where $u_f, u^*_f\in H^s(\R^n)$ uniquely solve
\begin{align*}
(-\Delta)^s u_f + \sum_{|\alpha|\leq m} a_\alpha D^\alpha u_f & = 0 \quad \mbox{in} \quad\Omega,\quad  u_f = f \quad\mbox{in} \quad\Omega_e
\end{align*}
and
\begin{align*}
(-\Delta)^s u_f^* + \sum_{|\alpha|\leq m} (-1)^{|\alpha|} D^\alpha (a_\alpha  u_f^*) & = 0 \quad \mbox{in} \quad\Omega, \quad  u_f^* = f \quad \mbox{in} \quad\Omega_e,
\end{align*}
respectively. Then $\mathcal R$ and $\mathcal R^*$ are dense in $\widetilde H^s(\Omega)$.
\end{lemma}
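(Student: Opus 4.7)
The plan is to derive Lemma \ref{runge} as a direct corollary of the abstract Runge approximation result (Theorem \ref{lemma:GeneralRungeApproximation}) applied to the natural splitting of the bilinear form. I would write $B_P = L + q$ where
\[ L(u,v) \vcentcolon = \langle (-\Delta)^{s/2}u,(-\Delta)^{s/2}v\rangle, \qquad q(u,v) \vcentcolon = \sum_{|\alpha|\leq m}\langle a_\alpha, (D^\alpha u)v\rangle. \]
By Lemma \ref{boundedness-bilinear}, both forms are bounded on $H^s(\R^n)\times H^s(\R^n)$. Since $\Omega$ is bounded in one direction, Theorem \ref{thm:PoincUnboundedDoms} provides a finite Poincar\'e constant of order $s$, so the smallness condition $P \in \mathcal{P}_{m,s}(\Omega)$ combined with Lemma \ref{well-posedness} gives the (strong) coercivity of $B_P$ on $\widetilde{H}^s(\Omega)$, which is the only structural hypothesis needed on the perturbed form by Lemma \ref{lemma:generalWellposedness}.

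There are then exactly two abstract hypotheses of Theorem \ref{lemma:GeneralRungeApproximation} to check for the chosen $W$, namely locality of $q$ and the appropriate UCP for $L$ on $W$. Locality of $q$ is immediate from locality of the differential operators $D^\alpha$: if $u,v\in H^s(\R^n)$ satisfy $\supp(u)\cap \supp(v) = \emptyset$, then $\supp(D^\alpha u)\subset \supp(u)$ forces $(D^\alpha u)v = 0$ in the distributional sense, so each multiplier pairing $\langle a_\alpha,(D^\alpha u)v\rangle$ vanishes.

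For the UCP, since $L$ is symmetric the left and right UCP coincide (cf.\ the remark following the definition of UCP for bilinear forms), so a single verification handles both parts of Theorem \ref{lemma:GeneralRungeApproximation}. Suppose $u\in H^s(\R^n)$ satisfies $u|_W = 0$ and $L(u,\phi) = 0$ for all $\phi\in C_c^\infty(W)$. Since $L(u,\phi) = \langle (-\Delta)^s u,\phi\rangle$ as a distributional pairing, this means $(-\Delta)^s u = 0$ in $W$ and $u = 0$ in $W$; the higher order fractional UCP, e.g.\ \cite[Theorem 1.2]{CMR20}, then yields $u\equiv 0$.

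With locality of $q$, boundedness and coercivity of $B_P$, and the symmetric UCP of $L$ on $W$ all in place, parts \ref{item:generalRunge} and \ref{item:generaladjointRunge} of Theorem \ref{lemma:GeneralRungeApproximation} (applied to $B_P$ and to the adjoint $B_P^*$, whose perturbation $q^*(u,v) = \sum_{|\alpha|\leq m}\langle a_\alpha, u (D^\alpha v)\rangle$ is local by the same reason) deliver the density of $\mathcal{R}$ and $\mathcal{R}^*$ in $\widetilde{H}^s(\Omega)$. The only substantive analytic input is the UCP of the higher order fractional Laplacian; every other step is a matter of matching the concrete setup to the abstract framework of Section \ref{sec:GeneralizedFractionalCalderon}, which is the main obstacle only in the sense of bookkeeping. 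Note that the hypothesis $\overline{W}\cap\overline{\Omega} = \emptyset$ in the statement is actually stronger than needed, since openness of $W$ in $\Omega_e$ already suffices for the abstract argument; in particular locality of $q$ does not require separation of closures because it uses only disjointness of supports of the test functions.
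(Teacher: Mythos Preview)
Your proof is correct and follows exactly the route the paper takes: the lemma is stated in the paper as a direct consequence of Theorem~\ref{lemma:GeneralRungeApproximation} together with the unique continuation property of the higher order fractional Laplacian from \cite{CMR20,GSU20}, and you have carefully verified each hypothesis of that abstract result for the splitting $B_P=L+q$. Your additional remark that the separation condition $\overline{W}\cap\overline{\Omega}=\emptyset$ is stronger than the abstract argument requires is also accurate.
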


We are ready to prove our main result on the fractional Calderón problem. One could follow the proof given in \cite{GSU20} using the Alessandrini identity and the Runge approximation property stated above. We however directly rely on Theorem \ref{theorem:GeneralFractionalCalderon}.

\begin{theorem}[Fractional Calderón problem]
\label{main-theorem-singular}
Let $\Omega \subset \mathbb R^n$ be an open set which is bounded in one direction, assume that $s \in \mathbb R^+ \setminus \mathbb Z$, $m\in \mathbb N$ satisfy $2s > m$ and let $$P_j = \sum_{|\alpha|\leq m} a_{j,\alpha} D^\alpha\in \mathcal{P}_{m,s}(\Omega),$$ for $j=1,2$. Let $W_1, W_2 \subset \Omega_e$ be open sets. If the exterior DN maps for the equations
$((-\Delta)^s + P_j)u = 0$ in $\Omega$, $j=1,2$, satisfy 
$ \Lambda_{P_1}f|_{W_2} = \Lambda_{P_2}f|_{W_2}$
for all $f \in C^\infty_c(W_1)$, then $a_{1,\alpha}|_{\Omega} = a_{2,\alpha}|_{\Omega}$ for all $\alpha$ of order $|\alpha| \leq m$.
\end{theorem}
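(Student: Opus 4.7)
The plan is to deduce this from the abstract Theorem~\ref{theorem:GeneralFractionalCalderon} together with a standard localization argument to extract the individual coefficients.

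First, I would set $L(u,v) \vcentcolon = \langle (-\Delta)^{s/2}u, (-\Delta)^{s/2}v\rangle$ and $q_j(u,v) \vcentcolon = \sum_{|\alpha|\leq m} \langle a_{j,\alpha}, (D^\alpha u)v\rangle$ on $H^s(\R^n)\times H^s(\R^n)$, so that $B_{P_j} = L + q_j$. Lemma~\ref{boundedness-bilinear} gives boundedness of these forms; Lemma~\ref{well-posedness} yields the strong coercivity of $B_{P_j}$ on $\tilde H^s(\Omega)$ because $P_j \in \mathcal P_{m,s}(\Omega)$; the form $L$ is symmetric and has the symmetric UCP on every nonempty open set thanks to the UCP for $(-\Delta)^s$; and $q_j$ is local because $\supp((D^\alpha u)v) \subset \supp(u)\cap \supp(v)$.

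Second, I would reduce to the disjoint-exterior case. If $W_1 \cap W_2 \neq \emptyset$, I pick two disjoint nonempty open balls $W_1' \subset W_1$, $W_2' \subset W_2$ inside $\Omega_e$; the hypothesis on the DN maps obviously persists after shrinking. Applying Theorem~\ref{theorem:GeneralFractionalCalderon} with the data $L, q_1, q_2, W_1', W_2'$ then produces $q_1 = q_2$ on $\tilde H^s(\Omega)\times \tilde H^s(\Omega)$; in other words, writing $b_\alpha \vcentcolon = a_{1,\alpha} - a_{2,\alpha}$,
\[
\sum_{|\alpha|\leq m} \langle b_\alpha, (D^\alpha u) v\rangle = 0 \quad \text{for all } u,v \in C_c^\infty(\Omega).
\]

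Third, I would extract the $b_\beta$ by induction on $|\beta|$. Fix $x_0 \in \Omega$, a small neighborhood $N \subset \Omega$ of $x_0$, and $\chi \in C_c^\infty(\Omega)$ with $\chi \equiv 1$ on $N$. For the base case $|\beta|=0$, taking $u=\chi$ kills every $|\alpha|\geq 1$ term on $N$, and the identity reduces to $\langle b_0, v\rangle = 0$ for all $v \in C_c^\infty(N)$, giving $b_0 = 0$ in $\Omega$. For the inductive step, suppose $b_\alpha = 0$ in $\Omega$ for $|\alpha| < k$ and fix $\beta$ with $|\beta|=k$. Taking $u(x) = (x-x_0)^\beta \chi(x)/\beta!$, one checks that $D^\alpha u \equiv 0$ on $N$ for every $\alpha \neq \beta$ with $|\alpha| \geq k$, while $D^\beta u \equiv 1$ on $N$; the identity collapses to $\langle b_\beta, v\rangle = 0$ for $v \in C_c^\infty(N)$, so $b_\beta = 0$ on $N$ and hence in $\Omega$.

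The main obstacle is really the verification of the abstract hypotheses of Theorem~\ref{theorem:GeneralFractionalCalderon} --- most notably the locality of $q_j$ and the UCP of $L$ on the exterior --- rather than the final extraction step, which is elementary once the bilinear identity is in hand. The smallness built into $\mathcal P_{m,s}(\Omega)$ plays the role of the compact Sobolev embeddings that would be available on a bounded domain.
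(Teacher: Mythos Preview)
Your proposal is correct and follows essentially the same route as the paper: apply the abstract Theorem~\ref{theorem:GeneralFractionalCalderon} to obtain $\sum_{|\alpha|\le m}\langle a_{1,\alpha}-a_{2,\alpha},(D^\alpha v_1)v_2\rangle=0$ for all $v_1,v_2\in C_c^\infty(\Omega)$, then extract the coefficients by induction using monomial test functions. Your explicit reduction to disjoint $W_1',W_2'$ is a detail the paper's proof leaves implicit (Theorem~\ref{theorem:GeneralFractionalCalderon} does assume $W_1\cap W_2=\emptyset$), and your localized test function $(x-x_0)^\beta\chi/\beta!$ is just a translated version of the paper's $x^\alpha$; note, however, that $D^\beta(x-x_0)^\beta=(-i)^{|\beta|}\beta!$ rather than $\beta!$, so $D^\beta u\equiv(-i)^{|\beta|}$ on $N$ --- a nonzero constant, so the argument is unaffected.
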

\begin{proof}[Proof of Theorem \ref{main-theorem-singular}] 
Using Theorem \ref{theorem:GeneralFractionalCalderon} we deduce
\begin{equation}\label{main-2}
 \sum_{|\alpha|\leq m} \langle a_{1,\alpha}-a_{2,\alpha} , (D^\alpha v_1) v_2\rangle = 0 \quad \mbox{for all} \quad v_1,v_2 \in C^\infty_c(\Omega).
\end{equation}
For $\alpha=0$ it immediately follows that $a_{1,\alpha}|_{\Omega}=a_{2,\alpha}|_{\Omega}$. We finish the proof by induction using the identity \eqref{main-2}. Suppose that $a_{1,\beta}|_{\Omega} = a_{2,\beta}|_{\Omega}$ for all $\beta$ such that $|\beta| < N$ for some $N <m$. Let $\alpha\in\N_0^n$ satisfy $|\alpha|=N$, $v_2 \in C^\infty_c(\Omega)$ and choose $v_1\in C^\infty_c(\Omega)$ such that $v_1(x) = x^\alpha = x_1^{\alpha_1}x_2^{\alpha_2}\cdots\,x_n^{\alpha_n}$ on $\supp(v_2) \Subset \Omega$. Now the equation \eqref{main-2} gives, after a short computation, that
$$ 0=\langle a_{1,\alpha}-a_{2,\alpha},(D^\alpha x^\alpha) v_2\rangle = \alpha! \langle a_{1,\alpha}-a_{2,\alpha}, v_2 \rangle $$
which implies $a_{1,\alpha}|_{\Omega} = a_{2,\alpha}|_{\Omega}$. Hence, by induction $a_{1,\alpha}|_{\Omega} = a_{2,\alpha}|_{\Omega}$ for all $\alpha$ of order $|\alpha| \leq m$. This also implies that $P_1|_{\Omega} = P_2|_{\Omega}$.
\end{proof}

\subsection{On generalizations of the uniqueness results for the fractional Calderón problems on bounded and unbounded domains}\label{sec:generalizationsCalderon}
We consider the problem \eqref{problem} and the related inverse problem of Theorem \ref{main-theorem-singular} in the special case when $m=0$, the PDOs $P_1,P_2$ are replaced by the zeroth order potentials $q_1,q_2 \in M(H^s\to H^{-s})$ and consider also the case where $\Omega \subset \R^n$ is a bounded domain. It was shown in \cite{CMR20,RS-fractional-calderon-low-regularity-stability} that the result of Theorem \ref{main-theorem-singular} holds when $m = 0$ and $q_1,q_2 \in M_0(H^s \to H^{-s})$ and $s \in \R_+ \setminus \Z$ on all bounded domains. We extend this result on bounded sets to potentials in $M_\Omega(H^s \to H^{-s}) + M_{\geq 0}(H^s \to H^{-s})$ as this small improvement comes only with a little additional work using the same methods that are needed for unbounded domains. In the cases of unbounded domains, we restrict to potentials in $M_{\delta(\Omega)}(H^s \to H^{-s}) + M_{\geq 0}(H^s \to H^{-s})$. 

The proofs remain almost identical to the proof of Theorem \ref{main-theorem-singular}. It is also possible to obtain similar generalizations on bounded sets in the case of lower order linear perturbations studied in Section \ref{sec:small local lin unbounded}. Rather than giving full details of the proofs in this section, which are very similar to the earlier proofs in \cite{CMR20,CMRU20-higher-order-fracCald,RS-fractional-calderon-low-regularity-stability} and the proof of Theorem \ref{main-theorem-singular}, we only explicitly prove here the related well-posedness results and conclude the uniqueness results for the fractional Calderón problems. The proof of Theorem \ref{main-theorem-singular} with obvious changes can be used as a guide to complete the missing details.

We assume that the potential $q \in M(H^s \to H^{-s})$ is such that $0$ is not the Dirichlet eigenvalue of the operator $\fraclaplace+q$, namely:
\begin{equation}
\label{def:zeronotdirichleteigenvalue}
\text{If} \ u\in H^s(\R^\dimens) \ \text{solves} \ (\fraclaplace+q)u=0 \ \text{in} \ \Omega \ \text{and} \ u|_{\Omega_e}=0, \ \text{then} \ u=0.
\end{equation}
We define the associated bilinear form $B_q\colon H^s(\R^n)\times H^s(\R^n)\to \R$ as
$$
B_q(v, w)=\langle(-\Delta)^{s/2}v, (-\Delta)^{s/2}w\rangle + \ip{q}{vw}
$$
for $v, w\in H^s(\R^\dimens)$.

\begin{lemma}[Well-posedness]
\label{lemma:schrodingerexistenceofsolutions}
Let $\Omega\subset\R^\dimens$ be a bounded open set, $s\in\R^+\setminus\Z$ and $q\in M_\Omega(H^s \to H^{-s}) + M_{\geq 0}(H^s \to H^{-s})$. Then the following statements hold.
\begin{enumerate}[(i)]
    \item There exist a real number $\mu > 0$ and a countable set\label{item:partAsingularPotBoundedSet} $\Sigma\subset (-\mu,\infty)$ of eigenvalues with $\lambda_1 \leq \lambda_2 \leq \dotso\rightarrow \infty$ having the following property: If $\lambda\notin\Sigma$, then for any $f\in H^s(\R^\dimens)$ and $F\in (\widetilde{H}^s(\Omega))^*$ there is a unique $u\in H^s(\R^\dimens)$ satisfying
    $$
    B_q(u, v)-\lambda\langle u,v\rangle=F(v) \ \ \text{for} \ v\in \widetilde{H}^s(\Omega), \quad u-f\in \widetilde{H}^s(\Omega)
    $$
    with the norm estimate
    $$
    \aabs{u}_{H^s(\R^\dimens)}\leq C\Big(\aabs{F}_{(\widetilde{H}^s(\Omega))^*}+\aabs{f}_{H^s(\R^\dimens)}\Big),
    $$
    where $C$ is independent of $F$ and $f$.
    \item \label{item:partBsingularPotBoundedSet}The function $u$ in \ref{item:partAsingularPotBoundedSet} is the unique $u\in H^s(\R^\dimens)$ satisfying
    $$
    \fraclaplace u+qu-\lambda u=F
    $$
    in the sense of distributions on $\Omega$ and $u-f\in\widetilde{H}^s(\Omega)$.
    \item\label{item:partCsingularPotBoundedSet} One has $0\notin\Sigma$ if \eqref{def:zeronotdirichleteigenvalue} holds.
\end{enumerate}
\end{lemma}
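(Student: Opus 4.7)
The plan is to split $q$ via the definition of $M_\Omega(H^s\to H^{-s})$ together with \eqref{eq: decomposition}, establish a G\aa rding-type coercivity estimate with a spectral shift, and then invoke the Fredholm alternative using the compact embedding $\widetilde{H}^s(\Omega)\hookrightarrow L^2(\Omega)$, which is available because $\Omega$ is bounded. I first write $q = q_0'+q_0''+q_1$, where $q_0'\in M_0(H^s\to H^{-s})$, $q_0''\in M(H^s\to H^{-s})$ satisfies $\|q_0''\|_{s,-s}=\delta(\Omega)-\eta$ for some $\eta>0$, and $q_1\in M_{\geq 0}(H^s\to H^{-s})$.

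For $v\in\widetilde{H}^s(\Omega)$, Theorem~\ref{thm:PoincUnboundedDoms} gives $\|(-\Delta)^{s/2}v\|_{L^2(\R^n)}^2 \geq \delta(\Omega)\|v\|_{H^s(\R^n)}^2$, while picking $\phi\in C_c^\infty(\R^n)$ with $\|q_0'-\phi\|_{s,-s}<\eta/2$ yields the bound $|\ip{q_0'}{vv}| \leq (\eta/2)\|v\|_{H^s(\R^n)}^2 + \|\phi\|_{L^\infty(\R^n)}\|v\|_{L^2(\R^n)}^2$. Combining with $|\ip{q_0''}{vv}|\leq(\delta(\Omega)-\eta)\|v\|_{H^s(\R^n)}^2$ and $\ip{q_1}{vv}\geq 0$ gives the G\aa rding estimate
\begin{equation}\label{eq: planGarding}
B_q(v,v)+\mu\|v\|_{L^2(\R^n)}^2\geq \tfrac{\eta}{2}\|v\|_{H^s(\R^n)}^2,
\end{equation}
with $\mu\vcentcolon=\|\phi\|_{L^\infty(\R^n)}$, enlarged if needed to ensure $\mu>0$.

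For $\lambda<-\mu$ the bilinear form $B_q(\cdot,\cdot)-\lambda\langle\cdot,\cdot\rangle$ is coercive on $\widetilde{H}^s(\Omega)$, so the reduction $u=f+\tilde u$ with $\tilde u\in\widetilde{H}^s(\Omega)$ combined with Lax--Milgram (exactly as in the proof of Lemma~\ref{lemma:generalWellposedness}\ref{item:uniqunessOfSolutions}) produces the unique solution together with the desired norm estimate for any admissible data. For general $\lambda\in\R$ I write the corresponding operator as a compact perturbation: the inclusion $J\colon\widetilde{H}^s(\Omega)\hookrightarrow L^2(\Omega)\hookrightarrow(\widetilde{H}^s(\Omega))^*$ is compact because $\Omega$ is bounded, so composing with a fixed invertible shifted operator reduces solvability to a compact perturbation of the identity on $\widetilde{H}^s(\Omega)$. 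Because the multiplier pairing is symmetric in its two arguments, $B_q$ is a symmetric bilinear form; the spectral theorem for compact self-adjoint operators together with the analytic Fredholm alternative then deliver the discrete set $\Sigma\subset(-\mu,\infty)$ accumulating only at $+\infty$, the existence and uniqueness for $\lambda\notin\Sigma$, and the norm estimate via the open mapping theorem.

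Claim~\ref{item:partBsingularPotBoundedSet} is a standard density argument: testing the weak formulation against $v\in C_c^\infty(\Omega)$, which is dense in $\widetilde{H}^s(\Omega)$, reads the equation in the distributional sense on $\Omega$. Claim~\ref{item:partCsingularPotBoundedSet} is immediate, since \eqref{def:zeronotdirichleteigenvalue} forces the $\lambda=0$ homogeneous problem to admit only the trivial solution, whence $0\notin\Sigma$. The chief technical obstacle is calibrating the G\aa rding estimate so that the $M_\Omega$-smallness of $q_0''$ is exactly absorbed by the Poincar\'e factor $\delta(\Omega)$, with enough margin left to soak up the approximation error of $q_0'$ by the smooth cutoff $\phi$, while the nonnegative part $q_1$ is simply dropped.
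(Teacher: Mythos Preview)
Your proof is correct and follows essentially the same strategy as the paper: decompose $q$ via \eqref{eq: decomposition} into an $M_0$ part, a small part controlled by $\delta(\Omega)$, and a nonnegative part, absorb the small part into the Poincar\'e lower bound and the $M_0$ part into a shift $\mu$ via a $C_c^\infty$ approximation, then invoke the compact embedding $\widetilde{H}^s(\Omega)\hookrightarrow L^2(\Omega)$ together with Fredholm/spectral theory for the resulting compact self-adjoint operator. The only cosmetic differences are that the paper uses $\mu=\|\phi_-\|_{L^\infty}$ rather than $\|\phi\|_{L^\infty}$ and phrases the Fredholm step via the resolvent $G_\mu$ on $L^2(\Omega)$ rather than invoking the analytic Fredholm alternative; neither affects the argument.
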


\begin{proof}
Using Lemma~\ref{boundedness-bilinear} we see that it is enough to solve the problem with zero exterior condition. By the assumption there is a decomposition
\begin{equation}
    q = q_s + q_0, \quad q_s \in M_\Omega(H^s \to H^{-s}),\quad q_0 \in M_{\geq 0}(H^s \to H^{-s}).
\end{equation}
First, we may calculate
\begin{equation}
\langle q,v^2 \rangle = \langle q_s, v^2\rangle + \langle q_0, v^2\rangle \geq \langle q_s, v^2\rangle
\end{equation}
for any $v \in H^s(\R^n)$.
Using \eqref{eq: decomposition} we have $q_s = q_{s,1} + q_{s,2}$, where $q_{s,1} \in M_0(H^{s}\to H^{-s})$ and $q_{s,2} \in M_{\delta(\Omega)}(H^s \to H^{-s})$. The small part has the estimate
\begin{equation}
    \abs{\langle q_{s,2}, v^2\rangle} \leq \norm{ q_{s,2}}_{s,-s}\norm{v}_{H^s(\R^n)}^2 
    < \delta(\Omega)\norm{v}_{H^s(\R^n)}^2
\end{equation}
for any $v \in H^s(\R^n)$. Next suppose that $v \in \tilde{H}^s(\Omega)$. We may now estimate as in the proof of Lemma \ref{well-posedness} to get
\begin{equation}\label{eq:coercivityEst1}
\begin{split}
B_q(v,v) &\geq \norm{(-\Delta)^{s/2}v}_{L^2(\R^n)}^2 +  \langle q_{s,1}, v^2\rangle + \langle q_{s,2}, v^2\rangle \\
&\geq \norm{(-\Delta)^{s/2}v}_{L^2(\R^n)}^2 + \langle q_{s,1}, v^2\rangle - \abs{\langle q_{s,2}, v^2\rangle}\\
&\geq \left(\delta(\Omega)-\norm{q_{s,2}}_{s,-s}\right)\norm{v}_{H^s(\R^n)}^2 + \langle q_{s,1}, v^2\rangle
\end{split}
\end{equation}
where $c \vcentcolon = \delta(\Omega)-\norm{q_{s,2}}_{s,-s} > 0$. Choose $\phi \in C_c^\infty(\R^n)$ such that $\norm{q_{s,1}-\phi}_{s,-s} < c$ and define $\mu \vcentcolon = \|\phi_{-}\|_{L^{\infty}(\R^n)}\geq 0$, where $\phi_{-}$ denotes the negative part of $\phi$. When we write $q_{s,1} = \phi + (q_{s,1}-\phi)$, we have by \eqref{eq:coercivityEst1} for any $v \in \tilde{H}^s(\Omega)$ that
\begin{equation}
    B_{q,\mu}(v,v) \vcentcolon = B_q(v,v) + \mu\ip{v}{v} \geq c'\norm{v}_{H^s(\R^n)}^2,
\end{equation}
where $c' = c-\norm{q_{s,1}-\phi}_{s,-s} > 0$ is independent of $v$. This coercivity estimate and the boundedness of the bilinear form $B_q$ implies that $B_{q,\mu}$ defines an equivalent inner product on $\tilde{H}^s(\Omega)$.

The proof is now completed as in \cite[Lemma 2.3]{GSU20}. The Riesz representation theorem implies that for every $\widetilde{F}\in (\widetilde{H}^s({\Omega}))^*$ there is unique $u=G_{\mu}\widetilde{F}\in\widetilde{H}^s(\Omega)$ such that $B_q(u, v)+\mu\ip{u}{v}=\widetilde{F}(v)$ for all $v\in\widetilde{H}^s(\Omega)$. Next note that there is a unique $u\in\tilde{H}^s(\Omega)$ such that $B_q(u,v)-\lambda\langle u,v\rangle=\widetilde{F}(v)$ for all $v\in \widetilde{H}^s(\Omega)$ if and only if there holds $u=G_{\mu}[(\mu+\lambda)u+\widetilde{F}]$.  The map $G_{\mu}\colon(\widetilde{H}^s(\Omega))^*\rightarrow\widetilde{H}^s(\Omega)$ induces a compact, self-adjoint and positive definite operator $\widetilde{G}_{\mu}\colon L^2(\Omega)\rightarrow L^2(\Omega)$ by the compact Sobolev embedding theorem. The Fredholm alternative together with the spectral theorem for the self-adjoint, compact, positive definite operator $\widetilde{G}_{\mu}$ implies the first part of the claim in \ref{item:partAsingularPotBoundedSet}. Moreover, the estimate follows from the Riesz representation theorem and the fact that if $\lambda\notin \Sigma$, then solutions $u\in \widetilde{H}^s(\Omega)$ to 
\[
     B_q(u, v)-\lambda\langle u,v\rangle=\tilde{F}(v) \ \ \text{for} \ v\in \widetilde{H}^s(\Omega)
\]
satisfy $\|u\|_{L^2}\leq C\|\widetilde{F}\|_{(\widetilde{H}^s(\Omega))^*}$ (cf.~\cite[Section 6.2, Theorem 6]{Eva10}). 
The statement \ref{item:partBsingularPotBoundedSet} holds since $C_c^{\infty}(\Omega)$ is dense in $\widetilde{H}^s(\Omega)$. The assertion in \ref{item:partCsingularPotBoundedSet} follows from the Fredholm alternative.
\end{proof}

Lemma \ref{lemma:schrodingerexistenceofsolutions} and the same proof as given for Theorem \ref{main-theorem-singular} let us conclude the following theorem. Note that the exterior DN maps are defined according to Definition \ref{def:exteriorDNmaps} and the other properties such as the Alessandrini identity and the Runge approximation property remains valid. See also e.g. \cite{CMR20,GSU20,RS-fractional-calderon-low-regularity-stability} for a simpler proof for the case of just a potential $q$ rather than an operator $P$.

\begin{theorem}[Fractional Calderón problem with $M_\Omega + M_{\geq0}$ potentials]
\label{thm:schrodingeruniqueness}
Let $\Omega\subset\R^\dimens$ be a bounded open set, $s\in\R^+\setminus\Z$, and $q_1, q_2\in M_\Omega(H^s \to H^{-s}) + M_{\geq 0}(H^s \to H^{-s})$ satisfy the condition~\eqref{def:zeronotdirichleteigenvalue}. Let $W_1, W_2\subset\Omega_e$ be open sets. If the exterior DN maps for the equations $\fraclaplace u+q_ju=0$ in $\Omega$, $j=1,2$, satisfy $\Lambda_{q_1}f|_{W_2}=\Lambda_{q_2}f|_{W_2}$ for all $f\in C_c^{\infty}(W_1)$, then $q_1|_{\Omega}=q_2|_{\Omega}$.
\end{theorem}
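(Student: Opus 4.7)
The plan is to apply the abstract uniqueness Theorem \ref{theorem:GeneralFractionalCalderon} (invoking Remark \ref{remark:generalizationCalderon} to circumvent the strong coercivity hypothesis) to the symmetric bilinear forms $B_{q_j} = L + q_j$, where $L(u,v) \vcentcolon = \langle (-\Delta)^{s/2}u, (-\Delta)^{s/2}v\rangle$ and $q_j(u,v) \vcentcolon = \langle q_j, uv\rangle$. This is the $m = 0$ specialization of the strategy used for Theorem \ref{main-theorem-singular}, and most of the technical work is already in place.

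First I handle well-posedness and the definition of the DN map. Lemma \ref{lemma:schrodingerexistenceofsolutions} applied with $\lambda = 0$ (which lies outside the exceptional spectrum $\Sigma$ thanks to \eqref{def:zeronotdirichleteigenvalue}) produces, for each $f \in H^s(\R^n)$, a unique $u_{j,f} \in H^s(\R^n)$ with $u_{j,f} - f \in \tilde{H}^s(\Omega)$ solving $B_{q_j}(u_{j,f},\cdot) = 0$ on $\tilde{H}^s(\Omega)$, together with the continuity estimate. Since each $q_j$ is a scalar potential, the form $B_{q_j}$ is symmetric, so $B_{q_j}^* = B_{q_j}$ and the adjoint forward problem coincides with the forward problem. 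I then define the exterior DN maps $\Lambda_{q_j}\colon X \to X^*$ on the trace space $X = H^s(\R^n)/\tilde{H}^s(\Omega)$ as in Definition \ref{def:exteriorDNmaps} and verify, as in Lemma \ref{lemma-DN-maps}, that they are bounded and linear.

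Next I verify the remaining hypotheses of Theorem \ref{theorem:GeneralFractionalCalderon}. The form $L$ is bounded and symmetric on $H^s(\R^n) \times H^s(\R^n)$; since $L(u, \phi) = \langle (-\Delta)^s u, \phi\rangle$ for $\phi \in C_c^\infty$, the symmetric UCP of $L$ on every nonempty open set follows from the UCP for the higher-order fractional Laplacian, which is \cite[Theorem 1.2]{GSU20} for $0 < s < 1$ and \cite[Theorem 1.2]{CMR20} for general $s \in \R_+ \setminus \Z$. Each $q_j$ is a bounded bilinear form on $H^s(\R^n) \times H^s(\R^n)$ by the multiplier estimate \eqref{multiplier-inequality}, and is local: $\supp u \cap \supp v = \emptyset$ forces $uv \equiv 0$, hence $q_j(u,v) = 0$. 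Given the two sets $W_1, W_2 \subset \Omega_e$ from the hypothesis, I pick nonempty open subsets $W_1' \subset W_1$ and $W_2' \subset W_2$ with $W_1' \cap W_2' = \emptyset$ (always available, since any two nonempty open subsets of $\R^n$ contain disjoint nonempty open subsets). The assumption on the DN maps then implies the pairing identity $\Lambda_{q_1}[f][g] = \Lambda_{q_2}[f][g]$ for all $f \in C_c^\infty(W_1')$ and $g \in C_c^\infty(W_2')$.

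Theorem \ref{theorem:GeneralFractionalCalderon} then yields $q_1 = q_2$ as bilinear forms on $\tilde{H}^s(\Omega) \times \tilde{H}^s(\Omega)$, i.e.\ $\langle q_1 - q_2, vw\rangle = 0$ for all $v, w \in \tilde{H}^s(\Omega)$. Specializing to $v, w \in C_c^\infty(\Omega) \subset \tilde{H}^s(\Omega)$ gives exactly the claimed identity $q_1|_\Omega = q_2|_\Omega$ in $M(H^s \to H^{-s})$. The only substantive difficulty in the whole argument sits inside Lemma \ref{lemma:schrodingerexistenceofsolutions}, whose coercivity-type inequality (obtained by splitting the potential into a small multiplier piece, a nonnegative piece, and a $C_c^\infty$ approximant of the remainder) is what makes the Fredholm alternative applicable on the bounded domain $\Omega$; once that lemma is in place, the present theorem reduces to a direct application of the abstract framework together with the UCP for the higher-order fractional Laplacian.
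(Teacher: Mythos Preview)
Your proposal is correct and follows essentially the same route as the paper: invoke Lemma \ref{lemma:schrodingerexistenceofsolutions} for well-posedness (hence Remark \ref{remark:generalizationCalderon} applies), then run the abstract Theorem \ref{theorem:GeneralFractionalCalderon} with $L$ the fractional Laplacian form and $q_j$ the local potential forms, exactly as in the proof of Theorem \ref{main-theorem-singular} specialized to $m=0$. Your explicit passage to disjoint open subsets $W_1' \subset W_1$, $W_2' \subset W_2$ is a detail the paper leaves implicit when it cites Theorem \ref{theorem:GeneralFractionalCalderon}, so you have in fact made that step cleaner.
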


We get similarly the following theorem on unbounded domains. The proof of well-posedness and uniqueness of solutions is similar to the proof of Lemma \ref{well-posedness} and only relies on the Riesz representation theorem without the spectral theorem. Therefore, we do not have to assume additionally anything about the Dirichlet eigenvalues due to the smallness and positivity assumptions on the potentials.

\begin{theorem}[Fractional Calderón problem with $M_{\delta(\Omega)} + M_{\geq0}$ potentials on unbounded domains]
\label{thm:schrodingeruniquenessUnbounded}
Let $\Omega \subset \mathbb R^n$ be an open set which is bounded in one direction, $s\in\R^+\setminus\Z$, and $q_1, q_2\in M_{\delta(\Omega)}(H^s \to H^{-s}) + M_{\geq 0}(H^s \to H^{-s})$. Let $W_1, W_2\subset\Omega_e$ be open sets. If the exterior DN maps for the equations $\fraclaplace u+q_ju=0$ in $\Omega$, $j=1,2$, satisfy $\Lambda_{q_1}f|_{W_2}=\Lambda_{q_2}f|_{W_2}$ for all $f\in C_c^{\infty}(W_1)$, then $q_1|_{\Omega}=q_2|_{\Omega}$.
\end{theorem}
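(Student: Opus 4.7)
The plan is to invoke the general uniqueness Theorem \ref{theorem:GeneralFractionalCalderon} with $L(u,v) = \langle (-\Delta)^{s/2}u, (-\Delta)^{s/2}v\rangle$ and the (symmetric) bilinear forms $q_j(u,v) = \langle q_j, uv\rangle$ induced by the Sobolev multipliers. Unlike the bounded-domain case treated in Lemma \ref{lemma:schrodingerexistenceofsolutions}, no compact Sobolev embedding is available, so the usual Fredholm alternative and associated eigenvalue analysis are inaccessible. Instead, the smallness/positivity structure built into $M_{\delta(\Omega)} + M_{\geq 0}$ will supply strong coercivity of $B_{L,q_j} = L + q_j$ on $\tilde{H}^s(\Omega)$ directly, so that well-posedness and the associated exterior DN maps come from Lemma \ref{lemma:generalWellposedness} with no need to exclude Dirichlet eigenvalues.

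For the coercivity, decompose $q_j = q_{j,s} + q_{j,0}$ with $q_{j,s} \in M_{\delta(\Omega)}(H^s \to H^{-s})$ and $q_{j,0} \in M_{\geq 0}(H^s \to H^{-s})$. The Poincar\'e inequality (Theorem \ref{thm:PoincUnboundedDoms}) together with the estimate derived at the start of Section \ref{sec:SobMultDecompositions} gives $\|(-\Delta)^{s/2}v\|_{L^2(\R^n)}^2 \geq \delta(\Omega)\|v\|_{H^s(\R^n)}^2$ for every $v \in \tilde{H}^s(\Omega)$. Combined with $q_{j,0}(v,v) \geq 0$ (which extends from $C_c^\infty$ to $\tilde{H}^s(\Omega)$ by density and continuity of the multiplier) and the multiplier bound $|q_{j,s}(v,v)| \leq \|q_{j,s}\|_{s,-s}\|v\|_{H^s(\R^n)}^2$, this yields
\begin{equation*}
B_{L,q_j}(v,v) \geq (\delta(\Omega) - \|q_{j,s}\|_{s,-s})\|v\|_{H^s(\R^n)}^2,
\end{equation*}
and the constant on the right is strictly positive by the very definition of $M_{\delta(\Omega)}$. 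Boundedness of $B_{L,q_j}$ on $H^s(\R^n) \times H^s(\R^n)$ is immediate from $q_j \in M(H^s \to H^{-s})$.

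The remaining hypotheses of Theorem \ref{theorem:GeneralFractionalCalderon} are routine. Each $q_j$ is local: if $u,v \in H^s(\R^n)$ have disjoint supports, standard cutoff constructions approximate them in $H^s$ by compactly supported smooth sequences with disjoint supports, forcing $\langle q_j, u_iv_i\rangle = 0$ for every $i$. The UCP for the higher-order fractional Laplacian \cite{CMR20,GSU20} gives the symmetric UCP of $L$ on any nonempty open set. If $W_1 \cap W_2 \neq \emptyset$ then this intersection is itself open and nonempty in $\R^n$, hence contains two disjoint nonempty open balls $W_1', W_2'$; the DN map agreement passes to $C_c^\infty(W_1')$ tested against functions supported in $W_2'$, so one may assume $W_1 \cap W_2 = \emptyset$ as required by condition \ref{item 3 interior det}. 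Theorem \ref{theorem:GeneralFractionalCalderon} then gives $q_1(u,v) = q_2(u,v)$ for all $u,v \in C_c^\infty(\Omega)$, and fixing $v \in C_c^\infty(\Omega)$ with $u \in C_c^\infty(\Omega)$ equal to $1$ on $\supp(v)$ yields $\langle q_1 - q_2, v\rangle = 0$ for every such $v$, i.e.\ $q_1|_\Omega = q_2|_\Omega$ as distributions. The only delicate step is the coercivity estimate; the remaining arguments mirror those already used to prove Theorem \ref{main-theorem-singular}.
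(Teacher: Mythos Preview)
Your proof is correct and follows essentially the same approach as the paper. The paper's own argument is extremely terse---it simply remarks that well-posedness follows as in Lemma~\ref{well-posedness} via the Riesz representation theorem (no spectral theory needed because the smallness and positivity assumptions give strong coercivity directly), and then defers to the proof of Theorem~\ref{main-theorem-singular}; you have spelled out exactly these steps, including the reduction to disjoint measurement sets and the final extraction of $q_1|_\Omega=q_2|_\Omega$ from the bilinear identity.
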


We finally move on to the case of local linear perturbations on bounded domains. It has a generalization from the usual coefficients in $M_0(H^r \to H^t)$ to more general coefficients, analogously to Theorem \ref{thm:schrodingeruniqueness}. We will state and prove the related well-posedness result as it is less trivial than in any of our earlier cases. Then the uniqueness result for the related fractional Calderón problem can be proved following the proof of Theorem \ref{main-theorem-singular} (given originally in \cite{CMRU20-higher-order-fracCald} in the context of bounded domains but for slightly less general coefficients).

In the uniqueness theorem, we assume that $0$ is not a Dirichlet eigenvalue of the operator $(\fraclaplace+P(x, D))$, namely:
\begin{equation}\label{eq:dirichletEigenvaluePDOs}
\text{If} \ u\in H^s(\R^\dimens) \ \text{solves} \ (\fraclaplace+P(x, D))u=0 \ \text{in} \ \Omega \ \text{and} \ u|_{\Omega_e}=0, \ \text{then} \ u=0.
\end{equation}

\begin{lemma}[Well-posedness]\label{well-posedness-boundedPDO}
Let $\Omega \subset \mathbb R^n$ be a bounded open set, $s \in \mathbb R^+ \setminus \mathbb Z$ and $m\in \mathbb N$ be such that $2s > m$. Define \begin{equation}P = \sum_{\abs{\alpha}\leq m} (a_\alpha+b_\alpha) D^\alpha, \quad a_{\alpha} \in M_0(H^{s-|\alpha|}\rightarrow H^{-s}),\quad P_s \vcentcolon = \sum_{\abs{\alpha}\leq m} b_\alpha D^\alpha \in \mathcal{P}_{m,s}(\Omega).
\end{equation}
Then the following statements hold.
\begin{enumerate}[(i)]
    \item\label{item 1 well-posedness sum} There exist a real number $\mu > 0$ and a countable set $\Sigma \subset (-\mu, \infty)$ of eigenvalues with $\lambda_1 \leq \lambda_2 \leq \dotso\rightarrow \infty$ having the following property: If $\lambda \notin \Sigma$, then for any $f\in H^s(\mathbb R^n)$ and $F\in (\widetilde{H}
^s(\Omega))^*$ there is a unique $u\in H^s(\mathbb R^n)$ satisfying
$$ B_P(u,v)-\lambda \langle u,v \rangle = F(v) \quad \mbox{for all} \quad v \in \widetilde H^s(\Omega),\quad u-f \in \widetilde H^s(\Omega)$$
with the norm estimate
$$ \|u\|_{H^s(\mathbb R^n)} \leq C\left( \|f\|_{H^s(\mathbb R^n)} + \|F\|_{(\widetilde{H}^s(\Omega))^*} \right), $$
where $C$ is independent of $F$ and $f$.
    \item\label{item 2 well-posedness sum} The function $u$ in \ref{item 1 well-posedness sum} is also the unique $u\in H^s(\mathbb R^n)$ satisfying $$( (-\Delta)^s + P)u=F$$ in the sense of distributions in $\Omega$ and $u-f \in \widetilde H^s(\Omega)$. 
    \item\label{item 3 well-posedness sum} Moreover, if \eqref{eq:dirichletEigenvaluePDOs} holds, then $0\notin \Sigma$.
\end{enumerate}
\end{lemma}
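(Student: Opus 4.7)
The plan is to adapt the template of Lemma~\ref{lemma:schrodingerexistenceofsolutions}: establish a shifted coercivity estimate, apply the Lax--Milgram theorem, and then invoke the Fredholm alternative via the compact Sobolev embedding of $\widetilde{H}^s(\Omega)$ into $L^2(\Omega)$ available on the bounded set $\Omega$. The only genuinely new step compared to the purely $M_0$ setting of \cite{CMRU20-higher-order-fracCald} is the coercivity, since the coefficients now decompose as a sum of $a_\alpha \in M_0(H^{s-|\alpha|}\to H^{-s})$ and a small multiplier part contributed by $P_s$. As a preliminary reduction, the substitution $\tilde{u} = u-f$ converts the problem to a zero exterior problem with modified data $\widetilde{F} \vcentcolon= F - B_P(f,\cdot) + \lambda\langle f,\cdot\rangle \in (\widetilde{H}^s(\Omega))^*$, which is well-defined by Lemma~\ref{boundedness-bilinear}.

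The crux of the argument is the shifted coercivity
\[
B_P(v,v) + \mu\|v\|_{L^2(\R^n)}^2 \geq c\|v\|_{H^s(\R^n)}^2 \qquad \text{for all } v \in \widetilde{H}^s(\Omega)
\]
for some $\mu, c > 0$. I would split $B_P = B_{P_s} + R$, where $R(v,w) \vcentcolon= \sum_{|\alpha|\leq m}\langle a_\alpha, (D^\alpha v)w\rangle$. The $B_{P_s}$ term is handled exactly as in Lemma~\ref{well-posedness}: the Poincar\'e-type estimate $\|(-\Delta)^{s/2}v\|_{L^2}^2 \geq \delta(\Omega)\|v\|_{H^s}^2$ on $\widetilde{H}^s(\Omega)$ and the multiplier bound on the $b_\alpha$ coefficients yield $B_{P_s}(v,v) \geq (\delta(\Omega) - \|P_s\|_{m,s})\|v\|_{H^s}^2$, and this lower bound is strictly positive by the assumption $P_s \in \mathcal{P}_{m,s}(\Omega)$. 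For each $a_\alpha \in M_0(H^{s-|\alpha|}\to H^{-s})$, I choose $\phi_\alpha \in C_c^\infty(\R^n)$ with $\|a_\alpha - \phi_\alpha\|_{s-|\alpha|,-s} < \epsilon$. The residual is absorbed via the multiplier inequality $|\langle a_\alpha - \phi_\alpha, (D^\alpha v)v\rangle| \leq \epsilon\|v\|_{H^s}^2$, while the smooth part is estimated using that $C_c^\infty$-multiplication is bounded on $H^{s-|\alpha|}$ to get
\[
|\langle \phi_\alpha, (D^\alpha v)v\rangle| \leq C(\phi_\alpha)\|v\|_{H^s}\|v\|_{H^{|\alpha|-s}} \leq C(\phi_\alpha)\|v\|_{H^s}^{1+\theta_\alpha}\|v\|_{L^2}^{1-\theta_\alpha}
\]
with $\theta_\alpha \vcentcolon= \max(|\alpha|-s, 0)/s \in [0,1)$, after interpolating $H^{|\alpha|-s}$ between $H^s$ and $L^2$; here the hypothesis $|\alpha|\leq m < 2s$ is essential. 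Young's inequality then bounds this by $\eta\|v\|_{H^s}^2 + C(\eta,\phi_\alpha)\|v\|_{L^2}^2$, and choosing $\epsilon$, $\eta$ small enough relative to $\delta(\Omega) - \|P_s\|_{m,s}$, and $\mu$ accordingly large, completes the coercivity estimate.

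From here the proof runs in parallel with Lemma~\ref{lemma:schrodingerexistenceofsolutions}. Lax--Milgram applied to the bounded coercive form $\widetilde B \vcentcolon= B_P + \mu\langle\cdot,\cdot\rangle$ produces a bounded solution operator $G_\mu \colon (\widetilde H^s(\Omega))^* \to \widetilde H^s(\Omega)$. Via the compact Sobolev embedding of $\widetilde H^s(\Omega)$ into $L^2(\Omega)$ on the bounded set $\Omega$, the induced operator $\widetilde G_\mu \colon L^2(\Omega) \to L^2(\Omega)$ is compact, and the spectral equation becomes $(I - (\lambda+\mu)\widetilde G_\mu)u = G_\mu\widetilde F$. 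The Fredholm alternative for compact operators (Riesz--Schauder theory) then produces $\Sigma$ as a countable set without finite accumulation point, contained in $(-\mu, \infty)$ because $\widetilde B - (\lambda+\mu)\langle\cdot,\cdot\rangle = B_P - \lambda\langle\cdot,\cdot\rangle$ remains coercive for $\lambda \leq -\mu$, while the claimed norm estimate follows from the open mapping theorem. Part~\ref{item 2 well-posedness sum} is immediate from the density of $C_c^\infty(\Omega)$ in $\widetilde H^s(\Omega)$, and part~\ref{item 3 well-posedness sum} follows from the Fredholm alternative together with the non-eigenvalue assumption~\eqref{eq:dirichletEigenvaluePDOs}: if $0\in\Sigma$ a nontrivial homogeneous solution in $\widetilde H^s(\Omega)$ would contradict the hypothesis. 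The main obstacle is the coercivity step, specifically controlling the smooth approximants $\phi_\alpha$ via interpolation and Young's inequality; this is precisely where the strict inequality $2s > m$ enters.
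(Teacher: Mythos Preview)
Your proof is correct and follows essentially the same route as the paper: split the coefficients into the small part $P_s$ (handled via the Poincar\'e estimate as in Lemma~\ref{well-posedness}) and the $M_0$ part, establish the shifted coercivity $B_P(v,v)\geq c_0\|v\|_{H^s}^2-\mu\|v\|_{L^2}^2$, and finish with Lax--Milgram plus compactness. The paper defers the $M_0$ estimate to \cite[Lemma~3.4]{CMRU20-higher-order-fracCald}, whereas you spell it out explicitly via $C_c^\infty$-approximation, interpolation of $H^{|\alpha|-s}$ between $L^2$ and $H^s$ (using $|\alpha|<2s$), and Young's inequality; this is exactly the content of that cited lemma, so the arguments coincide. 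Your invocation of Riesz--Schauder/Fredholm theory is in fact more appropriate here than the paper's phrase ``spectral theorem for compact, positive definite operators,'' since $B_P$ is generally not symmetric and the induced compact operator $\widetilde G_\mu$ need not be self-adjoint.
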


\begin{proof} The proof is a combination of the ideas presented already in the proof of Lemmas \ref{well-posedness} and \ref{lemma:schrodingerexistenceofsolutions}, and the proof given in \cite[Lemma 3.4]{CMRU20-higher-order-fracCald}. We describe here how the problem is reduced back to the estimates used in \cite[Lemma 3.4]{CMRU20-higher-order-fracCald}. The crucial part is to establish the coercivity estimate.

Let $v \in \tilde{H}^s(\Omega)$. We get from the definition of $B_P$ that
\begin{align}
B_P(v,v) & \geq \|(-\Delta)^{s/2}v\|^2_{L^2(\R^n)} + \sum_{|\alpha|\leq m} \langle a_\alpha, (D^\alpha v) v \rangle  - \sum_{|\alpha|\leq m} | \langle b_\alpha, (D^\alpha v) v \rangle | \\ 
& \geq  \|(-\Delta)^{s/2}v\|^2_{L^2(\R^n)} + \sum_{|\alpha|\leq m} \langle a_\alpha, (D^\alpha v) v \rangle -  \norm{P_s}_{m,s}\norm{v}_{H^s(\R^n)}^2\\
& \geq c\norm{v}_{H^s(\R^n)}^2 + \sum_{|\alpha|\leq m} \langle a_\alpha, (D^\alpha v) v \rangle
\end{align}
where $c = \delta(\Omega)-\norm{P}_{m,s} > 0$. This now leads to the estimate
\begin{equation}
    B_P(v,v) \geq c\norm{v}_{H^s(\R^n)}^2 - \sum_{|\alpha|\leq m} \abs{\langle a_\alpha, (D^\alpha v) v \rangle}\label{eq:coercivityPartial}
\end{equation}
which is of the form analyzed in \cite[Lemma 3.4]{CMRU20-higher-order-fracCald}.

The role of the additional $c > 0$ is the same as having a larger but still a finite Poincaré constant when compared to the situation in \cite[Lemma 3.4]{CMRU20-higher-order-fracCald}. It follows from \eqref{eq:coercivityPartial} and the more involved estimates calculated in \cite[Lemma 3.4]{CMRU20-higher-order-fracCald} that there exists some $c_0,\mu > 0$ independent of $v$ such that
\begin{equation}\label{coercivity-3}
B_P(v,v) \geq c_0 \|v\|_{H^s(\mathbb R^n)}^2 - \mu\|v\|_{L^2(\mathbb R^n)}^2.
\end{equation}

From here, the proof is completed similarly as in the proof of Lemma \ref{lemma:schrodingerexistenceofsolutions} but this time using the Lax--Milgram theorem and the spectral theorem for the compact, positive definite operators.
\end{proof}

Lemma \ref{well-posedness-boundedPDO} allows to set up the fractional Calderón problem and to prove the following theorem using the exactly same steps as in the proof of Theorem \ref{main-theorem-singular} (or \cite[Section 3]{CMRU20-higher-order-fracCald}).

\begin{theorem}
\label{main-theorem-singular-bounded}
Let $\Omega \subset \mathbb R^n$ be a bounded open set, assume that $s \in \mathbb R^+ \setminus \mathbb Z$, $m\in \mathbb N$ satisfy $2s > m$ and let \begin{equation}P_j = \sum_{\abs{\alpha}\leq m} (a_{j,\alpha}+b_{j,\alpha}) D^\alpha, \quad a_{j,\alpha} \in M_0(H^{s-|\alpha|}\rightarrow H^{-s}),\quad P_{j,s} \vcentcolon = \sum_{\abs{\alpha}\leq m} {b_{j,\alpha}} D^\alpha \in \mathcal{P}_{m,s}(\Omega)
\end{equation}
for $j=1,2$ be such that \eqref{eq:dirichletEigenvaluePDOs} holds. Let $W_1, W_2 \subset \Omega_e$ be open sets. If the exterior DN maps for the equations
$((-\Delta)^s + P_j)u = 0$ in $\Omega$, $j=1,2$, satisfy 
$ \Lambda_{P_1}f|_{W_2} = \Lambda_{P_2}f|_{W_2}$
for all $f \in C^\infty_c(W_1)$, then $(a_{1,\alpha}+b_{1,\alpha})|_\Omega = (a_{2,\alpha}+b_{2,\alpha})|_\Omega$ for all $\abs{\alpha}\leq m$ and $P_1|_{\Omega} = P_2|_{\Omega}$.
\end{theorem}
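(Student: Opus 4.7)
The plan is to mimic the proof of Theorem \ref{main-theorem-singular} with two adjustments: we invoke the abstract uniqueness result (Theorem \ref{theorem:GeneralFractionalCalderon}) via Remark \ref{remark:generalizationCalderon}, since coercivity of $B_{P_j}$ fails in general but well-posedness has been established in Lemma \ref{well-posedness-boundedPDO} under the eigenvalue assumption \eqref{eq:dirichletEigenvaluePDOs}, and we identify the total coefficients $c_{j,\alpha} := a_{j,\alpha}+b_{j,\alpha}$ rather than their two components separately (the individual decompositions are irrelevant to the weak formulation).

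First I set $L(u,v) := \langle (-\Delta)^{s/2}u, (-\Delta)^{s/2}v\rangle$ and
\begin{equation}
q_j(u,v) := \sum_{|\alpha|\leq m} \langle c_{j,\alpha}, (D^\alpha u)\, v\rangle, \qquad j=1,2.
\end{equation}
By Lemma \ref{boundedness-bilinear} (applied to the sum $a_\alpha + b_\alpha$, whose multiplier norms are finite by hypothesis), $L$ and $q_j$ are bounded bilinear forms on $H^s(\R^n) \times H^s(\R^n)$. The form $L$ enjoys the symmetric UCP on every nonempty open set by the UCP of the higher order fractional Laplacian \cite[Theorem 1.2]{CMR20}, so hypotheses \ref{item 1 interior det}--\ref{item 3 interior det} of Theorem \ref{theorem:GeneralFractionalCalderon} hold for any pair of disjoint nonempty open sets in $\Omega_e$ (and without loss of generality we may shrink $W_1,W_2$ to arrange this). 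The forms $q_j$ are local since whenever $\supp(u) \cap \supp(v) = \emptyset$ one has $\supp(D^\alpha u) \subset \supp(u)$ and hence the pairing vanishes. Well-posedness of the direct and adjoint problems on $\tilde{H}^s(\Omega)$ follows from Lemma \ref{well-posedness-boundedPDO}, so Remark \ref{remark:generalizationCalderon} allows us to apply Theorem \ref{theorem:GeneralFractionalCalderon}.

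The conclusion of Theorem \ref{theorem:GeneralFractionalCalderon} yields $q_1 = q_2$ on $\tilde{H}^s(\Omega) \times \tilde{H}^s(\Omega)$; by density of $C_c^\infty(\Omega)$ we get
\begin{equation}
\sum_{|\alpha|\leq m} \langle c_{1,\alpha}-c_{2,\alpha},\, (D^\alpha v_1)\, v_2\rangle = 0 \qquad \text{for all } v_1,v_2 \in C_c^\infty(\Omega).
\end{equation}
Now I proceed by induction on $|\alpha|$ exactly as in the proof of Theorem \ref{main-theorem-singular}. For $\alpha = 0$, choosing $v_1 \equiv 1$ on $\supp(v_2)$ gives $\langle c_{1,0}-c_{2,0}, v_2\rangle = 0$ for all $v_2 \in C_c^\infty(\Omega)$, hence $c_{1,0}|_\Omega = c_{2,0}|_\Omega$. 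Assuming $c_{1,\beta}|_\Omega = c_{2,\beta}|_\Omega$ for all $|\beta| < N$, I fix $|\alpha|=N$ and for arbitrary $v_2 \in C_c^\infty(\Omega)$ pick $v_1 \in C_c^\infty(\Omega)$ with $v_1(x) = x^\alpha$ on $\supp(v_2)$; the inductive hypothesis kills all terms with $|\beta|<N$, and only $\langle c_{1,\alpha}-c_{2,\alpha}, (D^\alpha x^\alpha) v_2\rangle = \alpha!\,\langle c_{1,\alpha}-c_{2,\alpha}, v_2\rangle$ survives, yielding $c_{1,\alpha}|_\Omega = c_{2,\alpha}|_\Omega$. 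This completes the induction and proves $P_1|_\Omega = P_2|_\Omega$.

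The main conceptual point (not really an obstacle) is checking that Remark \ref{remark:generalizationCalderon} applies: the well-posedness statement in Lemma \ref{well-posedness-boundedPDO} holds for both $P_j$ and also for the adjoint operators, since the adjoint bilinear form $B_{P_j}^*$ satisfies the same coercivity modulo compact terms estimate (the estimates in \cite[Lemma 3.4]{CMRU20-higher-order-fracCald} are symmetric in the two arguments), and the assumption \eqref{eq:dirichletEigenvaluePDOs} transfers to the adjoint via the Fredholm alternative. Everything else is routine and follows the arguments assembled in Section \ref{sec:small local lin unbounded}.
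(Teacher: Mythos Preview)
Your proof is correct and follows essentially the same route as the paper: invoke Lemma \ref{well-posedness-boundedPDO} for well-posedness (plus the Fredholm-based transfer of \eqref{eq:dirichletEigenvaluePDOs} to the adjoint, which the paper leaves implicit), apply Theorem \ref{theorem:GeneralFractionalCalderon} via Remark \ref{remark:generalizationCalderon}, and then run the monomial-test-function induction exactly as in the proof of Theorem \ref{main-theorem-singular}. Your treatment is actually a bit more explicit than the paper's one-line reference to that earlier proof, in particular on the adjoint well-posedness and the $\alpha=0$ base case.
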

\begin{remark} We do not have uniqueness for the decomposition $P_j = (P_j-P_{j,s})+P_{j,s}$ but only for the coefficients of the resulting operator.
\end{remark}
\begin{remark} This is a small step towards the unification of the two different approaches in \cite[Theorem 1.1 and Theorem 1.4]{CMRU20-higher-order-fracCald}) since when $\Omega$ is a Lipschitz domain we can now include $H^{s,\infty}(\Omega)$ type coefficients for the small term $P_{j,s}$. The issue is that it seems to be difficult (if even possible) to approximate $H^{s,\infty}(\Omega)$ coefficients with $C_c^\infty(\R^n)$ in the Sobolev multiplier norms. However, the relevant bounded Bessel potential coefficients belong to the spaces of all multipliers on Lipschitz domains as proved in \cite[Lemma 4.1]{CMRU20-higher-order-fracCald}. See also \cite[Propositions 1.2 and 1.3]{CMRU20-higher-order-fracCald} for the inclusion of $H^{s,\infty}(\Omega)$ type coefficients to the spaces $M_0$ when additional vanishing on the boundary is assumed. One can therefore interpret the assumptions of Theorem \ref{main-theorem-singular-bounded} as a smallness assumption on the boundary for the bounded Bessel coefficients in the case of Lipschitz domains.
\end{remark}

\section{Fractional conductivity equation and the Liouville transformation}\label{sec:conductivityCalderon}  In this last section, we will study the fractional conductivity equation and the related inverse problem, which was introduced in \cite{covi2019inverse-frac-cond}, on arbitrary bounded open sets and domains bounded in one direction. It was shown in \cite[Theorem 1.1]{covi2019inverse-frac-cond} that if $\Omega\subset\R^n$ is a bounded Lipschitz domain, $0<s<1$, $0<\gamma_0\leq \gamma_1,\gamma_2\in L^{\infty}(\R^n)$, $m_1\vcentcolon =\gamma_1^{1/2}-1,m_2\vcentcolon =\gamma_2^{1/2}-1\in \Tilde{H}^{2s,\frac{n}{2s}}(\Omega)$ and the associated DN maps restricted to open subsets $W_1,W_2\subset \Omega_e$ agree, then $\gamma_1=\gamma_2$ in $\Omega$. We will not impose any regularity assumption on the boundary of $\Omega$ and weaken the assumption $m_1,m_2\in \Tilde{H}^{2s,\frac{n}{2s}}(\Omega)$ besides generalizing the setting to domains only bounded in one direction. Our proofs are based on certain continuity properties of the multiplication map $(f,g)\to fg$ in Bessel potential spaces, which will be established in Appendix~\ref{subsubsec: Multiplication map in Bessel potential spaces}, and on our results on the generalized Calderón problem from Section~\ref{sec:GeneralizedFractionalCalderon} as well as Section~\ref{sec:generalizationsCalderon}. Our main result in this section is the proof of Theorem \ref{thm: characterization of uniqueness}.

\subsection{Fractional gradient and divergence}
\label{subsubsec: Fractional gradient and divergence}

We start by recalling the notion of fractional gradient and divergence as introduced in \cite{covi2019inverse-frac-cond, NonlocDiffusion}.
    Let $0<s<1$, then the fractional gradient of order $s$ is the bounded linear operator $\nabla^s\colon H^s(\R^n)\to L^2(\R^{2n};\R^n)$ given by
    \[
        \nabla^su(x,y)=\sqrt{\frac{C_{n,s}}{2}}\frac{u(x)-u(y)}{|x-y|^{n/2+s+1}}(x-y),
    \]
    where
    \[
        C_{n,s}=\left(\int_{\R^n}\frac{1-\cos(x_1)}{|x|^{n+2s}}\,dx\right)^{-1}<\infty,
    \]
    and there holds (cf.~\cite[Propositions 3.4 and 3.6]{DINEPV-hitchhiker-sobolev})
    \begin{equation}
    \label{eq: bound on fractional gradient}
        \|\nabla^su\|_{L^2(\R^{2n})}=\sqrt{\frac{C_{n,s}}{2}}[u]_{W^{s,2}(\R^n)}=\|(-\Delta)^{s/2}u\|_{L^2(\R^n)}\leq \|u\|_{H^s(\R^n)}.
    \end{equation}
    Here we have used that the Slobodeckij spaces $W^{s,2}(\R^n)$ coincide with the Bessel potential spaces $H^s(\R^n)$ for $0<s<1$, the continuity of the fractional Laplacian and we will write for simplicity $\|\cdot\|_{L^2(\R^{2n})}$ instead of $\|\cdot\|_{L^2(\R^{2n};\R^n)}$. Therefore, we can define the fractional divergence of order $s$ as the formal adjoint operator of the fractional gradient, that is, it is the map $\Div_s\colon L^2(\R^{2n};\R^n)\to H^{-s}(\R^n)$ with
    \[
        \langle \Div_s(u),v\rangle_{H^{-s}(\R^n)\times H^s(\R^n)}=\langle u,\nabla^sv\rangle_{L^2(\R^{2n})}
    \]
    for all $u\in L^2(\R^{2n};\R^n),v\in H^s(\R^n)$. Moreover, an easy computation shows
    \[
        \|\Div_s(u)\|_{H^{-s}(\R^n)}\leq \|u\|_{L^2(\R^{2n})}
    \]
    for all $u\in L^2(\R^{2n};\R^n)$.
    
    One can show the following relation between these fractional operators and the fractional Laplacian:
    \begin{lemma}[{\cite[Lemma 2.1]{covi2019inverse-frac-cond}}]
    \label{lemma: fractional gradient}
        Let $0<s<1$. Then there holds in the weak sense
        \[
            \Div_s(\nabla^su)=(-\Delta)^su
        \]
        for all $u\in H^s(\R^n)$, that is, we have
        \[
            \langle \nabla^s u,\nabla^s\phi\rangle_{L^2(\R^{2n})}=\langle (-\Delta)^{s/2}u,(-\Delta)^{s/2}\phi\rangle_{L^2(\R^n)}
        \]
        for all $\phi\in H^s(\R^n)$.
    \end{lemma}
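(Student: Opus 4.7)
My plan is to reduce both formulations to a single Plancherel identity for the Gagliardo form, then extend by density. First I would unfold the definition of $\nabla^s$ to compute the pointwise inner product
\[
\nabla^s u(x,y)\cdot\nabla^s\phi(x,y)=\frac{C_{n,s}}{2}\frac{(u(x)-u(y))(\phi(x)-\phi(y))}{|x-y|^{n+2s}},
\]
using that $(x-y)\cdot(x-y)=|x-y|^2$ cancels two powers from the denominator. Integrating over $\R^{2n}$ yields the polarized Gagliardo form
\[
\langle\nabla^s u,\nabla^s\phi\rangle_{L^2(\R^{2n})}=\frac{C_{n,s}}{2}\int_{\R^n}\int_{\R^n}\frac{(u(x)-u(y))(\phi(x)-\phi(y))}{|x-y|^{n+2s}}\,dx\,dy.
\]

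Next I would invoke the classical Plancherel identity for Schwartz $u$,
\[
\frac{C_{n,s}}{2}\int_{\R^n}\int_{\R^n}\frac{|u(x)-u(y)|^2}{|x-y|^{n+2s}}\,dx\,dy=\int_{\R^n}|\xi|^{2s}|\widehat u(\xi)|^2\,d\xi=\|(-\Delta)^{s/2}u\|_{L^2(\R^n)}^2,
\]
which is already recorded in \eqref{eq: bound on fractional gradient} and holds \emph{precisely because} $C_{n,s}$ is defined as in the excerpt: after substituting $z=x-y$, using $\widehat{u(\cdot+z)-u(\cdot)}=(e^{i\xi\cdot z}-1)\widehat u$, and applying Plancherel, a rotation and radial rescaling reduce the resulting $z$-integral to the defining integral of $C_{n,s}^{-1}$. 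Polarizing this quadratic identity and comparing with the previous display yields the target identity on $\mathcal{S}(\R^n)\times\mathcal{S}(\R^n)$.

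Both sides of the claimed identity extend to jointly continuous bilinear forms on $H^s(\R^n)\times H^s(\R^n)$: the left side by the estimate \eqref{eq: bound on fractional gradient}, the right side by boundedness of $(-\Delta)^{s/2}\colon H^s(\R^n)\to L^2(\R^n)$. Density of $\mathcal{S}(\R^n)$ in $H^s(\R^n)$ thus upgrades the identity from Schwartz functions to all of $H^s(\R^n)$, and the distributional formulation $\Div_s(\nabla^s u)=(-\Delta)^s u$ is immediate from the definition of $\Div_s$ as the $L^2$ adjoint of $\nabla^s$. The only genuine analytic content is the Plancherel-based computation pinning down the constant $C_{n,s}$; everything else is pointwise algebra, polarization, and approximation, and I do not expect any real obstacle.
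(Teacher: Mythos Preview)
Your proposal is correct. The paper does not supply its own proof of this lemma (it is cited from \cite[Lemma 2.1]{covi2019inverse-frac-cond}), but the argument you outline is exactly the standard one and is in fact already encoded in the paper: equation~\eqref{eq: bound on fractional gradient} records the quadratic identity $\|\nabla^s u\|_{L^2(\R^{2n})}=\|(-\Delta)^{s/2}u\|_{L^2(\R^n)}$ with a reference to \cite[Propositions 3.4 and 3.6]{DINEPV-hitchhiker-sobolev}, and your polarization-plus-density step is the immediate passage from that to the bilinear statement.
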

    \begin{remark}
        In \cite[Section 6.2]{CMR20} it is shown that this lemma can be generalized to $s\in(0,\infty)\setminus \N$ but the definition of the higher order fractional gradient is rather technical as it involves tensor fields. We have not considered the question to what extent the theory presented in this section extends to the higher order cases.
    \end{remark}

\subsection{Liouville transformation, well-posedness of the fractional conductivity equation and DN map}

First we introduce bilinear forms associated to the fractional conductivity equation and fractional Schr\"odinger equation when the potential $q$ belongs to $L^{n/2s}(\R^n)$.

\begin{lemma}[Definition of bilinear forms and conductivity matrix]\label{prop: bilinear forms conductivity eq}
    Let $\Omega\subset\R^n$ be an open set, $0<s<\min(1,n/2)$, $q\in L^{\frac{n}{2s}}(\R^n)$, $\gamma\in L^{\infty}(\R^n)$ and define the conductivity matrix associated to $\gamma$ by
    \begin{equation}
    \label{eq: conductivity matrix}
        \Theta_{\gamma}\colon \R^{2n}\to \R^{n\times n},\quad \Theta_{\gamma}(x,y)\vcentcolon =\gamma^{1/2}(x)\gamma^{1/2}(y)\mathbf{1}_{n\times n}
    \end{equation}
    for $x,y\in\R^n$. Then the maps defined by
    \begin{equation}
    \label{eq: conductivity bilinear form}
        B_{\gamma}\colon H^s(\R^n)\times H^s(\R^n)\to \R,\quad B_{\gamma}(u,v)\vcentcolon =\int_{\R^{2n}}\Theta_{\gamma}\nabla^su\cdot\nabla^sv\,dxdy
    \end{equation}
    and 
    \begin{equation}
    \label{eq: Schroedinger bilinear form}
        B_q\colon H^s(\R^n)\times H^s(\R^n)\to \R,\quad B_q(u,v)\vcentcolon =\int_{\R^n}(-\Delta)^{s/2}u\,(-\Delta)^{s/2}v\,dx+\int_{\R^n}quv\,dx
    \end{equation}
    are continuous bilinear forms. Moreover, we have $q\in M_0(H^s\to H^{-s})$ with \begin{equation}
    \label{eq: potential as Sobolev multiplier}
        \|q\|_{s,-s}\leq C(1+\|q\|_{L^{\frac{n}{2s}}(\R^n)})
    \end{equation}
    for some $C>0$.
\end{lemma}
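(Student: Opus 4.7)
The continuity of $B_\gamma$ follows from a pointwise bound of the conductivity matrix: since $\Theta_\gamma(x,y)\nabla^s u(x,y)\cdot\nabla^s v(x,y) = \gamma^{1/2}(x)\gamma^{1/2}(y)\,\nabla^s u\cdot\nabla^s v$, one has $|\Theta_\gamma\nabla^s u\cdot\nabla^s v|\leq \|\gamma\|_{L^\infty(\R^n)}|\nabla^s u||\nabla^s v|$ almost everywhere on $\R^{2n}$. Cauchy--Schwarz in $L^2(\R^{2n};\R^n)$ together with the bound $\|\nabla^s u\|_{L^2(\R^{2n})}\leq\|u\|_{H^s(\R^n)}$ recorded in \eqref{eq: bound on fractional gradient} then yields $|B_\gamma(u,v)|\leq \|\gamma\|_{L^\infty}\|u\|_{H^s}\|v\|_{H^s}$. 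Bilinearity is immediate from the definition.

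For $B_q$, the leading term $\int_{\R^n}(-\Delta)^{s/2}u\,(-\Delta)^{s/2}v\,dx$ is controlled by $\|u\|_{H^s}\|v\|_{H^s}$ via Cauchy--Schwarz in $L^2(\R^n)$ and the boundedness $(-\Delta)^{s/2}\colon H^s(\R^n)\to L^2(\R^n)$. The interaction term is where the structural assumption on $q$ enters: since $0<s<n/2$, the Sobolev embedding yields $H^s(\R^n)\hookrightarrow L^{2^*}(\R^n)$ with $2^* = 2n/(n-2s)$. Applying H\"older's inequality with exponents $(n/(2s),2^*,2^*)$, which satisfy $2s/n + (n-2s)/(2n) + (n-2s)/(2n)=1$, yields
\[
\Bigl|\int_{\R^n} quv\,dx\Bigr|\leq \|q\|_{L^{n/2s}(\R^n)}\|u\|_{L^{2^*}(\R^n)}\|v\|_{L^{2^*}(\R^n)}\leq C\|q\|_{L^{n/2s}(\R^n)}\|u\|_{H^s(\R^n)}\|v\|_{H^s(\R^n)},
\]
so $B_q$ is a continuous bilinear form on $H^s(\R^n)\times H^s(\R^n)$.

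The same inequality, applied to $u,v\in C_c^\infty(\R^n)$ and extended by density, shows that $q$ acts as a bounded Sobolev multiplier $H^s\to H^{-s}$ with $\|q\|_{s,-s}\leq C\|q\|_{L^{n/2s}(\R^n)}$, which implies the stated bound $\|q\|_{s,-s}\leq C(1+\|q\|_{L^{n/2s}(\R^n)})$. To obtain the stronger conclusion $q\in M_0(H^s\to H^{-s})$, I would invoke density of $C_c^\infty(\R^n)$ in $L^{n/2s}(\R^n)$: pick a sequence $\varphi_k\in C_c^\infty(\R^n)$ with $\varphi_k\to q$ in $L^{n/2s}(\R^n)$, and apply the multiplier estimate to the difference $q-\varphi_k$ to get $\|q-\varphi_k\|_{s,-s}\leq C\|q-\varphi_k\|_{L^{n/2s}}\to 0$. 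Thus $q$ lies in the closure of $C_c^\infty(\R^n)$ in the multiplier norm.

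There is no substantive obstacle — the lemma is a packaging of the standard Sobolev--H\"older toolkit together with density. The only point requiring care is exponent bookkeeping: the assumption $s<n/2$ is used precisely to ensure the Sobolev embedding delivers the critical exponent $2^*$, matching $L^{n/2s}$ via duality, and the assumption $s<1$ is needed for the fractional gradient theory of Section~\ref{subsubsec: Fractional gradient and divergence} to be applicable to $B_\gamma$.
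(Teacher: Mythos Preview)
Your proof is correct and follows essentially the same approach as the paper's: both use the $L^\infty$ bound on $\gamma$ with Cauchy--Schwarz for $B_\gamma$, the Sobolev embedding $H^s\hookrightarrow L^{2n/(n-2s)}$ combined with H\"older's inequality for the potential term in $B_q$, and density of $C_c^\infty(\R^n)$ in $L^{n/2s}(\R^n)$ for the $M_0$ conclusion. The only cosmetic difference is that the paper routes the H\"older step through Lemma~\ref{lemma: continuity of multiplication map in Bessel potential spaces I V} (first bounding $qu$ in $L^{2n/(n+2s)}$, then pairing with $v$), whereas you apply the three-factor H\"older inequality directly.
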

\begin{remark}
    If no confusion can arise we will drop the subscript $\gamma$ in the definition for the conductivity matrix $\Theta_{\gamma}$.
\end{remark}

\begin{proof}
    By H\"older's inequality, $\gamma\in L^{\infty}(\R^n)$ and \eqref{eq: bound on fractional gradient} the map $B_{\gamma}$ is well-defined and we have
    \[
        |B_{\gamma}(u,v)|\leq \|\gamma\|_{L^{\infty}(\R^n)}\|\nabla^su\|_{L^2(\R^{2n})}\|\nabla^sv\|_{L^2(\R^{2n})}\leq \|\gamma\|_{L^{\infty}(\R^n)}\|u\|_{H^s(\R^n)}\|v\|_{H^s(\R^n)}
    \]
    for all $u,v\in H^s(\R^n)$. On the other hand, if $q\in L^{\frac{n}{2s}}(\R^n)$, $u\in H^s(\R^n)$, then by Lemma~\ref{lemma: continuity of multiplication map in Bessel potential spaces I V} we know $qu\in L^{\frac{2n}{n+2s}}(\R^n)$. Since $v\in H^s(\R^n)$ we have by the Sobolev embedding $v\in L^{\frac{2n}{n-2s}}(\R^n)$ and therefore we obtain by H\"older's inequality $quv\in L^1(\R^n)$. Using the continuity of the fractional Laplacian we get the estimate
    \[
    \begin{split}
        |B_q(u,v)|&\leq \|(-\Delta)^{s/2}u\|_{L^2(\R^n)}\|(-\Delta)^{s/2}v\|_{L^2(\R^n)}+\|qu\|_{L^{\frac{2n}{n+2s}}(\R^n)}\|v\|_{L^{\frac{2n}{n-2s}}(\R^n)} \\
        &\leq \|u\|_{H^s(\R^n)}\|v\|_{H^s(\R^n)}+C\|qu\|_{L^{\frac{2n}{n+2s}}(\R^n)}\|v\|_{H^s(\R^n)}\\
        &\leq \|u\|_{H^s(\R^n)}\|v\|_{H^s(\R^n)}+C\|q\|_{L^{\frac{n}{2s}}(\R^n)}\|u\|_{H^s(\R^n)}\|v\|_{H^s(\R^n)}\\
        &\leq C(1+\|q\|_{L^{\frac{n}{2s}}(\R^n)})\|u\|_{H^s(\R^n)}\|v\|_{H^s(\R^n)}
    \end{split}
    \]
    for all $u,v\in H^s(\R^n)$. The previous estimate directly shows $q\in M(H^s\to H^{-s})$, but since $C_c^{\infty}(\R^n)$ is dense in $L^{\frac{n}{2s}}(\R^n)$ we can conclude that $q\in M_0(H^s\to H^{-s})$.
\end{proof}

This result now allows us to introduce in both cases a notion of weak solutions.

\begin{definition}[Weak solutions]
    Let $\Omega\subset\R^n$ be an open set, $0<s<\min(1,n/2)$, $q\in L^{\frac{n}{2s}}(\R^n)$ and $\gamma\in L^{\infty}(\R^n)$ with conductivity matrix $\Theta\colon \R^{2n}\to \R^{n\times n}$. If $f\in H^s(\R^n)$ and $F\in (\Tilde{H}^s(\Omega))^*$, then we say that $u\in H^s(\R^n)$ is a weak solution to the fractional conductivity equation
     \[
     \begin{split}
        \Div_s(\Theta\cdot\nabla^s u)&= F\quad\text{in}\quad\Omega,\\
        u&= f\quad\text{in}\quad\Omega_e,
     \end{split}
     \]
    if there holds
    \[
        B_{\gamma}(u,\phi)=F(\phi)\quad\text{and}\quad u-f\in\Tilde{H}^s(\Omega)
    \]
    for all $\phi\in \Tilde{H}^s(\Omega)$. Similarly, we say that $v\in H^s(\R^n)$ is a weak solution to the fractional Schr\"odinger equation
    \[
            \begin{split}
            ((-\Delta)^s+q)v&=F\quad\text{in}\quad\Omega,\\
            v&=f\quad\text{in}\quad\Omega_e,
        \end{split}
    \]
    if there holds
    \[
        B_q(v,\phi)=F(\phi)\quad\text{and}\quad v-f\in\Tilde{H}^s(\Omega)
    \]
    for all $\phi\in \Tilde{H}^s(\Omega)$.
\end{definition}

Now we proof a similar result as 
\cite[Theorem 3.1]{covi2019inverse-frac-cond} 
but which also holds on possibly unbdounded domains $\Omega\subset\R^n$.

\begin{theorem}[Liouville transformation]
\label{theorem: Liouville transformation}
    Let $\Omega\subset \R^n$ be an open set, $0<s<\min(1,n/2)$, $\gamma\in L^{\infty}(\R^n)$ with conductivity matrix $\Theta$ satisfies $\gamma(x)\geq \gamma_0>0$ and define the background deviation $m_{\gamma}\colon\R^n\to\R$ by $m_{\gamma}\vcentcolon =\gamma^{1/2}-1$. Moreover, assume that $m_{\gamma}\in H^{2s,\frac{n}{2s}}(\R^n)$, then the following assertions holds:
    \begin{enumerate}[(i)]
        \item \label{item:Liouville1} Let $u\in H^s(\R^n)$, $g\in H^s(\R^n)/\Tilde{H}^s(\Omega)$ and set $v\vcentcolon =\gamma^{1/2}u,\,f\vcentcolon =\gamma^{1/2}g,\,q_{\gamma}\vcentcolon =-\frac{(-\Delta)^{s}m_{\gamma}}{\gamma^{1/2}}$. Then $v\in H^s(\R^n), f\in H^s(\R^n)/\Tilde{H}^{s}(\Omega)$ and $u$ is a weak solution of the fractional conductivity equation
        \begin{equation}
        \label{eq: fractional conductivity equation}
        \begin{split}
            \Div_s(\Theta\cdot\nabla^s u)&= 0\quad\text{in}\quad\Omega,\\
            u&= g\quad\text{in}\quad\Omega_e
        \end{split}
        \end{equation}
        if and only if $v$ is a weak solution of the fractional Schr\"odinger equation
        \begin{equation}
        \label{eq: fractional Schroedinger equation}    
            \begin{split}
            ((-\Delta)^s+q_{\gamma})v&=0\quad\text{in}\quad\Omega\\
            v&=f\quad\text{in}\quad\Omega_e
        \end{split}
        \end{equation}
        \item \label{item:Liouville2} Let $v\in H^s(\R^n), f\in H^s(\R^n)/\Tilde{H}^s(\Omega)$ and set $u\vcentcolon =\gamma^{-1/2}v,\,g\vcentcolon =\gamma^{-1/2}f$. Then $v$ is a weak solution of \eqref{eq: fractional Schroedinger equation} if and only if $u$ is a weak solution of \eqref{eq: fractional conductivity equation}.
    \end{enumerate}
\end{theorem}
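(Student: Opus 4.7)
The plan is to reduce both \ref{item:Liouville1} and \ref{item:Liouville2} to a single bilinear form identity relating $B_\gamma$ and $B_{q_\gamma}$, together with the fact that multiplication by $\gamma^{\pm 1/2}$ induces mutually inverse bijections on $H^s(\R^n)$, on $\tilde H^s(\Omega)$, and on the trace space $H^s(\R^n)/\tilde H^s(\Omega)$.

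First I would set up the functional-analytic prerequisites. Using the multiplication results from Appendix~\ref{subsubsec: Multiplication map in Bessel potential spaces} together with $m_\gamma \in H^{2s,n/(2s)}(\R^n)$, multiplication by $\gamma^{1/2} = 1 + m_\gamma$ extends to a bounded operator on $H^s(\R^n)$. Since $\gamma \geq \gamma_0 > 0$, a composition result (cf.~\cite{AdamsComposition}) gives $\gamma^{-1/2} - 1 \in H^{2s,n/(2s)}(\R^n)$ as well, so multiplication by $\gamma^{-1/2}$ is likewise bounded on $H^s(\R^n)$; both operators preserve support in $\overline{\Omega}$, so they restrict to bounded bijections on $\tilde H^s(\Omega)$ and descend to inverse bijections on the trace space. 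In particular, $v = \gamma^{1/2}u \in H^s(\R^n)$ and $u - g \in \tilde H^s(\Omega)$ iff $v - f \in \tilde H^s(\Omega)$, and symmetrically for the relations in \ref{item:Liouville2}. Moreover $q_\gamma \in L^{n/(2s)}(\R^n)$, because $(-\Delta)^s m_\gamma \in L^{n/(2s)}(\R^n)$ by continuity of the fractional Laplacian between Bessel potential spaces, and $\gamma^{-1/2} \leq \gamma_0^{-1/2}$; hence Lemma~\ref{prop: bilinear forms conductivity eq} ensures that $B_{q_\gamma}$ is a bounded bilinear form on $H^s(\R^n)$.

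The heart of the proof is the identity
\begin{equation*}
B_\gamma(u, \gamma^{-1/2}\psi) = B_{q_\gamma}(v, \psi), \qquad v = \gamma^{1/2}u,
\end{equation*}
valid for all $u \in H^s(\R^n)$ and $\psi \in \tilde H^s(\Omega)$. Once this is in hand, both parts of the theorem follow by taking test functions of the form $\phi = \gamma^{-1/2}\psi$ in the conductivity weak formulation (these exhaust $\tilde H^s(\Omega)$ as $\psi$ does), which matches the Schr\"odinger weak formulation tested against $\psi$; statement \ref{item:Liouville2} is the inverse statement and reduces to \ref{item:Liouville1} via the bijections above. To prove the identity I would approximate $m_\gamma$ by $m_k \in C_c^\infty(\R^n)$ with $1 + m_k \geq c > 0$ uniformly and $m_k \to m_\gamma$ in $H^{2s,n/(2s)}(\R^n)$, using the obstacle-problem construction of \cite{SilvestreFracObstaclePhd} to preserve the pointwise lower bound. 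For smooth $\gamma_k = (1+m_k)^2$ the identity reduces to a pointwise singular-integral manipulation: writing
\begin{equation*}
\gamma_k^{1/2}(x)\gamma_k^{1/2}(y)(u(x)-u(y)) = (v_k(x)-v_k(y)) + m_k(y)v_k(x) - m_k(x)v_k(y)
\end{equation*}
with $v_k = \gamma_k^{1/2}u$, multiplying by the corresponding factor $(\gamma_k^{-1/2}\psi)(x) - (\gamma_k^{-1/2}\psi)(y)$, and integrating against the Riesz kernel $|x-y|^{-n-2s}$, the principal term produces $\langle (-\Delta)^{s/2}v_k, (-\Delta)^{s/2}\psi\rangle$, while the remainder assembles, after symmetrization in $(x,y)$, into the pairing of $v_k\psi$ with $-(-\Delta)^s m_k / \gamma_k^{1/2}$. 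The joint continuity estimates from Appendix~\ref{subsubsec: Multiplication map in Bessel potential spaces} will then let me pass to the limit $k\to\infty$ on both sides.

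The main obstacle I anticipate is precisely this approximation step. The pointwise singular-integral computation requires smoothness of $\gamma_k$ (so that $(-\Delta)^s m_k$ has a classical pointwise representation) and uniform positivity (so that $\gamma_k^{-1/2}$ remains a bounded multiplier), while the ambient conductivity $\gamma$ is only $L^\infty$ with $m_\gamma \in H^{2s,n/(2s)}$. Combining the obstacle-type approximation with the multiplication lemmas of the Appendix is the technical device that closes the gap flagged in the remark following Theorem~\ref{thm: characterization of uniqueness}, and verifying joint continuity of both sides of the bilinear identity in $(u,\psi,m_\gamma)$ with respect to the appropriate norms will be the most delicate part of the argument.
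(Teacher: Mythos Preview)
Your overall architecture matches the paper's: establish that multiplication by $\gamma^{\pm 1/2}$ gives mutually inverse bijections on $H^s(\R^n)$, $\tilde H^s(\Omega)$, and the trace space (via the multiplication lemmas and the composition result from \cite{AdamsComposition} for $\gamma^{-1/2}-1$), then reduce everything to a bilinear identity proved by approximating $m_\gamma$ by smooth data and doing a pointwise singular-integral computation. Two points, however, need correction.

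\textbf{The role of \cite{SilvestreFracObstaclePhd}.} The paper does \emph{not} use any obstacle-type construction. It simply mollifies: $\gamma_\epsilon^{1/2}=\gamma^{1/2}*\rho_\epsilon$ and $m_\epsilon=m*\rho_\epsilon\in C_b^\infty(\R^n)$. Mollification automatically preserves the lower bound $\gamma_\epsilon^{1/2}\ge\gamma_0^{1/2}$, so no special device is needed there. Silvestre's thesis enters only to justify that the pointwise formula $(-\Delta)^s m_\epsilon(x)=-\frac{C_{n,s}}{2}\int\frac{\delta m_\epsilon(x,y)}{|y|^{n+2s}}\,dy$ is valid for the mollified function, which is smooth and bounded but not Schwartz (this is the content of Remark~\ref{rem: conv in zygmund}). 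Your plan to take $m_k\in C_c^\infty$ is unnecessary and misattributes the citation.

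\textbf{The pointwise computation.} With your decomposition, multiplying $\gamma_k^{1/2}(x)\gamma_k^{1/2}(y)(u(x)-u(y))=(v_k(x)-v_k(y))+m_k(y)v_k(x)-m_k(x)v_k(y)$ by $(\gamma_k^{-1/2}\psi)(x)-(\gamma_k^{-1/2}\psi)(y)$ yields as principal term $\langle(-\Delta)^{s/2}v_k,(-\Delta)^{s/2}(\gamma_k^{-1/2}\psi)\rangle$, \emph{not} $\langle(-\Delta)^{s/2}v_k,(-\Delta)^{s/2}\psi\rangle$, and the remaining cross terms do not obviously repair this. The paper avoids the issue by proving instead the \emph{symmetric} identity
\[
B_\gamma(u,\phi)=\langle(-\Delta)^{s/2}(\gamma^{1/2}u),(-\Delta)^{s/2}(\gamma^{1/2}\phi)\rangle+\langle q_\gamma\,\gamma^{1/2}u,\gamma^{1/2}\phi\rangle
\]
for all $u,\phi\in H^s(\R^n)$, and only afterwards substitutes $\phi=\gamma^{-1/2}\psi$. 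The computation for smooth data uses the second-difference operator $\delta f(x,z)=f(x+z)+f(x-z)-2f(x)$: after the change of variables $y=x\pm z$, the integrand is rewritten as a combination of $\delta(\gamma^{1/2}u)$, $\delta(\gamma^{1/2}\phi)$, $\delta(\gamma^{1/2}u\phi)$ and $\delta m$, each of which integrates to a fractional Laplacian via the formula above. One then uses $\int_{\R^n}(-\Delta)^s(\text{Schwartz})\,dx=0$ and self-adjointness of $(-\Delta)^s$ to arrive at the stated form. This route is cleaner than trying to force your one-sided factorisation to produce $B_{q_\gamma}(v,\psi)$ directly.
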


\begin{remark}
    If it is clear to which conductivity $\gamma$ the background deviation $m_{\gamma}$ corresponds, we will denote it by $m$. Moreover, from now on we will always set $q_{\gamma}\vcentcolon =-\frac{(-\Delta)^sm_{\gamma}}{\gamma^{1/2}}$ and refer to it simply as electric potential. As for the background deviation we will drop the subscript $\gamma$ if the dependence is clear from the context.
\end{remark}

\begin{proof}
    Throughout the proof we will write for simplicity $m,q$ instead of $m_{\gamma},q_{\gamma}$.\\
    \ref{item:Liouville1} First note that by Corollary~\ref{cor: continuity of multiplication map in Bessel potential spaces I I} we have $v,f\in H^s(\R^n)$, since $v=\gamma^{1/2}u=mu+u\in H^s(\R^n)$, $f=\gamma^{1/2}g=mg+g\in H^s(\R^n)$. Moreover, if $g-h\in \Tilde{H}^s(\Omega)$ then Corollary~\ref{cor: continuity of multiplication map in Bessel potential spaces I I I} shows that $f-\gamma^{1/2}h\in \Tilde{H}^s(\Omega)$ and hence $f=\gamma^{1/2}g\in H^s(\R^n)/\Tilde{H}^s(\Omega)$ is well-defined. Moreover, by the assumptions $m\in H^{2s,\frac{n}{2s}}(\R^n),\, \gamma\geq \gamma_0>0$ and the mapping properties of the fractional Laplacian we have $q\in L^{\frac{n}{2s}}(\R^n)$. Hence, Lemma~\ref{prop: bilinear forms conductivity eq} shows that the introduced notions of weak solutions to both equations make sense. 
    
    Next we show that there holds 
    \begin{equation}
    \label{eq: equivalence of bilinear forms}
        \langle \Theta\nabla^su,\nabla^s\phi\rangle_{L^2(\R^{2n})}=\langle (-\Delta)^{s/2}(\gamma^{1/2}u),(-\Delta)^{s/2}(\gamma^{1/2}\phi)\rangle_{L^2(\R^n)}+\langle q\gamma^{1/2}u,\gamma^{1/2}\phi\rangle_{L^2(\R^n)}
    \end{equation}
    for all $u,\phi\in H^s(\R^n)$. In the second part of the proof we will demonstrate that if $\phi\in \widetilde{H}^s(\Omega)$ then there holds $\gamma^{-1/2}\phi\in \widetilde{H}^s(\Omega)$ and therefore we can replace in \eqref{eq: equivalence of bilinear forms} the test function $\phi$ by $\gamma^{-1/2}\phi$ which then shows the asserted equivalence in the statement \ref{item:Liouville1}. Fix $u,\phi\in H^s(\R^n)$, then by density of $C_c^{\infty}(\R^n)$ in $H^s(\R^n)$ there exists $u_n,\phi_n\in C_c^{\infty}(\R^n)$ such that $u_n\to u,\phi_n\to \phi$ in $H^s(\R^n)$, but then continuity of the bilinear forms $B_{\gamma},B_q$ (cf.~Lemma~\ref{prop: bilinear forms conductivity eq}) and Lemma~\ref{lemma: continuity of multiplication map in Bessel potential spaces I V} combined with the Sobolev embedding imply that it suffices to show \eqref{eq: equivalence of bilinear forms} for $u,\phi\in C_c^{\infty}(\R^n)$. 
    
    Next let us fix a (radial) standard mollifier $\rho_{\epsilon}\subset C_c^{\infty}(\R^n)$ and introduce the sequences $\gamma_{\epsilon}^{1/2}\vcentcolon =\gamma^{1/2}\ast \rho_{\epsilon}\in C_b^{\infty}(\R^n)$ and $m_{\epsilon}\vcentcolon =m\ast \rho_{\epsilon}\in C_b^{\infty}(\R^n)$. Note that $m_{\epsilon}=\gamma_{\epsilon}^{1/2}-1$, since $\int_{\R^n}\rho_{\epsilon}\,dx=1$. By standard arguments we deduce
    \begin{enumerate}[(I)]
        \item $\gamma_{\epsilon}^{1/2}\overset{\ast}{\rightharpoonup} \gamma^{1/2}$ in $L^{\infty}(\R^n)$,\label{item:LiovilleProof1}
        \item $0<\gamma_0^{1/2}\leq \gamma_{\epsilon}^{1/2}\in L^{\infty}(\R^n)$,\label{item:LiovilleProof2}
        \item $m_{\epsilon}\to m$ in $H^{2s,\frac{n}{2s}}(\R^n)$,\label{item:LiovilleProof3}
        \item $m_{\epsilon}\in L^{\infty}(\R^n)$ with $\|m_{\epsilon}\|_{L^{\infty}(\R^n)}\leq \|m\|_{L^{\infty}(\R^n)}\leq 1+\|\gamma\|_{L^{\infty}(\R^n)}^{1/2}$.\label{item:LiovilleProof4}
    \end{enumerate}
    Let $\Theta_{\epsilon}(x,y)=\gamma_{\epsilon}^{1/2}(x)\gamma_{\epsilon}^{1/2}(y)\mathbf{1}_{n\times n}$, then by using either property \ref{item:LiovilleProof1} or \ref{item:LiovilleProof3} and standard arguments, we obtain
    \[
    \begin{split}
        &\left|\int_{\R^{2n}}\Theta_{\epsilon}u\,dxdy-\int_{\R^{2n}}\Theta u\,dxdy\right|\\
        &=\left|\int_{\R^{2n}}\gamma_{\epsilon}^{1/2}(x)\gamma_{\epsilon}^{1/2}(y) u(x,y)\,dxdy-\int_{\R^{2n}}\gamma^{1/2}(x)\gamma^{1/2}(y) u(x,y)\,dxdy\right|\\
        &\leq \left|\int_{\R^{2n}}\gamma_{\epsilon}^{1/2}(x)(\gamma_{\epsilon}^{1/2}(y)-\gamma^{1/2}(y)) u(x,y)\,dxdy\right|+\left|\int_{\R^{2n}}(\gamma_{\epsilon}^{1/2}(x)-\gamma^{1/2}(x))\gamma^{1/2}(y) u(x,y)\,dxdy\right|\\
       &\to 0
    \end{split}
    \]
    as $\epsilon\to 0$ for all $u\in L^1(\R^{2n})$. 
   Therefore we have
    \[
        \langle\Theta_{\epsilon}\nabla^su,\nabla^sv\rangle_{L^2(\R^{2n})}\to \langle \Theta\nabla^su,\nabla^sv\rangle_{L^{2}(\R^{2n})}
    \]
    as $\epsilon\to 0$ for all $u,v\in H^s(\R^n)$. Next let us define $q_{\epsilon}\vcentcolon =-\frac{(-\Delta)^sm_{\epsilon}}{\gamma_{\epsilon}^{1/2}}\in C^{\infty}(\R^n)$. Since the fractional Laplacian is continuous, we deduce from \ref{item:LiovilleProof3}, \ref{item:LiovilleProof4} and Corollary~\ref{cor: continuity of multiplication map in Bessel potential spaces I I} that
    \[
        \langle (-\Delta)^{s/2}(\gamma^{1/2}_{\epsilon}u),(-\Delta)^{s/2}(\gamma^{1/2}_{\epsilon}\phi)\rangle_{L^2(\R^n)}\to \langle (-\Delta)^{s/2}(\gamma^{1/2}u),(-\Delta)^{s/2}(\gamma^{1/2}\phi)\rangle_{L^2(\R^n)}
    \]
    as $\epsilon\to 0$. Similarly, using the continuity of the fractional Laplacian, the property \ref{item:LiovilleProof3}, Lemma~\ref{lemma: continuity of multiplication map in Bessel potential spaces I V} and the Sobolev embedding we deduce
    \[
    \begin{split}
       & \langle q_{\epsilon}\gamma_{\epsilon}^{1/2}u,\gamma_{\epsilon}^{1/2}\phi\rangle_{L^2(\R^n)}=\langle \gamma^{1/2}_{\epsilon}u,q_{\epsilon}\gamma_{\epsilon}^{1/2}\phi\rangle_{L^2(\R^n)}=-\langle \gamma_{\epsilon}^{1/2}u,((-\Delta)^{s/2}m_{\epsilon})\phi\rangle_{L^2(\R^n)}\\
       &\to -\langle \gamma^{1/2}u,((-\Delta)^{s/2}m)\phi\rangle_{L^2(\R^n)}=\langle q\gamma^{1/2}u,\gamma^{1/2}\phi\rangle_{L^2(\R^n)}
    \end{split}
    \]
as $\epsilon\to 0$. Therefore, if we can show that \eqref{eq: equivalence of bilinear forms} holds for all $u,\phi\in C_c^{\infty}(\R^n)$, $0<\gamma_0^{1/2}\leq \gamma^{1/2}\in C_b^{\infty}(\R^n)$ and $m\in C^t(\R^n)$ for sufficiently large $t>0$ (see Remark~\ref{rem: conv in zygmund}), then we can conclude the proof of statement \ref{item:Liouville1}. By a change of variables we have
\[
    \begin{split}
        \langle\Theta\nabla^su,\nabla^s\phi\rangle_{L^2(\R^{2n})}&=\frac{C_{n,s}}{2}\int_{\R^{2n}}\gamma^{1/2}(x)\gamma^{1/2}(y)\frac{(u(y)-u(x))(\phi(y)-\phi(x))}{|x-y|^{n+2s}}\,dxdy\\
        &=\frac{C_{n,s}}{4}\left(\int_{\R^{2n}}\gamma^{1/2}(x)\gamma^{1/2}(x+z)\frac{(u(x+z)-u(x))(\phi(x+z)-\phi(x))}{|z|^{n+2s}}\,dzdx\right.\\
        &\quad \left.+\int_{\R^{2n}}\gamma^{1/2}(x)\gamma^{1/2}(x-z)\frac{(u(x-z)-u(x))(\phi(x-z)-\phi(x))}{|z|^{n+2s}}\,dzdx\right)\\
        &=\vcentcolon\frac{C_{n,s}}{4}\int_{\R^{2n}}I(x,z)\,dzdx.
    \end{split}
\]
Using the notation 
    \[
        \delta\psi(x,y)\vcentcolon =\psi(x+y)+\psi(x-y)-2\psi(x)
    \]
for any function $\psi\colon\R^n\to\R$ and $x,y\in\R^n$, we can write the integrand as\allowdisplaybreaks
\[
\begin{split}
    I(x,z)=&\frac{\gamma^{1/2}(x)}{|z|^{n+2s}}\left[-\phi(x)(\gamma^{1/2}(x+z)u(x+z)+\gamma^{1/2}(x-z)u(x-z)\right.\\
    &-\gamma^{1/2}(x+z)u(x)-\gamma^{1/2}(x-z)u(x))\\
    &-u(x)(\gamma^{1/2}(x+z)\phi(x+z)+\gamma^{1/2}(x-z)\phi(x-z))\\
    &\left.+(\gamma^{1/2}u\phi)(x+z)+(\gamma^{1/2}u\phi)(x-z)\right]\\
    =&\frac{\gamma^{1/2}(x)}{|z|^{n+2s}}\left[-\phi(x)\delta(\gamma^{1/2}u)(x,z)+u(x)\phi(x)\delta(\gamma^{1/2})(x,z)\right.\\
    &\left.-u(x)\delta(\gamma^{1/2}\phi)(x,z)+\delta(\gamma^{1/2}u\phi)(x,z)\right]\\
    =&\frac{\gamma^{1/2}(x)}{|z|^{n+2s}}\left[-\phi(x)\delta(\gamma^{1/2}u)(x,z)+u(x)\phi(x)\delta(m)(x,z)\right.\\
    &\left.-u(x)\delta(\gamma^{1/2}\phi)(x,z)+\delta(\gamma^{1/2}u\phi)(x,z)\right],
\end{split}
\]
where in the second equality sign we added and subtracted the term $2(\gamma^{1/2}u)(x)$ in the first bracket, $2(\gamma^{1/2}\phi)(x)$ in the second bracket and in the last equality sign we used $\delta(1)=0$. Next recall that the fractional Laplacian of order $s\in (0,1)$ of a function $u\in\schwartz(\R^n)$ can be calculated by (cf.~\cite[Proposition 3.3]{DINEPV-hitchhiker-sobolev})
\begin{equation}\label{eq:fracLapDeltaDef}
(-\Delta)^su(x)=-\frac{C_{n,s}}{2}\int_{\R^n}\frac{\delta u(x,y)}{|y|^{n+2s}}\,dy\quad\text{for all}\quad x\in\R^n.
\end{equation}
Moreover, the same formula holds for $m$ by Remark~\ref{rem: conv in zygmund}.
Therefore, we obtain
\[
    \begin{split}
        \langle\Theta\nabla^su,\nabla^s\phi\rangle_{L^2(\R^{2n})}=&\frac{1}{2}\int_{\R^n}\gamma^{1/2}(x)\left[\phi(x)(-\Delta)^s(\gamma^{1/2}u)(x)+u(x)(-\Delta)^s(\gamma^{1/2}\phi)(x)\right.\\
        &\quad\left.-(-\Delta)^{s}(\gamma^{1/2}u\phi)(x)-u(x)\phi(x)(-\Delta)^sm(x)\right]\,dx.
    \end{split}
\]
The integral $\int_{\R^n}(-\Delta)^su(x)\,dx$ vanishes for any $u\in\schwartz(\R^n)$ and $0<s<1$, since it is the Fourier transform at the origin of the fractional Laplacian. Therefore we obtain
    \[
    \begin{split}
    \allowdisplaybreaks
        \langle\Theta\nabla^su,\nabla^s\phi\rangle_{L^2(\R^{2n})}=&\frac{1}{2}\int_{\R^n}\left[\gamma^{1/2}(x)\phi(x)(-\Delta)^s(\gamma^{1/2}u)(x)+\gamma^{1/2}(x)u(x)(-\Delta)^s(\gamma^{1/2}\phi)(x)\right.\\
        &\quad\left.-m(x)(-\Delta)^{s}(\gamma^{1/2}u\phi)(x)-\gamma^{1/2}(x)u(x)\phi(x)(-\Delta)^sm(x)\right]\,dx\\
        =&\frac{1}{2}\left[\langle (-\Delta)^s(\gamma^{1/2}u),\gamma^{1/2}\phi \rangle_{L^2(\R^n)}+\langle \gamma^{1/2}u, (-\Delta)^s(\gamma^{1/2}\phi)\rangle_{L^2(\R^n)}\right.\\
        &\left.-\langle m,(-\Delta)^s(\gamma^{1/2}u\phi) \rangle_{L^2(\R^n)}-\langle \phi,((-\Delta)^sm)\gamma^{1/2}u \rangle_{L^2(\R^n)}\right]\\
        =&\frac{1}{2}\left[\langle (-\Delta)^s(\gamma^{1/2}u),\gamma^{1/2}\phi \rangle_{L^2(\R^n)}+\langle (-\Delta)^{s}(\gamma^{1/2}u), \gamma^{1/2}\phi\rangle_{L^2(\R^n)}\right.\\
        &\left.-\langle (-\Delta)^{s}m,\gamma^{1/2}u\phi \rangle_{L^2(\R^n)}-\langle ((-\Delta)^sm),\gamma^{1/2}u\phi \rangle_{L^2(\R^n)}\right]\\
        =&\langle (-\Delta)^{s/2}(\gamma^{1/2}u), (-\Delta)^{s/2}(\gamma^{1/2}\phi)\rangle_{L^2(\R^n)}+\langle q\gamma^{1/2}u,\gamma^{1/2}\phi \rangle_{L^2(\R^n)},
    \end{split}
\]
where we used $\langle (-\Delta)^su,v\rangle_{L^2(\R^n)}=\langle u,(-\Delta)^sv\rangle_{L^2(\R^n)}=\langle (-\Delta)^{s/2}u,(-\Delta)^{s/2}v\rangle_{L^2(\R^n)}$ for $u,v\in C_c^{\infty}(\R^n)$ and the same formula holds if one replaces $u$ by $m$ as $m\in H^{2s,\frac{n}{2s}}(\R^n)$.

\noindent\ref{item:Liouville2} Note that we can write 
\[
    \frac{1}{\gamma^{1/2}}=1-\frac{m}{m+1}
\]
and set $\Gamma_0\vcentcolon =\min(0,\gamma_0^{1/2}-1)$. Let $\Gamma\in C^2_b(\R)$ satisfy $\Gamma(t)=\frac{t}{t+1}$ for $t\geq \Gamma_0$. By \cite[p.~156]{AdamsComposition} and $m\in H^{2s,\frac{n}{2s}}(\R^n)\cap L^{\infty}(\R^n)$, we deduce $\Gamma(m)\in H^{2s,\frac{n}{2s}}(\R^n)$, but since $m\geq \gamma_0^{1/2}-1$ it follows that $\frac{m}{m+1}\in H^{2s,\frac{n}{2s}}(\R^n)\cap L^{\infty}(\R^n)$. Therefore by Corollary~\ref{cor: continuity of multiplication map in Bessel potential spaces I I} we deduce $u=\gamma^{-1/2}v\in H^{s}(\R^n)$ and by Corollary~\ref{cor: continuity of multiplication map in Bessel potential spaces I I I} that $f=\gamma^{-1/2}g\in H^s(\R^n)/\Tilde{H}^s(\Omega)$. The rest of the statement follows from \eqref{eq: equivalence of bilinear forms} and we can conclude the proof.
\end{proof}

\begin{remark}
\label{remark: identity of PDOs}
    In fact, we have shown in the proof of Theorem~\ref{theorem: Liouville transformation} that there holds
    \begin{equation}
    \label{eq: identity schroedinger operator and conductivity operator}
        \langle \Theta\nabla^su,\nabla^s\phi\rangle_{L^2(\R^{2n})}=\langle (-\Delta)^{s/2}(\gamma^{1/2}u),(-\Delta)^{s/2}(\gamma^{1/2}\phi)\rangle_{L^2(\R^n)}+\langle q\gamma^{1/2}u,\gamma^{1/2}\phi\rangle_{L^2(\R^n)}
    \end{equation}
    for all $u,\phi\in H^s(\R^n)$.
\end{remark}

\begin{remark}\label{rem: conv in zygmund}
If $f\in L^{\infty}(\R^n)$ and $\rho_{\epsilon}\in C_c^{\infty}(\R^n)$, then $f_{\epsilon}=f\ast \rho_{\epsilon}$ satisfies $\partial^{\alpha}f_{\epsilon}=f\ast (\partial^{\alpha}\rho_{\epsilon})$ for any $\alpha\in \N^n_0$ and is bounded and uniformly continuous. If the first and second order differences are denoted by
\[
    \Delta^1_hf(x)=f(x+h)-f(x),\quad \Delta^2_hf(x)=f(x+2h)-2f(x+h)+f(x),
\]
then
\[
    \Delta^2_h(f\ast \rho_{\epsilon})=(\Delta^1_hf)\ast (\Delta^1_h\rho_{\epsilon}).
\]
Therefore we have
\[
    \|\Delta^2_h\partial^{\alpha}f_{\epsilon}\|_{L^{\infty}}\leq \|(\Delta^1_hf)\|_{L^{\infty}}\|(\Delta^1_h\partial^{\alpha}\rho_{\epsilon})\|_{L^1}\leq C \|f\|_{L^{\infty}(\R^n)} \|\nabla^{k+1}\rho_{\epsilon}\|_{L^1(\R^n)}|h|
\]  
for all $\alpha\in\N^n_0$ with $|\alpha|\leq k$. This shows that $f_{\epsilon}$ belongs to any H\"older space $C^{t}(\R^n)$ with $t\in (0,\infty)\setminus\N$ (see e.g.~\cite[Section 2.5.7]{Triebel-Theory-of-function-spaces}, \cite[Section 1.2.2]{Triebel-Theory-of-functions-spaces-I-I}).
Moreover, note that
\[
\begin{split}
    \int_{\R^n}\frac{1}{1+|x|^{n+2s}}\,dx&=\omega_n\int_0^{\infty}\frac{r^{n-1}}{1+r^{\frac{n+2s}{2}}}\,dr<\infty
\end{split}
\]
for any $s>0$ and therefore $f_{\epsilon}\in L_s$, where $L_s$ consists of all $g\in L^1_{loc}(\R^n)$ such that
\[
    \int_{\R^n}\frac{|g(x)|}{1+|x|^{n+2s}}\,dx<\infty.
\]
Hence, for any $f\in L^{\infty}(\R^n)$ the functions $f_{\epsilon}$ satisfy the assumptions of \cite[Proposition 2.1.4]{SilvestreFracObstaclePhd} and therefore the proof of \cite[Lemma 3.2]{DINEPV-hitchhiker-sobolev} shows that there holds
\[
    (-\Delta)^{s}f_{\epsilon}(x)=-\frac{C_{n,s}}{2}\int_{\R^n}\frac{\delta f_{\epsilon}(x,y)}{|y|^{n+2s}}\,dy
\]
for all $0<s<1$ and $x\in\R^n$.
\end{remark}

Next we show that the fractional conductivity equation and the fractional Schr\"odinger equation are well-posed when the potential comes from a conductivity.

\begin{lemma}[Well-posedness and DN maps]
\label{eq: well-posedness results and DN maps}
    Let $\Omega\subset \R^n$ be an open set which is bounded in one direction and $0<s<\min(1,n/2)$. Assume that $\gamma\in L^{\infty}(\R^n)$ with conductivity matrix $\Theta$, background deviation $m$ and electric potential $q$ satisfies $\gamma(x)\geq \gamma_0>0$ and $m\in H^{2s,\frac{n}{2s}}(\R^n)$. Then the following assertions hold:
    \begin{enumerate}[(i)]
        \item\label{item 1 well-posedness cond eq} For all $f\in X\vcentcolon = H^s(\R^n)/\Tilde{H}^s(\Omega)$ there are unique weak solutions $u_f,v_f\in H^s(\R^n)$ of the fractional conductivity equation
        \begin{equation}
        \begin{split}
            \Div_s(\Theta\cdot\nabla^s u)&= 0\quad\text{in}\quad\Omega,\\
            u&= f\quad\text{in}\quad\Omega_e
        \end{split}
        \end{equation}
        and the the fractional Schr\"odinger equation
        \begin{equation}
            \begin{split}
            ((-\Delta)^s+q)v&=0\quad\text{in}\quad\Omega,\\
            v&=f\quad\text{in}\quad\Omega_e.
        \end{split}
        \end{equation}
        \item\label{item 2 well-posedness cond eq} The exterior DN maps $\Lambda_{\gamma}\colon X\to X^*$, $\Lambda_q\colon X\to X^*$ given by 
        \[
        \begin{split}
            \langle \Lambda_{\gamma}f,g\rangle \vcentcolon =B_{\gamma}(u_f,g),\quad \langle \Lambda_qf,g\rangle\vcentcolon =B_q(v_f,g),
        \end{split}
        \]
        where $u_f,v_f\in H^s(\R^n)$ are the unique solutions to the conductivity equation and Schr\"odinger equation with exterior value $f$, are well-defined bounded linear maps. 
    \end{enumerate}
\end{lemma}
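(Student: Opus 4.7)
The plan is to apply the abstract well-posedness Lemma \ref{lemma:generalWellposedness} to the bilinear form $B_\gamma$, and then transfer the conclusions to the fractional Schrödinger equation with potential $q = -(-\Delta)^{s}m/\gamma^{1/2}$ via the Liouville transformation of Theorem \ref{theorem: Liouville transformation}.

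First, the boundedness of $B_\gamma$ on $H^s(\R^n)\times H^s(\R^n)$ is contained in Lemma \ref{prop: bilinear forms conductivity eq}. For (strong) coercivity on $\widetilde H^s(\Omega)$, the pointwise lower bound $\Theta(x,y)\geq \gamma_0 \mathbf{1}_{n\times n}$ combined with Lemma \ref{lemma: fractional gradient} gives
$$ B_\gamma(u,u) \geq \gamma_0 \|\nabla^s u\|_{L^2(\R^{2n})}^2 = \gamma_0 \|(-\Delta)^{s/2}u\|_{L^2(\R^n)}^2 $$
for every $u\in H^s(\R^n)$. Since $\Omega$ is bounded in one direction and $0<s<1$, Theorem \ref{thm:PoincUnboundedDoms} (with $p=2$) supplies a constant $C=C(\Omega,s)>0$ such that $\|u\|_{L^2(\R^n)}\leq C\|(-\Delta)^{s/2}u\|_{L^2(\R^n)}$ for all $u\in \widetilde H^s(\Omega)$, whence $B_\gamma(u,u)\geq \gamma_0(1+C^2)^{-1}\|u\|_{H^s(\R^n)}^2$. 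Lemma \ref{lemma:generalWellposedness} then produces, for each $f\in H^s(\R^n)$, a unique solution $u_f\in H^s(\R^n)$ with $u_f-f\in\widetilde H^s(\Omega)$, and the well-defined bounded linear DN map $\Lambda_\gamma\colon X\to X^*$, $\langle \Lambda_\gamma f,g\rangle = B_\gamma(u_f,g)$.

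For the Schrödinger side, the assumptions $m\in H^{2s,n/(2s)}(\R^n)$, $\gamma\geq\gamma_0$, together with the boundedness of $(-\Delta)^s\colon H^{2s,n/(2s)}(\R^n)\to L^{n/(2s)}(\R^n)$, yield $q\in L^{n/(2s)}(\R^n)$, so that $B_q$ is bounded on $H^s(\R^n)\times H^s(\R^n)$ by Lemma \ref{prop: bilinear forms conductivity eq}. Rather than attempting direct coercivity of $B_q$ (which need not hold without sign information on $q$), I would obtain existence, uniqueness, and the DN map via the Liouville correspondence. Given $f\in H^s(\R^n)$, Theorem \ref{theorem: Liouville transformation}\ref{item:Liouville2} shows that $\gamma^{-1/2}f\in H^s(\R^n)$ and that the class $[\gamma^{-1/2}f]\in X$ is well-defined; letting $u_{\gamma^{-1/2}f}$ be the unique solution to the conductivity equation with that exterior datum, the function $v_f\vcentcolon= \gamma^{1/2}u_{\gamma^{-1/2}f}$ is by Theorem \ref{theorem: Liouville transformation}\ref{item:Liouville1} a weak solution of the Schrödinger equation with exterior value $f$. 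Uniqueness transfers by the same bijection: any second solution $v_f'$ would produce $\gamma^{-1/2}v_f'$ solving the conductivity equation with the same exterior data, contradicting the already-established uniqueness. The map $\Lambda_q$ is then the composition of bounded linear maps on $X$ and descends to a bounded linear pairing on $X\times X$.

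The main technical point I anticipate is verifying that the multiplications $f\mapsto \gamma^{\pm 1/2} f$ correctly descend to well-defined bounded linear maps on the quotient $X = H^s(\R^n)/\widetilde H^s(\Omega)$; this compatibility is essentially the content of Corollary \ref{cor: continuity of multiplication map in Bessel potential spaces I I I} from Appendix \ref{subsubsec: Multiplication map in Bessel potential spaces}, combined with the composition rule $\Gamma(m)\in H^{2s,n/(2s)}(\R^n)$ used inside the proof of Theorem \ref{theorem: Liouville transformation}. Once this is in place, the identity $\langle\Lambda_q f,g\rangle = B_q(v_f,g)$ together with the boundedness of $B_q$ and of the solution operator finishes the proof of part \ref{item 2 well-posedness cond eq}.
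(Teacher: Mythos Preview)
Your proposal is correct and follows essentially the same route as the paper: coercivity of $B_\gamma$ via the lower bound $\Theta\geq\gamma_0\mathbf{1}$, Lemma~\ref{lemma: fractional gradient}, and the Poincar\'e inequality of Theorem~\ref{thm:PoincUnboundedDoms}, then Lemma~\ref{lemma:generalWellposedness} for the conductivity equation, and finally the Liouville transformation (Theorem~\ref{theorem: Liouville transformation}) to transfer existence and uniqueness to the Schr\"odinger problem. The paper argues uniqueness for the Schr\"odinger equation by subtracting two solutions and mapping the difference back to a zero-data conductivity solution, exactly as you outline, and treats the boundedness of $\Lambda_q$ as a standard consequence of the boundedness of $B_q$ and of the solution operator.
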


\begin{remark}
    Note that we are integrating over all of $\R^n$ in the term involving the potential $q$ in our definition of the bilinear form for the fractional Schr\"odinger equation, whereas in \cite{covi2019inverse-frac-cond} the integral is restricted to $\Omega$. 
\end{remark}

\begin{proof}
The bilinear form $B_{\gamma}$ is continuous by Lemma~\ref{prop: bilinear forms conductivity eq}. Moreover, we have by the properties of the fractional gradient and the fractional Poincar\'e inequality on domains bounded in one direction (cf.~Theorem~\ref{thm:PoincUnboundedDoms}) 
    \[
    \begin{split}
    B_{\gamma}(u,u)&\geq \gamma_0\|\nabla^su\|_{L^2(\R^{2n})}^2=\gamma_0\|(-\Delta)^{s/2}u\|_{L^2(\R^n)}^2\\
    &\geq \frac{\gamma_0}{2}(\|(-\Delta)^{s/2}u\|_{L^2(\R^n)}^2+C\|u\|_{L^2(\R^n)}^2)\geq \frac{\gamma_0}{2}\min(1,C)\|u\|_{H^s(\R^n)}^2
    \end{split}
    \]
    for all $u\in \tilde{H}^s(\Omega)$ and therefore $B_{\gamma}$ is (strongly) coercive. Now by Lemma~\ref{lemma:generalWellposedness} the assertions for the conductivity equation follow. 
    
    Next consider the Schr\"odinger equation with exterior value $f\in X$. By part \ref{item:Liouville2} of Theorem~\ref{theorem: Liouville transformation} we have $g\vcentcolon =\gamma^{-1/2}f \in X$ and we already know that there is a unique weak solution $v_g\in H^s(\R^n)$ to the conductivity equation with exterior value $g$. Therefore, part \ref{item:Liouville1} of Theorem~\ref{theorem: Liouville transformation} demonstrates that $u_f\vcentcolon =\gamma^{1/2}v_g\in H^s(\R^n)$ has exterior value $f$ and solves the Schr\"odinger equation as desired. If $u_1,u_2\in H^s(\R^n)$ are weak solutions to the Schr\"odinger equation with exterior values $f\in X$, then $u\vcentcolon =u_1-u_2\in \Tilde{H}^s(\Omega)$ solves again the Schr\"odinger equation but this time with exterior value zero and hence by Theorem~\ref{theorem: Liouville transformation}, part \ref{item:Liouville2} the function $v\vcentcolon =\gamma^{-1/2}u\in \Tilde{H}^s(\Omega)$ is a weak solution to 
    \[
    \begin{split}
            \Div_s(\Theta\cdot\nabla^s v)&= 0\quad\text{in}\quad\Omega,\\
            v&= 0\quad\text{in}\quad\Omega_e.
        \end{split}
    \]
    Since solutions to the conductivity equation are unique we obtain $v\equiv 0$ which in turn implies $u_1=u_2$. Therefore solutions to the Schr\"odinger equation are unique as well. The statements for the DN map associated to the Schr\"odinger equation $\Lambda_q$ follow from Lemma~\ref{prop: bilinear forms conductivity eq} and standard arguments.
\end{proof}

\begin{lemma}
\label{eq: Comparison of DN maps}
    Let $\Omega\subset \R^n$ be an open set which is bounded in one direction and $0<s<\min(1,n/2)$. Assume that $\gamma\in L^{\infty}(\R^n)$ with conductivity matrix $\Theta$, background deviation $m$ and electric potential $q$ satisfies $\gamma(x)\geq \gamma_0>0$ and $m\in H^{2s,\frac{n}{2s}}(\R^n)$. If $f,g\in H^s(\R^n)$ satisfy
    \[
        (\supp(f)\cup\supp(g))\cap\supp(m)=\emptyset,
    \]
    then there holds $\langle\Lambda_{\gamma}f,g\rangle=\langle\Lambda_qf,g\rangle.$
\end{lemma}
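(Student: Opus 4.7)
The plan is to reduce the equality of the two DN maps to the pointwise/distributional identity from Remark \ref{remark: identity of PDOs}, namely
\[
B_\gamma(u,\phi) = \langle (-\Delta)^{s/2}(\gamma^{1/2}u),(-\Delta)^{s/2}(\gamma^{1/2}\phi)\rangle_{L^2(\R^n)} + \langle q\gamma^{1/2}u,\gamma^{1/2}\phi\rangle_{L^2(\R^n)},
\]
valid for all $u,\phi\in H^s(\R^n)$, combined with the observation that under the support hypothesis we have $\gamma^{1/2}f=f$ and $\gamma^{1/2}g=g$. Concretely, since $\gamma^{1/2}=1+m$, the functions $mf$ and $mg$ vanish in the distributional sense whenever $\supp(m)$ is disjoint from the respective supports; by the continuity of the multiplication map in Bessel potential spaces (Corollary \ref{cor: continuity of multiplication map in Bessel potential spaces I I}) this zero product persists as an identity in $H^s(\R^n)$, so that $\gamma^{1/2}f=f$ and $\gamma^{1/2}g=g$ in $H^s(\R^n)$.

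Next I would identify the solutions. Let $u_f\in H^s(\R^n)$ be the unique solution to the conductivity equation with exterior datum $f$ and let $v_f\in H^s(\R^n)$ be the unique solution to the fractional Schrödinger equation with exterior datum $f$ (both exist by Lemma \ref{eq: well-posedness results and DN maps}). The key intermediate claim is that $\gamma^{1/2}u_f = v_f$. To establish this, I would apply Theorem \ref{theorem: Liouville transformation} \ref{item:Liouville1}: $\gamma^{1/2}u_f\in H^s(\R^n)$ solves the Schrödinger equation with exterior value $\gamma^{1/2}f$, and since $\gamma^{1/2}f=f$ in $H^s(\R^n)$ (equivalently, $\gamma^{1/2}u_f - f = \gamma^{1/2}(u_f-f)\in \widetilde{H}^s(\Omega)$ by the invariance of $\widetilde{H}^s(\Omega)$ under multiplication by $\gamma^{1/2}$, cf.~Corollary \ref{cor: continuity of multiplication map in Bessel potential spaces I I I}), the uniqueness part of Lemma \ref{eq: well-posedness results and DN maps} forces $\gamma^{1/2}u_f = v_f$.

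With these two ingredients in place, the proof reduces to a short computation: starting from $\langle\Lambda_\gamma f,g\rangle = B_\gamma(u_f,g)$, apply the identity \eqref{eq: identity schroedinger operator and conductivity operator} to rewrite the right-hand side as
\[
\langle (-\Delta)^{s/2}(\gamma^{1/2}u_f),(-\Delta)^{s/2}(\gamma^{1/2}g)\rangle_{L^2(\R^n)} + \langle q\gamma^{1/2}u_f,\gamma^{1/2}g\rangle_{L^2(\R^n)},
\]
then substitute $\gamma^{1/2}u_f = v_f$ and $\gamma^{1/2}g = g$ to recognize this as $B_q(v_f,g) = \langle\Lambda_q f,g\rangle$.

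The only real technical point is the justification that $mh = 0$ in $H^s(\R^n)$ (or at least in the space where the multiplication $\gamma^{1/2}\cdot h$ has been defined) whenever $h\in H^s(\R^n)$ has $\supp(h)\cap\supp(m)=\emptyset$; this will follow by approximating $h$ by $C_c^\infty(\R^n)$ functions with supports eventually disjoint from $\supp(m)$ and invoking the continuity of the multiplication map established in the Appendix. No part of the argument uses boundedness of $\Omega$, so the result holds for all domains bounded in one direction once the Liouville framework of Section \ref{sec:conductivityCalderon} has been set up.
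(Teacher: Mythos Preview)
Your proposal is correct and follows essentially the same route as the paper's proof: both use the support condition to get $\gamma^{1/2}f=f$, $\gamma^{1/2}g=g$, then invoke the Liouville transformation (Theorem~\ref{theorem: Liouville transformation}) together with uniqueness to identify $\gamma^{1/2}u_f=v_f$, and finally apply the bilinear identity \eqref{eq: identity schroedinger operator and conductivity operator} to conclude. Your write-up is in fact slightly more detailed than the paper's in justifying $mh=0$ in $H^s(\R^n)$ and the exterior-value matching $\gamma^{1/2}u_f-f\in\widetilde{H}^s(\Omega)$, but the underlying argument is the same.
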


\begin{proof}
    If $\supp(m)=\R^n$, there is nothing to prove and hence we can assume $\supp(m)\neq \R^n$. By assumption we have 
    \[
        \gamma^{1/2}h=(m+1)h=mh+h=h
    \]
    for $h=f,g$. By Theorem~\ref{theorem: Liouville transformation} we have $\gamma^{1/2}u_f=v_{\gamma^{1/2}f}=v_f$,
    where $u_f,v_f\in H^s(\R^n)$ are the unique solutions to the homogenous fractional conductivity equation and Schr\"odinger equation with exterior value $f\in H^s(\R^n)$, respectively. Hence, by Remark~\ref{remark: identity of PDOs} and Lemma~\ref{eq: well-posedness results and DN maps} we obtain 
    \[
    \begin{split}
        \langle \Lambda_{\gamma}f,g\rangle &= B_{\gamma}(u_f,g)=B_q(\gamma^{1/2}u_f,\gamma^{1/2}g)=B_q(v_f,g)=\langle\Lambda_{q}f,g\rangle.
    \end{split}
    \]
\end{proof}

\subsection{Uniqueness results for the inverse problem}\label{sec:uniquenessFracCaldSubsection}

\begin{lemma}
\label{thm: uniqueness conductivity equation}
    Let $\Omega\subset \R^n$ be an open set and $0<s<\min(1,n/2)$. Assume that $\gamma_1,\gamma_2\in L^{\infty}(\R^n)$ with background deviations $m_1,m_2$ satisfy $\gamma_1(x),\gamma_2(x)\geq \gamma_0>0$ and $m_1,m_2\in H^{2s,\frac{n}{2s}}(\R^n)$. Moreover, suppose that one of the following assumptions hold:
    \begin{enumerate}[(i)]
        \item $\Omega\subset\R^n$ is bounded and $m\vcentcolon =m_1-m_2\in\Tilde{H}^{2s,\frac{n}{2s}}(\Omega)$,\label{Item:firstUniqFracCond1}
        \item or $\Omega\subset\R^n$ is bounded in one direction and $m\vcentcolon =m_1-m_2\in \Tilde{H}^s(\Omega)$.\label{Item:firstUniqFracCond2}
    \end{enumerate}
    If $W_1,W_2\subset\Omega_e$ are nonempty open sets with
    \[
        (\supp(m_1)\cup\supp(m_2))\cap (W_1\cup W_2)=\emptyset
    \]
and there holds $\left.\Lambda_{\gamma_1}f\right|_{W_2}=\left.\Lambda_{\gamma_2}f\right|_{W_2}$ for all $f\in C_c^{\infty}(W_1)$, then $\gamma_1=\gamma_2$ in $\R^n$.
\end{lemma}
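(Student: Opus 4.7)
My plan is to transfer the conductivity problem to the fractional Schr\"odinger problem via the Liouville transformation, apply a Calder\'on-type uniqueness theorem to deduce equality of the induced potentials, and then extract an equation for $m:=m_1-m_2$ whose only admissible solution is zero. First, since $m_i \in H^{2s,\frac{n}{2s}}(\R^n)$ and $\gamma_i \geq \gamma_0 > 0$, the induced potentials $q_i := -(-\Delta)^s m_i/\gamma_i^{1/2}$ lie in $L^{\frac{n}{2s}}(\R^n) \subset M_0(H^s \to H^{-s})$ by Lemma \ref{prop: bilinear forms conductivity eq}, and both the conductivity and Schr\"odinger exterior value problems are well-posed with well-defined DN maps by Theorem \ref{theorem: Liouville transformation} and Lemma \ref{eq: well-posedness results and DN maps}. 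The disjointness assumption $(\supp m_1 \cup \supp m_2) \cap (W_1 \cup W_2) = \emptyset$ combined with Lemma \ref{eq: Comparison of DN maps} then converts $\Lambda_{\gamma_1} f|_{W_2} = \Lambda_{\gamma_2} f|_{W_2}$ for $f\in C_c^\infty(W_1)$ into the Schr\"odinger identity $\Lambda_{q_1} f|_{W_2} = \Lambda_{q_2} f|_{W_2}$ for the same test functions.

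Next I would apply Calder\'on uniqueness for the Schr\"odinger equation to conclude $q_1 = q_2$ in $\Omega$. In case \ref{Item:firstUniqFracCond1}, Theorem \ref{thm:schrodingeruniqueness} applies with $q_i \in M_0 \subset M_\Omega + M_{\geq 0}$, the eigenvalue assumption \eqref{def:zeronotdirichleteigenvalue} being provided by Lemma \ref{eq: well-posedness results and DN maps}. In case \ref{Item:firstUniqFracCond2}, since $q_i$ need not lie in $M_{\delta(\Omega)}$, I instead invoke the abstract Theorem \ref{theorem:GeneralFractionalCalderon} via Remark \ref{remark:generalizationCalderon}: the bilinear form of $(-\Delta)^s$ has the symmetric UCP on every open set by \cite[Theorem 1.2]{GSU20}, the multiplication forms $q_i$ are local and bounded on $H^s(\R^n) \times H^s(\R^n)$, and the well-posedness established in Lemma \ref{eq: well-posedness results and DN maps} substitutes the missing coercivity hypothesis.

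The identity $q_1=q_2$ in $\Omega$ is equivalent to $\gamma_2^{1/2}(-\Delta)^s m_1 = \gamma_1^{1/2}(-\Delta)^s m_2$ there, which after subtracting $\gamma_2^{1/2}(-\Delta)^s m_2$ from both sides and using $\gamma_1^{1/2}-\gamma_2^{1/2}=m$ rearranges to
\[
(-\Delta)^s m \;=\; \frac{m\,(-\Delta)^s m_2}{\gamma_2^{1/2}} \;=\; -q_2\, m \quad \text{in } \Omega.
\]
In case \ref{Item:firstUniqFracCond2} we have $m \in \tilde H^s(\Omega)$ by hypothesis, while in case \ref{Item:firstUniqFracCond1} the inclusion $\tilde H^{2s,\frac{n}{2s}}(\Omega) \hookrightarrow \tilde H^s(\Omega)$ (valid for bounded $\Omega$) places $m$ in the same space. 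Either way $m$ is a weak solution of $((-\Delta)^s + q_2)m = 0$ in $\Omega$ with $m=0$ in $\Omega_e$, and uniqueness for this problem via Lemma \ref{eq: well-posedness results and DN maps} forces $m\equiv 0$, i.e., $\gamma_1=\gamma_2$ in $\R^n$.

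The step that I expect to require the most care is the last one: the a.e.\ identity $q_1=q_2$ in $\Omega$ must be promoted to a bona fide weak formulation of the Schr\"odinger equation tested against arbitrary elements of $\tilde H^s(\Omega)$. This hinges on the multiplication results of Appendix \ref{subsubsec: Multiplication map in Bessel potential spaces}, which ensure that all relevant products of $m_i$, $\gamma_i^{1/2}$, and fractional Laplacians sit in the proper dual spaces and behave stably under $C_c^\infty(\R^n)$ approximation, so that the rearrangement leading to $(-\Delta)^s m + q_2 m = 0$ remains valid at the level of bilinear forms.
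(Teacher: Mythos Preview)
Your proof is correct and follows the same strategy as the paper's: Liouville-reduce to Schr\"odinger DN maps via Lemma~\ref{eq: Comparison of DN maps}, invoke Calder\'on uniqueness (the paper cites Theorems~\ref{thm:schrodingeruniqueness}/\ref{thm:schrodingeruniquenessUnbounded} while you go through Theorem~\ref{theorem:GeneralFractionalCalderon} with Remark~\ref{remark:generalizationCalderon}, which is an equally valid route), derive the homogeneous Schr\"odinger equation for $m$ with zero exterior data, and conclude $m\equiv 0$ by well-posedness from Lemma~\ref{eq: well-posedness results and DN maps}. Two small points to tidy up: in case~\ref{Item:firstUniqFracCond1} the embedding $\Tilde{H}^{2s,n/2s}(\Omega)\hookrightarrow\Tilde{H}^s(\Omega)$ is not immediate and needs the Sobolev/interpolation argument via Corollary~\ref{cor: interpolation in Bessel potential spaces} that the paper spells out, and when invoking Theorem~\ref{theorem:GeneralFractionalCalderon} you should first shrink $W_1,W_2$ to disjoint open subsets to meet its hypothesis~\ref{item 3 interior det}.
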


\begin{proof}
    By Lemma~\ref{eq: Comparison of DN maps} we have $\left.\Lambda_{q_1}f\right|_{W_2}=\left.\Lambda_{q_2}f\right|_{W_2}$. Since $L^{\frac{n}{2s}}(\R^n)\hookrightarrow M_0(H^s\to H^{-s})\subset M_{\Omega}(H^s\to H^{-s})$ the assumptions of Theorem~\ref{thm:schrodingeruniqueness} in case \ref{Item:firstUniqFracCond1}, of Theorem~\ref{thm:schrodingeruniquenessUnbounded} in case \ref{Item:firstUniqFracCond2}, respectively, are fulfilled and we deduce in both cases $q_1|_{\Omega}=q_2|_{\Omega}$. If $\Omega\subset\R^n$ is bounded, then using the Sobolev embedding we deduce $m\in H^{2s,\frac{n}{2s}}(\R^n)\hookrightarrow L^{p}(\R^n)$ for all $\frac{n}{2s}\leq p\leq \infty$ and hence using $\Tilde{H}^{2s,\frac{n}{2s}}(\Omega)\hookrightarrow H_{\overline{\Omega}}^{2s\frac{n}{2s}}(\R^n)$ we get $m\in L^q(\R^n)$ for all $1\leq q\leq\infty$. Therefore, using \ref{item 1 interpolation Bessel potential spaces} of Corollary~\ref{cor: interpolation in Bessel potential spaces} with $s_1=0,s_2=2s,\theta=1/2,p_0=n/2s$ and $p_1=n/(n-2s)$ we infer $m\in H^s(\R^n)$. Since all used embeddings are continuous, we obtain $m\in\Tilde{H}^s(\Omega)$ and hence in both cases \ref{Item:firstUniqFracCond1} and \ref{Item:firstUniqFracCond2} we have $m\in\Tilde{H}^s(\Omega)$. Following \cite{covi2019inverse-frac-cond} we next calculate
    \[
        \begin{split}
            0&=\gamma_1^{1/2}\gamma_2^{1/2}(q_2-q_1)=-\gamma_1^{1/2}\gamma_2^{1/2}\left(\frac{(-\Delta)^sm_2}{\gamma_2^{1/2}}-\frac{(-\Delta)^sm_1}{\gamma_1^{1/2}}\right)\\
            &=-\gamma_1^{1/2}(-\Delta)^sm_2+\gamma_2^{1/2}(-\Delta)^sm_1=-(1+m_1)(-\Delta)^sm_2+(1+m_2)(-\Delta)^sm_1\\
            &=(1+m_1)(-\Delta)^sm-(1+m_1)(-\Delta)^sm_1+(1+m_2)(-\Delta)^sm_1\\
            &=(1+m_1)(-\Delta)^sm-m(-\Delta)^sm_1=\gamma_1^{1/2}(-\Delta)^sm+m(-\Delta)^sm_1
        \end{split}
    \]
    in $\Omega$ and thus $m\in\Tilde{H}^s(\Omega)$ solves
    \[
        \begin{split}
            (-\Delta)^sm-\frac{(-\Delta)^sm_1}{\gamma_1^{1/2}}m&=0\quad\text{in}\quad \Omega,\\
            m&=0\quad\text{in}\quad \Omega_e.
        \end{split}
    \]
    By Lemma~\ref{eq: well-posedness results and DN maps} we get $m\equiv 0$ which in turn shows $\gamma_1=\gamma_2$ in $\R^n$.
\end{proof}

\begin{remark}\label{rmk: fractional conductivity UCP remark}
    If one assumes 
    \begin{enumerate}[(I)]
        \item\label{item 1 remark on uniqueness cond eq}  $m\vcentcolon =m_1-m_2\in H^s(\R^n)$ and there exist an open set $V\subset \Omega$, a measurable subset $A\subset V$ with positive measure such that $m=0$ on $A$ and $(-\Delta)^sm_1\in L^{\infty}(V)$ and $\frac{1}{4} < s < 1$
        \item\label{item 2 remark on uniqueness cond eq} or $m\in H^t(\R^n)$ for $t\in \R$ and $m=0$ on some open nonempty subset of $\Omega$
    \end{enumerate} instead of the condition \ref{Item:firstUniqFracCond1} or \ref{Item:firstUniqFracCond2} in Lemma~\ref{thm: uniqueness conductivity equation}, then it directly follows from the (measurable) UCP (cf. \cite[Theorem 3, Remark 5.6]{GRSU-fractional-calderon-single-measurement}) and the given proof above that $m=0$ on all of $\R^n$ and hence $\gamma_1=\gamma_2$ on $\R^n$.
\end{remark}

Up to this point our arguments have been very similar to the ones given in \cite{covi2019inverse-frac-cond}. The main difference related to the solving the inverse problem has been that we do not restrict the potential $q$ of the Schrödinger equation to $\Omega$ as done in \cite{covi2019inverse-frac-cond} but allow it to take values everywhere and therefore the data $\Lambda_\gamma f|_{W_1\cap W_2}$ where $f \in C_c^\infty(W_1 \cap W_2)$ can be used to gain additional information whenever $W_1 \cap W_2 \neq \emptyset$. The main benefit of this becomes clear in the next statement. The other obvious differences have been that we allow $\Omega$ to be unbounded and non-Lipschitz (but bounded in one direction) and the conductivities to be nontrivial in $\R^n \setminus \Omega$. In fact, there is the following characterization of equal exterior data for the fractional conductivity equation. 
\begin{lemma}
    \label{thm: sharpness of condition (ii) in uniqueness thm}
    Let $\Omega\subset \R^n$ be an open set which is bounded in one direction and $0<s<\min(1,n/2)$. Assume that $\gamma_1,\gamma_2\in L^{\infty}(\R^n)$ with background deviations $m_1,m_2$ satisfy $\gamma_1(x)$, $\gamma_2(x)\geq \gamma_0>0$ and $m_1,m_2\in H^{2s,\frac{n}{2s}}(\R^n)$. Moreover, assume that $m_0\vcentcolon =m_1-m_2\in H^s(\R^n)$ and $W_1,W_2\subset\Omega_e$ are nonempty open sets with
    \[
        (\supp(m_1)\cup\supp(m_2))\cap (W_1\cup W_2)=\emptyset.
    \]
Then there holds $\left.\Lambda_{\gamma_1}f\right|_{W_2}=\left.\Lambda_{\gamma_2}f\right|_{W_2}$ for all $f\in C_c^{\infty}(W_1)$ if and only if
\begin{enumerate}[(i)]
    \item\label{first item thm 7.13 } $m_1-m_2$ is the unique solution of
    \begin{equation}
    \label{eq: sharpness}
        \begin{split}
            (-\Delta)^sm-\frac{(-\Delta)^sm_1}{\gamma_1^{1/2}}m&=0\quad\text{in}\quad \Omega,\\
            m&=m_0\quad\text{in}\quad \Omega_e.
        \end{split}
    \end{equation}
    \item\label{second item thm 7.13 } and $\int_{\Omega_e}(-\Delta)^sm_0fg\,dx=0$ for all $f\in C^{\infty}_c(W_1)$, $g\in C_c^{\infty}(W_2)$.
\end{enumerate}
\end{lemma}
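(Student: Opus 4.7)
The plan is to reduce everything to the fractional Schrödinger equation via Lemma~\ref{eq: Comparison of DN maps}, then apply the Alessandrini identity to produce a single pivotal identity that makes (i) and (ii) transparent, and finally, in the forward direction, decouple the interior and exterior contributions through a Runge approximation with carefully engineered supports.

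\textbf{Reduction and pivotal identity.} Since $(\supp m_1\cup\supp m_2)\cap(W_1\cup W_2)=\emptyset$, for any $f\in C_c^\infty(W_1)$ and $g\in C_c^\infty(W_2)$ Lemma~\ref{eq: Comparison of DN maps} gives $\langle\Lambda_{\gamma_i}f,g\rangle=\langle\Lambda_{q_i}f,g\rangle$ for $i=1,2$. I would next invoke Lemma~\ref{lemma:generalAlessandrini} on the (symmetric) Schrödinger bilinear forms $B_{q_i}$ to obtain
\[
\langle(\Lambda_{q_1}-\Lambda_{q_2})f,g\rangle=\int_{\R^n}(q_1-q_2)u_fv_g\,dx,
\]
where $u_f,v_g\in H^s(\R^n)$ are the corresponding Schrödinger solutions. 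Writing $u_f=(u_f-f)+f$ and $v_g=(v_g-g)+g$, locality of multiplication by $q_1-q_2\in L^{n/2s}(\R^n)$ (Lemma~\ref{prop: bilinear forms conductivity eq}) kills both cross terms because $\supp(u_f-f),\supp(v_g-g)\subset\overline\Omega$ while $\supp f,\supp g\subset\Omega_e$. On $\supp(fg)\subset W_1\cap W_2$ the support hypothesis forces $m_1=m_2=0$, hence $\gamma_1^{1/2}=\gamma_2^{1/2}=1$ and $q_1-q_2=-(-\Delta)^sm_0$. This yields the \emph{pivotal identity}
\begin{equation}\label{eq:planPivotal}
\langle(\Lambda_{\gamma_1}-\Lambda_{\gamma_2})f,g\rangle=\int_{\R^n}(q_1-q_2)(u_f-f)(v_g-g)\,dx-\int_{\Omega_e}(-\Delta)^sm_0\,fg\,dx.
\end{equation}
In addition, the algebraic computation from the proof of Lemma~\ref{thm: uniqueness conductivity equation} shows that $\gamma_1^{1/2}\gamma_2^{1/2}(q_2-q_1)=\gamma_1^{1/2}(-\Delta)^sm-m(-\Delta)^sm_1$ pointwise with $m=m_0$, so together with the well-posedness in Lemma~\ref{eq: well-posedness results and DN maps}, condition (i) is equivalent to $q_1=q_2$ almost everywhere on $\Omega$.

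\textbf{Backward direction.} Under (i) and (ii), the right-hand side of \eqref{eq:planPivotal} vanishes: the first integral is zero because $(u_f-f)(v_g-g)$ is supported in $\overline\Omega$ where $q_1=q_2$, and the second integral is precisely (ii). Hence the DN maps agree on the prescribed test pairs.

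\textbf{Forward direction and main obstacle.} Assuming the DN maps agree, the left-hand side of~\eqref{eq:planPivotal} is zero, but a bare Runge approximation does not control the exterior integral $\int_{\Omega_e}(-\Delta)^sm_0\,fg\,dx$, so the two contributions cannot be separated directly; this is the hardest point when $W_1\cap W_2\neq\emptyset$. The plan is to \emph{engineer disjoint supports}: pick $x_0\in W_1$ and a nonempty open ball $V\subset W_2$ with $x_0\notin\overline V$, and set $W_1'\vcentcolon=W_1\setminus\overline V$, which is open, contains $x_0$, and satisfies $W_1'\cap V=\emptyset$. For $f\in C_c^\infty(W_1')$ and $g\in C_c^\infty(V)$ the product $fg$ vanishes identically, so \eqref{eq:planPivotal} collapses to $\int_{\R^n}(q_1-q_2)(u_f-f)(v_g-g)\,dx=0$. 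Because $(-\Delta)^s$ has the symmetric UCP on every open set, Theorem~\ref{lemma:GeneralRungeApproximation} applied to both $W_1'$ and $V$ gives density in $\widetilde H^s(\Omega)$ of $\{u_f-f:f\in C_c^\infty(W_1')\}$ and of $\{v_g-g:g\in C_c^\infty(V)\}$. Passing to the limit, using continuity of $(h_1,h_2)\mapsto\int_{\R^n}(q_1-q_2)h_1h_2\,dx$ on $H^s(\R^n)\times H^s(\R^n)$ (Lemma~\ref{prop: bilinear forms conductivity eq}), forces $\int_\Omega(q_1-q_2)h_1h_2\,dx=0$ for all $h_1,h_2\in C_c^\infty(\Omega)$, whence $q_1=q_2$ a.e.\ on $\Omega$, i.e.\ (i). Feeding $q_1=q_2$ on $\Omega$ back into \eqref{eq:planPivotal} with arbitrary $f\in C_c^\infty(W_1)$ and $g\in C_c^\infty(W_2)$ then delivers (ii).
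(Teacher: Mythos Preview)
Your argument is correct and follows essentially the same route as the paper: reduce to the Schr\"odinger DN maps via Lemma~\ref{eq: Comparison of DN maps}, expand the Alessandrini identity into interior and exterior contributions, identify condition~(i) with $q_1=q_2$ in $\Omega$ via the algebraic identity from the proof of Lemma~\ref{thm: uniqueness conductivity equation}, and read off condition~(ii) as the vanishing of the exterior term. The only cosmetic difference is organizational: you package the expansion into a single pivotal identity valid for all test pairs and then treat both implications from it, whereas the paper first invokes the proof of Lemma~\ref{thm: uniqueness conductivity equation} (hence implicitly Theorem~\ref{thm:schrodingeruniquenessUnbounded} and Theorem~\ref{theorem:GeneralFractionalCalderon}) to obtain $q_1=q_2$ in $\Omega$, and only afterwards expands Alessandrini to isolate the exterior integral. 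In particular, your explicit ``engineer disjoint supports'' step ($W_1'=W_1\setminus\overline V$, $V\subset W_2$) is precisely the shrinking that the paper delegates to the disjointness hypothesis~\ref{item 3 interior det} of Theorem~\ref{theorem:GeneralFractionalCalderon}; making it visible here is harmless and arguably clearer.
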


\begin{proof}
    If the DN maps agree, then the proof of Lemma~\ref{thm: uniqueness conductivity equation} shows that $m_1-m_2$ is the unique solution of \eqref{eq: sharpness} and $q_1=q_2$ in $\Omega$. Thus, in both cases $m_1-m_2$ uniquely solves \eqref{eq: sharpness} and $q_1=q_2$ in $\Omega$ but then \ref{item 2 well-posedness cond eq} of Lemma~\ref{eq: well-posedness results and DN maps} and Lemma~\ref{eq: Comparison of DN maps} shows
    \[
    \begin{split}
        &\langle \Lambda_{\gamma_1}f,g\rangle =\langle \Lambda_{\gamma_2}f,g\rangle \Leftrightarrow \langle \Lambda_{q_1}f,g\rangle =\langle \Lambda_{q_2}f,g\rangle 
    \end{split}
    \]
    for all $f\in C_c^{\infty}(W_1)$, $g \in C_c^\infty(W_2)$. 
    By the Alessandrini identity for the fractional Schr\"odinger equation (cf.~\cite[Lemma 2.5]{GSU20}, \cite[Lemma 2.7]{RS-fractional-calderon-low-regularity-stability} and Lemma~\ref{alex}), whose proof remains the same in our setting, we obtain
    \begin{equation}
        \langle \Lambda_{\gamma_1}f,g\rangle =\langle \Lambda_{\gamma_2}f,g\rangle\Leftrightarrow \int_{\R^n}(q_1-q_2)u^{(1)}_fu^{(2)}_g\,dx=0,
    \end{equation}
    where $u^{(1)}_f,u^{(2)}_g\in H^s(\R^n)$ are the unique weak solutions of the fractional Schr\"odinger equation with potential $q_1,q_2$, respectively, and exterior values $f,g$. Therefore, we obtain
    \begin{equation}
    \label{eq: first cons for alessandrini}
    \begin{split}
        0=&\int_{\R^n}(q_1-q_2)u^{(1)}_fu^{(2)}_g\,dx\\
        =&\int_{\R^n}(q_1-q_2)(u^{(1)}_f-f)(u^{(2)}_g-g)\,dx+\int_{\R^n}(q_1-q_2)(u^{(1)}_f-f)g\,dx\\
        &+\int_{\R^n}(q_1-q_2)f(u^{(2)}_g-g)\,dx+\int_{\R^n}(q_1-q_2)fg\,dx.
    \end{split}
    \end{equation}
    Since $u^{(1)}_f-f,u_g^{(2)}-g\in\Tilde{H}^s(\Omega)$ there exists $u_k^{(1)},u_k^{(2)}\in C_c^{\infty}(\Omega)$ such that $u_k^{(1)}\to u^{(1)}_f-f$ and $u_k^{(2)}\to u^{(2)}_g-g$ in $H^s(\R^n)$. By Lemma~\ref{lemma: continuity of multiplication map in Bessel potential spaces I V}, the Sobolev embedding and H\"older's inequality we have
    \begin{equation}
    \label{eq: second cons for alessandrini}
    \begin{split}
        0=&\lim_{k\to\infty}\int_{\R^n}(q_1-q_2)u^{(1)}_ku^{(2)}_k\,dx+\lim_{k\to\infty}\int_{\R^n}(q_1-q_2)u^{(1)}_kg\,dx\\
        &+\lim_{k\to\infty}\int_{\R^n}(q_1-q_2)fu^{(2)}_k\,dx+\int_{\R^n}(q_1-q_2)fg\,dx.
    \end{split}
    \end{equation}
    Now the first term is zero since $q_1=q_2$ in $\Omega$, the second and third term are zero as $u_k^{(j)}$ ($j=1,2$) and $f,g$ have disjoint supports. Therefore, since $f,g$ are supported in $\Omega_e$ we deduce
    \[
        \int_{\Omega_e}(q_1-q_2)fg\,dx=0
    \]
    for all $f\in C_c^{\infty}(W_1),g\in C_c^{\infty}(W_2)$. As the assumption on the supports of $m_1,m_2$ imply $\gamma_1=\gamma_2=1$ on $W_1\cup W_2$, the last identity is equivalent to
    \[
        \int_{\Omega_e}(-\Delta)^sm_0fg\,dx=0
    \]
    for all $f\in C_c^{\infty}(W_1),g\in C_c^{\infty}(W_2)$. Hence, this implies that if the DN maps coincide, then the assertions \ref{first item thm 7.13 }, \ref{second item thm 7.13 } of Lemma~\ref{thm: sharpness of condition (ii) in uniqueness thm} hold.
    
    Conversely, if $m_1-m_2$ solves the equation \eqref{eq: sharpness}, then the computation in the proof of Lemma~\ref{thm: uniqueness conductivity equation} shows that $q_1=q_2$ in $\Omega$. Moreover, by using \ref{second item thm 7.13 } we deduce 
    from the previous computation (cf.~\eqref{eq: first cons for alessandrini}, \eqref{eq: second cons for alessandrini}) that 
    \[
    \int_{\R^n}(q_1-q_2)u^{(1)}_fu^{(2)}_g\,dx=\int_{\Omega_e}(q_1-q_2)fg\,dx=0,
    \]
    where $u^{(1)}_f,u^{(2)}_g\in H^s(\R^n)$ are the unique weak solutions of the fractional Schr\"odinger equation with potential $q_1,q_2$, respectively, and exterior values $f,g$.
    Now the Alessandrini identity implies that the DN maps for the Schr\"odinger equation and hence for the conductivity equation coincide.
\end{proof}

We can now prove the main theorem of our work using the characterization of Lemma \ref{thm: sharpness of condition (ii) in uniqueness thm} and the UCP.

\begin{proof}[Proof of Theorem~\ref{thm: characterization of uniqueness}]
    Assume $W_1\cap W_2\neq\emptyset$, then there is an open bounded set $U\subset\R^n$ such that $\overline{U}\subset W_1\cap W_2$. By Theorem~\ref{thm: sharpness of condition (ii) in uniqueness thm} we know that the DN maps coincide if and only if 
    \begin{enumerate}[(I)]
    \item\label{item 1 proof main thm cond eq} $m_1-m_2$ is the unique solution of
    \begin{equation}
        \begin{split}
            (-\Delta)^sm-\frac{(-\Delta)^sm_1}{\gamma_1^{1/2}}m&=0\quad\text{in}\quad \Omega,\\
            m&=m_0\quad\text{in}\quad \Omega_e.
        \end{split}
    \end{equation}
    \item\label{item 2 proof main thm cond eq} and $\int_{\Omega_e}(-\Delta)^sm_0fg\,dx=0$ for all $f\in C^{\infty}_c(W_1)$, $g\in C_c^{\infty}(W_2)$.
\end{enumerate}
    By the property \ref{item 2 proof main thm cond eq} and choosing a cut--off function $f\in C_c^{\infty}(W_1)$ such that $f|_{\overline{U}}=1$ we see that
    \[
        \int_{U}(-\Delta)^sm_0g\,dx=0
    \]
    for all $g\in C_c^{\infty}(U)$. This in turn implies $(-\Delta)^sm=0$ in $U$. On the other hand we have by the support assumption on $m_1$ and $m_2$ that $m_0=0$ in $U$ and hence the UCP (cf.~\cite[Theorem 1.2]{CMR20}) implies $m_0\equiv 0$. Therefore we have $\gamma_1=\gamma_2$. Finally, observe that the assertion \ref{item 2 characterization of uniqueness} is a direct consequence of Theorem~\ref{thm: sharpness of condition (ii) in uniqueness thm}, since the test functions $f\in C_c^{\infty}(W_1)$ and $g\in C_c^{\infty}(W_2)$ have disjoint supports.
\end{proof}

\begin{remark} The crucial difference between general fractional inverse problems and the one arising from the conductivity equation is that the potentials associated with the conductivity equation arise in the form $(-\Delta)^s m/\gamma^{1/2}$ for some functions $m$ and $\gamma>0$. After interior determination is made with the usual argument one is left with the additional exterior data which must also vanish. Therefore, this let us to conclude stronger results than expected using the UCP of fractional Laplacians when having overlapping exterior DN partial data. This is an interesting feature and it may indeed generalize to other nonlocal inverse problems having similar special structures at hand. For instance, whenever potentials $q$ of the equation $(-\Delta)^s + q$ are a priori known to arise from a suitable restriction of the range of some fixed nonlocal operator having the UCP. For example, in Theorem~\ref{thm: characterization of uniqueness} this operator is $(-\Delta)^s$ and one may restrict the domain to be the functions in $H^s(\R^n)$ that agree in $W_1 \cap W_2$.
\end{remark}

\appendix

\section{Multiplication map in Bessel potential spaces}
\label{subsubsec: Multiplication map in Bessel potential spaces}

In this appendix, we adapt the methods in \cite{BrezisComposition} to conclude useful multiplication results for Bessel potential spaces. See also \cite[Lemma 2.2]{RS-fractional-calderon-low-regularity-stability} for related results in terms of Sobolev multipliers. Before stating the main results of this section we recall a possible definition of the Triebel--Lizorkin spaces $F^s_{p,q}=F^s_{p,q}(\R^n)$ following the exposition in \cite{BrezisComposition} or \cite{Triebel-Theory-of-function-spaces,Triebel-Theory-of-functions-spaces-I-I}. In the sequel we will write for brevity $B_r$ instead of $B_r(0)$. Fix any $\psi_0\in C_c^{\infty}(\R^n)$ satisfying
\[
    0\leq \psi_0\leq 1,\quad \psi_0(\xi)=1\quad\text{for}\quad |\xi|\leq 1\quad \text{and}\quad \psi_0(\xi)=0\quad \text{for}\quad |\xi|\geq 2
\]
and let $\psi_j\in C_c^{\infty}(B_{2^{j+1}})$, $j\geq 1$, be given by
\[
    \psi_j(\xi)=\psi_0(\xi/2^j)-\psi_0(\xi/2^{j-1}).
\]
In this section we write $u_j\vcentcolon =u\ast \phi_j=\fourier(\psi_j\hat{u})\in \schwartz(\R^n)$ for any $u\in\tempered(\R^n)$, where $\phi_j=\ifourier(\psi_j)$ ($j\geq 1$) and one has the Littlewood--Paley decomposition
\[
    u=\sum_{j\geq 0}u_j\quad \text{in}\quad \tempered(\R^n).
\]
For $s\in\R$, $0<p,q\leq \infty$ we set (\cite[Section 2.3.1]{Triebel-Theory-of-function-spaces})
\[
    F^s_{p,q}=\{u\in\tempered(\R^n);\quad \|u\|_{F^s_{p,q}}=\|\,\|2^{js}u_j(x)\|_{\ell^q}\|_{L^p(\R^n)}<\infty\}.
\]
\begin{remark}
\label{remark: identification}
    \begin{enumerate}[(i)]
        \item\label{item 1 triebel spaces} For $0<p<\infty$ different choices of $\psi_0$ yield equivalent quasi-norms (see \cite[Section 2.3.5]{Triebel-Theory-of-function-spaces}), but for $p=\infty$, $0<q<\infty$ this is in general wrong as shown in \cite[Section 2.3.2]{Triebel-Theory-of-functions-spaces-I-I}, and for $s\in\R$, $0<p<\infty$, $0<q\leq \infty$ the Triebel-Lizorkin spaces are quasi-Banach spaces and Banach spaces if $p,q\geq 1$ (see \cite[Section 2.3.3]{Triebel-Theory-of-function-spaces}).
        \item\label{item 2 triebel spaces} By the embedding $\ell^{q_1}\hookrightarrow \ell^{q_2}$ for $0<q_1\leq q_2\leq \infty$ we have $F^s_{p,q_1}\subset F^s_{p,q_2}$ when $s\in\R$, $0<p\leq \infty$ and $0<q_1\leq q_2\leq \infty$.
        \item\label{item 3 triebel spaces} We have the following identifications with equivalent norms (\cite{BrezisComposition,Triebel-Theory-of-function-spaces}):
        \begin{enumerate}[(I)]
            \item\label{first identification remark A1} $F^0_{p,2}=L^p(\R^n)$ for $1<p<\infty$,
            \item\label{second identification remark A1} $F^s_{p,2}=H^{s,p}(\R^n)$ for $1<p<\infty$, $s\in\R$,
            \item\label{third identification remark A1} $F^s_{p,p}=W^{s,p}(\R^n)$ for $1\leq p<\infty$, $s\in (0,\infty)\setminus\N$,
            \item\label{fourth identification remark A1} $L^{\infty}(\R^n)\hookrightarrow F^{0}_{\infty,\infty}$ with
            \[
            \|u\|_{F^0_{\infty,\infty}}=\sup_{j\in\N_0,x\in\R^n}|u_j(x)|\leq C\|u\|_{L^{\infty}(\R^n)}.
            \]
        \end{enumerate}
    \end{enumerate}
\end{remark}

The following general interpolation result in Triebel-Lizorkin spaces hold:
\begin{theorem}[{\cite[Lemma 3]{BrezisComposition}}]
\label{thm: interpolation in Triebel spaces}
    Let $-\infty<s_1<s_2<\infty$, $0<p_1,p_2,q_1,q_2\leq \infty$, $0<\theta<1$ and set
    \[
            s=\theta s_1+(1-\theta)s_2\quad\text{and}\quad \frac{1}{p}=\frac{\theta}{p_1}+\frac{1-\theta}{p_2},
    \]
    then for all $0<q\leq \infty$ there holds
   \[
        \|u\|_{F^{s}_{p,q}}\leq C\|u\|_{F^{s_1}_{p_1,q_1}}^{\theta}\|u\|_{F^{s_2}_{p_2,q_2}}^{1-\theta},
   \]
    when $u\in F^{s_1}_{p_1,q_1}\cap F^{s_2}_{p_2,q_2}$.
\end{theorem}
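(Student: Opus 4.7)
The plan is to reduce to the borderline case $q_1=q_2=\infty$, exploit the pointwise multiplicative structure of the Littlewood--Paley pieces to obtain a bound on $\|2^{js}u_j(x)\|_{\ell^q}$ that is independent of $q$ up to a constant, and then integrate in $x$ via Hölder's inequality in $L^p$.

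First I would invoke the elementary embedding $F^{s_i}_{p_i,q_i}\hookrightarrow F^{s_i}_{p_i,\infty}$ from Remark~\ref{remark: identification}\ref{item 2 triebel spaces} to reduce to $q_1=q_2=\infty$. Setting $M_i(x)\vcentcolon=\sup_{j\geq 0}2^{js_i}|u_j(x)|$, one then has $\|M_i\|_{L^{p_i}(\R^n)}=\|u\|_{F^{s_i}_{p_i,\infty}}$. The identity $s=\theta s_1+(1-\theta)s_2$ together with the trivial bounds $|u_j(x)|\leq 2^{-js_i}M_i(x)$ yields the pointwise estimate
\[
|2^{js}u_j(x)|\leq \min\bigl(2^{j(1-\theta)(s_2-s_1)}M_1(x),\,2^{-j\theta(s_2-s_1)}M_2(x)\bigr).
\]

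Second, for fixed $x$ I would split the $\ell^q$-sum at the crossover index $j^*(x)=(s_2-s_1)^{-1}\log_2(M_2(x)/M_1(x))$, using the first bound for $j\leq j^*(x)$ and the second for $j>j^*(x)$. Each half is a convergent geometric series of ratios $2^{q(1-\theta)(s_2-s_1)}$ and $2^{-q\theta(s_2-s_1)}$ respectively, and at the crossover both evaluate to a constant multiple of $M_1(x)^{q\theta}M_2(x)^{q(1-\theta)}$. Extracting the $q$-th root produces the $x$-pointwise estimate
\[
\|2^{js}u_j(x)\|_{\ell^q}\leq C(s_1,s_2,\theta,q)\,M_1(x)^\theta M_2(x)^{1-\theta},\qquad 0<q\leq\infty,
\]
the case $q=\infty$ being just the geometric mean inequality.

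Third, the hypothesis $\tfrac{1}{p}=\tfrac{\theta}{p_1}+\tfrac{1-\theta}{p_2}$ rewrites as $\tfrac{\theta p}{p_1}+\tfrac{(1-\theta)p}{p_2}=1$, which is Hölder's condition with conjugate exponents $p_1/(\theta p)$ and $p_2/((1-\theta)p)$. Taking $L^p(\R^n)$ norms in the previous pointwise estimate and applying Hölder yields
\[
\|u\|_{F^s_{p,q}}\leq C\bigl\|M_1^\theta M_2^{1-\theta}\bigr\|_{L^p}\leq C\|M_1\|_{L^{p_1}}^\theta\|M_2\|_{L^{p_2}}^{1-\theta}=C\|u\|_{F^{s_1}_{p_1,\infty}}^\theta\|u\|_{F^{s_2}_{p_2,\infty}}^{1-\theta},
\]
which combined with the first-step reduction gives the claim.

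The main nuisance I anticipate is the quasi-Banach regime $0<p<1$ or $0<q<1$ together with the extreme cases where one of $p,p_1,p_2$ equals $\infty$. All bounds used above are pointwise or involve only nonnegative quantities, so they remain valid in that regime; however, the constant $C(s_1,s_2,\theta,q)$ from the geometric-series argument degenerates as $q\to 0^+$, and Hölder with an $L^\infty$ factor must be read in the standard way, but no essentially new idea is required to handle these.
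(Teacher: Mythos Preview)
The paper does not supply its own proof of this statement; it is quoted verbatim as \cite[Lemma~3]{BrezisComposition} and used as a black box in the appendix. Your argument is correct and is in fact the standard proof one finds for this inequality (and essentially the one in the cited reference): reduce to $q_1=q_2=\infty$ via the trivial $\ell^{q_i}\hookrightarrow\ell^\infty$ embedding, use the min-of-two-geometric-bounds trick on the Littlewood--Paley pieces to get the pointwise estimate $\|2^{js}u_j(x)\|_{\ell^q}\leq C\,M_1(x)^\theta M_2(x)^{1-\theta}$, and finish with H\"older in $x$. The edge cases you flag (quasi-Banach range, $p_i=\infty$, degeneration of the constant as $q\to 0^+$) are handled exactly as you indicate, since every step is either pointwise or involves only nonnegative integrands; note also that when $M_1(x)=0$ or $M_2(x)=0$ the left-hand side vanishes trivially, and when the crossover index $j^*(x)$ is negative one simply uses the decaying bound for all $j\geq 0$, which is still dominated by $M_1(x)^\theta M_2(x)^{1-\theta}$.
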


Next we prove the analogous result of \cite[Corollary 2]{BrezisComposition} for Bessel potential spaces. The first estimate can also be obtained more directly by complex interpolation (cf.~\cite[Theorem 6.4.5]{Interpolation-spaces} or \cite{TRI-interpolation-function-spaces}).
\begin{corollary}
\label{cor: interpolation in Bessel potential spaces}
    Let $0<\theta<1$.
    \begin{enumerate}[(i)]
        \item\label{item 1 interpolation Bessel potential spaces} If $-\infty<s_1<s_2<\infty$, $1<p_1,p_2<\infty$ satisfy
        \[
            s=\theta s_1+(1-\theta)s_2\quad\text{and}\quad \frac{1}{p}=\frac{\theta}{p_1}+\frac{1-\theta}{p_2},
        \]
        then
        \[
            \|u\|_{H^{s,p}(\R^n)}\leq C\|u\|_{H^{s_1,p_1}(\R^n)}^{\theta}\|u\|_{H^{s_2,p_2}(\R^n)}^{1-\theta}
        \]
        for all $u\in H^{s_1,p_1}(\R^n)\cap H^{s_2,p_2}(\R^n)$.
        \item\label{item 2 interpolation Bessel potential spaces} If $0<s<\infty$, $1<p<\infty$, $0<q\leq \infty$, then 
        \[
            \|u\|_{F^{\theta}_{p/\theta,q}}\leq C\|u\|_{H^{s,p}(\R^n)}^{\theta}\|u\|_{L^{\infty}(\R^n)}^{1-\theta}
        \]
        for all $u\in H^{s,p}(\R^n)\cap L^{\infty}(\R^n)$.
        \item\label{item 3 interpolation Bessel potential spaces} If $0<s<\infty$, $1<p<\infty$, then 
        \[
            \|u\|_{H^{\theta s,p/\theta}}\leq C\|u\|_{H^{s,p}(\R^n)}^{\theta}\|u\|_{L^{\infty}(\R^n)}^{1-\theta}
        \]
        for all $u\in H^{s,p}(\R^n)\cap L^{\infty}(\R^n)$.
    \end{enumerate}
\end{corollary}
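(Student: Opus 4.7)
The plan is to derive all three inequalities as direct consequences of Theorem~\ref{thm: interpolation in Triebel spaces} combined with the identifications collected in Remark~\ref{remark: identification}. The only real work is bookkeeping the parameters so that the exponent $\theta$ in Corollary~\ref{cor: interpolation in Bessel potential spaces} matches the abstract $\theta$ in Theorem~\ref{thm: interpolation in Triebel spaces} (which carries the ordering convention $s_1 < s_2$).

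For part \ref{item 1 interpolation Bessel potential spaces}, I would apply Theorem~\ref{thm: interpolation in Triebel spaces} to the abstract parameters $(s_1,p_1,q_1)$ and $(s_2,p_2,q_2)$ with $q_1=q_2=2$ and the interpolation parameter $\theta$ (noting that if $s_1>s_2$ one simply swaps roles and replaces $\theta$ by $1-\theta$, which leaves the inequality symmetric). Choosing the target integrability $q=2$ and invoking identification \ref{second identification remark A1} of Remark~\ref{remark: identification} to rewrite $F^{s_i}_{p_i,2}=H^{s_i,p_i}(\R^n)$ and $F^{s}_{p,2}=H^{s,p}(\R^n)$ yields the claim immediately.

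For part \ref{item 2 interpolation Bessel potential spaces}, I would apply Theorem~\ref{thm: interpolation in Triebel spaces} with $(s_1,p_1,q_1)=(0,\infty,\infty)$, $(s_2,p_2,q_2)=(s,p,2)$ and abstract interpolation parameter $1-\theta$ in place of $\theta$. Then the abstract formulas give the new smoothness $(1-\theta)\cdot 0+\theta\cdot s=\theta s$ and the new integrability exponent determined by $\frac{1-\theta}{\infty}+\frac{\theta}{p}=\frac{\theta}{p}$, i.e.\ $p/\theta$. The conclusion of Theorem~\ref{thm: interpolation in Triebel spaces} therefore reads
\begin{equation}
\|u\|_{F^{\theta s}_{p/\theta,q}}\leq C\|u\|_{F^0_{\infty,\infty}}^{1-\theta}\|u\|_{F^{s}_{p,2}}^{\theta}
\end{equation}
for any $0<q\leq\infty$. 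Applying \ref{fourth identification remark A1} of Remark~\ref{remark: identification} to bound $\|u\|_{F^0_{\infty,\infty}}\leq C\|u\|_{L^\infty(\R^n)}$ and \ref{second identification remark A1} to identify $\|u\|_{F^s_{p,2}}=\|u\|_{H^{s,p}(\R^n)}$ finishes this part. (Implicitly I am reading the target space in the statement as $F^{\theta s}_{p/\theta,q}$; the exponent ``$\theta$'' in the excerpt appears to be a typographical shorthand, as part \ref{item 3 interpolation Bessel potential spaces} makes clear.)

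Part \ref{item 3 interpolation Bessel potential spaces} is then immediate from part \ref{item 2 interpolation Bessel potential spaces} by specializing $q=2$: since $0<\theta<1$ and $1<p<\infty$ we have $1<p/\theta<\infty$, so identification \ref{second identification remark A1} of Remark~\ref{remark: identification} gives $F^{\theta s}_{p/\theta,2}=H^{\theta s,p/\theta}(\R^n)$, completing the proof. The only step that requires any attention is the parameter swap in the application of Theorem~\ref{thm: interpolation in Triebel spaces} for parts \ref{item 2 interpolation Bessel potential spaces} and \ref{item 3 interpolation Bessel potential spaces}, which is needed to respect the strict ordering $s_1<s_2$ in that theorem; this is purely cosmetic and no further analytical input is required beyond what is already encoded in Theorem~\ref{thm: interpolation in Triebel spaces} and Remark~\ref{remark: identification}.
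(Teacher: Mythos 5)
Your proof is correct and follows exactly the paper's route: part~\ref{item 1 interpolation Bessel potential spaces} by setting $q=q_1=q_2=2$ in Theorem~\ref{thm: interpolation in Triebel spaces} together with the identification $F^s_{p,2}=H^{s,p}(\R^n)$, part~\ref{item 2 interpolation Bessel potential spaces} by the same theorem with $(s_1,p_1,q_1)=(0,\infty,\infty)$, $(s_2,p_2,q_2)=(s,p,2)$ and the replacement $\theta\mapsto 1-\theta$, and part~\ref{item 3 interpolation Bessel potential spaces} by specializing $q=2$. You are also right that the smoothness index in part~\ref{item 2 interpolation Bessel potential spaces} should read $\theta s$ rather than $\theta$ (a typo in the statement), as the bookkeeping and part~\ref{item 3 interpolation Bessel potential spaces} both require.
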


\begin{proof}
    The proof is completely analogous to the one given in \cite[Corollary 2]{BrezisComposition}, but for convenience of the reader we reproduce it here.\\
    \ref{item 1 interpolation Bessel potential spaces} Using the identification $F^{s}_{p,2}=H^{s,p}(\R^n)$ for $s\in\R$, $1<p<\infty$ and setting $q=q_1=q_2=2$ in Theorem~\ref{thm: interpolation in Triebel spaces} already gives the result.\\
    \ref{item 2 interpolation Bessel potential spaces} If we redefine $\theta\to 1-\theta$, apply Theorem~\ref{thm: interpolation in Triebel spaces} with $s_1=0,s_2>0$, $p_1=q_1=\infty$, $1<p_2<\infty$, $q_2=2$ and use \ref{second identification remark A1}, \ref{fourth identification remark A1} of Remark~\ref{remark: identification}, we obtain the result up to relabelling the indices.\\
    \ref{item 3 interpolation Bessel potential spaces} This follows from \ref{item 2 interpolation Bessel potential spaces} by choosing $q=2$.
\end{proof}

Now we state the Runst-Sickel lemmas which establish continuity results for the multiplication map in Triebel-Lizorkin spaces and eventually allow us to show that the multiplication map between certain (local) Bessel potential spaces are continuous. In the following we denote for any $f\in L^1_{loc}(\R^n)$ by $Mf$ the Hardy-Littlewood maximal function, that is, 
\[
    Mf(x)\vcentcolon =\sup_{r>0}\frac{1}{|B_r(x)|}\int_{B_r(x)}|f(y)|\,dy.
\]

\begin{proposition}[{Runst-Sickel lemma I, \cite[Lemma 5]{BrezisComposition}}]
\label{prop: Runs-Sickel Lemma I}
    Let $0<s<\infty$, $1<q<\infty$, $1<p_1,p_2,r_1,r_2\leq \infty$ satisfy
    \[
        0<\frac{1}{p}\vcentcolon =\frac{1}{p_1}+\frac{1}{r_2}=\frac{1}{p_2}+\frac{1}{r_1}<1,
    \]
then for all $f\in F^s_{p_1,q}\cap L^{r_1}(\R^n)$, $g\in F^s_{p_2,q}\cap L^{r_2}(\R^n)$ there holds
\[
    \|fg\|_{F^{s}_{p,q}}\leq C(\|Mf(x)\|2^{sj}g_j(x)\|_{\ell^q}\|_{L^p(\R^n)}+\|Mg(x)\|2^{sj}f_j(x)\|_{\ell^q}\|_{L^p(\R^n)})
\]
and
\[
    \|fg\|_{F^{s}_{p,q}}\leq C(\|f\|_{F^{s}_{p_1,q}}\|g\|_{L^{r_2}(\R^n)}+\|g\|_{F^{s}_{p_2,q}}\|f\|_{L^{r_1}(\R^n)}).
\]
\end{proposition}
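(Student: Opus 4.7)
The plan is to use the Bony paraproduct decomposition adapted to the Littlewood--Paley blocks $(u_j)_j$ that define $F^s_{p,q}$, and then reduce each piece to a pointwise bound by the Hardy--Littlewood maximal function combined with the Fefferman--Stein vector-valued maximal inequality. Writing $S_k(u)=\sum_{\ell\le k}u_\ell$, I decompose
\[
fg=\Pi(f,g)+\Pi(g,f)+R(f,g),\qquad \Pi(f,g)=\sum_{j\ge 3}S_{j-3}(f)\,g_j,\qquad R(f,g)=\sum_{|i-j|\le 2}f_i\,g_j.
\]
The second claimed inequality will follow from the first by H\"older's inequality with $\tfrac{1}{p}=\tfrac{1}{r_1}+\tfrac{1}{p_2}=\tfrac{1}{p_2}+\tfrac{1}{r_1}$ together with the boundedness of $M$ on $L^{r_1}$ and $L^{r_2}$ (using $r_1,r_2>1$; the case $r_i=\infty$ is trivial from $\|Mu\|_{L^\infty}\le \|u\|_{L^\infty}$), so the core of the proof is the first inequality.

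First I would treat the low--high paraproduct $\Pi(f,g)$. Since $\widehat{S_{j-3}(f)}$ is supported in $\{|\xi|\le 2^{j-2}\}$ and $\widehat{g_j}$ in an annulus $\{|\xi|\sim 2^j\}$, the Fourier support of $S_{j-3}(f)\,g_j$ lies in a comparable annulus, hence the $k$-th block $(\Pi(f,g))_k$ equals a convolution of a finite (uniformly in $k$) sum of terms $S_{j-3}(f)g_j$ with $j$ in a fixed neighbourhood of $k$. The pointwise bound $|S_{j-3}(f)(x)|\lesssim Mf(x)$, which follows because $S_{j-3}(f)$ is a convolution of $f$ with a uniformly $L^1$-bounded smooth bump (Peetre/maximal function trick), yields
\[
2^{sk}\,|(\Pi(f,g))_k(x)|\;\le\; C\,Mf(x)\,\sum_{|j-k|\le N}2^{sj}|g_j(x)|.
\]
Taking the $\ell^q_k$ norm of the right-hand side and then the $L^p$ norm produces exactly the first term $\|Mf(x)\|2^{sj}g_j(x)\|_{\ell^q}\|_{L^p(\R^n)}$ on the right of the inequality; the symmetric paraproduct $\Pi(g,f)$ produces the second term.

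The main obstacle is the resonance term $R(f,g)$, because each summand $f_i g_j$ with $|i-j|\le 2$ has Fourier support only in a ball $B(0,2^{j+3})$, so $(R(f,g))_k$ can a priori collect contributions from all pairs with $i,j\gtrsim k$. The hypothesis $s>0$ is what makes the tail summable: estimating
\[
2^{sk}\,|(R(f,g))_k(x)|\;\lesssim\; \sum_{j\ge k-3} 2^{-s(j-k)}\,2^{sj}\,|f_j(x)||g_{j'}(x)|,\qquad |j-j'|\le 2,
\]
and using Young's inequality in $k,j$ on the geometric weight $2^{-s(j-k)}\mathbf{1}_{j\ge k-3}\in \ell^1$, the $\ell^q_k$-norm is controlled by the $\ell^q_j$-norm of the right factor. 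After applying the pointwise bound $|f_j(x)|\lesssim Mf(x)$ (taking the $f$-factor out in $L^\infty_j$), I obtain again a contribution bounded by $Mf(x)\|2^{sj}g_j(x)\|_{\ell^q}$ pointwise, plus the symmetric expression; here $1<p<\infty$ and $1<q<\infty$ are used to legitimize the Fefferman--Stein vector-valued maximal inequality needed to justify the pointwise-in-$x$ passage (in particular $\|(M g_j)_j\|_{L^p(\ell^q)}\lesssim \|(g_j)_j\|_{L^p(\ell^q)}$). Summing the three contributions gives the first inequality, and applying H\"older together with the scalar maximal inequality then yields the second, concluding the proof.
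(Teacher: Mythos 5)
The paper states this result by citing \cite{BrezisComposition} (Lemma 5 there, going back to Runst and Sickel) and does not reproduce a proof, so there is no internal argument to compare against; your paraproduct and maximal-function strategy is essentially the standard one used in that reference, and the overall plan (Bony decomposition into two paraproducts and a resonant piece, pointwise control of low-frequency factors by $M$, summability of the geometric tail in the resonance using $s>0$, Fefferman--Stein) is the right one.

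There is, however, a genuine gap in how the paraproduct bound is justified. Since $(\Pi(f,g))_k=\phi_k*\sum_{|j-k|\le N}S_{j-3}(f)\,g_j$ is a \emph{convolution}, the displayed pointwise inequality
\[
2^{sk}\,\bigl|(\Pi(f,g))_k(x)\bigr|\;\le\;C\,Mf(x)\sum_{|j-k|\le N}2^{sj}\,|g_j(x)|
\]
does not hold as written: the integral against $\phi_k$ evaluates $S_{j-3}(f)$ and $g_j$ at a moving point, not at $x$, so it only yields $|(\Pi(f,g))_k(x)|\lesssim M\bigl[\sum_{|j-k|\le N}S_{j-3}(f)g_j\bigr](x)$, which carries one extra maximal function that must then be absorbed by Fefferman--Stein. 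Alternatively, one should avoid $(\cdot)_k$ entirely by invoking the Nikolskij-type representation of the $F^s_{p,q}$-quasinorm (for any decomposition $u=\sum_j u^{(j)}$ with $\supp\widehat{u^{(j)}}$ in dyadic annuli one has $\|u\|_{F^s_{p,q}}\lesssim\|(2^{sj}u^{(j)})_j\|_{L^p(\ell^q)}$, and for ball supports the same holds when $s>0$, which is what you need for $R(f,g)$); but the proof of that representation also goes through Fefferman--Stein. You invoke Fefferman--Stein only for the resonant piece, yet it is equally required for the two paraproducts. The same caveat applies to your displayed pointwise bound for $2^{sk}|(R(f,g))_k(x)|$. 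With that repaired, the three contributions assemble as you describe, and the second inequality follows from the first by H\"older with the split $\tfrac1p=\tfrac1{p_1}+\tfrac1{r_2}=\tfrac1{p_2}+\tfrac1{r_1}$ together with $L^{r_i}$-boundedness of $M$ (your exponent split contains a typo repeating $r_1,p_2$ twice).
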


\begin{proposition}[{Runst-Sickel lemma II, \cite[Corollary 3]{BrezisComposition}}]
\label{prop: Runs-Sickel Lemma II}
    Let $0<s<\infty$, $1<q<\infty$. Then the following assertions hold:
    \begin{enumerate}[(i)]
        \item\label{item 1 runst sickel II } If $1<p_1,p_2,r_1,r_2< \infty$ satisfy
    \[
        0<\frac{1}{p}\vcentcolon =\frac{1}{p_1}+\frac{1}{r_2}=\frac{1}{p_2}+\frac{1}{r_1}<1,
    \] then the multiplication map 
        \[
            (F^{s}_{p_1,q}\cap L^{r_1}(\R^n))\times (F^{s}_{p_2,q}\cap L^{r_2}(\R^n))\ni (f,g)\mapsto fg\in F^s_{p,q}
        \]
        is continuous.
        \item\label{item 2 runst sickel II } If $1<p<\infty$ and there holds
        \[
            \begin{cases}
                    f^k\to f\quad \text{in}\quad F^s_{p_1,q},\, & \|f^k\|_{L^{\infty}(\R^n)}\leq C\\
                    g^k\to g\quad \text{in}\quad F^s_{p_1,q},\, & \|g^k\|_{L^{\infty}(\R^n)}\leq C
            \end{cases}
        \]
        for some $C>0$, then $f^kg^k\to fg$ in $F^{s}_{p,q}$.
        \item\label{item 3 runst sickel II } Let $1<p_1,r,p<\infty$ be such that
        \[
            \frac{1}{p}=\frac{1}{p_1}+\frac{1}{r}
        \]
        and there holds
        \[
            \begin{cases}
                    f^k\to f\quad \text{in}\quad F^s_{p_1,q},\, & \|f^k\|_{L^{\infty}(\R^n)}\leq C\\
                    g^k\to g\quad \text{in}\quad F^s_{p,q},\, &
            \end{cases}
        \]
        for some $C>0$, then $f^kg^k\to fg$ in $F^{s}_{p,q}$.
    \end{enumerate}
\end{proposition}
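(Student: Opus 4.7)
I would prove the three parts in sequence, taking Proposition~\ref{prop: Runs-Sickel Lemma I} as the essential input and following closely the strategy of Brezis--Mironescu \cite{BrezisComposition}.

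For part \ref{item 1 runst sickel II }, the continuity of the multiplication map is a formal consequence of the bilinear estimate already stated in Proposition~\ref{prop: Runs-Sickel Lemma I}. Given convergent sequences $(f_k, g_k) \to (f, g)$ in the product space $(F^s_{p_1,q}\cap L^{r_1}) \times (F^s_{p_2,q}\cap L^{r_2})$, I would apply the bilinear estimate to each summand in the identity
\[
f_k g_k - fg = (f_k - f) g_k + f (g_k - g),
\]
using that $\{g_k\}$ is bounded in $F^s_{p_2,q}\cap L^{r_2}$ (as a convergent sequence) and that $\|f_k - f\|_{F^s_{p_1,q}} + \|f_k - f\|_{L^{r_1}} \to 0$, with the analogous statements for the second summand.

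For parts \ref{item 2 runst sickel II } and \ref{item 3 runst sickel II }, the new difficulty is that an $L^{\infty}$ bound is not finite in any Lebesgue exponent $r < \infty$, so the ``clean'' bilinear estimate cannot be invoked directly. I would instead exploit the sharper maximal-function form already recorded in Proposition~\ref{prop: Runs-Sickel Lemma I},
\[
\|fg\|_{F^s_{p,q}} \leq C\bigl( \|Mf \cdot \|2^{sj}g_j\|_{\ell^q}\|_{L^p} + \|Mg \cdot \|2^{sj}f_j\|_{\ell^q}\|_{L^p}\bigr),
\]
together with the pointwise bound $Mh \leq \|h\|_{L^{\infty}}$ for $h \in L^{\infty}$ and the $L^r$-boundedness of $M$ for $1 < r < \infty$. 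In the setting of \ref{item 2 runst sickel II } this delivers the a~priori estimate
\[
\|uv\|_{F^s_{p_1,q}} \leq C\bigl(\|u\|_{L^{\infty}}\|v\|_{F^s_{p_1,q}} + \|v\|_{L^{\infty}}\|u\|_{F^s_{p_1,q}}\bigr),
\]
and an analogous inequality for \ref{item 3 runst sickel II } with the index condition $1/p = 1/p_1 + 1/r$, realized by treating the $L^{\infty}$-slot through $Mf$ and the other slot through the Sobolev embedding $F^s_{p,q} \hookrightarrow L^{r}$.

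To upgrade these a~priori bounds to genuine convergence I would use the three-term splitting
\[
f^k g^k - fg = (f^k - f)(g^k - g) + (f^k - f) g + f(g^k - g),
\]
controlling the diagonal piece by the bound above (it tends to zero since both factors have differences converging to zero and bounded $L^{\infty}$-norms), and handling the two mixed pieces through a dominated-convergence argument on the right-hand side of the maximal estimate. Concretely, convergence in $F^s_{p_1,q}$ yields convergence in $L^{p_1}$ by Sobolev embedding, hence a~subsequence $f^k \to f$ pointwise a.e.; combined with the uniform bound $|f^k - f| \leq 2\sup_k \|f^k\|_{L^{\infty}}$ and an interpolation estimate
\[
\|f^k - f\|_{L^{r}} \leq \|f^k - f\|_{L^{p_1}}^{\theta}\|f^k - f\|_{L^{\infty}}^{1-\theta},\qquad p_1 \leq r < \infty,
\]
this is enough to let the factors involving $f^k - f$ vanish inside the maximal-function envelope, with the symmetric statement for $g^k - g$. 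The main obstacle, as expected, is exactly this last interpolation and dominated-convergence step, which bridges the gap between the mere $L^{\infty}$-boundedness of $\{f^k\}$ and the full $L^{\infty}$-convergence that one would naively want; once this step is in place, the proofs of \ref{item 2 runst sickel II } and \ref{item 3 runst sickel II } proceed by the same scheme.
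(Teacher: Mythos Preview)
The paper does not prove this proposition; it is quoted verbatim as \cite[Corollary~3]{BrezisComposition} and used as a black box. Your sketch is essentially the argument given in that reference: part~\ref{item 1 runst sickel II } follows directly from the bilinear bound of Proposition~\ref{prop: Runs-Sickel Lemma I} via the two-term splitting you wrote, and parts~\ref{item 2 runst sickel II } and~\ref{item 3 runst sickel II } are handled exactly by returning to the maximal-function form of the estimate, using $M h \le \|h\|_{L^\infty}$ on the bounded factor, and closing with a subsequence plus dominated-convergence argument.

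One small point worth tightening: in the dominated-convergence step for the mixed terms you need $M(f^k-f)\to 0$ a.e.\ (along a subsequence), not merely $f^k\to f$ a.e. This follows since $f^k\to f$ in $L^{p_1}$ and $M$ is bounded on $L^{p_1}$, so $M(f^k-f)\to 0$ in $L^{p_1}$ and hence a.e.\ along a subsequence; combined with the pointwise bound $M(f^k-f)\le 2C$ and the integrable envelope $\|2^{sj}g_j\|_{\ell^q}\in L^{p}$, dominated convergence applies. The usual ``every subsequence has a further subsequence'' device then gives convergence of the full sequence. Your interpolation remark $\|f^k-f\|_{L^r}\le \|f^k-f\|_{L^{p_1}}^{\theta}\|f^k-f\|_{L^\infty}^{1-\theta}$ is correct but not strictly needed once you phrase things this way. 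Also note that in~\ref{item 2 runst sickel II } one should read $p=p_1$; your derived estimate $\|uv\|_{F^s_{p_1,q}}\le C(\|u\|_{L^\infty}\|v\|_{F^s_{p_1,q}}+\|v\|_{L^\infty}\|u\|_{F^s_{p_1,q}})$ reflects this correctly.
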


\begin{lemma}
\label{lemma: continuity of multiplication map in Bessel potential spaces I}
    Let $0<s_1<\infty$, $1<p_1,p_2,r_2<\infty$, $0<\theta<1$ satisfy
    \[
        \frac{1}{p_2}=\frac{\theta}{p_1}+\frac{1}{r_2},
    \]
    then 
    \[
        (H^{s_1,p_1}(\R^n)\cap L^{\infty}(\R^n))\times (H^{\theta s_1,p_2}(\R^n)\cap L^{r_2}(\R^n))\ni (f,g)\mapsto fg\in H^{\theta s_1,p_2}(\R^n)
    \]
    is well-defined and there holds:
    \begin{enumerate}[(i)]
        \item\label{item 1 continuity bessel potential product} For all $f\in H^{s_1,p_1}(\R^n)\cap L^{\infty}(\R^n)$, $g\in H^{\theta s_1,p_2}(\R^n)\cap L^{r_2}(\R^n)$ we have
        \[
            \|fg\|_{H^{\theta s_1,p_2}(\R^n)}\leq C(\|f\|_{L^{\infty}(\R^n)}\|g\|_{H^{\theta s_1,p_2}(\R^n)}+\|g\|_{L^{r_2}(\R^n)}\|f\|^{\theta}_{H^{s_1,p_1}(\R^n)}\|f\|_{L^{\infty}(\R^n)}^{1-\theta}).
        \]
        \item\label{item 2 continuity bessel potential product} If $f^k\to f$ in $H^{s_1,p_1}(\R^n)$, $\|f^k\|_{L^{\infty}(\R^n)}\leq C$ for some $C>0$ and $g^k\to g$ in $H^{\theta s_1,p_2}(\R^n)\cap L^{r_2}(\R^n)$, then $f^kg^k\to fg$ in $H^{\theta s_1,p_2}(\R^n)$.
    \end{enumerate}
\end{lemma}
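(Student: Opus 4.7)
The strategy is to obtain (i) as a direct consequence of the Runst--Sickel product estimate (Proposition~\ref{prop: Runs-Sickel Lemma I}) combined with the interpolation inequality Corollary~\ref{cor: interpolation in Bessel potential spaces}\ref{item 3 interpolation Bessel potential spaces}, and then to deduce (ii) from (i) via a decomposition of $f^k g^k - fg$ together with a density argument in $C_c^\infty(\R^n)$.

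For (i), I apply Proposition~\ref{prop: Runs-Sickel Lemma I} with $s=\theta s_1$, $q=2$, and the parameter choice $p_1^{RS}=p_1/\theta$, $r_1^{RS}=\infty$, $p_2^{RS}=p_2$, $r_2^{RS}=r_2$. The compatibility condition $1/p=1/p_1^{RS}+1/r_2^{RS}=1/p_2^{RS}+1/r_1^{RS}$ is met with $p=p_2$ precisely because of the hypothesis $1/p_2=\theta/p_1+1/r_2$, and the admissibility constraints $1<p_1^{RS},p_2^{RS},r_2^{RS}\leq\infty$ are automatic. Using the identification $F^{\theta s_1}_{p,2}=H^{\theta s_1,p}$ for $1<p<\infty$ recalled in Remark~\ref{remark: identification}\ref{item 3 triebel spaces}, this yields
\[\|fg\|_{H^{\theta s_1,p_2}}\leq C\bigl(\|f\|_{H^{\theta s_1,p_1/\theta}}\|g\|_{L^{r_2}}+\|g\|_{H^{\theta s_1,p_2}}\|f\|_{L^{\infty}}\bigr).\]
Corollary~\ref{cor: interpolation in Bessel potential spaces}\ref{item 3 interpolation Bessel potential spaces} gives $\|f\|_{H^{\theta s_1,p_1/\theta}}\leq C\|f\|_{H^{s_1,p_1}}^{\theta}\|f\|_{L^{\infty}}^{1-\theta}$, and substituting this produces exactly the asserted bound; the well-definedness of the multiplication map is an immediate consequence.

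For (ii), I split
\[f^kg^k-fg=f(g^k-g)+(f^k-f)(g^k-g)+(f^k-f)g\]
and treat each summand separately. The first two are handled immediately by applying (i): the convergence $g^k\to g$ in $H^{\theta s_1,p_2}\cap L^{r_2}$, combined with the uniform bound on $\|f^k\|_{L^\infty}$ and the boundedness of $\|f^k\|_{H^{s_1,p_1}}$ (convergent sequences being norm-bounded), drives both contributions to zero.

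The main obstacle is the third summand $(f^k-f)g$, since a direct application of (i) would involve $\|f^k-f\|_{L^{\infty}}$, which is only bounded and need not tend to zero. I circumvent this by a density argument. Given $\epsilon>0$, choose $\phi\in C_c^{\infty}(\R^n)$ with $\|g-\phi\|_{H^{\theta s_1,p_2}}+\|g-\phi\|_{L^{r_2}}<\epsilon$, which is possible since $C_c^{\infty}(\R^n)$ is dense in $H^{\theta s_1,p_2}(\R^n)\cap L^{r_2}(\R^n)$ (by simultaneous mollification and truncation). Applying (i) to $(f^k-f)(g-\phi)$ then yields a bound of the form $C_1\epsilon$, where $C_1$ depends only on the uniform bounds of $\|f^k\|_{L^{\infty}}$ and $\|f^k\|_{H^{s_1,p_1}}$. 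For the remaining piece $(f^k-f)\phi$ I reapply Proposition~\ref{prop: Runs-Sickel Lemma I}, this time with $p_1^{RS}=p_1/\theta$, any finite $r_1^{RS}\geq\max(p_1,p_2)$, $r_2^{RS}=r_2$, and $p_2^{RS}$ determined by $1/p_2=1/p_2^{RS}+1/r_1^{RS}$. Since $\phi$ is smooth and compactly supported, its norms in all the relevant spaces are finite and fixed, and the resulting bound is proportional to $\|f^k-f\|_{H^{\theta s_1,p_1/\theta}}+\|f^k-f\|_{L^{r_1^{RS}}}$. Both factors vanish as $k\to\infty$: the first by Corollary~\ref{cor: interpolation in Bessel potential spaces}\ref{item 3 interpolation Bessel potential spaces} applied to $f^k-f$ (using $f^k\to f$ in $H^{s_1,p_1}$ and the uniform $L^{\infty}$ bound), and the second by interpolating the convergence $f^k\to f$ in $L^{p_1}$ (which follows from $H^{s_1,p_1}\hookrightarrow L^{p_1}$) against the uniform $L^{\infty}$ bound. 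Passing first $k\to\infty$ and then $\epsilon\to 0$ concludes the proof.
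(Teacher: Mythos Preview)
Your proof of \ref{item 1 continuity bessel potential product} is identical to the paper's: both apply Proposition~\ref{prop: Runs-Sickel Lemma I} with $q=2$, $p_1^{RS}=p_1/\theta$, $r_1^{RS}=\infty$ and then invoke Corollary~\ref{cor: interpolation in Bessel potential spaces}\ref{item 3 interpolation Bessel potential spaces}.

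For \ref{item 2 continuity bessel potential product} your route is genuinely different. The paper does not decompose $f^kg^k-fg$; instead it first argues (via Banach--Alaoglu and the embedding $H^{s_1,p_1}\hookrightarrow L^{p_1}$) that $f\in L^{\infty}$, then uses Corollary~\ref{cor: interpolation in Bessel potential spaces}\ref{item 3 interpolation Bessel potential spaces} to obtain $f^k\to f$ in $H^{\theta s_1,p_1/\theta}$, and finally cites the continuity statement Proposition~\ref{prop: Runs-Sickel Lemma II}\ref{item 3 runst sickel II } directly. Your argument bypasses Proposition~\ref{prop: Runs-Sickel Lemma II} entirely, relying only on the product estimate (Proposition~\ref{prop: Runs-Sickel Lemma I}) and density; in effect you are reproving the relevant case of Runst--Sickel~II by hand. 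Both approaches are correct, with the paper's being shorter because it outsources the continuity step, while yours is more self-contained.

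One point to make explicit: when you apply \ref{item 1 continuity bessel potential product} to $f(g^k-g)$ and later invoke the ``uniform $L^{\infty}$ bound'' on $f^k-f$, you are tacitly using $f\in L^{\infty}$. This does follow from the hypotheses (e.g.\ extract an a.e.\ convergent subsequence from the $L^{p_1}$ convergence and apply Fatou, or use weak-$\ast$ compactness as the paper does), but it should be stated since it is not part of the assumptions.
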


\begin{proof}
    \ref{item 1 continuity bessel potential product} By the statement \ref{item 3 interpolation Bessel potential spaces} of Corollary~\ref{cor: interpolation in Bessel potential spaces} we have
    \[
        \|f\|_{H^{\theta s_1,p_1/\theta}(\R^n)}\leq C\|f\|_{H^{s_1,p_1}(\R^n)}^{\theta}\|f\|_{L^{\infty}(\R^n)}^{1-\theta}.
    \]
    Hence the Runst-Sickel lemma I (Proposition~\ref{prop: Runs-Sickel Lemma I}) with $q=2$ and Remark~\ref{remark: identification}, \ref{fourth identification remark A1} shows
    \[
    \begin{split}
         \|fg\|_{H^{\theta s_1,p}(\R^n)}&\leq C(\|f\|_{H^{\theta s_1,p_1/\theta}(\R^n)}\|g\|_{L^{r_2}(\R^n)}+\|f\|_{L^{\infty}(\R^n)}\|g\|_{H^{\theta s_1,p_2}(\R^n)})\\
         &\leq C(\|f\|_{H^{s_1,p_1}(\R^n)}^{\theta}\|f\|_{L^{\infty}(\R^n)}^{1-\theta}\|g\|_{L^{r_2}(\R^n)}+\|f\|_{L^{\infty}(\R^n)}\|g\|_{H^{\theta s_1,p_2}(\R^n)}).
    \end{split}
    \]
\ref{item 2 continuity bessel potential product} First we show that $f\in H^{\theta s_1,p_1/\theta}(\R^n)$. Since, $(f^k)_{k\in\N}$ is bounded in $L^{\infty}(\R^n)$ by the Banach--Alaoglu theorem there is a subsequence, which we still denote by $(f^k)$, which converges weak-$\ast$ to some $\Tilde{f}\in L^{\infty}(\R^n)$ and this will be denoted from now on as $f^k\overset{\ast}{\rightharpoonup} \Tilde{f}$ in $L^{\infty}(\R^n)$. Using the embedding $H^{s_1,p_1}(\R^n)\hookrightarrow L^{p_1}(\R^n)$ we deduce $f=\Tilde{f}$ with 
\[
    \|f\|_{L^{\infty}(\R^n)}\leq \liminf_{k\to\infty}\|f^k\|_{L^{\infty}(\R^n)}\leq C.
\]  
But now the assertion \ref{item 3 interpolation Bessel potential spaces} of Corollary~\ref{cor: interpolation in Bessel potential spaces} implies $f^k\to f$ in $H^{\theta s_1,p_1/\theta}(\R^n)$. By applying part \ref{item 3 runst sickel II } of the Runst-Sickel lemma II (Proposition~\ref{prop: Runs-Sickel Lemma II}) we obtain $f^kg^k\to fg$ in $H^{\theta s_1,p_2}(\R^n)$. Since, we can use this argument for any subsequence of $f^k$ we deduce that the whole sequence $f^kg^k$ converges to $fg$ in $H^{\theta s_1,p_2}(\R^n)$.
\end{proof}

As a special case we obtain the following result
\begin{corollary}
\label{cor: continuity of multiplication map in Bessel potential spaces I I}
    Let $0<s<n/2$, then the multiplication map 
    \[
        (H^{2s,n/2s}(\R^n)\cap L^{\infty}(\R^n))\times H^s(\R^n)\ni (f,g)\mapsto fg\in H^s(\R^n)
    \]
    is well defined and there holds:
    \begin{enumerate}[(i)]
        \item\label{item 1 Hs continuity} For all $f\in H^{2s,n/2s}(\R^n)\cap L^{\infty}(\R^n)$, $g\in H^{s}(\R^n)$ we have
        \[
            \|fg\|_{H^{s}(\R^n)}\leq C(\|f\|_{L^{\infty}(\R^n)}+\|f\|^{\theta}_{H^{2s,n/2s}(\R^n)}\|f\|_{L^{\infty}(\R^n)}^{1-\theta})\|g\|_{H^s(\R^n)}.
        \]
        \item\label{item 2 Hs continuity} If $f^k\to f$ in $H^{2s,n/2s}(\R^n)$, $\|f^k\|_{L^{\infty}(\R^n)}\leq C$ for some $C>0$ and $g^k\to g$ in $H^{s}(\R^n)$, then $f^kg^k\to fg$ in $H^{s}(\R^n)$.
    \end{enumerate}
\end{corollary}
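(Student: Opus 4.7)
The plan is to deduce this as an immediate specialization of Lemma \ref{lemma: continuity of multiplication map in Bessel potential spaces I} combined with the critical Sobolev embedding, so there is essentially no new analytic content beyond a parameter choice and a verification that everything lands in the admissible range.

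First I would fix the exponents in Lemma \ref{lemma: continuity of multiplication map in Bessel potential spaces I} as $s_1 = 2s$, $\theta = 1/2$, $p_1 = n/(2s)$, and $p_2 = 2$. Under this choice one has $\theta s_1 = s$ and $p_2 = 2$, so the target space in that lemma is $H^{\theta s_1, p_2}(\R^n) = H^{s,2}(\R^n) = H^s(\R^n)$, matching the corollary. The compatibility relation
\[
    \frac{1}{p_2} = \frac{\theta}{p_1} + \frac{1}{r_2}
\]
then forces $\frac{1}{r_2} = \frac{1}{2} - \frac{s}{n}$, i.e.\ $r_2 = \frac{2n}{n-2s}$. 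This is the key place where the hypothesis $0 < s < n/2$ is used: it guarantees $r_2 \in (2,\infty)$, so $r_2$ is in the admissible range of Lemma \ref{lemma: continuity of multiplication map in Bessel potential spaces I}.

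Next I would note that this particular $r_2$ is precisely the critical Sobolev exponent for $H^s(\R^n)$: the Sobolev embedding gives $H^s(\R^n) \hookrightarrow L^{2n/(n-2s)}(\R^n)$ with $\|g\|_{L^{r_2}(\R^n)} \leq C \|g\|_{H^s(\R^n)}$ for all $g \in H^s(\R^n)$. Hence every $g \in H^s(\R^n)$ automatically lies in $H^{\theta s_1, p_2}(\R^n) \cap L^{r_2}(\R^n)$, so the pairs $(f,g)$ considered in the corollary are exactly those considered in the lemma.

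For part \ref{item 1 Hs continuity}, plugging $\theta = 1/2$ and the Sobolev bound $\|g\|_{L^{r_2}(\R^n)} \leq C \|g\|_{H^s(\R^n)}$ into the estimate provided by part \ref{item 1 continuity bessel potential product} of Lemma \ref{lemma: continuity of multiplication map in Bessel potential spaces I} yields
\[
    \|fg\|_{H^s(\R^n)} \leq C\bigl(\|f\|_{L^\infty(\R^n)}\|g\|_{H^s(\R^n)} + \|f\|_{H^{2s,n/2s}(\R^n)}^{1/2}\|f\|_{L^\infty(\R^n)}^{1/2}\|g\|_{L^{r_2}(\R^n)}\bigr),
\]
and after factoring out $\|g\|_{H^s(\R^n)}$ I recover the stated inequality. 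For part \ref{item 2 Hs continuity}, convergence $g^k \to g$ in $H^s(\R^n)$ implies $g^k \to g$ in $L^{r_2}(\R^n)$ by Sobolev embedding, so the hypotheses of part \ref{item 2 continuity bessel potential product} of Lemma \ref{lemma: continuity of multiplication map in Bessel potential spaces I} are met and one concludes $f^k g^k \to fg$ in $H^s(\R^n)$.

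There is no real obstacle in the proof; all of the hard work has been packaged into Lemma \ref{lemma: continuity of multiplication map in Bessel potential spaces I}. The only points requiring attention are the arithmetic of the exponents and the fact that the value of $r_2$ dictated by the compatibility relation happens, by design, to coincide with the critical Sobolev exponent for $H^s(\R^n)$, which is precisely why the restriction $0 < s < n/2$ is the natural one.
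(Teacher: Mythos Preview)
Your proposal is correct and follows exactly the same approach as the paper: specialize Lemma~\ref{lemma: continuity of multiplication map in Bessel potential spaces I} with $s_1=2s$, $p_1=n/(2s)$, $p_2=2$, $\theta=1/2$, $r_2=2n/(n-2s)$, and invoke the Sobolev embedding $H^s(\R^n)\hookrightarrow L^{2n/(n-2s)}(\R^n)$. You have simply written out more of the arithmetic than the paper does.
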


\begin{proof}
Use Lemma~\ref{lemma: continuity of multiplication map in Bessel potential spaces I} with $s_1=2s, p_1=\frac{n}{2s}, p_2=2, r_2=\frac{2n}{n-2s}, \theta=1/2$ and the standard Sobolev embedding $H^s(\R^n)\hookrightarrow L^{\frac{2n}{n-2s}}(\R^n)$.
\end{proof}

We also have the following local version of Corollary~\ref{cor: continuity of multiplication map in Bessel potential spaces I I}:
\begin{corollary}
\label{cor: continuity of multiplication map in Bessel potential spaces I I I}
    Let $\Omega\subset \R^n$ be an open set and $0<s<n/2$, then the multiplication map 
    \[
        (H^{2s,n/2s}(\R^n)\cap L^{\infty}(\R^n))\times \tilde{H}^s(\Omega)\ni (f,g)\mapsto fg\in \tilde{H}^s(\Omega)
    \]
    is well-defined and there holds:
    \begin{enumerate}[(i)]
        \item\label{item 1 continuity bessel space cor} For all $f\in H^{2s,n/2s}(\R^n)\cap L^{\infty}(\R^n)$, $g\in \tilde{H}^{s}(\Omega)$ we have
        \[
            \|fg\|_{H^{s}(\R^n)}\leq C(\|f\|_{L^{\infty}(\R^n)}+\|f\|^{\theta}_{H^{2s,n/2s}(\R^n)}\|f\|_{L^{\infty}(\R^n)}^{1-\theta})\|g\|_{H^s(\R^n)}.
        \]
        \item\label{item 2 continuity bessel space cor} If $f^k\to f$ in $H^{2s,n/2s}(\R^n)$, $\|f^k\|_{L^{\infty}(\R^n)}\leq C$ for some $C>0$ and $g^k\to g$ in $\tilde{H}^{s}(\Omega)$, then $f^kg^k\to fg$ in $H^{s}(\R^n)$.
    \end{enumerate}
\end{corollary}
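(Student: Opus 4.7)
The key observation is that Corollary \ref{cor: continuity of multiplication map in Bessel potential spaces I I} already establishes every estimate and every convergence claim in the \emph{ambient} norm $H^s(\R^n)$. So the only new content of Corollary \ref{cor: continuity of multiplication map in Bessel potential spaces I I I} is the localization assertion: if $g \in \tilde H^s(\Omega)$ then the product $fg$, which a priori sits in $H^s(\R^n)$, actually belongs to the closed subspace $\tilde H^s(\Omega) = \overline{C_c^\infty(\Omega)}^{H^s(\R^n)}$. Once this is proved, parts \ref{item 1 continuity bessel space cor} and \ref{item 2 continuity bessel space cor} will follow immediately from the corresponding statements in Corollary \ref{cor: continuity of multiplication map in Bessel potential spaces I I} and closedness of $\tilde H^s(\Omega)$ in $H^s(\R^n)$.

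To prove $fg \in \tilde H^s(\Omega)$, my plan is a two-step diagonal approximation. First, by definition of $\tilde H^s(\Omega)$, there exists $g_k \in C_c^\infty(\Omega)$ with $g_k \to g$ in $H^s(\R^n)$. Second, I approximate $f$ by compactly supported smooth functions $f_j \in C_c^\infty(\R^n)$ with $f_j \to f$ in $H^{2s,n/2s}(\R^n)$ and a uniform bound $\|f_j\|_{L^\infty(\R^n)} \leq C\|f\|_{L^\infty(\R^n)}$; this is done by first mollifying (which preserves the $L^\infty$ bound and converges in $H^{2s,n/2s}$) and then multiplying by a smooth cutoff $\chi_R \in C_c^\infty(\R^n)$ equal to $1$ on $B_R$, with $R \to \infty$ arranged so that $\chi_R(f*\rho_\epsilon) \to f$ in $H^{2s,n/2s}(\R^n)$. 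For each fixed $k$, since $\supp(g_k) \Subset \Omega$, we have $f_j g_k \in C_c^\infty(\Omega)$. Applying Corollary \ref{cor: continuity of multiplication map in Bessel potential spaces I I} \ref{item 2 Hs continuity} with $f_j \to f$ and $g_k$ fixed gives $f_j g_k \to f g_k$ in $H^s(\R^n)$, so $f g_k \in \tilde H^s(\Omega)$. Then applying Corollary \ref{cor: continuity of multiplication map in Bessel potential spaces I I} \ref{item 2 Hs continuity} with $f$ fixed and $g_k \to g$ yields $f g_k \to fg$ in $H^s(\R^n)$, and the closedness of $\tilde H^s(\Omega)$ forces $fg \in \tilde H^s(\Omega)$.

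For part \ref{item 1 continuity bessel space cor}, the norm estimate is literally the estimate of Corollary \ref{cor: continuity of multiplication map in Bessel potential spaces I I} \ref{item 1 Hs continuity}; no change is needed. For part \ref{item 2 continuity bessel space cor}, note that $g^k \to g$ in $\tilde H^s(\Omega)$ is by definition convergence in the $H^s(\R^n)$ norm of elements of $\tilde H^s(\Omega)$. Under the given hypotheses Corollary \ref{cor: continuity of multiplication map in Bessel potential spaces I I} \ref{item 2 Hs continuity} directly supplies $f^k g^k \to fg$ in $H^s(\R^n)$; each $f^k g^k$ lies in $\tilde H^s(\Omega)$ by the well-definedness just proved, and hence so does the limit.

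The only genuinely nonroutine point is the simultaneous approximation of $f$ by $C_c^\infty(\R^n)$ functions in the $H^{2s,n/2s}$ topology \emph{while} preserving a uniform $L^\infty$ bound — so that the hypothesis of Corollary \ref{cor: continuity of multiplication map in Bessel potential spaces I I} \ref{item 2 Hs continuity} is actually met for the sequence $(f_j)$. This is the main obstacle, but the mollification-plus-cutoff recipe above handles it because convolution with $\rho_\epsilon$ has operator norm $1$ on $L^\infty(\R^n)$ and multiplication by $\chi_R$ does not increase the $L^\infty$ norm. Everything else is a direct application of the already established Corollary \ref{cor: continuity of multiplication map in Bessel potential spaces I I} together with the closedness of $\tilde H^s(\Omega)$.
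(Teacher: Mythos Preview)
Your proof is correct and follows essentially the same approach as the paper: reduce to well-definedness, approximate $g$ by $g_k\in C_c^\infty(\Omega)$, approximate $f$ by smooth functions with uniform $L^\infty$ bound, and invoke Corollary~\ref{cor: continuity of multiplication map in Bessel potential spaces I I}\,\ref{item 2 Hs continuity}. The only difference is that the paper is slightly more economical: since each $g_k$ already has compact support in $\Omega$, it suffices to take $f^k=f\ast\rho_{\epsilon_k}\in C^\infty(\R^n)$ (smooth but not compactly supported), so that $f^kg^k\in C_c^\infty(\Omega)$ automatically and a single diagonal sequence $f^kg^k\to fg$ does the job---your extra cutoff $\chi_R$ and the two-step limit are unnecessary, though not wrong.
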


\begin{proof}
We only need to prove that the multiplication map is well-defined. From Corollary~\ref{cor: continuity of multiplication map in Bessel potential spaces I I} we already know that $fg\in H^s(\R^n)$. Denote by $(\rho_{\epsilon})_{\epsilon>0}\subset C_c^{\infty}(\R^n)$ the standard mollifiers and set $f^k\vcentcolon =f\ast \rho_{\epsilon_k}\in C^{\infty}(\R^n)$ for some $\epsilon_k\to 0$ as $k\to\infty$. Using standard results one deduces that $f^k\to f$ in $H^{2s,\frac{n}{2s}}(\R^n)$ and $\|f^k\|_{L^{\infty}(\R^n)}\leq \|f\|_{L^{\infty}(\R^n)}$. By assumption there is a sequence $(g^k)_{k\in\N}\subset C_c^{\infty}(\Omega)$ such that $g^k\to g$ in $H^s(\R^n)$ as $k\to\infty$. Then $f^kg^k\in C_c^{\infty}(\Omega)$ and by the statement \ref{item 2 Hs continuity} of Corollary~\ref{cor: continuity of multiplication map in Bessel potential spaces I I} we obtain $f^kg^k\to fg$ in $H^s(\R^n)$. Therefore we can conclude the proof.
\end{proof}

Another important ingredient in the uniqueness proof of the inverse problem associated to the fractional conductivity equation will be deduced from the following continuity result:

\begin{proposition}[{\cite[Theorem 6.1]{MultiplicationSobolev}}]
\label{proposition: multiplication map in fractional Sobolev spaces}
    Let $0\leq s_0\leq s_1,s_2<\infty$, $1\leq p_0,p_1,p_2<\infty$ satisfy $s_0\in \N_0$ and
    \begin{enumerate}[(i)]
        \item\label{assump 1 multiplication} $1/p_0-s_0/n\geq 1/p_i-s_i/n$ for $i=1,2$ and $s_1+s_2-s_0>n(1/p_1+1/p_2-1/p_0)\geq 0$
        \item\label{assump 2 multiplication} or $1/p_0-s_0/n> 1/p_i-s_i/n$ for $i=1,2$ and $s_1+s_2-s_0\geq n(1/p_1+1/p_2-1/p_0)\geq 0$,
    \end{enumerate}
    then the multiplication map
    \[
        W^{s_1,p_1}(\R^n)\times W^{s_2,p_2}(\R^n)\ni (f,g)\mapsto fg\in W^{s_0,p_0}(\R^n)
    \]
    is a continuous bilinear map and in particular there holds
    \[
        \|fg\|_{W^{s_0,p_0}(\R^n)}\leq C\|f\|_{W^{s_1,p_1}(\R^n)}\|g\|_{W^{s_2,p_2}(\R^n)}
    \]
for all $f\in W^{s_1,p_1}(\R^n)$, $g\in W^{s_2,p_2}(\R^n)$.
\end{proposition}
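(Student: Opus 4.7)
The plan is to reduce the estimate $\|fg\|_{W^{s_0,p_0}(\R^n)}\leq C\|f\|_{W^{s_1,p_1}(\R^n)}\|g\|_{W^{s_2,p_2}(\R^n)}$ to H\"older's inequality combined with Sobolev embeddings, exploiting that $s_0\in\N_0$, so $W^{s_0,p_0}(\R^n)$ is a classical integer-order Sobolev space. It suffices to bound $\|\partial^{\alpha}(fg)\|_{L^{p_0}(\R^n)}$ for every multi-index $|\alpha|\leq s_0$, and by the classical Leibniz rule the derivative expands as $\partial^{\alpha}(fg)=\sum_{\beta\leq \alpha}\binom{\alpha}{\beta}\partial^{\beta}f\,\partial^{\alpha-\beta}g$. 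Since $s_0\leq \min(s_1,s_2)$ and $s_0$ is an integer, every $\partial^{\beta}f$ and $\partial^{\alpha-\beta}g$ appearing in this sum belongs, by definition of the Slobodeckij spaces (recalled in Section~\ref{sec:Bessel-basics}), to $W^{s_1-|\beta|,p_1}(\R^n)$ and $W^{s_2-|\alpha-\beta|,p_2}(\R^n)$ respectively.

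For a fixed summand, applying H\"older's inequality with $1/p_0=1/r_1+1/r_2$ followed by Sobolev embeddings $W^{s_1-|\beta|,p_1}(\R^n)\hookrightarrow L^{r_1}(\R^n)$ and $W^{s_2-|\alpha-\beta|,p_2}(\R^n)\hookrightarrow L^{r_2}(\R^n)$ produces
\begin{equation}
\|\partial^{\beta}f\,\partial^{\alpha-\beta}g\|_{L^{p_0}(\R^n)}\leq C\|\partial^{\beta}f\|_{L^{r_1}(\R^n)}\|\partial^{\alpha-\beta}g\|_{L^{r_2}(\R^n)}\leq C\|f\|_{W^{s_1,p_1}(\R^n)}\|g\|_{W^{s_2,p_2}(\R^n)},
\end{equation}
provided one can choose $(r_1,r_2)$ with $1/r_1\geq 1/p_1-(s_1-|\beta|)/n$ and $1/r_2\geq 1/p_2-(s_2-|\alpha-\beta|)/n$. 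Summing these two embedding conditions and using $|\beta|+|\alpha-\beta|=|\alpha|\leq s_0$ yields exactly $s_1+s_2-s_0\geq n(1/p_1+1/p_2-1/p_0)$, which is the assumption. The dichotomy \ref{assump 1 multiplication}/\ref{assump 2 multiplication} is endpoint bookkeeping: in case \ref{assump 2 multiplication} the per-factor embeddings are strict, so any admissible $(r_1,r_2)$ saturating the sum constraint is allowed; in case \ref{assump 1 multiplication} the strict inequality on the sum leaves room to perturb $(r_1,r_2)$ slightly away from a critical per-factor endpoint, which is needed when the individual embedding $1/p_0-s_0/n=1/p_i-s_i/n$ is only critical.

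The main technical hurdle is to justify the Leibniz expansion rigorously when $s_1$ or $s_2$ is non-integer, since the products $\partial^{\beta}f\,\partial^{\alpha-\beta}g$ are a priori only distributional. I would resolve this by mollification: for standard radial mollifiers $(\rho_{\varepsilon})_{\varepsilon>0}$, set $f_{\varepsilon}=f*\rho_{\varepsilon}$ and $g_{\varepsilon}=g*\rho_{\varepsilon}$, for which Leibniz holds classically, and pass to the limit $\varepsilon\to 0^+$ using $f_{\varepsilon}\to f$ in $W^{s_1,p_1}(\R^n)$, $g_{\varepsilon}\to g$ in $W^{s_2,p_2}(\R^n)$, and $f_{\varepsilon}g_{\varepsilon}\to fg$ in $L^{p_0}(\R^n)$; the uniform H\"older--Sobolev bound above is preserved in the limit. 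Continuity of the bilinear multiplication then follows from bilinearity together with this estimate by a standard two-variable approximation argument. An alternative, more unified route is to transfer to the Triebel--Lizorkin scale via $W^{k,p}=F^{k}_{p,2}$ for integer $k$ and $W^{s,p}=F^{s}_{p,p}$ for non-integer $s$, apply Bony's paraproduct decomposition $fg=T_f g+T_g f+R(f,g)$, and combine the Runst--Sickel lemmas (Propositions~\ref{prop: Runs-Sickel Lemma I} and~\ref{prop: Runs-Sickel Lemma II}) with the Triebel interpolation result (Theorem~\ref{thm: interpolation in Triebel spaces}); this avoids the case split entirely and treats fractional $s_1,s_2$ uniformly, at the cost of deploying heavier machinery.
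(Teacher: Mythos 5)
This proposition is not proved in the paper: it is quoted verbatim from \cite[Theorem 6.1]{MultiplicationSobolev} and used as a black box, so there is no internal argument to compare yours against. Your sketch follows what is essentially the standard route for establishing such product estimates when the target regularity $s_0$ is an integer (Leibniz rule, H\"older, Sobolev embeddings, mollification), and this is very likely close to how the cited reference argues. In that sense the overall plan is sound, and the observation that $s_0\in\N_0$ is what makes the Leibniz-rule reduction available is the right starting point.

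That said, as written the proposal has genuine gaps at precisely the places where the hypotheses (i) and (ii) earn their keep, and these are glossed over as ``endpoint bookkeeping.'' Two concrete issues. First, after summing the two per-factor embedding constraints you only get that the \emph{necessary} numerical relation $s_1+s_2-|\alpha|\ge n(1/p_1+1/p_2-1/p_0)$ holds; you have not exhibited a pair $(r_1,r_2)$ with $1/r_1+1/r_2=1/p_0$, $1\le r_i<\infty$, \emph{and} both required embeddings $W^{s_1-|\beta|,p_1}(\R^n)\hookrightarrow L^{r_1}(\R^n)$, $W^{s_2-|\alpha-\beta|,p_2}(\R^n)\hookrightarrow L^{r_2}(\R^n)$ valid, for every admissible $\beta\le\alpha$ with $|\alpha|\le s_0$. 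When the sum constraint is tight and an individual embedding lands on the critical exponent, one can be forced to $r_i=\infty$, and the critical embedding $W^{n/p_i,p_i}(\R^n)\hookrightarrow L^\infty(\R^n)$ is \emph{false}; the strict inequalities in (ii) (or the slack in the sum in (i)) are exactly what rules this out, but that verification has to be carried out and is not. Second, the mollification step needs to produce convergence $f_\varepsilon g_\varepsilon\to fg$ in $L^{p_0}(\R^n)$, which is not automatic from convergence of $f_\varepsilon,g_\varepsilon$ in their respective $W^{s_i,p_i}$ norms alone; it uses again the same Sobolev/H\"older combination, and one should say so rather than assert it. Finally, a minor point: the claim ``$\partial^\beta f\in W^{s_1-|\beta|,p_1}(\R^n)$'' for fractional $s_1$ is correct on $\R^n$ (one can see it via $W^{s,p}=B^s_{pp}$ for noninteger $s$), but it is not literally ``by definition'' from the paper's Section~\ref{sec:Bessel-basics}, which only defines the spaces; the stability under differentiation deserves a one-line justification. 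None of these issues is fatal, but the case split is the mathematical content of the theorem, and the sketch does not actually prove it.
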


We have the following special case:

\begin{lemma}
\label{lemma: continuity of multiplication map in Bessel potential spaces I V}
    Let $n\in\N$ and $0<s<n/2$, then the map 
    \[
        L^{\frac{n}{2s}}(\R^n)\times H^s(\R^n)\ni (f,g)\mapsto fg\in L^{\frac{2n}{n+2s}}(\R^n)
    \]
    is a continuous bilinear map with
    \[
        \|fg\|_{L^{\frac{2n}{n+2s}}}\leq C\|f\|_{L^{\frac{n}{2s}}(\R^n)}\|g\|_{H^s(\R^n)}
    \] 
    for all $f\in L^{\frac{n}{2s}}(\R^n)$, $g\in H^s(\R^n)$.
\end{lemma}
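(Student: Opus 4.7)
The plan is to prove this by a direct application of H\"older's inequality together with the fractional Sobolev embedding, which is essentially the cleanest possible route and the reason this lemma is relegated to the end of the appendix.

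First I would verify the compatibility of exponents. Since $0<s<n/2$, the conjugate H\"older relation to check is
\[
\frac{2s}{n}+\frac{n-2s}{2n}=\frac{4s+n-2s}{2n}=\frac{n+2s}{2n},
\]
so that $\frac{1}{n/(2s)}+\frac{1}{2n/(n-2s)}=\frac{1}{2n/(n+2s)}$. Consequently H\"older's inequality applied to the product $fg$ yields
\[
\|fg\|_{L^{\frac{2n}{n+2s}}(\R^n)}\leq \|f\|_{L^{\frac{n}{2s}}(\R^n)}\|g\|_{L^{\frac{2n}{n-2s}}(\R^n)}
\]
for every $f\in L^{\frac{n}{2s}}(\R^n)$ and $g\in L^{\frac{2n}{n-2s}}(\R^n)$.

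Next I would invoke the standard Sobolev embedding $H^s(\R^n)=H^{s,2}(\R^n)\hookrightarrow L^{\frac{2n}{n-2s}}(\R^n)$, which holds in precisely the range $0<s<n/2$ and can be justified via the Hardy--Littlewood--Sobolev inequality applied to the Riesz potential representation of $g$, giving a constant $C=C(n,s)>0$ with $\|g\|_{L^{2n/(n-2s)}(\R^n)}\leq C\|g\|_{H^s(\R^n)}$. Chaining this with the H\"older estimate above produces
\[
\|fg\|_{L^{\frac{2n}{n+2s}}(\R^n)}\leq C\|f\|_{L^{\frac{n}{2s}}(\R^n)}\|g\|_{H^s(\R^n)},
\]
which is the required quantitative bound. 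Bilinearity of the multiplication map is immediate, and continuity then follows from the quantitative estimate by the usual argument for bilinear maps between normed spaces. There is no serious obstacle here: the result is a textbook consequence of H\"older and Sobolev embedding, and its main role in the paper is simply to justify (together with Proposition~\ref{proposition: multiplication map in fractional Sobolev spaces}) that potentials $q\in L^{n/(2s)}(\R^n)$ define bounded Sobolev multipliers $H^s(\R^n)\to H^{-s}(\R^n)$, as used in Lemma~\ref{prop: bilinear forms conductivity eq}.
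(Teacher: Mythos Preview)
Your proof is correct. The argument via H\"older's inequality combined with the Sobolev embedding $H^s(\R^n)\hookrightarrow L^{2n/(n-2s)}(\R^n)$ is entirely valid and is in fact the same computation the paper carries out earlier in the proof of Lemma~\ref{prop: bilinear forms conductivity eq}.

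The paper's own proof of this lemma takes a different route: rather than arguing directly, it verifies that the parameter choices $s_0=s_1=0$, $s_2=s$, $p_0=\frac{2n}{n+2s}$, $p_1=\frac{n}{2s}$, $p_2=2$ satisfy assumption \ref{assump 2 multiplication} of Proposition~\ref{proposition: multiplication map in fractional Sobolev spaces}, and then invokes that general multiplication theorem as a black box. Your direct approach is more elementary and self-contained, and arguably cleaner for this particular statement; the paper's approach has the advantage of exhibiting the lemma as a special case of a uniform framework (the multiplication results of \cite{MultiplicationSobolev}) that is being recorded in the appendix anyway. Either argument is perfectly acceptable here.
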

\begin{proof}
    Let $s_0=s_1=0$, $s_2=s>0$, $p_0=\frac{2n}{n+2s}$, $p_1=\frac{n}{2s}$, $p_2=2$. Then 
    \[
        \frac{1}{p_0}-\frac{s_0}{n}=\frac{n+2s}{2n},\quad \frac{1}{p_1}-\frac{s_1}{n}=\frac{2s}{n},\quad \frac{1}{p_2}-\frac{s_2}{n}=\frac{n-2s}{2n}
    \]
    and 
    \[
        s_1+s_2-s_0=s,\quad \frac{1}{p_1}+\frac{1}{p_2}-\frac{1}{p_0}=\frac{s}{n}.
    \]
    Therefore assumption \ref{assump 2 multiplication} of Proposition~\ref{proposition: multiplication map in fractional Sobolev spaces} is fulfilled and we can conclude the proof.
\end{proof}

\begin{corollary}
\label{corollary: continuity of multiplication map in Bessel potential spaces V}
    Let $n\in\N$ and $0<s<n/2$, then the multiplication map 
    \[
        L^{\frac{n}{2s}}(\R^n)\times H^s(\R^n)\ni (f,g)\mapsto fg\in H^{-s}(\R^n)
    \]
    is continuous.
\end{corollary}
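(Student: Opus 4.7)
The plan is to derive the corollary as an immediate consequence of Lemma~\ref{lemma: continuity of multiplication map in Bessel potential spaces I V} combined with the standard Sobolev embedding and a duality argument. First, Lemma~\ref{lemma: continuity of multiplication map in Bessel potential spaces I V} already gives the bilinear bound
\[
    \|fg\|_{L^{\frac{2n}{n+2s}}(\R^n)} \leq C\|f\|_{L^{\frac{n}{2s}}(\R^n)}\|g\|_{H^s(\R^n)}
\]
for all $f\in L^{\frac{n}{2s}}(\R^n)$ and $g\in H^s(\R^n)$. Thus it suffices to establish the continuous embedding $L^{\frac{2n}{n+2s}}(\R^n)\hookrightarrow H^{-s}(\R^n)$.

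To obtain this embedding, I would use that $\frac{2n}{n+2s}$ and $\frac{2n}{n-2s}$ are H\"older conjugates (since $0<s<n/2$ ensures both exponents are in $(1,\infty)$), together with the Sobolev embedding $H^s(\R^n)\hookrightarrow L^{\frac{2n}{n-2s}}(\R^n)$. Indeed, for any $u\in L^{\frac{2n}{n+2s}}(\R^n)$ and any $\varphi\in C_c^\infty(\R^n)$, H\"older's inequality gives
\[
    \left|\int_{\R^n} u\varphi\,dx\right| \leq \|u\|_{L^{\frac{2n}{n+2s}}(\R^n)}\|\varphi\|_{L^{\frac{2n}{n-2s}}(\R^n)} \leq C\|u\|_{L^{\frac{2n}{n+2s}}(\R^n)}\|\varphi\|_{H^s(\R^n)},
\]
so $u$ defines an element of $H^{-s}(\R^n)=(H^s(\R^n))^*$ with norm controlled by its $L^{\frac{2n}{n+2s}}$ norm.

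Combining the two estimates then yields
\[
    \|fg\|_{H^{-s}(\R^n)}\leq C\|fg\|_{L^{\frac{2n}{n+2s}}(\R^n)}\leq C\|f\|_{L^{\frac{n}{2s}}(\R^n)}\|g\|_{H^s(\R^n)},
\]
which proves that the multiplication map is a continuous bilinear map into $H^{-s}(\R^n)$. There is no essential obstacle here; the argument is a direct chain through the preceding lemma and Sobolev duality, and the only care required is to check that the conjugate exponent relation is exactly the one that matches the Sobolev embedding for $H^s(\R^n)$ in the subcritical range $0<s<n/2$.
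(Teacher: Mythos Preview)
Your proof is correct and takes essentially the same approach as the paper: combine Lemma~\ref{lemma: continuity of multiplication map in Bessel potential spaces I V} with the Sobolev embedding $H^s(\R^n)\hookrightarrow L^{\frac{2n}{n-2s}}(\R^n)$ and the duality relation coming from the fact that $\frac{2n}{n+2s}$ and $\frac{2n}{n-2s}$ are conjugate exponents. The paper's proof says precisely this in one sentence; you have simply written out the duality step explicitly.
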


\begin{proof}
    The statement directly follows by combining Lemma~\ref{lemma: continuity of multiplication map in Bessel potential spaces I V} with the Sobolev embedding and noting that $\frac{2n}{n+2s}$ is the conjugate exponent to $\frac{2n}{n-2s}$. 
\end{proof}

\bibliography{refs} 

\bibliographystyle{alpha}

\end{document}